\pdfoutput=1
\documentclass[11pt,namelimits,sumlimits,a4paper]{amsart}
\usepackage{cite}
\usepackage{comment}

\usepackage{amssymb,amsmath}
\usepackage[mathscr]{eucal}
\usepackage{slashed}
\usepackage{nameref} 
\usepackage[draft,totoc]{listlbls}

\usepackage[latin1]{inputenc}
\usepackage[T1]{fontenc}
\usepackage[UKenglish]{babel}
\usepackage{amsfonts}
\usepackage{fancyhdr}
\usepackage{graphicx}
\usepackage{amsmath}
\usepackage{amsthm}
\usepackage{amsmath,amscd}
\usepackage{latexsym}
\usepackage{cite}
\usepackage{amssymb,amsmath}
\usepackage{comment}
\usepackage[mathscr]{eucal}
\usepackage{slashed}
\usepackage{times,psfrag}
\usepackage{mathtools}
\usepackage[pdftitle={Complete noncompact G2-manifolds with ALC asymptotics},hidelinks]{hyperref}
\usepackage{multirow}
\usepackage{longtable}
\usepackage{bm}
\usepackage{fancyhdr}
\usepackage{float}
\usepackage{esint}
\usepackage{layout}
\usepackage{enumitem}
\usepackage{lmodern}
\usepackage[rgb]{xcolor}

\numberwithin{equation}{section}
\newtheorem{theorem}[equation]{Theorem}
\newtheorem{lemma}[equation]{Lemma}
\newtheorem{prop}[equation]{Proposition}
\newtheorem{corollary}[equation]{Corollary}

\theoremstyle{definition}
\newtheorem{definition}[equation]{Definition}
\newtheorem{example}[equation]{Example}

\newtheorem{mtheorem}{Theorem}

\theoremstyle{remark}
\newtheorem{remark}[equation]{Remark}
\newtheorem*{remark*}{Remark}

\DeclareFontFamily{U}{mathx}{}
\DeclareFontShape{U}{mathx}{m}{n}{<-> mathx10}{}
\DeclareSymbolFont{mathx}{U}{mathx}{m}{n}
\DeclareMathAccent{\widecheck}{0}{mathx}{"71}

\newcommand{\SigmP}{{\Sigma^+}}
\newcommand{\tsp}{N^+}
\newcommand{\wt}{\widetilde}

\newcommand{\ie}{\emph{i.e.} }
\newcommand{\eg}{\emph{e.g.} }
\newcommand{\cf}{\emph{cf.} }

\newcommand{\beq}{\begin{equation}}
\newcommand{\eeq}{\end{equation}}
\newcommand{\bea}{\begin{eqnarray}}
\newcommand{\eea}{\end{eqnarray}}

\newcommand{\C}{\mathbb{C}}

\newcommand{\R}{\mathbb{R}}

\newcommand{\Z}{\mathbb{Z}}
\newcommand{\N}{\mathbb{N}}

\newcommand{\CP}{\mathbb{CP}}
\newcommand{\PP}{\mathbb{P}}
\newcommand{\Sph}{\mathbb{S}}
\newcommand{\ra}{\rightarrow}

\newcommand{\dvol}{\operatorname{dv}}
\newcommand{\Real}{\operatorname{Re}}
\newcommand{\Imag}{\operatorname{Im}}

\newcommand{\Ric}{\operatorname{Ric}}

\newcommand{\Lie}[1]{\mathfrak{#1}}
\newcommand{\F}{\mathbb{F}}

\newcommand{\BC}{\textup{BC}}
\newcommand{\tu}[1]{\textup{#1}}

\newcommand{\gtwo}{\texorpdfstring{\ensuremath{\textup{G}_2}}{G2}}

\newcommand{\gtstr}{\gtwo--structure}

\newcommand{\gtmfd}{\gtwo--manifold}
\newcommand{\gtmetric}{\gtwo--metric}
\newcommand{\gthol}{\gtwo--holonomy\ }

\newcommand{\unitary}[1]{\textup{U$(#1)$}}

\newcommand{\sunitary}[1]{\textup{SU$(#1)$}}

\newcommand{\suthreestr}{\sunitary{3}--structure}
\newcommand{\sutwostr}{\sunitary{2}--structure}
\newcommand{\sorth}[1]{\textup{SO$(#1)$}}

\def\co{\colon\thinspace}

\begin{document}

\title{Complete noncompact \gtwo--manifolds with ALC asymptotics}
\author{L. Foscolo, M. Haskins and J. Nordstr\"om}

\begin{abstract}
We prove existence, uniqueness and structure results for complete noncompact \linebreak 7-dimensional $\gtwo$--holonomy metrics with ALC (asymptotically locally conical) asymptotics. We regard such spaces as $\gtwo$--analogues of ALF gravitational instantons in 4-dimensional hyperk\"ahler geometry. Our main results include the existence of a $\gtwo$--analogue of the Atiyah--Hitchin metric in 4-dimensional hyperk\"ahler geometry, the existence of a good moduli theory for ALC $\gtwo$--holonomy metrics and rigidity results for ALC $\gtwo$--metrics in terms of the symmetries of their asymptotic model. 

The analytic toolkit needed to prove all these results is a robust Fredholm theory for the natural geometric linear elliptic operators 
on ALC spaces. We provide a self-contained derivation of this Fredholm theory for arbitrary Riemannian manifolds with ALC asymptotics. 
Since our ALC Fredholm theory does not rely on imposing any holonomy reduction or curvature conditions it may also be of utility beyond the setting of ALC special holonomy metrics. As one such application of our general Fredholm theory we prove some Hodge-theoretic results on general ALC spaces.
\end{abstract}

\maketitle

\section*{Introduction}

The acronym ALC (asymptotically locally conical) was introduced in the physics literature by Cveti\v{c}--Gibbons--L\"u--Pope in \cite{CGLP:New:Spin(7)} (and then quickly picked up in a plethora of related works \cite{Brandhuber,BGGG,CGLP:Coho1,CGLP:B8,CGLP:C7:tilde,CGLP:M:conifolds,CGLP:C7,Gukov:Sparks,Gukov:Sparks:Tong,ALC:A7,Kanno:Yasui:I,Kanno:Yasui:II}) to denote an asymptotic geometry that generalises to higher dimensions  that of ALF (asymptotically locally flat) metrics in 4-dimensional geometry; well-known ALF metrics include the Taub-NUT and the Atiyah--Hitchin metrics. A Riemannian metric is said to be ALC if it is asymptotic, with polynomial decay rate, to a Riemannian submersion with base metric the Riemannian cone $\tu{C}(\Sigma)$ over a closed Riemannian manifold $(\Sigma,g_\Sigma)$ and fibre a circle of fixed finite length (see Definition \ref{def:ALC:n}). With this choice of terminology, ALF metrics are then the ALC metrics with tangent cone at infinity $\tu{C}(\Sigma)$ a flat cone.

The focus of this paper is the study of complete Riemannian 7-manifolds with ALC asymptotics and holonomy the exceptional Lie group $\gtwo$. These manifolds are necessarily Ricci-flat and therefore have only one end. In previous work \cite{FHN:ALC:G2:from:AC:CY3,FHN:Coho1:ALC} we constructed infinitely many topological types of \mbox{7-manifolds} that admit ALC $\gtwo$--holonomy metrics. The main results of this paper focus on existence, uniqueness and structure results for general ALC $\gtwo$--holonomy metrics.  These results are motivated on one side by analogies with 4-dimensional ALF hyperk\"ahler metrics and on the other side by extensions to the noncompact setting of foundational results on compact $\gtwo$--manifolds. 

Regarding the analogy between ALC $\gtwo$--holonomy metrics in 7 dimensions and ALF hyper\-k\"ahler metrics in 4 dimensions, we derive direct~\gtwo-analogues of results established by Minerbe \cite{Minerbe:Ak,Minerbe:Mass}, Biquard--Minerbe \cite{Biquard:Minerbe} and Chen--Chen \cite{Chen:Chen:II} (as parts of the recently achieved complete classification of 4-dimensional ALF hyperk\"ahler metrics).

Regarding the second motivation, the third-named author Nordstr\"om \cite{Nordstrom:ACyl:G2} and later Karigiannis--Lotay \cite{Karigiannis:Lotay} extended the deformation theory of $\gtwo$--holonomy metrics developed by Joyce \cite{Joyce:Book} in the compact setting to the asymptotically cylindrical (ACyl) and asymptotically conical (AC) settings respectively; these authors have also
used this deformation theory to derive interesting consequences on the geometry of these complete noncompact spaces. In this paper we describe how this deformation theory for $\gtwo$--holonomy metrics further extends to the noncompact ALC setting and explore some of its consequences.

\subsection*{Main results}

We begin by highlighting three of our geometric results that appear of broadest interest. We will then expand on the contents of the paper and discuss additional results beyond the three highlighted ones.

Recall that ALF gravitational instantons, \ie hyperk\"ahler 4-manifolds with cubic volume growth, come in two infinite families:
\begin{itemize}
    \item the ones of \emph{cyclic type}, where the circle fibration of the end is a principal circle bundle;
    \item the ones of \emph{dihedral type}, where the circle bundle on the end is non-orientable (but the total space is a $\Z_2$--quotient of a principal circle bundle).
\end{itemize}
Minerbe proved that any ALF space of cyclic type, of which the Taub--NUT metric on $\C^2$ is the prototype, must arise from the Gibbons--Hawking ansatz (and hence they are all explicit). The ALF spaces of dihedral type, of which the Atiyah--Hitchin metric is the prototype, are typically not explicit and are much harder to construct. An analogous dichotomy between cyclic and dihedral metrics occurs for (orientable) ALC metrics in any dimension. All the (infinitely many) ALC $\gtwo$--holonomy metrics constructed in our prior work \cite{FHN:ALC:G2:from:AC:CY3,FHN:Coho1:ALC} are of cyclic type. The first highlighted result of this paper is the construction of the first examples of ALC $\gtwo$--metrics of dihedral type. 

\begin{mtheorem}\label{mthm:Dihedral}
There exists a 1-parameter family of complete ALC $\gtwo$--holonomy metrics of dihedral type on~$S^3 \times \mathcal{O}_{\mathbb{P}^1}(-4)$.
\end{mtheorem}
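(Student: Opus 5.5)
Our plan is to obtain these metrics by an adiabatic gluing construction in the collapsed (small circle) limit, following the strategy of our earlier work \cite{FHN:ALC:G2:from:AC:CY3}, but with a $\Z_2$--twisted circle fibration at infinity. The model is an asymptotically conical Calabi--Yau 3-fold with a real structure: the Stenzel/Candelas--de la Ossa metric on the deformed conifold $T^*S^3$, or more naturally its quotient by an antiholomorphic involution. Concretely, we take the smoothing of $\C^3/\Z_2$--type or the conifold with the antiholomorphic involution $\sigma$ whose fixed locus is a special Lagrangian. The 7-manifold is then built as the total space of a circle bundle over the complement of the fixed locus twisted by $\sigma$, compactified near the fixed locus using a $\gtwo$--analogue of the Atiyah--Hitchin metric transversally. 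The topology $S^3\times\mathcal{O}_{\mathbb{P}^1}(-4)$ should arise because the transverse model is the Atiyah--Hitchin manifold, diffeomorphic to a neighbourhood of $\mathbb{RP}^2$ with double cover $\mathcal{O}_{\mathbb{P}^1}(-4)$ (indeed the double cover of Atiyah--Hitchin is $\mathcal{O}(-4)$ topologically), fibred over an $S^3$ fixed locus.

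The key steps in order are: (i) fix the AC Calabi--Yau $B$ with involution $\sigma$ whose fixed set $L\cong S^3$ is special Lagrangian; (ii) build an approximate $\gtwo$--structure $\varphi_\epsilon$ on $M$: on $B\setminus L$ use the $\Z_2$--twisted circle bundle with flat (or harmonic) connection and $\varphi_\epsilon=\epsilon\,\theta\wedge\omega+\Real\Omega$, while in a tubular neighbourhood of $L$ glue in the product of the flat $\R^3=T_pL$ directions with a rescaled Atiyah--Hitchin metric in the normal $\R^3$ directions (Atiyah--Hitchin is hyperk\"ahler, so $\R^3\times AH$ is $\gtwo$); the Atiyah--Hitchin asymptotics (negative mass ALF, dihedral) match the twisted circle bundle with a Green's-function corrected connection near $L$; (iii) estimate the torsion of $\varphi_\epsilon$ in weighted H\"older spaces adapted to the ALC end and to the collapsing region; (iv) perturb to a torsion-free structure using Joyce's ansatz $\varphi_\epsilon+d\eta$, solving a nonlinear elliptic equation by contraction mapping, which requires a uniformly bounded inverse for the linearised operator $d+d^*$ on ALC spaces, supplied by the Fredholm theory developed in this paper. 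The parameter $\epsilon$ (circle length at infinity, up to scaling) gives the 1-parameter family; holonomy is exactly $\gtwo$ since $\pi_1$ is finite and the metric is not a product (dihedral type excludes reduced holonomy Calabi--Yau $\times S^1$).

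The main obstacle is step (iv): the uniform invertibility of the linearised operator as $\epsilon\to0$, since the geometry degenerates both at infinity and along $L$ where the Atiyah--Hitchin bubbles form. I would first prove separate weighted estimates on the adiabatic region (reducing to $\sigma$--invariant Laplace-type problems on $B$ twisted by the sign representation, where absence of $\sigma$--anti-invariant decaying harmonic forms on $B$ must be checked) and on the model $\R^3\times AH$ (using that AH has no $L^2$ harmonic 1-forms and the relevant kernel is controlled), then patch by a blow-up/contradiction argument. A secondary difficulty is matching orders in step (ii): the AH metric differs from twisted Taub--NUT by exponentially small terms plus a negative-mass $1/r$ term, which must be absorbed into a harmonic correction of the connection on $B\setminus L$ with prescribed singularity along $L$.
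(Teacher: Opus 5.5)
You are aiming at the opposite end of the $\mathbb{A}_7$ family from the one the paper constructs: the collapsed limit over $T^*S^3/\sigma$ with bubbles along the zero section. The paper only describes that degeneration heuristically and explicitly leaves it to future work. The paper instead keeps the circle length at infinity fixed and desingularises:
\begin{itemize}
\item it takes the conically singular ALC space $\widetilde M_0$ of \cite{FHN:Coho1:ALC} and quotients by $\Gamma_{dih}=\langle(1,-1,\tau)\rangle\cong\Z_4$ with $\tau\in\tu{N}\setminus\unitary{1}$, which makes the end dihedral;
\item it notes that $\Gamma_{dih}$ is conjugate to $\Gamma_{cyc}$ in $\sunitary{2}^3/\triangle\Z_2$, so the AC manifold $M_{1,1}\cong S^3\times\mathcal{O}_{\mathbb{P}^1}(-4)$ fits the singularity;
\item it applies Theorem~\ref{thm:Desing:CS:ALC}, whose topological hypotheses hold automatically (Example~\ref{eg:A7}).
\end{itemize}
Since only a small region near the singular point degenerates, Joyce's $L^2$/$L^p$ scheme (Theorem~\ref{thm:Joyce:ALC}) with rescaled local estimates suffices. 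Weighted theory is used only qualitatively, for each fixed metric (Proposition~\ref{prop:Laplacian:2Forms:ALC}).

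In your route the whole end collapses, and your proposal has genuine gaps.
\begin{itemize}
\item \emph{Uniform linear theory is missing.} The step you call the main obstacle is not carried out, and it cannot be ``supplied by the Fredholm theory developed in this paper''. Part~\ref{Part1} gives Fredholm estimates for a \emph{fixed} ALC metric, with constants that degenerate as the circle length tends to $0$ and as bubbles form. You would need a genuinely new adiabatic analysis: weighted spaces on $(B\setminus L)/\sigma$ with a Dirac-type singularity of the monopole data along $L$, a model analysis on $\R^3\times(\text{bubble})$, and a matching argument.
\item \emph{The approximate solution is too crude.} It must be corrected beyond the pointwise product $\R^3\times\mathrm{AH}$, since the fibration over the round $S^3$ is not a product.
\item \emph{The kernel discussion targets the wrong object.} For $3$-forms on $\R^3\times D$ the relevant data are $L^2$ harmonic $2$-forms $\omega$ on the bubble $D$ (the double cover of AH carries one). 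These give model kernel elements $e^i\wedge\omega$ that you must show do not obstruct. The absence of $L^2$ harmonic $1$-forms is beside the point.
\item \emph{The topology is wrong as stated.} An AH-bundle over $S^3$ has $\pi_1=\Z_2$ and retracts onto $S^3\times\mathbb{RP}^2$, so it is not $S^3\times\mathcal{O}_{\mathbb{P}^1}(-4)$. Its end would also have circle degree $4$ on the linking sphere of $L$. The $\mathbb{A}_7$ end has canonical double cover $(S^3\times S^3)/\pm1\to S^2\times S^3$, of degree $2$. So the transverse bubble must be the double cover $D_1\cong\mathcal{O}_{\mathbb{P}^1}(-4)$ of AH, not AH itself.
\end{itemize}
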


The existence of this family was predicted in the physics literature \cite{ALC:A7}, where it is known as the $\mathbb{A}_7$ family of ALC $\gtwo$--metrics. One end of this 1-parameter family of metrics can be thought of heuristically as a nontrivial fibration over $S^3$ by Atiyah--Hitchin metrics on the total space of $\mathcal{O}_{\mathbb{P}^1}(-4)$. However, we prove Theorem \ref{mthm:Dihedral} by instead rigorously constructing ``the other end'' of the $1$-parameter family; we do so by applying a general desingularisation result for ALC $\gtwo$--holonomy spaces with isolated conical singularities, which we prove in Part \ref{Part3} of the paper. Our desingularisation result is modelled in part on an analogous result in the compact~\gtwo~case due to Karigiannis \cite{Karigiannis} and in part on the construction of dihedral ALF hyperk\"ahler $4$-manifolds by Biquard--Minerbe \cite{Biquard:Minerbe} via desingularisation of ALF hyperk\"ahler $4$-orbifolds 
using ALE dihedral gravitational instantons. Theorem \ref{mthm:Dihedral} is proved by a nontrivial application of our general desingularisation scheme to some of the examples we previously constructed in \cite{FHN:Coho1:ALC} by exploiting cohomogeneity-one methods.

An ALC metric is by definition asymptotic to a locally circle-invariant one on its end, but only in the cyclic case (when the circle bundle is orientable) does the end admits a genuine circle action. 
In the case of ALC $\gtwo$--holonomy metrics, we establish structural results on extending the (local) circle-invariance from the asymptotic limit; these are modelled on analogous results proven in 4-dimensional hyperk\"ahler geometry.

\begin{mtheorem}\label{mthm:Asymptotics} ${}$

\begin{enumerate}
\item Every ALC $\gtwo$-holonomy end metric is asymptotic to a locally circle-invariant $\gtwo$--structure up to an exponentially decaying error.
\item Every complete ALC $\gtwo$--manifold of cyclic type admits a circle action that preserves the $\gtwo$--structure.
\end{enumerate}    
\end{mtheorem}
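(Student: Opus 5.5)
The plan is to prove part (1) first by a Fourier decomposition along the circle fibres, and then get part (2) from part (1) together with the deformation theory and Fredholm theory developed later in the paper.

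\textbf{Part (1).} Work on the end $M\setminus K$. Locally it is a circle fibration over $(R,\infty)\times\Sigma$, with fibres of asymptotically constant length $2\pi\ell$. In the dihedral case, pass to the double cover so that the fibration is a principal circle bundle; the resulting invariant objects descend because the involution acts compatibly. Fix the asymptotic connection to define an averaging operator $\pi_0$, which projects forms onto their circle-invariant parts. Decompose the torsion-free $\gtwo$--structure $\varphi$ as $\varphi=\pi_0\varphi+\varphi^\perp$, where $\varphi^\perp$ collects the nonzero Fourier modes $e^{ik\theta}$, $k\neq 0$.

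Write $\varphi$ in the harmonic gauge adapted to the model. Then closedness and coclosedness of $\varphi$ and of $\psi=*\varphi$ give an elliptic system of the form $\Delta\varphi^\perp=Q(\varphi^\perp,\pi_0\varphi)$, where $Q$ is quadratic or has decaying coefficients. On the $k$-th mode, the model Laplacian has the form $\Delta_{\tu{C}(\Sigma)}+k^2/\ell^2$ plus lower-order corrections. This is a Schr\"odinger-type operator with a positive mass term, so its solutions that decay polynomially must decay like $e^{-|k|r/\ell}$.

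I will make this precise with a weighted maximum principle, or an Agmon-type estimate, applied to $|\varphi^\perp|^2$. The starting input is the polynomial decay that holds by the ALC definition. The goal is to show that $u=|\varphi^\perp|^2$ satisfies $\Delta u\ge c\,u$ outside a compact set, up to errors that can be absorbed; comparison with $e^{-\sqrt{c}\,r}$ then gives exponential decay. After that, $\pi_0\varphi$ is a locally circle-invariant $\gtwo$--structure. It is closed and coclosed up to exponentially small errors, so a correction by an exponentially small amount makes it exactly torsion-free. That correction uses the invariant deformation theory on the end, which reduces to an ODE/elliptic problem on the cone.

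\textbf{Part (2).} Now assume the fibration is cyclic, so there is a genuine circle-generating vector field $X$ on the end. By part (1), $X$ is asymptotically Killing: $\mathcal{L}_X\varphi=O(e^{-\delta r})$. The plan is to produce a true Killing field $\tilde X=X+Y$ with $Y$ decaying. Since $g$ is Ricci-flat, Killing fields are exactly the solutions of $\Delta Y=-\Delta X$ (with the Bochner--Weitzenb\"ock Laplacian) that are divergence-free. Using the ALC Fredholm theory at a weight slightly below $0$, where $\Delta$ is an isomorphism onto its image modulo a finite-dimensional cokernel and there are no decaying harmonic obstructions, solve $\Delta Y=-\Delta X$ with $Y\to 0$. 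The resulting $\tilde X$ is harmonic and asymptotic to $X$. Standard arguments (Bochner, or the fact that $\mathcal{L}_{\tilde X}g$ is a decaying infinitesimal Einstein deformation in Bianchi gauge, which must vanish by the ALC deformation theory) then show that $\tilde X$ is Killing.

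On a $\gtwo$--holonomy manifold, Killing fields preserve $\varphi$: $\mathcal{L}_{\tilde X}\varphi$ is a parallel $3$-form, and it decays, so it vanishes. The flow of $\tilde X$ is periodic because its orbits at infinity are asymptotic to the circle fibres of fixed length, so the isometry group contains a circle.

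\textbf{Main obstacle.} I expect the hardest step to be part (1). The difficulty is obtaining a coercive estimate for the nonzero Fourier modes uniformly, despite the nonlinear coupling between modes and the fact that the connection is only asymptotically invariant. I would first try Agmon estimates mode by mode, using the inequality $\|\partial_\theta u\|\ge \ell^{-1}\|u\|$ for $u\perp\ker\partial_\theta$.
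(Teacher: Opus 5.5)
Your overall strategy can be made to work; the paper itself remarks that one can first produce a harmonic $1$-form asymptotic to $\theta$ and then show it is automorphic. However, the step where you conclude that $\tilde X$ is Killing has a genuine gap. Harmonic and divergence-free does not by itself imply Killing. It is also false that a decaying infinitesimal Einstein deformation must vanish ``by the ALC deformation theory'': Theorem~\ref{thm:Moduli:Spaces} identifies decaying infinitesimal deformations with $\mathcal{H}^3_\nu(M)$, which is often nonzero (one-dimensional for the $\mathbb{B}_7$ family when $-3<\nu<-1$). For the same reason no Bochner-type vanishing is available.

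The missing input is cohomological. With $\tilde\gamma=\tilde X^\flat$ harmonic, $d^\ast\tilde\gamma$ is a decaying harmonic function, and then $\tu{curl}\,\tilde\gamma$ is a decaying harmonic $1$-form (via the Dirac operator of Lemma~\ref{lem:identities:0:1:forms:torsion:free}(iv)). Both therefore vanish by Lemma~\ref{lem:Vanishing:harmonic}. Lemma~\ref{lem:identities:0:1:forms:torsion:free}(iii) then shows that $\rho=\mathcal{L}_{\tilde X}\varphi=d(\tilde X\lrcorner\varphi)$ is closed and coclosed with $\ast\rho=-d(\tilde X\lrcorner\psi)$, so $([\rho],[\ast\rho])=0$ in $H^3(M)\oplus H^4(M)$. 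This forces $\rho=0$ only through injectivity of $\mathcal{H}^3_\lambda(M)\to H^3(M)\oplus H^4(M)$. Remark~\ref{rmk:inj_across_L2} guarantees that injectivity in general only for $\lambda<-3+\delta$; for non-regular $\Sigma$ it can fail at slower rates, because the classes $f(\mathcal{H}^3_\lambda(\Sigma))$, $\lambda\in(-3,-2)$, are not detected by cohomology.

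So you need $\rho=O(r^{-3+\epsilon})$, i.e.\ $\tilde X-X=O(r^{-2+\epsilon})$. This holds because part (1) makes $\triangle X$ exponentially small and $\triangle\co L^2_{2,2+\delta}\to L^2_\delta$ is an isomorphism on $1$-forms for every $\delta\in(-4,0)$. A solution known only to decay at a rate slightly below $0$ gives merely $\rho=O(r^{-1-\epsilon})$, where the argument breaks down. The paper packages exactly this as Lemma~\ref{lem:sextension_merge} with $\mu=-3+\epsilon$, applied in Proposition~\ref{prop:S1_surj}.

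Second, periodicity does not follow from orbits far out being close to circle fibres. That only shows the time-$2\pi$ flow $F$ of $\tilde X$ is an automorphism asymptotic to the identity. Without proving $F=\tu{id}$ you have only an $\R$-action, which is the ``circle rather than $\R$'' issue the paper raises explicitly. The paper proves $F=\tu{id}$ in Proposition~\ref{prop:Decaying:symmetries}. It uses the canonical foliation of the end by gradient lines of the solution of $\triangle u=6$, $u=\tfrac12 r^2+O(r^{1+\delta})$, which is unique up to constants only when $\Sigma$ is not exceptional. The exceptional case needs Theorem~\ref{thm:ALC:G2:Positive:Mass} and Corollary~\ref{cor:flat}, showing $M$ is a quotient of $\Sph^1\times\C^3$.

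In part (1), your route matches Theorem~\ref{thm:Exp:Decay:ALC:G2}: Fourier splitting, nonlinear coupling linear in $\rho_\perp$ with small coefficients, an integrated fibrewise Poincar\'e estimate, and hole-filling. A pointwise inequality $\triangle u\ge cu$ for $u=|\varphi^\perp|^2$ is not available, since the Poincar\'e inequality holds only after integrating over fibres. What you take for granted is the gauge. The paper needs Proposition~\ref{prop:Slice:End}, a nonlinear slice theorem on $\{r\ge R_0\}$ solved through a Dirichlet boundary value problem with bounds uniform in $R_0$, to make $\varphi-\varphi'_\BC$ of type $27$ with respect to the invariant model. Without it the equations are not elliptic, and the conclusion fails in a general chart. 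Finally, making $\pi_0\varphi$ exactly torsion-free is unnecessary: the statement only needs the invariant part, which has exponentially decaying torsion.
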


Part (i) (proven in Theorem~\ref{thm:Exp:Decay:ALC:G2}) is an analogue of a result of Chen--Chen \cite{Chen:Chen:II} in the 4-dimensional hyperk\"ahler setting; the analogue of part (ii) (proven in Theorem~\ref{thm:Circle:Symmetry:Cyclic:ALC}) in that setting is the first step in Minerbe's classification of ALF gravitational instantons of cyclic type \cite{Minerbe:Ak}. The existence of a structure-preserving circle symmetry is not nearly as restrictive for a $\gtwo$--holonomy metric as it is for a 4-dimensional hyperk\"ahler metric; nevertheless,  as a step in deducing part (ii) from part (i) of Theorem \ref{mthm:Asymptotics}, we do obtain the following rigidity result.

\begin{mtheorem}\label{mthm:PositiveMass}
Any ALC $\gtwo$--manifold of cyclic type asymptotic to a flat principal circle bundle over a Calabi--Yau cone
is finitely covered by the Riemannian product of $\Sph^1$ and an AC Calabi-Yau 3-fold.
\end{mtheorem}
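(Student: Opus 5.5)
The idea is to produce a parallel vector field on $M$ whose dual $1$-form is asymptotic to the connection form of the flat circle bundle at infinity, and then apply de Rham splitting. The asymptotic model is a principal circle bundle over $\tu{C}(\Sigma)$ with flat connection $\theta$ and fibre of fixed length $\ell$, where $\tu{C}(\Sigma)$ is a Calabi--Yau cone. Its $\gtwo$--structure is $\varphi_\infty = \ell\,\theta\wedge\omega_0 + \Real\Omega_0$, with $(\omega_0,\Omega_0)$ the conical Calabi--Yau structure. Because $d\theta=0$, the $1$-form $\ell\theta$ is parallel for the model metric, and the infinitesimal generator $\xi_\infty$ of the circle action is a parallel Killing field. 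Flatness of the bundle is exactly the condition that makes this model a Ricci-flat product rather than a genuinely twisted ALC end.

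First, I would use part (i) of Theorem \ref{mthm:Asymptotics} (Theorem \ref{thm:Exp:Decay:ALC:G2}). It shows that $g$ agrees with a locally circle-invariant $\gtwo$--structure up to an exponentially decaying error. In the cyclic case this yields a Killing-like vector field $\xi_0$ on the end with $|\nabla \xi_0| = O(r^{-\nu})$, and at least $O(r^{-1-\epsilon})$ for the decaying difference from the model. Second, I would correct $\xi_0$ to a harmonic $1$-form. Cut off $\xi_0^\flat$ to obtain a $1$-form $\eta_0$ on all of $M$ with $\Delta \eta_0$ decaying fast. The ALC Fredholm theory of the paper, for the Hodge Laplacian on weighted spaces, then lets me solve $\Delta u = -\Delta\eta_0$ with $u$ decaying. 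Here I must choose a weight in the range where the Laplacian is surjective, avoiding the indicial roots determined by $\Sigma$ and the circle fibre. This gives $\eta = \eta_0+u$ with $\Delta\eta=0$ and $\eta \to \ell\theta$ at infinity.

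Third, I would apply a Bochner argument, which is where the positive-mass flavour enters. Since $\Ric=0$, we have $\Delta \eta = \nabla^*\nabla\eta$. Integrate $\langle \nabla^*\nabla \eta,\eta\rangle$ over $\{r\le R\}$. This gives $\int |\nabla\eta|^2 = $ a boundary term $\int_{\partial} \langle \nabla_\nu \eta,\eta\rangle$. The boundary has volume $O(R^{5})$ in dimension 7 with an ALC end, since the base cone is 6-dimensional and the fibre has bounded length. So I need $|\nabla \eta| = o(R^{-5})$, which follows from the exponential decay in part (i) together with the decay of $u$ at a sufficiently negative rate. Hence $\nabla\eta=0$, and $\eta$ is parallel and nonzero.

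Fourth, I would apply de Rham decomposition on the universal cover $\widetilde M$. This gives $\widetilde M = \R\times N$ or $S^1\times N$, with $N$ carrying the induced $\sunitary{3}$--structure $(\iota_\xi\varphi, \varphi|_N)$, which is torsion-free. So $N$ is Calabi--Yau and asymptotic to $\tu{C}(\Sigma)$, i.e. an AC Calabi--Yau 3-fold. The flow of $\xi$ is an isometric $\R$-action. Its closure of orbits in $M$ is compact because the model orbits are circles of length $\ell$, so on $M$ the action factors through a torus. Because $\pi_1$ of the AC Calabi--Yau $N$ is finite, we obtain a finite cover of $M$ equal to $S^1\times N$.

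The main obstacle is the second step, and specifically the decay needed in the third step. One must check that the correction $u$ decays faster than $r^{-5}$ in gradient. Equivalently, there must be no non-trivial harmonic $1$-forms with slow decay, $O(r^{-\mu})$ for $0<\mu\le 4$, which would obstruct the choice of weight. I would first try to exclude them by analysing the indicial roots of the Laplacian on the Calabi--Yau cone. Failing that, I would use the $\theta$-invariant/non-invariant splitting, under which the non-invariant part decays exponentially anyway.
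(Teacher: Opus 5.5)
\paragraph{The gap: the Bochner boundary term in your third step.} The claim $|\nabla\eta| = o(R^{-5})$ is false, so the boundary term does not tend to zero. Theorem~\ref{thm:Exp:Decay:ALC:G2} gives exponential decay only for the oscillatory part $\rho_\perp$. The $S^1$--invariant part of $\varphi-\varphi'_\BC$ decays only at the polynomial rate $\nu$, so $\triangle_g\theta = O(r^{\nu-2})$. The invariant part of $\eta-\ell\theta$ is forced by this inhomogeneity, so no choice of weight removes it. In general you therefore only get $\nabla\eta = O(r^{\nu-1})$, and
\[
\int_{r=R}\langle\nabla_{\partial_r}\eta,\eta\rangle = O(R^{4+\nu}),
\]
which is unbounded for $\nu\in(-3,-1)$.

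The problem persists even if the metric decays arbitrarily fast. The $\theta$--component of $\eta$ can contain the mode $Kr^{-4}\theta$, coming from the indicial root $-4$ of the scalar Laplacian on the Calabi--Yau cone (Proposition~\ref{prop:Harmonic:Functions:Cone}). This mode gives $|\nabla\eta|\sim r^{-5}$ exactly, and a boundary term tending to a nonzero multiple of $K$: this is the ``mass''. Ricci-flatness plus the asymptotics cannot exclude it. Your steps 2--3 apply verbatim to the Schwarzschild-type Ricci-flat ALC metric $V\,dt^2+V^{-1}dr^2+r^2g_\Sigma$, $V=1-(r_0/r)^4$, on $\R^2\times\Sigma$ with $\Sigma$ Sasaki--Einstein, mentioned in the Introduction. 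That metric is exactly $S^1$--invariant and asymptotic to the trivial circle bundle over $\tu{C}(\Sigma)$. There $\eta=V\,dt$ is harmonic (indeed Killing) and asymptotic to $dt$, yet $\int_M|\nabla\eta|^2>0$.

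\paragraph{The missing ingredient: the $\gtwo$--structure, used at exactly this step.} The paper takes $\gamma=\theta+O(r^\nu)$ with $\gamma^\sharp$ an infinitesimal automorphism of $\varphi$ (Corollary~\ref{cor:Circle:Symmetry:Cyclic:ALC:Killing:Field}). This is obtained from Theorem~\ref{thm:Exp:Decay:ALC:G2} together with the deformation theory of Lemma~\ref{lem:sextension_merge} and the injectivity in Remark~\ref{rmk:inj_across_L2}. Then $\gamma^\sharp$ is Killing, so it suffices to show $d\gamma=0$. Now $d\gamma$ is closed, coclosed and of type $14$. Flatness of $\theta$ gives $d\gamma=O(r^{\nu-1})$. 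Since closed and coclosed $2$-forms have no indicial roots in $(-6,-2)$ (Proposition~\ref{prop:Closed:Even:Forms}), this improves to $O(r^{-6+\delta})$, which justifies
\[
\|d\gamma\|^2_{L^2} = -\int_M d\gamma\wedge d\gamma\wedge\varphi = 0 .
\]
This is precisely what kills the mass. The mode $Kr^{-4}\theta$ would contribute $-4Kr^{-5}\,dr\wedge\theta$ to $d\gamma$, which has a nonzero $\Omega^2_7$--component. It also drops out of the boundary term $\int_{r=R}\gamma\wedge d\gamma\wedge\varphi$, because $\theta\wedge\theta=0$.

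Your de Rham step is then close to the paper's. However, you should prove that the Calabi--Yau factor is AC rather than assume it before invoking finiteness of its fundamental group. The paper does this via the finite covering $q\colon\wt C\to \tu{C}_R(\Sigma)$ and by restricting the metric.
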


In the 4-dimensional hyperk\"ahler case, Minerbe \cite{Minerbe:Mass} deduced a
similar result from %
the Positive Mass Theorem. 
A particular consequence of Minerbe's result is the 
non-existence of non-flat ALF hyperk\"ahler metrics asymptotic to $\R^3\times
S^1$; this non-existence result in the ALF special holonomy setting should be
contrasted with the general $4$-dimensional ALF Ricci-flat setting, where the
Riemannian Schwarzschild metric provides such a metric on $\R^2 \times S^2$.
A similar phenomenon occurs in dimension 7: for any Einstein 5-manifold
$\Sigma$ with positive scalar curvature (in particular, the Sasaki--Einstein
cross-section of a $3$-dimensional Calabi--Yau cone), $\R^2\times\Sigma$
carries a Schwarzschild-like complete ALC Ricci-flat (but not 
special holonomy) metric, see Besse \cite[\S 9.118(a)]{Besse}.

In the rest of this introduction we state further supporting results and discuss
the main ideas of the proofs of our results. This discussion will also serve as a detailed outline of the contents of the paper. Given the length of this paper we hope that the reader will find such a detailed outline useful. 

\subsection*{Analytic tools} This paper establishes a variety of results about arbitrary complete noncompact $\gtwo$--holonomy metrics with ALC asymptotics. One of the threads connecting these results is that they all require a good understanding of ALC $\gtwo$--metrics at the perturbative level, analogous to that first established by Joyce \cite{Joyce:Book} for compact $\gtwo$--manifolds. In the perturbative regime we aim to describe $\gtwo$--holonomy metrics close to a background model (itself torsion-free or approximately torsion-free) by linearising the PDE system characterising the holonomy reduction. The geometric results of the paper therefore rest on a linear-analytic foundation: a  robust Fredholm theory for linear elliptic operators on ALC manifolds. Part \ref{Part1} of the paper provides a self-contained exposition of this analytic framework. Here our main results are Theorems \ref{thm:Fredholm}, \ref{thm:Index:jump} and \ref{thm:Fredholm:2nd:order}.
This section may be read independently of the rest of the paper and we hope its contents 
will be of some interest to geometric analysts beyond the special holonomy community. 

ALC metrics fall into a broader class of asymptotic geometries known as fibred boundary metrics. The general fibred boundary metric is asymptotic to a Riemannian submersion with base metric a Riemannian cone and smooth compact fibres. ALC metrics are thus the simplest fibred boundary metrics (other than ACyl or AC ones
which can both be viewed as somewhat degenerate special cases of fibred boundary metrics). Linear Fredholm theory for fibred boundary metrics is described in the special case of the Hodge-operator $d + d^\ast$ in the work of Hausel--Hunsicker--Mazzeo \cite{HHM} on the $L^2$--cohomology of various classes of complete noncompact spaces and for more general fibred boundary Dirac operators by Kottke--Rochon~\cite[Theorem 5]{Kottke:Rochon:FB}.
These papers take a microlocal approach building on the $\Phi$--calculus developed by Mazzeo--Melrose\cite{Mazzeo:Melrose:Phi:Calculus} and Vaillant \cite{Vaillant}. 
In this paper we provide a more direct and elementary derivation of all the results we need, avoiding the microlocal tools used in the more general theory. We believe there is value in providing such an exposition since it provides clear statements and self-contained proofs for all the refined results we use in the paper. In the very special case of the scalar Laplacian on ALF manifolds this direct approach has already been used by Minerbe \cite[Section 2]{Minerbe:Mass} and again very recently by Kim--Ozuch \cite[Section 2]{Kim:Ozuch}.

As in similar (\ie non microlocal) approaches to linear analysis on AC and ACyl manifolds, the approach we take in this paper is to first analyse certain model operators at infinity and then deduce Fredholm results for ALC manifolds via a perturbation argument. There are two key ingredients to our analysis of the model operators. Central to our exposition is the notion of an adapted connection, a certain distinguished metric connection with torsion on a Riemannian submersion, see Definition~\ref{def:adapted} and Bismut \cite[Definition~1.7]{Bismut}. This adapted connection captures the leading-order behaviour at infinity of the Levi-Civita connection of an ALC metric. Therefore the models at infinity for many natural linear elliptic operators on ALC manifolds (in particular, Dirac-type operators) are provided by the analogous operators associated with the adapted connection of the model metric at infinity. Combined with Fourier decomposition along the fibres, this allows one to work on the end of an ALC manifold as if one were on the direct product of a Riemannian cone with a circle. Fredholm theory in weighted Sobolev and H\"older spaces on Riemannian cones and AC manifolds is very well understood, see for example the summary in \cite[Appendix B]{FHN:ALC:G2:from:AC:CY3}, and it underlies our analysis of the 0-modes, \ie the circle-invariant sections.  Higher Fourier modes are instead easily controlled because of the collapsed asymptotic geometry of ALC manifolds. For example, higher Fourier modes of decaying elements in the kernel of the operators that we consider are typically exponentially decaying (which is the ultimate reason for part (i) of Theorem \ref{mthm:Asymptotics}), while the natural weighted Banach spaces on ALC manifolds we introduce involve polynomial weights.

The direct analytic approach we take in this paper has the advantage that it generalises without complication in two distinct directions, both of which are important 
in further applications of these tools to complete noncompact special and exceptional holonomy metrics.
Firstly, we can allow for more general ALC geometries where the tangent cone at infinity $\tu{C}(\Sigma)$ is an orbifold (but we still require that the total space of the circle fibration be smooth). This generalisation is very natural from the point of view of the theory of Riemannian collapse under bounded curvature, see Fukaya \cite[Proposition 11.5]{Fukaya}.  Furthermore, this orbifold generalisation of ALC has already been observed to be the source of infinitely many new examples of Ricci-flat metrics with exceptional holonomy in \cite{Foscolo:ALC:Spin7}. The analysis we develop here, when combined with the notions and properties of Riemannian foliations recalled in \cite{Foscolo:ALC:Spin7}, covers this generalisation without change. 

Secondly, one can consider asymptotic geometries that generalise the geometry of ALC manifolds by allowing Riemannian submersions over Riemannian cones with fibre a flat torus of higher rank. 
This larger class of asymptotic geometries can be treated with the tools of this paper without any change. 
For example, in the 4-dimensional hyperk\"ahler setting there are many interesting so-called ALG metrics~\cite{Cherkis:Kapustin:ALG,Biquard:Minerbe,Hein} asymptotic to a flat 2-torus bundle over a flat 2-dimensional cone; moreover, a classification result for such $4$-dimensional ALG hyperk\"ahler metrics has been proven by Chen--Chen~\cite{Chen:Chen:III}.
In the higher-dimensional exceptional holonomy context 
there are some recent existence results (but so far no
classification results). 
Cavalleri \cite{Cavalleri} has recently constructed infinitely many complete metrics with holonomy $\tu{Spin}_7$ asymptotic at infinity (with polynomial rate) to a Riemannian submersion over a $3$-dimensional Calabi--Yau cone with fibre a fixed flat 2-torus. 

In principle, analysis in the general fibred boundary case could also be approached using our methods, but the special case of totally geodesic flat torus fibres allows for a simpler exposition because one can always reduce to the case of trivial vertical tangent bundle and trivialise all vertical tensor bundles using parallel tensors. In the general fibred boundary case, one would have to work instead with operators on the base acting on sections of generally nontrivial vector bundles parametrising elements of the kernel of the operator restricted to each fibre. 
However, we do not pursue this extension further in the current paper.

\subsubsection*{Hodge theory on ALC spaces}
As a nontrivial application of the linear analysis developed in the first part of the paper, we prove Hodge-theoretic results for ALC manifolds. More precisely, we describe all the contributions to the space $\mathcal{H}_\lambda^k(M)$ of closed and coclosed $k$-forms on $M$ of decay~$\lambda$ that can be described in terms of topological data. Our main results are Theorems \ref{thm:ALC_hodge_fastdecay} and \ref{thm:ALC_hodge_jump} (which are too technical to be stated in detail in this introduction).
The specialisations of these results to the case of $3$-forms on ALC $\gtwo$--manifolds will be essential in the latter parts of this paper, but we decided to state and prove results that work in any dimension and without any curvature or holonomy reduction assumptions on the ALC manifold involved nor on its asymptotic cone. We hope these Hodge-theoretic results will be useful in other geometric analysis problems.

The first theorem (Theorem~\ref{thm:ALC_hodge_fastdecay}) reproduces the calculation of the $L^2$--cohomology of ALC manifolds  given in the work of Hausel--Hunsicker--Mazzeo \cite[Corollary 1]{HHM}: here we recover their results with independent proofs 
that avoid the use of geometric microlocal analytic tools. Moreover, our proof also applies to the case of orbifold ALC ends. More importantly, in the second theorem (Theorem~\ref{thm:ALC_hodge_jump}) we can extend the results to be able to compute weighted $L^2$--cohomology at rates of decay slower than the one corresponding to (unweighted) $L^2$--integrability, describing  the jump of $\mathcal{H}_\lambda^k(M)$ as $\lambda$ crosses certain distinguished values in terms of topological data. 

\subsection*{Deformation theory and applications} 
 The local smooth structure of the moduli space of $\gtwo$--holonomy metrics was established in the smooth compact case by Joyce \cite{Joyce:Book}. 
 Several extensions of this deformation theory have since been established, 
 where instead one considers metrics on noncompact spaces with tame asymptotic geometry. 
 Specifically, Nordstr\"om \cite{Nordstrom:ACyl:G2} considered the deformation theory in the complete noncompact asymptotically cylindrical case (ACyl) and then Karigiannis--Lotay in \cite{Karigiannis:Lotay} studied complete
noncompact $\gtwo$--holonomy metrics with asymptotically conical ends (AC) (and also compact incomplete $\gtwo$--holonomy spaces with isolated conical singularities). In this paper we give a parallel discussion of this deformation theory in the ALC setting. There are a number of new features in this case that we now explain. These new challenges have also allowed us to refine and clarify some of the methods introduced in \cite{Karigiannis:Lotay}.

First of all, we comment on the asymptotic geometry of ALC $\gtwo$--holonomy metrics. This is described in terms of a circle bundle over a 6-dimensional cone $\tu{C}$. Passing to a double cover of the ALC end in the dihedral case, we can always reduce to the case where the circle bundle is a principal one. The holonomy reduction then forces the base $\tu{C}$ to be a Calabi--Yau cone. The model \gtwo-structure can be written as
\[
\varphi_\BC = \theta_\infty\wedge\omega_\tu{C}+ \Real\Omega_\tu{C},
\]
where $(\omega_\tu{C},\Omega_\tu{C})$ is the conical Calabi--Yau structure and $\theta_\infty$ is a connection on the principal circle bundle, which is further forced to be Hermitian Yang--Mills (HYM). 
(The subscript BC here refers to ``bundle over cone''.)
This model $3$-form $\varphi_\BC$ is \emph{not} closed. There is however a universal correction to it to make it closed:
\[
\varphi'_\BC = \theta_\infty\wedge\omega_\tu{C}+ \Real\Omega_\tu{C} - \tfrac{1}{2}r^2 \eta\wedge d\theta_\infty,
\]
where $\eta$ is the contact 1-form on the Sasaki--Einstein cross section of the Calabi--Yau cone. We then say that a $\gtwo$--structure $\varphi$ is ALC if it is asymptotic with polynomial decay rate to the closed $3$-form $\varphi'_\BC$; it is ALC with rate $\nu$ if the error terms are $O(r^{-\nu})$. Since the correction term $\varphi'_\BC - \varphi_\BC$ is itself of order $O(r^{-1})$ it is natural to impose faster decay on the difference $\varphi-\varphi'_\BC$, \ie to assume $\nu < -1$. This assumption, in particular, implies that all the ALC $\gtwo$--manifolds we consider in the paper are ALC Riemannian manifolds of rate $\leq -1$ (indeed, the rate as an ALC Riemannian manifold is exactly $-1$ unless $d\theta_\infty = 0$). This assumption does not appear to be restrictive in terms of the known examples, and it is convenient as it eliminates various technical complications that would otherwise arise from the construction of a slice for the action of the diffeomorphism group, see Karigiannis--Lotay \cite[\S 5.2.1]{Karigiannis:Lotay}. Note also that, unlike the AC and ACyl settings, the model $\gtwo$--structure $\varphi'_\BC$ is closed but \emph{not} torsion-free  
(unless the HYM connection $\theta_\infty$ is actually flat, \ie $d \theta_\infty=0$.)

The moduli space we wish to study is the quotient of the space of torsion-free \gtwo-structures that are ALC with a fixed rate $\nu < -1$ to a \emph{fixed} end $\varphi'_\BC$ by the identity component of the group of diffeomorphisms that are asymptotic (in an appropriate sense) to the identity on the end.

\begin{remark*}
    We could also consider quotients by diffeomorphisms that are allowed to be asymptotic to nontrivial symmetries of the end (see Definition \ref{def:ALC:automorphism}). The resulting quotients could be regarded as subquotients of the moduli space that we consider (in the context of Theorem \ref{mthm:Symmetries} below it makes no difference though).
\end{remark*}

Now, the basic strategy for understanding $\gtwo$--holonomy metrics close to a given one is to (a) find a good slice to the action of diffeomorphisms, and (b) in the presence of the slice condition turn the torsion-free condition into a nonlinear elliptic PDE that can be solved via the Implicit Function Theorem. This strategy was pioneered in the compact case by Joyce \cite[\S10.4]{Joyce:Book}. An efficient approach to implementing step (a) that exploits the mapping properties of the Dirac operator was later described in the ACyl case by Nordstr\"om in \cite[\S6.3]{Nordstrom:ACyl:G2}. Given the analysis developed in the first part of the paper, the extension of both parts (a) and (b) to the ALC setting is fairly straightforward.

\begin{mtheorem}\label{mthm:Moduli}
Let $(M,\varphi)$ be an ALC $\gtwo$--manifold of rate $\nu\in (-3,-1)$. Then the moduli space of ALC torsion-free $\gtwo$--structures on $M$ of rate $\nu$ is a smooth manifold whose tangent space at the orbit of $\varphi$ is identified with the space $\mathcal{H}^3_\nu(M,g_\varphi)$ of closed and coclosed 3-forms of rate $\nu$.
\end{mtheorem}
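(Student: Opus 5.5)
We follow Joyce's scheme for compact \gtwo--manifolds, as adapted to the ACyl setting by Nordstr\"om and to the AC setting by Karigiannis--Lotay. The analytic input is the ALC Fredholm theory of Part \ref{Part1}, namely Theorems \ref{thm:Fredholm} and \ref{thm:Fredholm:2nd:order}, applied in weighted spaces of rate $\nu$. Throughout we work in weighted H\"older spaces $C^{k,\alpha}_\nu$ on $M$.

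A nearby ALC torsion-free structure of rate $\nu$ is written as $\varphi+\xi$, with $\xi$ closed and $\xi\in C^{\infty}_\nu$. Since $\nu<-1$, we have $\varphi+\xi\to\varphi'_\BC$ with the same rate. The condition to impose is $d\Theta(\varphi+\xi)=0$, where $\Theta$ is the nonlinear map sending a positive $3$-form to its Hodge dual $4$-form.

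\textbf{Step 1: the slice.} We show that every closed $\xi$ of rate $\nu$ near $0$ can be moved, by a diffeomorphism asymptotic to the identity, so that it satisfies $\pi_7(d^*\xi)=0$, or equivalently an appropriate gauge condition $d^*\xi=0$ for the linearised problem. Following Nordstr\"om, the infinitesimal action of vector fields is $X\mapsto d(X\lrcorner\varphi)=\mathcal{L}_X\varphi$. Gauge fixing then amounts to solving $d^*d(X\lrcorner\varphi)=f$ for $X$ of rate $\nu+1$. Through the identification $\Lambda^0\oplus\Lambda^1\cong$ spinors, this operator factors through the square of the Dirac operator, so its surjectivity onto the relevant range follows from Theorem \ref{thm:Fredholm:2nd:order}.

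The constraint $\nu\in(-3,-1)$ ensures two things. First, rate $\nu+1\in(-2,0)$ lies in the range where the Dirac-type Laplacian on vector fields has no decaying kernel; such a kernel would consist of parallel or Killing fields, which are excluded by Ricci-flatness and decay. Second, the cokernel lies in the dual weight $-\nu-7+\dots$, and this is controlled by a maximum principle or Bochner argument. Surjectivity then lets us apply the Implicit Function Theorem, which gives a slice $\mathcal{S}=\{\varphi+\xi:\ d\xi=0,\ d^*\xi=0 \text{ (to first order)}\}$ transverse to the diffeomorphism orbit. The condition $\nu<-1$ avoids the slice complications of \cite[\S 5.2.1]{Karigiannis:Lotay}, because the relevant vector fields decay.

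\textbf{Step 2: elliptic reformulation.} On the slice, the torsion-free equation becomes
\[
d\xi=0,\qquad d^*\xi + \text{(quadratic terms } Q(\xi,\nabla\xi))=0.
\]
Following Joyce, we write this as $(d+d^*)\xi = d^*$ applied to the nonlinear correction of $\Theta$. Equivalently, we seek $\xi=\eta+d\beta$ with $\eta\in\mathcal{H}^3_\nu$ and solve $F(\eta,\beta)=0$, where $F$ maps into exact $4$-forms of rate $\nu-1$. The linearisation in $\beta$ is $\beta\mapsto d^*d\beta$ (after gauge) acting on $2$-forms of rate $\nu+1$; in elliptic form this is $\Delta$ on coclosed $2$-forms. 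By the Fredholm theory and the Hodge results (Theorems \ref{thm:ALC_hodge_fastdecay}, \ref{thm:ALC_hodge_jump}), $\Delta$ maps onto the exact forms of rate $\nu-1$. The quadratic estimates needed for the Implicit Function Theorem hold because $\nu<0$ and the ALC ends have bounded geometry. Hence the zero set near $\varphi$ is a smooth graph over a neighbourhood of $0$ in $\mathcal{H}^3_\nu$.

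\textbf{Step 3: the manifold structure.} The residual gauge freedom within the slice is trivial, since decaying Killing fields vanish. Thus the moduli space is locally homeomorphic to this graph, and the charts are compatible, which gives a smooth manifold with tangent space $\mathcal{H}^3_\nu(M,g_\varphi)$.

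The main obstacle is the surjectivity statements in Steps 1 and 2. One must verify that $\nu\in(-3,-1)$ and the shifted rates $\nu+1$ avoid indicial roots for the $0$-mode problem on the Calabi--Yau cone, where the higher Fourier modes are harmless. One must also check that the cokernels are identified with kernels at dual rates $>-6$, $\le$ ALC $L^2$ threshold, which vanish or are exactly accounted for by $\mathcal{H}^3_\nu$. We would check this first by computing the indicial roots of $d+d^*$ on circle-invariant forms over $\tu{C}(\Sigma)$ in the relevant degrees, using the Hodge jump theorem.
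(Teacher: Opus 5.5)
There is a genuine gap in Step~2. First, the torsion-free equation on your slice is not $d\xi=0$, $d^\ast\xi+Q=0$. By Lemma~\ref{lem:Linearisation:Hitchin:dual:3form:7d} the linear part of $d\Theta(\varphi+\xi)$ is $d\ast(\tfrac43\pi_1\xi+\pi_7\xi-\pi_{27}\xi)$, and this equals $-d\ast\xi$ only when $\pi_{1\oplus 7}\xi=0$.

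More seriously, you write $\xi=\eta+d\beta$ with $\beta$ a free coclosed $2$-form and use $\beta\mapsto d^\ast d\beta=\triangle\beta$ as the linearisation. This moves $\xi$ off the slice, so zeros of your $F$ need not be torsion-free, and the Implicit Function Theorem then describes the wrong set.

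Suppose instead you keep $\xi$ in the slice. On closed forms your gauge $\pi_7 d^\ast\xi=0$ and the paper's condition $\pi_{1\oplus7}\xi=0$ give the same linear slice, since $\pi_7 d^\ast d(\gamma^\sharp\lrcorner\varphi)=0$ forces $\gamma=0$. The linearisation is then $\rho\mapsto d^\ast\rho$ on closed type-$27$ forms. By Remark~\ref{rmk:identities:0:1:forms:torsion:free} its image lies in $\Omega^2_{14}$, so it is not onto $d^\ast\Omega^3_\nu$.

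The underlying problem is that the system is overdetermined. For closed $\varphi+\xi$ the $\Omega^5_7$-component of $d\Theta(\varphi+\xi)$ vanishes automatically (Lemma~\ref{lem:Torsion:G2:structures}). But this vanishing is with respect to $\varphi+\xi$, not $\varphi$, so you cannot simply restrict to a fixed smaller target.

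The missing idea is Proposition~\ref{prop:Deformation:Problem}: add an auxiliary $1$-form and study
$F(\rho,\gamma)=d^\ast\rho-d^\ast d(\gamma^\sharp\lrcorner\varphi)-d^\ast Q_\varphi(\rho)$ on $\mathcal{K}_{l,\mu}\oplus\Omega^1_{l+1,\mu+1}$.
\begin{itemize}
\item Its linearisation is onto $d^\ast\Omega^3_{l,\mu}$ with kernel $\mathcal{H}^3_\mu\oplus\{0\}$. To see surjectivity, solve $\triangle\sigma=d^\ast\xi$ with $\sigma$ coclosed via Proposition~\ref{prop:Closed:to:Coclosed:Laplacian}(ii), which is where the range $(-3,-1)$ enters. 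Then split $d\sigma$ using Proposition~\ref{prop:Slice:Thm}.
\item Every zero has $\gamma=0$. Pairing with $X\lrcorner\varphi_\rho$ for test vector fields $X$ and using the torsion constraint gives $\pi_7^{\varphi_\rho}d^\ast d(\gamma^\sharp\lrcorner\varphi)=0$. The operator $dd^\ast+\tfrac23 d^\ast d$ is injective on $1$-forms of rate $\mu+1$ for $\mu<-1$: integrate by parts for $\mu\le-3$, and use the absence of indicial roots in $(-2,0)$ from the last statement of Proposition~\ref{prop:Harmonic:1Forms:Cone}. This injectivity is stable under the small perturbation from $\varphi$ to $\varphi_\rho$, so $\gamma=0$ and zeros of $F$ are torsion-free.
\end{itemize}

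Two smaller points concern Step~1. The gauge operator is $dd^\ast+\tfrac23 d^\ast d$ (Lemma~\ref{lem:identities:0:1:forms:torsion:free}(v)), not $\slashed{D}^2$. Also, Theorem~\ref{thm:Fredholm:2nd:order} gives only Fredholmness. Surjectivity needs the kernel at the dual rate $-5-\nu\in(-4,-2)$ to vanish, which follows from integration by parts and Lemma~\ref{lem:Vanishing:harmonic}.

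With these corrections, your Joyce-type gauge is a workable substitute for the paper's Dirac-operator slice (Propositions~\ref{prop:Mapping:Dirac} and~\ref{prop:Slice:Thm}). Step~2, however, has to be rebuilt as above.
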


The dimension of the space $\mathcal{H}^3_\nu(M,g_\varphi)$ depends on the topology of $M$ \emph{as well as} on the spectral properties of the cross-section of its tangent cone at infinity. We discuss in Section \ref{sec:Dimension} how to compute the dimension of the moduli space for the examples of ALC \gtwo--holonomy metrics we have constructed in our prior work \cite{FHN:ALC:G2:from:AC:CY3,FHN:Coho1:ALC} and \cite[Theorem 4.12]{Foscolo:ALC:Spin7}. 

The topological contributions to the ALC moduli space can be calculated as an application of the general ALC Hodge theory developed in the earlier part of the paper. We exemplify these computations with the cohomogeneity-one ALC $\gtwo$--manifolds that we constructed in \cite{FHN:Coho1:ALC}. In particular, these computations yield local uniqueness of all the cohomogeneity-one ALC $\gtwo$--holonomy metrics constructed in \cite{FHN:Coho1:ALC} and also of the metrics provided by Theorem \ref{mthm:Dihedral}. 

In the particular context of the highly-collapsed ALC $\gtwo$--holonomy metrics constructed in \cite{FHN:ALC:G2:from:AC:CY3} on a principal circle bundle $M$ over an AC Calabi--Yau 3-fold $B$ we can in fact go much further and identify all contributions to the ALC $\gtwo$--moduli space, not only the topological ones. We do this by relating the ALC $\gtwo$--moduli space
on $M$ to the AC Calabi--Yau moduli space on $B$ and exploiting recent results about 
AC Calabi--Yau manifolds.

\begin{mtheorem}\label{mthm:Moduli:AC:CY} Let $M\ra B$ be the total space of a principal circle bundle over a 6-manifold $B$ carrying AC Calabi--Yau structures. Then for any $\nu \in (-3,-1)$ there is a local diffeomorphism between the moduli space of ALC \gtwo--holonomy metrics of rate $\nu$ on $M$ constructed in \cite{FHN:ALC:G2:from:AC:CY3} and a moduli space of AC Calabi--Yau structures $(\omega,\Omega)$ of rate $\nu$ on $B$ satisfying the topological constraint $c_1(M)\cup [\omega]=0\in H^4(B)$.
\end{mtheorem}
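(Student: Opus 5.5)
We reduce the statement to Theorem \ref{mthm:Moduli} and compare tangent spaces on the two sides. The \gtwo-metrics of \cite{FHN:ALC:G2:from:AC:CY3} form, for each AC Calabi--Yau structure $(\omega,\Omega)$ on $B$ with $c_1(M)\cup[\omega]=0$ and each sufficiently small collapsing parameter $\epsilon>0$, a torsion-free ALC \gtwo-structure $\varphi_\epsilon$ on $M$. It is close, after rescaling, to $\epsilon\,\theta\wedge\omega+\Real\Omega$, where $\theta$ is a connection on $M\ra B$ whose curvature is the harmonic (HYM) representative of $c_1(M)$. The topological constraint is exactly what is needed to solve the first-order obstruction in that construction. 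The plan is as follows.

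First, we check that the construction depends smoothly on $(\omega,\Omega)$ within the AC Calabi--Yau moduli space $\mathcal{M}_{CY,\nu}$ of rate $\nu$ cut out by the constraint. The constraint locus is a smooth submanifold of $\mathcal{M}_{CY,\nu}$, since the constraint is linear in $[\omega]$. The implicit-function argument of \cite{FHN:ALC:G2:from:AC:CY3} then gives a smooth map $\Psi$ from this locus (times the $\epsilon$ parameter, or with $\epsilon$ fixed after scaling) into the ALC \gtwo-moduli space of rate $\nu$. By Theorem \ref{mthm:Moduli} the target is smooth with tangent space $\mathcal{H}^3_\nu(M,g_{\varphi_\epsilon})$. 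It therefore suffices to show that $d\Psi$ is an isomorphism, and by smooth dependence it is enough to do so at small $\epsilon$.

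Second, we compute $\mathcal{H}^3_\nu(M,g_{\varphi_\epsilon})$ in the adiabatic limit $\epsilon\to0$ via a Fourier decomposition along the circle fibres. Nonzero Fourier modes of closed and coclosed forms are controlled by the collapsed geometry: the fibrewise Laplacian has eigenvalues of order $\epsilon^{-2}$, which forces these modes to vanish uniformly for small $\epsilon$. The $\sorth{1}$-invariant 3-forms have the form $\alpha+\theta\wedge\beta$ with $\alpha\in\Omega^3(B)$ and $\beta\in\Omega^2(B)$. The closed and coclosed conditions, to leading order in $\epsilon$, decouple into $\alpha,\beta$ closed and coclosed on $B$ with rate $\nu$, up to the coupling $d(\theta\wedge\beta)=F_\theta\wedge\beta$. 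The limiting space therefore consists of pairs in $\mathcal{H}^3_\nu(B)\oplus\mathcal{H}^2_\nu(B)$ subject to $F_\theta\wedge\beta$ being exact in a controlled way, i.e.\ $c_1(M)\cup[\beta]=0$ at the level of the relevant cohomology, with an Euler-characteristic-type correction handled by the Hodge theorems \ref{thm:ALC_hodge_fastdecay} and \ref{thm:ALC_hodge_jump}.

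Third, we identify this limiting space with the tangent space of the constrained AC Calabi--Yau moduli space. By known AC Calabi--Yau deformation theory, infinitesimal deformations of rate $\nu\in(-3,-1)$ are harmonic pairs $(\dot\omega,\dot\Omega)$ with $\dot\omega$ a harmonic $(1,1)$-form and $\dot\Omega$ in $\mathcal{H}^3$. Linearising the constraint gives $c_1(M)\cup[\dot\omega]=0$. We match $\beta\leftrightarrow\dot\omega$ and $\alpha\leftrightarrow\Real\dot\Omega$; the remaining non-$(1,1)$ harmonic 2-forms and the imaginary parts must be shown not to appear at rate $\nu<-1$, using the Calabi--Yau vanishing results on AC manifolds.

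The main obstacle is the second step: making the Fourier/adiabatic analysis uniform in $\epsilon$. The ALC weighted estimates of Part \ref{Part1} must hold with constants independent of $\epsilon$ for $\epsilon$-dependent metrics. The delicate point is the coupling term $F_\theta\wedge\beta$, which sits exactly at the borderline rate where the index can jump. I would first try to prove dimension equality, rather than constructing the isomorphism directly: show $\dim\mathcal{H}^3_\nu(M)\le\dim T\mathcal{M}_{CY}^{c}$ by a contradiction/compactness argument as $\epsilon\to0$, and combine it with injectivity of $d\Psi$, which follows from the explicit leading-order form of $\varphi_\epsilon$.
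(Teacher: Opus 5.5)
Your outline has the right skeleton: reduce to $S^1$-invariant forms, identify the $\epsilon\to0$ limit with $\{(\rho_2,\rho_3)\in\mathcal{H}^2_\nu(B)\oplus\mathcal{H}^3_\nu(B): c_1(M)\cup[\rho_2]=0\}$, then match this with Calabi--Yau deformations. Your Fourier-mode argument is an acceptable substitute for the paper's use of Theorem~\ref{thm:Circle:Symmetry:Cyclic:ALC} and slice invariance. You do need to note that oscillatory modes of kernel elements decay exponentially, so the Weitzenb\"ock integration by parts is justified at rates $\nu>-3$, and that the curvature of $g_{\varphi_\epsilon}$ is bounded uniformly in $\epsilon$.

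The gap is in how you get the \emph{lower} bound $\dim\mathcal{H}^3_\nu(M)\geq\dim T\mathcal{M}^{\tu{CY}}_\nu$. You take it from injectivity of $d\Psi$, but $\Psi$ only makes sense once you know two things.
\begin{enumerate}
\item The AC Calabi--Yau moduli space of rate $\nu$ is a smooth manifold whose tangent space is all of $\mathcal{H}^2_\nu(B)\oplus\mathcal{H}^3_\nu(B)$. This unobstructedness is not in the literature, and the paper has to supply it in three steps. First, decaying closed and coclosed forms have the right types, by vanishing of decaying harmonic functions and 1-forms. Second, K\"ahler directions are integrated by Conlon--Hein's Monge--Amp\`ere theory, using injectivity of $\mathcal{H}^2_\nu(B)\to H^2(B)$. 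Third, complex-structure directions are integrated by a Tian--Todorov-type argument extracted from \cite[Theorem 8.1]{FHN:ALC:G2:from:AC:CY3}.
\item The construction of \cite{FHN:ALC:G2:from:AC:CY3} depends smoothly on the Calabi--Yau data and descends to a map of moduli spaces landing in $\mathcal{M}_\nu$. You assert this but never argue it.
\end{enumerate}
Smoothness of the constraint locus also needs the period map to be a submersion onto $[\omega_0]+\tu{im}(H^2_c(B)\to H^2(B))$ (respectively $H^2(B)$); linearity of the constraint alone is not enough. Finally, even granting both points, showing that $d\Psi(v)$ is not a gauge direction in $d(\Omega^2_7)$ for small $\epsilon$ requires exactly the uniform analysis you flag as the main obstacle.

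The paper avoids $\Psi$ and any compactness argument. Writing $\rho=\epsilon\,\theta\wedge\rho_2+h^{1/2}\ast_{\omega,\Omega}\rho_3$ reduces the problem to the system \eqref{eq:AS:linearised} on $B$, which is real-analytic in $\epsilon$ down to $\epsilon=0$. Two facts then let it be recast as $d\rho=0$, $d^{\ast_{g_0}}\rho=\ast_{g_0}dQ(\rho)$ with $Q$ small on $C^{1,\alpha}_\nu$: an exact $k$-form of rate $\mu$ on an AC manifold has a primitive of rate $\max\{-k+1,\mu+1\}$, and $\nu>-3$. Solving $\triangle\gamma=\ast dQ$ on the fixed AC manifold $B$ then exhibits the solution space for small $\epsilon$ as a graph over the $\epsilon=0$ space. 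This gives both inequalities at once, and needs no uniform ALC estimates on $M$.

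This rescaling $\beta=\epsilon\rho_2$ is also what makes your ``decoupling'' statement true. With $\rho=\alpha+\theta\wedge\beta$ and $\beta$ measured on $B$, the coupling $d\alpha=-d\theta\wedge\beta$ is $O(1)$, not lower order. After rescaling, both couplings become $\epsilon\,d\theta\wedge(\cdot)$ and the limit really is the constrained direct sum.
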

Combined with the recent classification of AC Calabi--Yau manifolds obtained by Conlon--Hein \cite{Conlon:Hein:Classification} in terms of complex geometric data, this theorem allows us to compute the dimension of the moduli space of all the ALC $\gtwo$--manifolds produced by \cite{FHN:ALC:G2:from:AC:CY3}. 

\medskip
The proof of part (i) of Theorem \ref{mthm:Asymptotics} is based on a strategy similar to the construction of the moduli space of ALC $\gtwo$--holonomy metrics: we prove a gauge-fixing result on the end of an ALC $\gtwo$--manifold $(M,\varphi)$, see Proposition \ref{prop:Slice:End}, that enables us to write the torsion-free condition for~$\varphi$ as an elliptic system for the difference $\varphi-\varphi'_\BC$. Exponential decay of the higher Fourier modes arises naturally from the structure of these equations and properties of the linearised equation (as in the 4-dimensional hyperk\"ahler setting \cite{Chen:Chen:II}). The gauge-fixing result on the end is analogous to a result proven in the AC setting by Karigiannis--Lotay \cite[\S 6.6]{Karigiannis:Lotay}. Working on an exterior domain in the noncompact manifold $M$ introduces new difficulties due to the need to choose suitable boundary conditions that allow us to understand the mapping properties of the Dirac operator on such domains. Karigiannis--Lotay manage to bypass this point by giving an ad hoc but slightly cumbersome argument that relies on working on the whole manifold $M$, 
rather than an exterior domain in it. In this paper we show that imposing suitable classical boundary conditions for differential forms introduced by Schwarz \cite{Hodge:BVP} in the study of Hodge theory on manifolds with boundary suffices for our purposes (our method also applies in the AC case). This clean approach might turn out to be useful more generally in the study of $\gtwo$--manifolds with boundary as initiated by Donaldson \cite{Donaldson:G2:Boundary}. 

We now discuss the second part of Theorem \ref{mthm:Asymptotics} and Theorem \ref{mthm:PositiveMass}. When studying complete noncompact Ricci-flat manifolds (or manifolds with other curvature constraints), a natural question is whether symmetries of the asymptotic model at infinity extend to symmetries of the manifold itself. In the case of continuous symmetries of AC \gtwo-manifolds, this has already been affirmed by Karigiannis--Lotay \cite[\S 6.3]{Karigiannis:Lotay}.

The model $\varphi'_\BC$ admits two types of symmetry: the circle action arising from the principal circle bundle structure;  and secondly,  those symmetries of the cross-section of the tangent cone at infinity that lift to bundle automorphisms that preserve the HYM connection $\theta_\infty$. Theorem \ref{mthm:Asymptotics} (ii) states that the former type of model symmetry always extends to a symmetry of any ALC $\gtwo$--manifold of cyclic type. In general, even at the linearised level there is no reason why model symmetries of the second type need to act trivially on the tangent space $\mathcal{H}^3_\nu (M,g_\varphi)$ \emph{unless} the latter can be identified with a subspace of de Rham cohomology.

\begin{mtheorem}\label{mthm:Symmetries}
Let $(M,\varphi)$ be an ALC $\gtwo$--manifold of rate $\nu \in (-3,-1)$. Assume that the natural map $\mathcal{H}^3_\nu (M,g_\varphi)\rightarrow H^3(M)\oplus H^4(M)$ given by 
$\rho \mapsto ([\rho],[\ast \rho])$ is injective. Then the action of the identity component of the symmetry group of the asymptotic model at infinity extends to an action by $\gtwo$--automorphisms on~$(M,\varphi)$. 
\end{mtheorem}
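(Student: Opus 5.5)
We will follow the strategy of Karigiannis--Lotay for AC \gtwo--manifolds, adapted to the ALC setting via the deformation theory of Theorem~\ref{mthm:Moduli}. Let $G$ be the identity component of the symmetry group of the asymptotic model $\varphi'_\BC$. It is a compact Lie group, generated by the circle action on the fibres (in the cyclic case) and by the automorphisms of the cross-section of the Calabi--Yau cone that lift to bundle automorphisms preserving $\theta_\infty$. It therefore suffices to show that every Killing field $X$ of the model, i.e.\ every element of the Lie algebra $\mathfrak{g}$, extends to a vector field $\tilde X$ on $M$ with $\mathcal{L}_{\tilde X}\varphi=0$. Completeness of $M$ then integrates $\tilde X$ to a $\gtwo$--automorphism flow. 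The resulting Lie algebra homomorphism $\mathfrak g\to\mathfrak{aut}(M,\varphi)$ integrates to the action of $G$: the limit at infinity is a homomorphism, and the kernel of the map sending an automorphism to its model at infinity is discrete by unique continuation.

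To construct $\tilde X$, we first choose a cut-off extension $X_0=\chi X$ supported on the end, where $\chi$ is a cut-off function equal to $1$ far out. Since $\varphi-\varphi'_\BC=O(r^{\nu})$ and $X$ preserves $\varphi'_\BC$, with $|X|=O(r)$ and $\nabla X$ bounded, the $3$-form $\varphi_t=\Phi_t^*\varphi$ is again a torsion-free ALC $\gtwo$--structure of rate $\nu$ asymptotic to the same fixed end $\varphi'_\BC$. Here $\Phi_t$ is the flow of $X_0$, which exists for all $t$ because $X_0$ grows at most linearly. More precisely, $\Phi_t$ is asymptotic to a model symmetry, so we pull back by the model symmetry as well; we arrange this by using the version of the moduli space in which we quotient by diffeomorphisms asymptotic to symmetries of the end, which the Remark says makes no difference here. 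The derivative $\rho=\mathcal{L}_{X_0}\varphi=d(X_0\lrcorner\varphi)$ is a closed $3$-form of rate $\nu$, since it decays like the error $\varphi-\varphi'_\BC$ on the end. It is also an infinitesimal deformation through torsion-free structures. By the slice theorem underlying Theorem~\ref{mthm:Moduli}, we can correct $X_0$ by a vector field $Y$ decaying at rate $\nu+1$ so that $\rho'=\mathcal{L}_{X_0+Y}\varphi$ lies in the slice, and hence $\rho'\in\mathcal{H}^3_\nu(M,g_\varphi)$.

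We next compute the cohomology classes of $\rho'$. We have $[\rho']=[d((X_0+Y)\lrcorner\varphi)]=0$ in $H^3(M)$. For the dual class, $\ast\rho'=\mathcal{L}_{X_0+Y}\ast\varphi$ up to terms coming from the metric variation. Since $\rho'$ is a torsion-free infinitesimal deformation generated by a diffeomorphism, $\ast_{\varphi}\rho'$ equals the linearisation of $\psi=\ast_\varphi\varphi$ in the direction $\mathcal L_{X_0+Y}$, namely $\mathcal{L}_{X_0+Y}\psi=d((X_0+Y)\lrcorner\psi)$, because $d\psi=0$. Hence $[\ast\rho']=0$ in $H^4(M)$. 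The hypothesis of injectivity of $\mathcal{H}^3_\nu\to H^3\oplus H^4$ then forces $\rho'=0$. Thus $\tilde X=X_0+Y$ preserves $\varphi$, and $\tilde X$ is asymptotic to $X$.

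The main obstacle is the second step. We must make sure that $\rho'$ really lands in $\mathcal{H}^3_\nu$ with the correct decay, and that $\ast\rho'$ is exact (not merely closed), with primitive $(X_0+Y)\lrcorner\psi$ of the correct growth. The primitive grows like $r$ on the end, so one has to check that this does not obstruct exactness in ordinary de Rham cohomology. It does not, because $H^4(M)$ is unweighted de Rham cohomology. One also has to check that the identification of the linearised Hodge star with $\mathcal{L}\psi$ holds exactly for the torsion-free family $\Phi_t^*\varphi$. For the gauge-fixing correction $Y$, we would try the Dirac-operator slice argument used for Theorem~\ref{mthm:Moduli} and solve $d^*(\mathcal{L}_Y\varphi)=-d^*\rho$, which reduces to a Laplace-type equation on vector fields. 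We then invoke the ALC Fredholm theory, Theorems~\ref{thm:Fredholm} and~\ref{thm:Fredholm:2nd:order}, at rate $\nu+1\in(-2,0)$, verifying that this weight is non-exceptional and that the equation is surjective there.
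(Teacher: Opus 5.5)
\textbf{The infinitesimal extension is sound.} Your extension of each model Killing field $X$ to an automorphic field $\tilde X=X_0+Y$ is correct. Split $\mathcal{L}_{X_0}\varphi$ using Proposition~\ref{prop:Slice:Thm}. Its type-27 part $\rho'$ is closed, and coclosed by the linearised torsion-free condition. It satisfies $[\rho']=0$, and $\ast\rho'=-\mathcal{L}_W\psi=-d(W\lrcorner\psi)$ is exact. The injectivity hypothesis therefore kills $\rho'$. This is the infinitesimal variant the paper mentions in the remark after Corollary~\ref{cor:Circle:Symmetry:Cyclic:ALC:Killing:Field}. The paper's main route instead extends group elements directly: Lemma~\ref{lem:sextension_merge} uses that the nonlinear moduli space is immersed in $H^3(M)\oplus H^4(M)$. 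Either way, what you obtain at this stage is only a Lie algebra action, equivalently an action of the universal cover $\widetilde G$ of $G$.

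\textbf{The gap: integrating to $G$ itself.} You claim the homomorphism $\mathfrak g\to\mathfrak{aut}(M,\varphi)$ ``integrates to the action of $G$'' because the kernel of $\tu{Aut}_\nu(M,\varphi)\to\tu{Aut}\,\BC(\Sigma)$ is discrete. Discreteness is not enough.

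\begin{itemize}
\item The composite $\widetilde G\to\tu{Aut}_\nu(M,\varphi)\to\tu{Aut}\,\BC(\Sigma)$ is the covering map $\widetilde G\to G$. Hence $\pi_1(G)$ maps into that kernel.
\item $\pi_1(G)$ is nontrivial as soon as $G$ contains a torus, for example the principal circle action in the cyclic case.
\item You need its image to be \emph{trivial}. Otherwise the time-$2\pi$ map of the extended circle generator could be a nontrivial automorphism asymptotic to the identity, and you would only get an action of $\R$ or of a finite cover of $G$.
\end{itemize}

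Two side remarks. Discreteness itself follows from Lemma~\ref{lem:Vanishing:harmonic}: a Killing field in the kernel's Lie algebra is dual to a decaying harmonic $1$-form, hence vanishes. Unique continuation cannot yield triviality, since an element of the kernel need not agree with the identity on any open set.

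\textbf{What the paper uses instead.} The missing ingredient is Proposition~\ref{prop:Decaying:symmetries}: the restriction map is injective. It needs a separate argument in two cases.

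\begin{itemize}
\item If $\Sigma$ is exceptional (cyclic case), then $M$ is a finite quotient of flat $S^1\times\C^3$ by Theorem~\ref{thm:ALC:G2:Positive:Mass} and Corollary~\ref{cor:flat}, where $\tu{Aut}_\nu$ is computed directly.
\item Otherwise, the absence of indicial roots of the scalar Laplacian in $(0,1+\delta_0)$ gives a function $u$ with $\triangle u=6$ and $u=\tfrac12 r^2+O(r^{1+\delta})$, unique up to constants. Its gradient flow lines foliate the end and correspond bijectively to points of $N$. An automorphism asymptotic to the identity preserves $u$ up to a constant and fixes every flow line. It is therefore the identity on the end, and hence everywhere.
\end{itemize}

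Without this, or an equivalent rigidity statement for automorphisms asymptotic to the identity, your argument only shows that some covering group of $G$ acts.
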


This is proved in Theorem \ref{thm:Symmetries}.
Together with the cohomogeneity-one analysis in \cite{FHN:Coho1:ALC}, it allows us to prove strong uniqueness results for ALC $\gtwo$--manifolds with  $\sunitary{2}^2\times\unitary{1}$--invariant ends, see Theorem \ref{thm:Uniqueness:Coho1:ALC}.

The proofs of the symmetry extension results Theorem \ref{mthm:Asymptotics}(ii) and \ref{mthm:Symmetries} involve two main steps. The first step is to show that each end symmetry under consideration is the restriction of a global symmetry of the ALC \gtmfd{} $(M,\varphi)$. Here the main idea (similar to \cite[\S 6.3]{Karigiannis:Lotay} in the AC case) is to extend the end symmetry to an arbitrary ALC diffeomorphism $F$ and deduce from Theorem \ref{mthm:Moduli} that $F^\ast \varphi$ represents the same point in the ALC moduli space. That means there exists a diffeomorphism $G$ of $M$ asymptotic to the identity such that $G \circ F$ is the desired automorphism. To justify this argument here we need to use either part (i) of Theorem \ref{mthm:Asymptotics}, if the symmetry in question is the principal circle action, or otherwise invoke the injectivity assumption in Theorem \ref{mthm:Symmetries} in general. 
 
 However, even if we can extend the elements of some group of symmetries of the end elementwise, that does not immediately imply that we can extend the action of the whole group; conceivably the result could instead be an action of the universal cover of the group of end symmetries. (This was not an issue in the particular AC examples considered in \cite[\S 6.3]{Karigiannis:Lotay} since  the end symmetry groups were all simply connected). The second main step in the proof is therefore to show that the restriction map from symmetries of the \gtmfd{} to symmetries of the end is always injective. We prove this in Proposition \ref{prop:Decaying:symmetries}, using the fact that, unless the tangent cone at infinity is flat, ALC $\gtwo$--manifolds admit a canonical foliation by the gradient flow-lines of the function $u$ (unique up to an additive constant) satisfying $\triangle u = 1$. The idea of using such a function to construct canonical foliations of the end of a complete Ricci-flat manifold is due to Biquard--Hein \cite{Biquard:Hein}. Finally, the case when the tangent cone at infinity is flat is trivial as a consequence of Theorem \ref{mthm:PositiveMass}. The proof of the latter theorem is based on known $\gtwo$--identities and only uses the existence of an automorphic vector field asymptotic to the vector field that generates the principal circle action on the model at infinity and the fast decay of its covariant derivative implied by the assumption of the flatness of the circle bundle on the asymptotic model.

\subsection*{Existence of ALC \gtwo--manifolds of dihedral type}

In previous work \cite{FHN:ALC:G2:from:AC:CY3} we gave a general construction of ALC \gtmetric s on principal circle bundles over AC Calabi--Yau $3$-folds. The ALC metrics we constructed are close to the \emph{collapsed limit} in which the circle fibres shrink to zero length. More precisely, we constructed families of ALC \gtmetric s of cyclic type that depend on a parameter $\epsilon$ that measures the length of the circle fibre at infinity; the metrics we constructed exist for $\epsilon>0$ sufficiently small.

Now, we would also like to study degenerations of ALC manifolds at the opposite extreme where instead $\epsilon\ra \infty$ and the circle fibres at infinity get longer and longer. In \cite{FHN:Coho1:ALC} we used ODE methods to study the ``large circle'' limit of $\sunitary{2}^2\times\unitary{1}$--invariant ALC \gtmetric s and in that setting we found a uniform qualitative behaviour: after suitable rescalings, as $\epsilon\ra\infty$ we see ALC \gtmfd s that converge to an AC \gtmfd. For example, the explicit ALC {\gtmetric} on $S^3\times \R^4$ constructed by Brandhuber et al \cite{BGGG} admits a one-parameter family of cohomogeneity-one ALC {\gtwo}-deformations (up to scale): see Bogoyavlenskaya \cite{Bogoyavlenskaya}, or the alternative proof given in~\cite{FHN:Coho1:ALC}. By normalising this family so that the size of the zero-section has fixed volume, the parameter can be interpreted as the length of the circle fibres at infinity. As the parameter grows larger and larger the ALC metrics approach Bryant--Salamon's AC metric on the spinor bundle of $S^3$ \cite{Bryant:Salamon}.

In \cite{FHN:Coho1:ALC} we also provided a very clear picture for the transition from ALC to AC asymptotic geometry. The scaling freedom inherent in the problem furnishes a way of regarding the ``large circle'' limit from a different perspective, where instead \emph{ALC manifolds with isolated conical singularities} appear in the limit. For example, the aforementioned $1$-parameter family of cohomogeneity-one ALC \gtmetric s on $S^3\times\R^4$ of \cite{FHN:Coho1:ALC,Bogoyavlenskaya} can alternatively be normalised so that the length of the circle fibres at infinity have fixed length. Under this rescaling the ``large circle limit'' corresponds to the limit of a family of ALC \gtmetric s on $S^3\times\R^4$  in which the zero-section has vanishing size and there
a conical singularity appears. In fact, in \cite[Theorem A(ii)]{FHN:Coho1:ALC} we proved the existence of a unique up to scale cohomogeneity-one ALC {\gtmfd} with an isolated conical singularity in its interior modelled on the cone over $S^3\times S^3$ endowed with its (unique) homogeneous nearly K\"ahler structure. The smooth ALC metrics with large but finite $\epsilon$ can then be thought of as arising from desingularising the isolated singularity of (a rescaled copy of) the conically singular ALC space by using the Bryant--Salamon AC \gtmfd.

The purpose of the last part of the paper is to make this picture precise
in a much broader setting where we do not assume any continuous symmetries. 
To that end, we study the problem of desingularising isolated conical singularities of ALC {\gtwo}--spaces by gluing in AC \gtmfd s. Applying this construction to (suitable finite quotients of) the conically singular {\gtwo}--space constructed in \cite{FHN:Coho1:ALC} and the AC \gtmfd s of \cite{Bryant:Salamon} and \cite{FHN:Coho1:ALC} will then allow us to recover analytically the families of cohomogeneity-one \gtmetric s of \cite{FHN:Coho1:ALC} in the ``large circle'' limit, previously constructed by ODE methods.  More importantly, by exploiting  the discrete symmetries of the cone over ${S^3\times S^3}$ we will also be able to construct a new family of ALC \gtmetric s beyond the ones studied in~\cite{FHN:ALC:G2:from:AC:CY3,FHN:Coho1:ALC}. This new family, denoted $\mathbb{A}_7$ in the physics literature and whose existence is stated in Theorem~\ref{mthm:Dihedral}, provides the first known example of ALC \gtmetric s of dihedral type. At the opposite end of the 1-dimensional moduli space the $\mathbb{A}_7$ family is expected to collapse to the quotient of Stenzel's AC Calabi--Yau metric on $T^\ast S^3$ by the standard antiholomorphic involution and with an adiabatic fibration of Atiyah--Hitchin metrics over $S^3$ bubbling off in the limit. We plan to describe  this other degeneration of the $\mathbb{A}_7$ family rigorously in future work.

Our construction of dihedral ALC $\gtwo$--manifolds can be regarded as a $\gtwo$--version of the  Biquard--Minerbe~\cite{Biquard:Minerbe} gluing construction of 4-dimensional ALF hyperk\"ahler metrics of dihedral type:  they desingularised finite quotients of the Taub--NUT metric on $\C^2$ by gluing in ALE gravitational instantons of dihedral type. 
Foscolo~\cite{Foscolo:K3} gave a gluing construction of sequences of hyperk\"ahler metrics on the K3 surface collapsing to the 3-dimensional limit space $\mathbb{T}^3/\Z_2$.
In view of the central role played by the $4$-dimensional dihedral ALF hyperk\"ahler spaces in Foscolo's construction we expect that ALC spaces of dihedral type will be essential building blocks in the future construction of sequences of \emph{compact} \gtmfd s collapsing to a compact Calabi--Yau $3$-fold (possibly with isolated conical singularities). 

The desingularisation problem for \emph{compact} \gtmfd s with isolated conical singularities has already been studied by Karigiannis \cite{Karigiannis}. His construction has three steps: (i) understand the obstructions to smoothing a conically singular {\gtmfd} $M_0$ by studying the linearised problem; (ii) under appropriate topological conditions, construct a manifold $M$ with an approximately torsion-free {\gtstr} by using AC \gtmfd s to resolve the singularities of $M_0$; (iii) deform the approximate solution into a genuine {\gtmetric} on $M$ using a general deformation result for closed $\gtwo$--structures on compact manifolds with small torsion due to Joyce \cite{Joyce:Book}. Our goal is to adapt this strategy to the noncompact ALC setting. The construction of an approximate solution in step (ii) does not present any additional difficulty in our ALC setting.  In contrast, steps (i) and (iii) require some substantial changes when 
passing from the compact to the ALC setting.

In particular, when adapting to the ALC setting Joyce's existence result for torsion-free $\gtwo$--structures close to a closed $\gtwo$--structure with small torsion, which we do in Theorem \ref{thm:Joyce:ALC}, we face two main new issues. Firstly, in the compact case measures of the smallness of the torsion of the background $\gtwo$--structure in the $C^0$ and $L^p$--norms are equivalent. In the noncompact case the necessary control of the $L^p$--norm forces the torsion not only to be small but also to be of sufficiently fast decay. This is not unexpected and parallels for example extensions of Yau's proof of the Calabi Conjecture to the noncompact case, in the refined form proven by Hein \cite{Hein}: in that case, even though one need not assume the failure of the initial K\"ahler metric to solve a complex Monge--Amp\`ere equation to be small, it is still necessary to require that the Ricci potential of this background metric decays sufficiently fast at infinity. Secondly, the linear problem one needs to solve to prove Theorem \ref{thm:Joyce:ALC} is a Poisson equation on 2-forms. This is particularly delicate for ALC manifolds of dimension $7$, since the Laplacian on 2-forms on a 6-dimensional cone has a double indicial root.  Therefore it has analytic features analogous to those of a scalar Poisson equation on non-parabolic manifolds such as $\R^2$.    

While we have to work harder on the analytic side, our noncompact setting has two significant advantages compared with the compact case considered by Karigiannis \cite{Karigiannis}: firstly, as is often the case, the freedom to change the asymptotic geometry at infinity allows us to overcome some of the topological obstructions present in the compact setting; secondly, in \cite{FHN:Coho1:ALC} we have already proven the existence of at least one conically singular ALC {\gtwo}--holonomy space (in contrast, at present there are no known compact {\gtwo}--holonomy spaces with isolated conical singularities). Theorem \ref{mthm:Dihedral} is proven by applying our general desingularisation result Theorem \ref{thm:Desing:CS:ALC} to some of the new examples (a certain quotient of the conically singular ALC space as well as the AC $\gtwo$--manifold used in the desingularisation) constructed in \cite{FHN:Coho1:ALC}. This application is nontrivial because to obtain a dihedral (as opposed to a cyclic) ALC space requires us to exploit special features of the discrete symmetries of both the building blocks and of the tangent cone at the isolated conical singularity.  

\subsection*{Organisation of the paper} 

The paper is organised into three parts: Part 1 is devoted to a discussion of Fredholm theory for linear elliptic operators on weighted Banach spaces on general ALC manifolds; Part 2 contains the main results about the deformation theory of ALC $\gtwo$--holonomy metrics and the proofs of our main Theorems \ref{mthm:Asymptotics} and \ref{mthm:PositiveMass}; Part 3  explains the general desingularisation construction underlying main Theorem \ref{mthm:Dihedral}.  

Part \ref{Part1} of the paper is organised as follows. Section \ref{sec:ALC} discusses the asymptotic geometry of ALC manifolds and introduces the adapted connection. Sections \ref{sec:Dirac} and \ref{sec:weighted:spaces} introduce, respectively, Dirac-type operators and weighted Banach spaces on ALC manifolds.  Section~\ref{sec:fredholm} establishes Fredholmness of Dirac-type (and natural second-order) operators on ALC manifolds in weighted Banach spaces outside of a discrete set of indicial roots and index jump formulas across such indicial walls. Generalisations of these results are discussed at the end of the section. Finally, Section~\ref{sec:Hodge:ALC} applies our ALC Fredholm theory to study  the Hodge theory of ALC manifolds and identifies all topological contributions to spaces of closed and coclosed forms on an ALC manifold.

Part \ref{Part2} is organised into four sections. Section \ref{sec:ALC:G2} introduces ALC $\gtwo$--manifolds and their asymptotic geometry and summarises indicial roots calculations from \cite{FHN:ALC:G2:from:AC:CY3} that arise from the holonomy reduction. Section \ref{sec:def_theory} contains the construction of the moduli space of ALC $\gtwo$--holonomy metrics and the proof of part (i) of %
Theorem \ref{mthm:Asymptotics}. 
Section~\ref{sec:Symmetries} discusses the problem of extending symmetries of the end of an ALC \gtwo--metric to the interior, thereby concluding the proofs of Theorems \ref{mthm:Asymptotics} and~\ref{mthm:PositiveMass}.
Section~\ref{sec:Dimension} contains calculations of the dimension of various moduli spaces of ALC $\gtwo$--holonomy metrics and the proof of Theorem~\ref{mthm:Moduli:AC:CY}.

Part \ref{Part3} consists of two sections: Section \ref{sec:CS:ALC:Def} introduces ALC $\gtwo$--spaces with isolated conical singularities and establishes some of their Hodge-theoretic properties; Section \ref{sec:CS:ALC:Gluing} provides a proof of our general desingularisation result, Theorem \ref{thm:Desing:CS:ALC}, and its application to  the proof of Theorem~\ref{mthm:Dihedral}.

\subsection*{Acknowledgements} During this work MH and JN were partially supported by the Simons Collaboration on Special Holonomy in Geometry, Analysis and Physics (grants \#488620, Mark Haskins, and \#488631, Johannes Nordstr\"om) and LF was partially supported by a Royal Society University Research Fellowship, the INdAM (Istituto Nazionale di Alta Matematica) research group GNSAGA and the Ministero dell'Universit\`a e della Ricerca of Italy -- Avviso FIS2 grant FIS-2023-0395. The paper is based upon work partially supported by the National Science Foundation under Grants No. DMS-1440140 and No. DMS-1928930, since it started while all three authors were in residence at MSRI, Berkeley, California, during the Spring 2016 semester and continued when all three authors were in residence at the same institute (with the new name of Simons Laufer Mathematical Sciences Institute) in the Fall 2024 semester. The authors are grateful to all these funders and to MSRI/SLMath and their staff for the support.  

\part{Analysis on ALC manifolds}\label{Part1}

This first part of the paper establishes a general Fredholm theory for elliptic operators (focusing on first-order operators, in view of our applications) acting on weighted Sobolev and H\"older spaces on Riemannian manifolds with an ALC end. We want to stress that the results 
of this part of the paper make no assumption on any kind of holonomy reduction of the ALC metric. We will apply this general machinery to ALC \gtwo--holonomy metrics in Parts \ref{Part2} and \ref{Part3} of the paper, which contain our main new results. As already explained in the introduction 
similar Fredholm results can be deduced from results about linear elliptic theory for more general fibred boundary metrics proven using geometric microlocal methods. We have instead chosen to exploit fully the ALC structure to provide a self-contained treatment of the requisite Fredholm theory on ALC spaces that avoids microlocal methods. We hope this choice will make the paper accessible to a wider range of geometric analysts. 
Moreover, as we will explain later another benefit is that our approach also extends with only cosmetic changes in two separate directions, both of which turn out
to arise naturally in other special holonomy related problems. 

\section{The asymptotic geometry of ALC manifolds}\label{sec:ALC}

The aim of this section is to introduce the notion of an ALC manifold, \ie a complete Riemannian manifold with an end asymptotic to a circle fibration over a Riemannian cone with fibres of fixed finite length. We will give our initial definitions in the category of smooth Riemannian manifolds and specialise to ALC \gtmfd s only in the second part of the paper.

\subsection{ALC manifolds}
\label{subsec:ALC}

Let $(\Sigma, g_\Sigma)$ be a closed $(n-1)$-dimensional smooth Riemannian manifold. The \emph{Euclidean cone} over $\Sigma$ is the manifold $\tu{C}(\Sigma)=\R^+\times \Sigma$ endowed with the (incomplete) Riemannian metric $g_\textup{C}=dr^2 + r^2 g_\Sigma$. Here $r$ is a coordinate on the $\R^+$--factor.

Let $\pi\co N\ra \Sigma$ be a circle bundle, which we assume to have a fixed $O(2)$ structure. While we will always \emph{assume the total space $N$ to be orientable}, we will not assume $\Sigma$ or the bundle to be orientable (\ie that the structure group can be reduced to $SO(2)$, to make it a principal circle bundle). Nevertheless, the restriction of the bundle to any small neighbourhood can always be reduced to a principal circle bundle, so there is a well-defined notion of tensors on the total space~$N$ being ``locally circle-invariant''.

We find it expedient to say that a connection on $N$ is a locally circle-invariant, symmetric bilinear form $\theta^2$ of rank 1 on the total space, whose restriction to each fibre is a metric of circumference~$2\pi$. If there exists a square root $\theta \in \Omega^1(N)$, then that defines an orientation on the bundle, and a connection in the usual sense on the resulting principal $O(2)$-bundle. (In general, we can in any case say that $\ker \theta^2$ defines choices of horizontal subspaces in $TN$.)

Often it is convenient to deal with the non-orientable case by noting that, because we assume the total space $N$ to be orientable, the pull-back $\wt N$ of the circle bundle to the oriented double cover $\wt \Sigma$ is always a principal circle bundle. The total space $\wt N$ is then also a double cover of $N$, and the nontrivial deck transformation $\iota$ is an orientation-preserving involution that reverses the orientations of the fibres, and lifts a fixed-point free involution on 
$\Sigma$. We shall refer to such an $\iota$ as a \emph{standard involution}.

Any locally circle-invariant Riemannian metric on the total space $N$ can be written as
\begin{equation}
g_N = \pi^* g_\Sigma + \ell^2 \theta^2
\end{equation}
for some Riemannian metric $g_\Sigma$ on $\Sigma$, function $\ell : \Sigma \to \R_{>0}$ and connection $\theta^2$. For the purposes of defining the ALC end models, we will take $\ell$ to be a positive \emph{constant}. Equivalently, we assume that the circle fibres of $\pi$ are totally geodesic. Most of the time we will further assume that $\ell=1$. In our applications to Ricci-flat manifolds this can always be achieved by scaling.

We can now define the model for an ALC end. Let $\BC (\Sigma)=\BC (\Sigma, \pi,\theta^2,\ell)$ be the product $\R^+ \times N$ endowed with the metric
\begin{equation}\label{eq:FB:N} 
g_{\BC}=dr^2 + r^2 g_\Sigma + \ell^2 \theta^2.
\end{equation}
By radial extension of $\pi$ and $\theta^2$ from $\Sigma$ to $\tu{C}(\Sigma)$ we think of $\BC(\Sigma)$ as the total space of a circle bundle $\pi\co \BC (\Sigma)\ra \tu{C}(\Sigma)$ endowed with a connection $\theta^2$. Then $\pi\co \big( \BC(\Sigma),g_{\BC}\big)\ra \big( \tu{C}(\Sigma),g_{\tu{C}}\big)$ becomes a Riemannian submersion with totally geodesic fibres of length $2\pi\ell$. The structure of $\BC(\Sigma)$ as a \emph{bundle over a cone} explains our choice of notation. 

\begin{remark*}
By changing coordinates $x=\tfrac{1}{r}$ the metric $g_{\BC}$ becomes $\frac{dx^2}{x^4} + \frac{g_\Sigma}{x^2} + \ell^2 \theta^2$, which is a special case of an exact \emph{fibred boundary} metric in the language of \cite{HHM}.
\end{remark*}

An ALC manifold is a complete Riemannian manifold $(M,g)$ with finitely many ends all of which are asymptotic to $\BC (\Sigma)$ of the above form. For simplicity of exposition and since we are mainly interested in complete Ricci-flat manifolds we will restrict to the case when $M$ has only one end.

\begin{definition}\label{def:ALC:n}
Let $(M^{n+1},g)$ be a complete Riemannian manifold with only one end. We say that it is an \emph{ALC manifold} asymptotic to $\BC (\Sigma) = \BC (\Sigma, \pi, \theta^2,\ell)$ with rate $\tau<0$ if there exists a compact set $K\subset M$, a positive number $R>0$ and a diffeomorphism $f\co \BC(\Sigma) \cap \{ r> R\} \ra M\setminus K$ such that for all $j\geq 0$
\[
|\nabla_{g_\BC}^j (f^\ast g - g_\BC) |_{g_\BC}=O(r^{\tau-j}).
\]
We say that $M$ is of \emph{cyclic type} if $\Sigma$ is orientable, and that it is of \emph{dihedral type} if $\Sigma$ is not orientable.
\end{definition}

For $\SigmP$ oriented, we can instead write $\BC(\SigmP, \pi, \theta, \ell)$ for the model, where $\theta$ is the honest $SO(2)$-connection.
If we need to ensure that we work with such a cyclic model, then we set $\SigmP = \Sigma$ or the oriented double cover $\wt \Sigma$ according to whether $\Sigma$ itself is orientable or not, and $f_+ = \textrm{Id}$ or its double cover map.

\begin{remark*}
The name ALC, which stands for \emph{asymptotically locally conical}, first arose in the physics literature in connection with the construction of the first examples of such manifolds with $\tu{Spin}_7$--holonomy \cite{CGLP:New:Spin(7)}. In the $4$-dimensional case, if we further assume that the cone $\tu{C}(\Sigma)$ is (Ricci-)flat then the acronym ALF is more commonly used, \cf \cite{Minerbe:ALF}. In fact, in \cite[\S 7.3]{HHM} the explicit ALC metric with \gthol of \cite{BGGG} is also called ALF. However, in \cite[pp. 34--35]{CGLP:New:Spin(7)} the name ALF is reserved for the special case when the asymptotic cone $\tu{C}(\Sigma)$ is \emph{flat}, in the same way that ALE metrics are often distinguished from the more general class of asymptotically conical (AC) ones. We have chosen to adopt the same convention. The terminology ABC (asymptotically a bundle over a cone) has also been put forward \cite{Apostolov:Salamon}.
\end{remark*}

\begin{remark*}
The use of the terminology cyclic/dihedral type is motivated by the $4$-dimensional case: with the exceptions of $\R^3\times S^1$ and the $D_2$ ALF metrics, the boundaries of large geodesic balls in a hyperk\"ahler ALF space of cyclic (dihedral) type are diffeomorphic to $S^3/\Gamma$ where $\Gamma$ is a finite cyclic (binary dihedral) subgroup of $\sunitary{2}$. 
\end{remark*}

Note that our chosen definition of ALC manifolds rules out the possibility of circle fibrations over orbifolds. These could arise for example by taking the quotient of $\BC(\Sigma)$ as above by a finite group $\Gamma$ of isometric bundle automorphisms; if the underlying diffeomorphisms of $\Sigma$ act freely then this simply recovers $\BC(\Sigma/\Gamma)$ in the sense defined above, but if the action on $\Sigma$ has fixed points then $\BC(\Sigma)/\Gamma$ is a Seifert circle fibration over the orbifold $\Sigma/\Gamma$. The extension to Seifert fibrations will be discussed in Section \ref{subsec:seifert}.

\subsection{The Riemannian geometry of \texorpdfstring{$\BC (\Sigma)$}{BC(Sigma)} and the adapted connection}

In the rest of the section we study the geometry of the model at infinity $\BC(\Sigma)$ of an ALC manifold. Following Bismut \cite[Definition 1.7]{Bismut} we will introduce an adapted connection $\overline{\nabla}$ which is better suited to the fibration structure than the Levi-Civita connection of $g_\BC$. In particular, $\overline{\nabla}$ will be used in the next sections to describe the leading-order behaviour of natural differential operators on $\BC (\Sigma)$.

We assume that $\Sigma$ is oriented, so that we can consider the connection 1-form $\theta \in \Omega^1(\BC(\Sigma))$. (This is not a restriction, since the work here is entirely local; in particular Definition \ref{def:adapted} makes sense also when $\Sigma$ is not orientable.)
Let $\xi$ be the vertical vector field on $\BC (\Sigma)=\BC (\Sigma,\pi,\theta,\ell)$ dual to $\theta$. For every vector field $X$ on the cone $\tu{C}(\Sigma)$ we denote by $\widetilde{X}$ its horizontal lift to $\BC (\Sigma)$. Finally, to ease the notation we will not distinguish between $d\theta$ as a $2$-form on $\BC (\Sigma)$ and as a $2$-form on the cone $\tu{C}(\Sigma)$.

Since $\wt X$ is circle-invariant, we have $[\xi,\widetilde{X}]=0$. The standard formulae for the Levi-Civita connection of a Riemannian submersion \cite[Definition 9.25]{Besse} yield
\begin{equation}\label{eq:Levi:Civita}
\begin{aligned}
\nabla _\xi \xi=0, &\quad \nabla _{\widetilde{X}}\xi=\frac{\ell^2}{2}(X\lrcorner d\theta)^\sharp, \\ \nabla _\xi \widetilde{Y}=\frac{\ell^2}{2}(Y\lrcorner d\theta)^\sharp,& \quad \nabla _{\widetilde{X}} \widetilde{Y} = -\frac{1}{2}d\theta (X,Y)\, \xi + \widetilde{\nabla^\tu{C} _X Y},
\end{aligned}
\end{equation}
where $\nabla^\tu{C}$ denotes the Levi-Civita connection of the cone metric $g_\tu{C}$. Using \eqref{eq:Levi:Civita} we can then calculate the leading terms of the Riemannian curvature, Ricci tensor and scalar curvature of $g_\BC$ in terms of curvature of $g_C$.
\begin{prop}\label{prop:Curvature:Model}
The Riemannian curvature $\textup{Rm}_{g_\BC}$ of $g_\BC$ satisfies
\[
\begin{gathered}
\langle \,\textup{Rm}_{g_\BC}(\widetilde{X},\xi)\,\widetilde{Y},\xi\,\rangle = O(r^{-4}), \qquad \langle\, \textup{Rm}_{g_\BC}\,(\widetilde{X},\widetilde{Y})\widetilde{Z},\xi\,\rangle = O(r^{-3}),\\
\langle\, \textup{Rm}_{g_\BC}(\widetilde{X},\widetilde{Y})\,\widetilde{Z},\widetilde{W}\,\rangle =  \langle\, \textup{Rm}_{g_\tu{C}} (X,Y)\,Z,W\,\rangle + O(r^{-4}).
\end{gathered}
\]
In particular, the Ricci tensor $\Ric_{g_\BC}$ of $g_\BC$ satisfies
\[
\Ric_{g_\BC}(\xi,\xi)=O(r^{-4}), \qquad \Ric_{g_\BC}(\widetilde{X},\xi)=O(r^{-3}) , \qquad \Ric_{g_\BC}(\widetilde{X},\widetilde{Y})= \Ric_{g_\tu{C}}(X,Y) + O(r^{-4}),
\]
and the scalar curvature $\textup{Scal}(g_\BC)$ satisfies $\textup{Scal}(g_\BC) = \tu{Scal}(g_\tu{C})+O(r^{-4})$.
\proof
Formulae for the Riemannian curvature, Ricci tensor and scalar curvature of a Riemannian submersion are given in \cite[\S 9.D]{Besse}. These formulae involve terms controlled by $|d\theta|^2$ and $|\nabla ^\tu{C} d\theta|$. Since $d\theta$ is a $2$-form on $\Sigma$ radially extended to $\tu{C}(\Sigma)$ and pulled-back to $\BC (\Sigma)$, we have $|d\theta|^2=O(r^{-4})$ and $|\nabla ^\tu{C} d\theta|=O(r^{-3})$ with respect to the metric $g_\BC$.
\endproof
\end{prop}

\begin{remark*}
The term of order $O(r^{-3})$ in the expression for $\Ric(\widetilde{X}, \xi)$ only involves $d^\ast (d\theta)$ on $\tu{C}(\Sigma)$. If $\theta$ is a Yang--Mills connection on the cone $\tu{C}(\Sigma)$ we can therefore replace $O(r^{-3})$ with $O(r^{-4})$ in the formula for the Ricci tensor.
\end{remark*}

\begin{remark}\label{rmk:Curvature:Cone}
Since $\tu{C}(\Sigma)$ is a Riemannian cone with smooth cross-section, we have $| \tu{Rm}_{g_\tu{C}}| \leq C r^{-2}$ for a constant $C$ that depends only on the Riemannian curvature of the metric $g_\Sigma$ on $\Sigma$.
\end{remark}

In order to capture the leading-order behaviour of natural differential operators associated with a metric asymptotic to $g_\BC$ it is more convenient to introduce a new \emph{metric} connection which preserves the decomposition into vertical and horizontal subspaces.

\begin{definition}
\label{def:adapted}
The \emph{adapted connection} of $\BC (\Sigma)$ is the metric connection $\overline{\nabla}$  characterised by
\begin{equation}\label{eq:Adapted:connection}
\overline{\nabla}_\xi \xi=0, \qquad \overline{\nabla}_{\widetilde{X}}\xi=0, \qquad \overline{\nabla}_\xi \widetilde{Y}=0, \qquad \overline{\nabla}_{\widetilde{X}}\widetilde{Y}=\widetilde{\nabla^\tu{C} _X Y}.
\end{equation}
\end{definition}
\begin{remark}\label{rmK:Difference:connections}
Denote by $S$ the difference operator $\nabla -\overline{\nabla}$. Comparing \eqref{eq:Levi:Civita} and \eqref{eq:Adapted:connection} and using the fact that $|d\theta|=O(r^{-2})$ (with analogous estimates on the derivatives) we deduce that $r^j |\overline{\nabla}^j S| \leq C r^{-2}$ for all $j\geq 0$. 
\end{remark}

\begin{remark*}
Note that the torsion $\overline{T}$ of $\overline{\nabla}$ is $\overline{T}(U,V)=d\theta (\pi_\ast U,\pi_\ast V)\, \xi$.
\end{remark*}

\begin{remark}
\label{rmk:motivate_adapted}
In order to motivate the introduction of the adapted connection $\overline{\nabla}$ consider the first-order operator $d+d^\ast$ acting on differential forms on $\BC (\Sigma)$. Define an operator $d_{\overline{\nabla}} + d_{\overline{\nabla}}^\ast$ by
\begin{equation}\label{eq:Adapted:d+dstar}
d_{\overline{\nabla}} + d_{\overline{\nabla}}^\ast =\sum_{i=1}^{n+1}{e_i\wedge \overline{\nabla}_{e_i}} -\sum_{i=1}^{n+1}{e_i\lrcorner\overline{\nabla}}_{e_i}
\end{equation}
where $e_1,\dots,e_{n+1}$ is an orthonormal frame for $g_\BC$. One finds (see Lemma \ref{lem:comparison:Dirac:operators} below for the proof for a general class of Dirac-type operators) that
\begin{equation}\label{eq:comparison:d+dast}
d+d^\ast = d_{\overline{\nabla}} + d_{\overline{\nabla}}^\ast + Q,
\end{equation}
where $Q$ is a zeroth-order operator whose coefficients depend on $d\theta$. Since $|d\theta|=O(r^{-2})$, for large $r$ the operator $Q$ should be regarded as a lower-order perturbation.
\end{remark}

\section{Dirac-type operators on ALC manifolds}\label{sec:Dirac}

The operator $d+d^\ast$ of the previous remark is an example of a Dirac-type operator. The main goal of the first part of the paper is to establish a Fredholm theory for a natural class of Dirac-type operators on ALC manifolds. In this preliminary section we recall basic facts about the spin geometry of principal circle bundles and then define a class of Dirac-type operators on ALC manifolds that is well adapted to the asymptotic geometry.
\subsection{Spin geometry of principal circle bundles}

Dirac-type operators on Riemannian submersions have been the subject of research by many authors. In the special case of principal circle bundles, the approach of Ammann--B\"ar \cite[\S 4]{Ammann:Bar}, which we follow closely, allows one to avoid some of the more technical aspects of the general theory; in particular, one can avoid the language of superconnections.

In order to simplify the notation, in this section we work on an arbitrary (with structure group $\tu{O}(2)$) circle bundle $\pi\co M^{n+1}\ra B^n$ endowed with a Riemannian metric $g$ that makes $\pi$ into a Riemannian submersion over the oriented Riemannian manifold $(B,g_B)$ with totally geodesic fibres of length $2\pi\ell$. The reader should keep in mind that $M=\BC (\Sigma)$ and $B=\tu{C}(\Sigma)$ in our applications.

Let $P_{\tu{SO}}(M)$ be the bundle of oriented orthonormal frames of $(M,g)$. 
Because the vertical tangent bundle of $M$ is a pull-back of a real line bundle $R \to B$, a choice of connection $\theta^2$ on $M$ identifies $P_{\tu{SO}}(M)$ with the pull-back of an $\tu{SO}(n+1)$-bundle $\overline P_\tu{SO}(B)$ from $B$. Indeed, $R$ must be $\det TB$ because we assume $M$ to be oriented, and $\overline P_\tu{SO}(B)$ is associated to the $\tu{O}(n)$-frame bundle of $B$ by the embedding $(\textup{Id} \times \det) \co \tu{O}(n) \to \tu{O}(n) \times \Z/2 \to SO(n+1)$.

Now consider a spin structure on $TB+\det TB$, defined by a $\tu{Spin}(n+1)$-bundle $\overline P_{\tu{Spin}}(B)$ with a double cover map to $\overline P_{\tu{SO}}(B)$. (In other words, $\overline P_{\tu{Spin}}(B)$ is a $\tu{Pin}^-$--structure on $B$. If $B$ is itself oriented, so that $\det TB$ is trivial, then this is equivalent to a spin structure on $B$ itself.) Then $P_{\tu{Spin}}(M):=\pi^\ast \overline P_{\tu{Spin}}(B)$
defines a spin structure on $M$, which is locally $S^1$--invariant. Call such spin structures \emph{projectable}. (Not all spin structures on $M$ must be lifts of $\tu{Pin}^-$--structure on $B$ when the second Stiefel--Whitney class of the principal $\tu{O}(2)$--bundle associated with $M$ is trivial, since then $H^1(M;\Z_2)$ is larger than $H^1(B;\Z_2)$.)

\begin{remark*}
In our application to $\gtwo$--manifolds, since a locally $S^1$--invariant $\gtwo$--structure on the total space of a circle bundle induces an $(\sunitary{3}\rtimes \Z_2)$--structure on the base and $\sunitary{3}\rtimes \Z_2$ is a subgroup of $\tu{Pin}^-(6)$, we will always be considering projectable spin structures. Here $\Z_2$ is generated by complex conjugation.
\end{remark*}

We now want to understand Dirac-type operators on $M$ and relate them to Dirac-type operators on the base $B$. First, we identify the relevant bundles on $M$ with pull-backs of ones on $B$.

\begin{definition}\label{def:Clifford:Module}
Let $G$ be either $\tu{SO}(n)$ or $\tu{Spin}(n)$. A complex finite-dimensional representation $V$ of $G$ is called a \emph{$G$--Clifford module} if $V$ carries
\begin{enumerate}
\item a Hermitian metric preserved by $G$;
\item a Clifford multiplication $\gamma$ preserved by $G$, \ie a $G$--equivariant map $\gamma\co \R^{n}\ra \tu{End}(V)$ such that $\gamma (v)^\ast = -\gamma (v)$ and $\gamma (v)\,\gamma (w)+\gamma (w)\,\gamma (v)=-2\langle v,w\rangle\, \text{id}$.
\end{enumerate}
\end{definition}

The basic example of an $\tu{SO}(n)$--Clifford module is $V=\Lambda^\bullet(\R^{n})^\ast \otimes \C$ with Clifford multiplication $\gamma (v)\omega = v^\flat \wedge \omega - v\lrcorner\,\omega$. The basic example of a $\tu{Spin}(n)$--Clifford module is given by a complex representation of the Clifford algebra $Cl(\R^n)$ and therefore also of the complexified Clifford algebra $\C l (\R^n)$. Since $\tu{Spin}(n)\subset Cl(\R^n)$, $V$ is also a representation of $\tu{Spin}(n)$. The complexified Clifford algebra $\C l(\R^n)$ has one (two) irreducible complex representation(s) if $n$ is even (odd). When $n$ is odd, the two irreducible representations are equivalent as $\tu{Spin}(n)$--representations and we denote either of the two by $\slashed{S}(\R^n)$; when $n$ is even, the irreducible representation of $\C l(\R^n)$ splits as two inequivalent $\tu{Spin}(n)$--representations $\slashed{S}^{\pm}(\R^n)$ and we set $\slashed{S}(\R^n)=\slashed{S}^+(\R^n)\oplus\slashed{S}^-(\R^n)$. 

\medskip

Let $P_G (M)$ be a \emph{projectable} $G$--structure on $M$ and $\overline P_{G}$ the corresponding $G$--structure on $B$. Given a $G$--Clifford module $V$ we construct vector bundles $V_M = P_G(M)\times_G V$ and $V_B=P_{G}(B)\times_{G}V$ on $M$ and $B$ respectively. Since $P_G (M)=\pi^\ast \overline P_{G}(B)\times_{G}G$, we have $V_M \simeq \pi^\ast V_B$.

\begin{remark*}
Assume $n$ is even, $B$ is spin and $V=\slashed{S}(\R^{n+1})$. Then $V_M=\slashed{S}(M)$ and $V_B=\slashed{S}(B)$ are the spinor bundles of $M$ and $B$ respectively.
\end{remark*}

In particular, the bundle $V_M$ is trivial on each fibre of $\pi\co M\ra B$. Then on each fibre we have the $L^2$--orthogonal projection $\Pi_0$ from the space of sections of $V$ over the fibre onto the finite-dimensional space of constant $V$--valued functions. Set $\Pi_\perp = \text{id}-\Pi_0$. Extend $\Pi_0$ and $\Pi_\perp$ to the space of all sections of $V_M$ which are $L^2$--integrable when restricted to almost every fibre.

Finally, we allow for the possibility of twisting by an additional Hermitian vector bundle. Let $E\ra B$ be a rank $m$ Hermitian vector bundle. Then we consider $V_B \otimes E$ and $V_M \otimes\pi^\ast E=\pi^\ast (V_B \otimes E)$.

Note that the connection $\nabla$ on $V_M$ induced by the Levi-Civita connection of $M$ is compatible with the Clifford multiplication $\gamma$ on $V_M$, in the sense that $\nabla_X \left( \gamma(v)\psi\right) = \gamma (\nabla_X v)\psi + \gamma (v)\nabla_X\psi$. If $E$ is endowed with a Hermitian connection $\nabla_E$ we can then define the \emph{Dirac-type operator} $D$ acting on sections of $V_M \otimes \pi^\ast E$ by
\begin{equation}\label{eq:Dirac:type:operator}
D\psi = \sum_{i=1}^{n+1}{\gamma(e_i)\nabla_{e_i}\psi},
\end{equation}
where $\{ e_1,\dots, e_{n+1}\}$ is an orthonormal frame of $TM$. Here, by abuse of notation, $\nabla$ is the connection on $V_M\otimes\pi^\ast E$ obtained from the Levi-Civita connection on $M$ and $\pi^\ast\nabla_E$. For example, if $V=\Lambda^\bullet (\R^{n+1})^\ast\otimes\C$ then $D = d+d^\ast$. If $V=\slashed{S}(\R^{n+1})$ then $D$ is the Dirac operator of $M$.

An important tool is the Weitzenb\"ock formula for the Dirac Laplacian
\begin{equation}\label{eq:Weitzenbock}
D^2\psi = \nabla ^\ast \nabla \psi + \mathcal{R}\cdot\psi.
\end{equation}
Here the curvature $\mathcal{R}$ of $\nabla$ acts on $\psi$ by
\[
\mathcal{R}\cdot\psi = \sum_{i<j}{\gamma (e_i)\gamma (e_j)\mathcal{R}(e_i,e_j)_\ast \psi}
\]
where $\mathcal{R}(e_i,e_j)_\ast\psi$ is the representation of $\tu{Lie}(G)\oplus \mathfrak{u}(m)$ on the tensor product of $V$ with the fibre~$\C^m$ of $E$. Formula \eqref{eq:Weitzenbock} follows from a direct calculation.

Now, besides the Levi-Civita connection on $M$ we also have the adapted connection $\overline{\nabla}$ defined in \eqref{eq:Adapted:connection}. This adapted connection and $\nabla_E$ then define a new connection on $V_M$, still denoted by~$\overline{\nabla}$, and we have an associated Dirac-type operator $\overline{D}$ defined as in \eqref{eq:Dirac:type:operator}. (If $B$ is non-oriented, this involves a local choice of square root $\theta \in \Omega^1(M)$ for the connection $\theta^2$, yielding a locally defined dual vector field $\xi$.)

\begin{lemma}\label{lem:comparison:Dirac:operators}
For every smooth section $\psi$ of $V_M\otimes \pi^\ast E$ we have
\[
D\psi = \overline{D}\psi -\tfrac{1}{4}\gamma(\xi)\,\gamma (d\theta)\,\psi.
\]
Here Clifford multiplication by the $2$-form $d\theta$ is defined by
\[
\gamma (d\theta)\,\psi = \tfrac{1}{2}\sum_{i,j}{d\theta (e_i,e_j)\,\gamma (e_i)\,\gamma(e_j)\,\psi}.
\]
Furthermore we have the Weitzenb\"ock formula
\begin{equation}\label{eq:Weitzenbock:Adapted}
\overline{D}^2\psi = \overline{\nabla}^\ast\overline{\nabla}\psi + \overline{\mathcal{R}}\cdot\psi - \gamma (d\theta)\overline{\nabla}_\xi\psi,
\end{equation}
where $\overline{\mathcal{R}}$ is the curvature of $\overline{\nabla}$.
\proof
Over a small open set $U$ in $B$ we consider an orthonormal frame of $\pi^{-1}(U)$ obtained by adding the vector $\tilde{e}_{n+1} = \ell^{-1}\xi$ to the horizontal lift $\{ \tilde{e}_1,\dots, \tilde{e}_n\}$ of an orthonormal frame $\{ e_1,\dots, e_{n}\}$ of $B$. Then we have \cite[Theorem 4.14]{Lawson:spin}
\[
D\psi = \overline{D}\psi + \tfrac{1}{4}\sum_{i=1}^n{\gamma (\tilde{e}_i)\,\gamma (S_{\tilde{e}_i})\,\psi} +\tfrac{1}{4}\ell^{-2}\gamma (\xi)\,\gamma (S_\xi)\,\psi,
\]
where, using \eqref{eq:Levi:Civita} and \eqref{eq:Adapted:connection},
\[
\begin{gathered}
\gamma (S_{\tilde{e}_i}) = -\tfrac{1}{2}\sum_{j=1}^n{ d\theta (e_i,e_j)\, \gamma (\tilde{e}_j)\, \gamma (\xi)} + \tfrac{1}{2}\sum_{j=1}^n{d\theta (e_i,e_j)\, \gamma (\xi)\, \gamma (\tilde{e}_j)} = \sum_{j=1}^n{d\theta (e_i,e_j)\, \gamma (\xi)\, \gamma (\tilde{e}_j)},\\
\gamma (S_{\xi}) = \frac{\ell^2}{2}\sum_{i,j=1}^n{d\theta (e_i,e_j)\, \gamma (\tilde{e}_i)\, \gamma (\tilde{e}_j)}.
\end{gathered}
\]
Here $S=\nabla - \overline{\nabla}$. The first part of the Lemma follows immediately from these formulas using simple algebraic manipulations using the properties of the Clifford multiplication $\gamma$.

The Weitzenb\"ock formula \eqref{eq:Weitzenbock:Adapted} is obtained by a direct computation using an adapted frame $\{ \tilde{e}_1, \dots, \tilde{e}_{n+1}\}$ as above. Observe that $[\xi,\tilde{e}_i]=0$ and that we can further assume that $[\tilde{e}_i,\tilde{e}_j]=-d\theta(e_i,e_j)\,\xi$ over a fixed point $p\in U$ by requiring that $[e_i,e_j]_p=0$. With respect to the classical Weitzenb\"ock formula \eqref{eq:Weitzenbock}, the additional term in \eqref{eq:Weitzenbock:Adapted} involving $d\theta$ arises from the non-vanishing commutator $[\tilde{e}_i,\tilde{e}_j]$, \ie the non-vanishing torsion of $\overline{\nabla}$. 
\endproof
\end{lemma}

\begin{remark*}
Formula \eqref{eq:comparison:d+dast} is a special case of the Lemma with $V=\Lambda^\bullet(\R^{n+1})^\ast\otimes\C$.
\end{remark*}

Using adapted orthonormal frames as in the proof of Lemma \ref{lem:comparison:Dirac:operators}, \ie frames $\{ e_1,\dots, e_{n+1}\}$ on~$M$ with $e_{n+1}=\ell^{-1}\xi$ and the first $n$ vectors obtained by horizontal lift of an orthonormal frame of~$B$, we can further decompose
\begin{equation}\label{eq:Decomposition:adapted:Dirac}
\overline{D} = D_B + \ell^{-2}\gamma (\xi) \overline{\nabla}_\xi,
\end{equation}
where $D_B$ is defined by $D_B \psi = \sum_{i=1}^n{\gamma (e_i)\, \overline{\nabla}_{e_i} \psi}$ for an orthonormal frame $\{ e_1, \dots, e_n\}$ of $B$.

\begin{remark*}
The operator $D_B$ can be regarded as a Dirac-type operator on the infinite-dimensional vector bundle over $B$ whose fibre over a point $b$ is the space of $L^2$--sections of $V_M\otimes E|_{\pi^{-1}(b)}$, \ie the space of $V\times E_b$--valued $L^2$ functions on the circle $\pi^{-1}(b)$. In fact, by Fourier decomposition along the fibres one can decompose this infinite-dimensional bundle into a sum of finite-dimensional ones.

Assume that $M$ and $B$ are compact for simplicity (or restrict to a slice $r=\text{const}$ in the case $M=\BC (\Sigma)$ and then extend radially). Assume also that $B$ is oriented, so that $M$ is a principal $S^1$-bundle. Then we have an $S^1$--action on $L^2 (M;V_M\otimes \pi^\ast E)$ and therefore a decomposition into irreducible $\tu{U}(1)$--representations:
\[
L^2(M;V_M\otimes \pi^\ast E)=\bigoplus_{k\in\Z}V_k,
\]
where $\psi\in V_k$ if and only if $\mathcal{L}_\xi\psi = ik\psi$. By definition, every element $\psi\in V_k$ corresponds to a section $\check{\psi}$ of $V_B \otimes E \otimes L^{-k}$ over $B$, where $L$ is the complex line bundle on $B$ associated to the principal $\tu{U}(1)$--bundle $\pi\co M\ra B$. Moreover, using the fact that $\mathcal{L}_\xi=\overline{\nabla}_\xi$, this identification intertwines connections, \ie for every $\psi\in V_k$ the element $\overline{\nabla}_{\tilde{e}_i}\psi \in V_k$ corresponds to the section $\widecheck{\nabla}_{e_i}\check{\psi}$ of $V_B\otimes E\otimes L^{-k}$, where $\widecheck{\nabla}$ is the connection on $V_B\otimes E\otimes L^{-k}$ induced by the Levi-Civita connection of $B$, $\nabla_E$ and $\theta$. We conclude that $D_B$ acts on $V_k$ as the twisted Dirac-type operator on $V_B \otimes E \otimes L^{-k}$, \cf \cite[Lemma 4.4]{Ammann:Bar}. In the following we will not use this full Fourier decomposition along the fibres but only the projections $\Pi_0$ and $\Pi_\perp$
\end{remark*}

\begin{remark}
\label{rmk:nonproj}
    The spinor bundle of a \emph{non}-projectable spin structure on $M$ cannot be naturally identified with the pull-back of a bundle on $B$ (the complexified spinor bundle is always isomorphic to a bundle associated to a spin$^c$ structure on $B$, but not naturally so). In this case, there are no $S^1$-invariant sections over a fibre; the $\overline \nabla$-parallel transport around a fibre has monodromy $-1$.

    One can still consider a Fourier decomposition for a section of the spinor bundle, but in this case the modes are \emph{half}-integers, \cf Ammann--B\"ar \cite[Theorem 4.5]{Ammann:Bar}. However, interpreting $\Pi_0$ as projection to sections that are $\overline \nabla$-constant along the fibres, we should simply set $\Pi_0 = 0$ since there are no such sections.
\end{remark}

\subsection{Admissible Dirac-type operators on ALC manifolds}

We now specialise the discussion above to the case of ALC manifolds and define the class of Dirac-type operators that we will study in the following sections.

Let $(M^{n+1},g)$ be a complete oriented ALC manifold in the sense of Definition \ref{def:ALC:n}. Then there exists a compact set $K$  with a diffeomorphism $f$ from the open set $\{r\geq R\}$ in $\BC (\Sigma)=\BC (\Sigma,\pi,\theta^2,\ell)$ to $M \setminus K$ such that $f^\ast g$ is asymptotic to $g_\BC$ with rate $\tau<0$ in the sense of Definition \ref{def:ALC:n}. Let $G$ be a subgroup of $\tu{SO}(n+1)$ or $\tu{Spin}(n+1)$ and assume that $M$ carries a $G$--structure $P_G$. Then we get an induced $G$--structure $\overline{P}_G = f^\ast P_G$ on~$\BC (\Sigma)$. Similarly, if $E\ra M$ is a Hermitian vector bundle, we obtain an induced vector bundle $\overline{E}=f^\ast E$ over $\BC (\Sigma)$.

\begin{definition}\label{def:Admissible:bundle}
We say that the Hermitian vector bundle $E\ra M$ is \emph{admissible} if $\overline{E}$ is identified with the pull-back of a Hermitian vector bundle $\widecheck{E}$ on the cone $\tu{C}(\Sigma)$.
\end{definition}

If $B$ is oriented, this is equivalent to a choice of lift of the $S^1$--action on $\BC (\Sigma)$ to $\overline{E}$.
Since $\tu{C}(\Sigma)=\R^+\times\Sigma$ we can think of $\widecheck{E}$ as a vector bundle on $\Sigma \simeq \{ r=1\} \subset \tu{C}(\Sigma)$ extended radially to the cone.

\begin{remark*}
Every bundle associated to the orthonormal frame bundle of $M$ is admissible. In the spin case, a bundle associated to the spin structure on $M$ is admissible if the induced spin structure on $\BC (\Sigma)$ is projectable.
\end{remark*}

Suppose now that the admissible bundle $E$ carries a connection $\nabla_E$ that preserves the Hermitian structure.

\begin{definition}\label{def:Admissible:connection}
We say that the connection $\nabla_E$ is \emph{admissible} if there exists $\tau<0$ such that up to gauge transformations $f^\ast \nabla_E = \nabla_{\overline{E}} + O(r^{\tau-1})$ where $\nabla_{\overline{E}}$ is in \emph{radial gauge}, \ie it is the pull-back to $\BC (\Sigma)$ of the radial extension to $\tu{C}(\Sigma)$ of a connection on $\widecheck{E}|_\Sigma$. Here $f^\ast\nabla_E-\nabla_{\overline{E}}=O(r^{\tau-1})$ means that the difference of the two connections is a $1$--form $a$ with values in the bundle of skew-Hermitian endomorphisms of $E$ such that
\[
|\nabla_{\overline{E}}^j a| \leq C r^{\tau-1-j}.
\]
\end{definition}

\begin{remark*}
If $E$ is a bundle of (complexified) tensors on $M$ then the Levi-Civita connection on $E$ is admissible by \eqref{eq:Levi:Civita} and the relation between the Levi-Civita connections of the cone $\tu{C}(\Sigma)$ and of~$\Sigma$, \cf for example \cite[Lemma 2.2]{Chou}.
\end{remark*}

Now, for $G=\tu{SO}(n+1)$ or $G=\tu{Spin}(n+1)$, let $V$ be a $G$--Clifford module and denote by $V_M$ and $\overline{V}=f^\ast V_M$ the associated vector bundles on $M$ and $\BC (\Sigma)$ respectively. From the Levi--Civita connection of $M$ and an admissible connection $\nabla_E$ we construct a connection $\nabla$ on $V_M\otimes E$ and a corresponding Dirac-type operator $D$. Similarly, from the adapted connection~$\overline{\nabla}$ of \eqref{eq:Adapted:connection} and $\nabla_{\overline{E}}$ we obtain a connection $\overline{\nabla}$ and a Dirac-type operator $\overline{D}$ on $\overline{V}\otimes\overline{E}$. By Lemma \ref{lem:comparison:Dirac:operators} and the definition of an admissible connection we have
\begin{equation}\label{eq:comparison:Dirac}
D = \overline{D} + Q,
\end{equation}
where $Q$ is a zeroth-order operator with $|\overline{\nabla}^jQ| = O(r^{-\min \{|\tau|,1\}-1-j})$. Here we used Remark \ref{rmK:Difference:connections} to estimate the difference between the Levi-Civita connection of $\BC (\Sigma)$ and the adapted connection \eqref{eq:Adapted:connection} and we suppressed from the notation the fact that we are using $\phi$ to think of $D$ as an operator on $\overline{V}\otimes \overline{E}$. We will refer to $D$ as an \emph{admissible} Dirac-type operator \emph{asymptotic} to $\overline{D}$.

Finally, pulling back bundles to $\BC (\Sigma)$ using $f$, outside the compact set $K$ we have projections $\Pi_0$ and $\Pi_\perp$ into locally $S^1$--invariant and ``oscillatory'' sections of $\overline{V}\otimes \overline{E}=f^\ast (V_M\otimes E)$. Note that $[\overline{D},\Pi_0]=0$ and therefore we have that
\begin{equation}\label{eq:Asymptotic:Dirac:decomposition}
\overline{D} = \Pi_0\, \overline{D}\,\Pi_0 + \Pi_\perp\, \overline{D}\,\Pi_\perp.
\end{equation}
The operator $\Pi_0\, \overline{D}\,\Pi_0$ can be identified with a Dirac-type operator $D_{\tu{C}}$ on the cone $\tu{C}(\Sigma)$. By Definition \ref{def:Admissible:connection} the operator $D_{\tu{C}}$ is ``adapted'' to the conical geometry of $\tu{C}(\Sigma)$: after conformally rescaling the cone to a cylinder, $D_{\tu{C}}$ is a translation-invariant operator of the type studied by Lockhart--McOwen \cite{Lockhart:McOwen}. 

\section{Weighted Banach spaces}
\label{sec:weighted:spaces}
The aim of this section is to introduce weighted Sobolev and H\"older spaces adapted to the asymptotic geometry of an ALC manifold. These are the most natural spaces on which to study mapping properties of the admissible Dirac-type operators introduced in the previous section.

\subsection{Set-up}\label{sec:Set:up}

Let $(M^{n+1},g)$ be a complete ALC manifold in the sense of Definition \ref{def:ALC:n}. Let $f$ be the identification of the complement of a compact set $K\subset M$ with its asymptotic model $\BC (\Sigma)=\BC (\Sigma,\pi,\theta^2,\ell)$. In the following we will use $f$ to identify %
$M\setminus K$ and all the structures it carries with an exterior region $\{ r \geq R\}$ in $\BC (\Sigma)$. We will often need to assume that $R$ is large enough and therefore reserve the freedom to enlarge $K$ as needed.

Using the map $f$ we push-forward the radial function $r$ on $\BC (\Sigma)$ to $M \setminus K$. We extend $r$ to the whole manifold $M$ so that $r\geq 1$ on $M$ and $r\equiv 1$ on a compact subset $K'\subset K$. We call such a function a \emph{radial function} and by abuse of notation still denote it with the symbol $r$. 

Let $P_G$ be a $G$--structure on $M$ with $G\subset \tu{SO}(n+1)$ or $G\subset \tu{Spin}(n+1)$, $V$ a $G$--Clifford module and $V_M = P_G \times_G V$ the associated vector bundle. Given an admissible Hermitian bundle $E$ as in Definition \ref{def:Admissible:bundle}, we consider $V_M\otimes E$ endowed with an admissible connection $\nabla$ in the sense of Definition \ref{def:Admissible:connection}. On the pull-back $\overline{V}\otimes\overline{E}=f^\ast (V_M\otimes E)\ra \BC (\Sigma)$ let $\overline{\nabla}$ be the Hermitian connection obtained from the adapted connection \eqref{eq:Adapted:connection} and the limit $\nabla_{\overline{E}}$ of $f^\ast\nabla_E$. Recall that $f^\ast\nabla - \overline{\nabla}=O(r^{\tau-1})$ for some $\tau<0$ by Definition \ref{def:Admissible:connection}. To ease the notation we will often drop explicit reference to the identification $f$ between $M\setminus K$ and $\BC(\Sigma)$: thus $(V_M\otimes E,\nabla)$ will denote both the bundle with connection on $M$ as well as its pull-back to $\BC (\Sigma)$. $D$ and $\overline{D}$ will denote the Dirac-type operators associated with the connections $\nabla$ and $\overline{\nabla}$ respectively. Recall that $D$ and $\overline{D}$ are related by \eqref{eq:comparison:Dirac}.

Finally, we define projections $\Pi_0$ and $\Pi_\perp = \text{id}-\Pi_0$ onto the space of fibrewise constant sections of $\overline{V}\otimes\overline{E}$. Using the map $f$, we can think of $\Pi_0$ and $\Pi_\perp$ as defined on sections of $V_M\otimes E$ outside the compact set $K$.

\subsection{Definition of weighted norms}

In the set-up of the previous section we now define weighted Sobolev and H\"older spaces of sections $\psi$ of $V_M \otimes E$.

\begin{definition}\label{def:Weighted:Sobolev}
Given $\nu\in\R$, $p\geq 1$ and $l\in\Z_{\geq 0}$ define the weighted Sobolev $L^p_{l,\nu}$--norm of a section $\psi$ of $V_M\otimes E$ by
\begin{equation}\label{eq:Weighted:Sobolev}
\| \psi \| ^p_{L^p_{l,\nu}} = \| \psi \| ^p_{L^p_l(K)}+ \sum_{j=0}^{l}{ \int_{M\setminus K}{\left( |r^{-\nu+j}\nabla^j \Pi_0\psi |^p + |r^{-\nu+l}\nabla^j \Pi_\perp\psi |^p \right) r^{-n}\dvol_M}}.
\end{equation}
The weighted Sobolev space $L^p_{l,\nu}(M;V_M \otimes E)$ is defined as the closure of the space of smooth compactly supported sections of $V_M \otimes E$ on $M$ with respect to the norm \eqref{eq:Weighted:Sobolev}. For ease of notation we will often write $L^p_{l,\nu}$ for $L^p_{l,\nu}(M;V_M \otimes E)$ and $L^p_\nu$ for $L^p_{0,\nu}$.
\end{definition} 

\begin{remark}
\label{remark:lp:unweighted}
If $|\psi|=O(r^a)$ then $\psi \in L^p_{\nu}$ for all $\nu >a$. Moreover, $L^p_{\nu} \subseteq L^p$ if and only if $\nu \leq -\tfrac{n}{p}$.
\end{remark}

\begin{definition}\label{def:Weighted:Holder}
Given $\nu\in\R$, $k\in\N_0$ and $\alpha\in (0,1)$ define the weighted H\"older $C^{k,\alpha}_\nu$--norm of a section $\psi$ of $V_M\otimes E$ by
\begin{equation}\label{eq:Weighted:Holder}
\| u\|_{C^{k,\alpha}_\nu} = \sum_{j=0}^k{ \left( \|r^{-\nu+j}\nabla^j \Pi_0 u\|_{C^0} + \|r^{-\nu+k}\nabla^j \Pi_\perp u\|_{C^0}\right) } + [r^{-\nu+k+\alpha}\nabla^k u]_\alpha.
\end{equation}
Here $[r^{-\nu+k+\alpha}\nabla^k u]_\alpha$ is the H\"older seminorm defined using parallel transport of $\nabla$ to identify fibres of $V_M\otimes E$ along minimising geodesics in a small neighbourhood of each point in $M$. The weighted H\"older space $C^{k,\alpha}_\nu(M;V\otimes E)$ is defined as the closure of the space of smooth compactly supported sections of $V_M \otimes E$ on $M$ with respect to the norm \eqref{eq:Weighted:Holder}. For ease of notation we will often write $C^{k,\alpha}_{\nu}$ for $C^{k,\alpha}_{\nu}(M;V_M \otimes E)$. By dropping the H\"older seminorm $[r^{-\nu+k}\nabla^k u]_\alpha$ in the definition of the $C^{k,\alpha}_\nu$--norm, we obtain the definition of the space of sections of $V_M\otimes E$ of class $C^k_\nu$. We also set $C^\infty_\nu = \bigcap_{k\geq 0}{C^k_\nu}$.
\end{definition}

Note that definitions \ref{def:Weighted:Sobolev} and \ref{def:Weighted:Holder} impose a stronger decay on the oscillatory part $\Pi_\perp\psi$ of a spinor field $\psi$ than they do on its locally $S^1$--invariant component $\Pi_0\psi$. In fact, the following lemma shows that this is automatic.

\begin{lemma}\label{lem:Control:Oscillatory}
Assume that $\psi \in L^p_{\nu}$ satisfies $\Pi_0\psi=0$ and $\nabla\psi\in L^p_{\nu-1}$. Then $\psi \in L^p_{\nu-1}$ and there exist a constant $C>0$ and a compact set $K\subset M$ independent of $\psi$ such that
\[
\| \psi|_{M\setminus K} \| _{L^p_{\nu-1}} \leq C \| \nabla\psi \|_{L^p_{\nu-1}}.
\]
Similarly, if $\Pi_0\psi=0$ and $\nabla\psi\in C^{0,\alpha}_{\nu-1}$ for some $\alpha\in (0,1)$ then
\[
\| \psi|_{M\setminus K} \| _{C^{0}_{\nu-1}} \leq C \| \nabla\psi \|_{C^{0,\alpha}_{\nu-1}}.
\]
\proof
We work on the region $\{ r\geq R\}$ in $\BC (\SigmP)$ for $R>0$ sufficiently large. Since $\Pi_0\psi=0$, we have
\begin{equation}\label{eq:Control:Oscillatory}
\int_{\pi^{-1}(x)}{|\psi|^p} \leq C\int_{\pi^{-1}(x)}{|\overline{\nabla}_\xi\psi|^p}
\end{equation}
for some uniform constant $C$. Here $\pi\co\BC (\Sigma)\ra \tu{C}(\Sigma)$ is the bundle projection and $\xi$ is the locally defined vertical unit vector field. Integration over the whole region $\{ r\geq R\}$ then yields the estimate
\[
\| \psi|_{\{ r\geq R\}} \|_{L^p_{\nu-1}} \leq C \| \overline{\nabla}\psi|_{\{ r\geq R\}} \| _{L^p_{\nu-1}}.
\]
By Remark \ref{rmK:Difference:connections} and Definition \ref{def:Admissible:connection} we can replace $\overline{\nabla}$ with $\nabla$ for $R$ sufficiently large.

The proof in the setting of H\"older spaces is analogous.
\endproof
\end{lemma}

\begin{remark}\label{rmk:Weighted:Sobolev}
In view of Lemma \ref{lem:Control:Oscillatory}, since $\nabla - \overline{\nabla}=O(r^{-\min\{|\tau|,1\}-1})$ and $\overline{\nabla}$ commutes with $\Pi_0$, the norms defined by
\[
\| \psi \| _{L^p_l(K)}+ \sum_{j=0}^{l}{ \| \nabla^j\psi\|_{L^p_{l-j,\nu-j}}}, \qquad \| u\|_{C^{k,\alpha}_\nu} = \sum_{j=0}^k{\|r^{-\nu+j}\nabla^j u\|_{C^0}} + [r^{-\nu+k+\alpha}\nabla^k u]_\alpha
\]
define equivalent norms to the weighted Sobolev and H\"older norms in \eqref{eq:Weighted:Sobolev} and \eqref{eq:Weighted:Holder} respectively. While the definition in \eqref{eq:Weighted:Sobolev} and \eqref{eq:Weighted:Holder} emphasise the actual decay of each component, the formulas above are sometimes more convenient when deriving estimates.  
\end{remark}

\begin{remark}\label{rmk:Scaling:Technique}
Below we will extend basic estimates (continuous embeddings, elliptic estimates) from the standard compact setting to weighted estimates on ALC manifolds. The following observations will be crucial to derive these results. Write $M$ as the union of a compact set $K$ together with an exterior domain $\{ r\geq R\}$ in $\BC (\Sigma)$. The weighted norms of a spinor field $\psi$ restricted to $K$ coincide with the standard Sobolev/H\"older norms on $K$. Consider then the restriction of $\psi$ to $M\setminus K$.

Assume first that $\Pi_\perp\psi=0$: we can work on the cone $\tu{C}(\Sigma)$ and apply the scaling trick of Bartnik \cite[Theorem 1.2]{Bartnik}. Decompose the region $\{ r\geq R \}$ in $\tu{C}(\Sigma)$ into the union of annuli ${ \{ 2^k R \leq r \leq 2^{k+1}R\} }$. Up to a factor of $(2^k R)^{-\nu}$, on each annulus the weighted norms are equivalent (with constants independent of $R$ and $k$) to the standard Sobolev/H\"older norms on the rescaled annulus. Weighted estimates can the be obtained by applying standard estimates on these rescaled annuli, rescaling back and summing/taking supremums over $k\in\Z_{\geq 0}$.

If $\Pi_\perp\psi \neq 0$, we can modify this scaling technique as follows. Each annulus $\{ 2^k R \leq r \leq 2^{k+1}R\}$ can be covered with the same finite number of open subsets with the property that $\BC (\Sigma)$ restricts to a trivial circle bundle over each of them. The fact that the number of open subsets is independent of the radius of the annulus follows from the fact that $\BC (\Sigma)$ (and all the structure it carries) is the radial extension of a circle bundle over $\Sigma$. The same scaling technique can then be applied replacing annuli with $(2^k R)^{-1}$--covers of the restriction of $\BC (\Sigma)$ to such subsets.
\end{remark}

\subsection{Basic properties}

We now state a few basic facts about weighted Sobolev and H\"older spaces that can be deduced easily from Definitions \ref{def:Weighted:Sobolev} and \ref{def:Weighted:Holder}.

\begin{lemma}\label{lem:Weighted:Embedding}
Fix $p,q\geq 1$, $h,k\in\mathbb{N}_0$, $\alpha,\beta\in (0,1)$ and $\nu, \nu'\in\R$. There are continuous embeddings
\begin{enumerate}
\item $L^p_\nu\subset L^q_{\nu'}$ whenever $q\leq p$ and $\nu'>\nu$;
\item $C^{k,\alpha}_\nu\subset C^{h,\beta}_{\nu'}$ whenever $h+\beta \leq k+\alpha$ and $\nu'\geq \nu$;
\item $C^{0,\alpha}_\nu\subset L^p_{\nu'}$ whenever $\nu'>\nu$.
\end{enumerate}
 \end{lemma}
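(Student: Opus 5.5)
The plan is to reduce each embedding to an elementary statement about the radial weight and the volume growth of the ALC end, treating the compact part $K$ separately. On $K$ all weighted norms coincide with the unweighted ones, so (i)--(iii) there are standard: H\"older's inequality on a finite-measure set, the usual H\"older embeddings, and $C^0\subset L^p$. It therefore suffices to work on $\{r\geq R\}\subset \BC(\Sigma)$. The two facts we use about this region are $\dvol_M \sim r^{n-1}\,dr\,\dvol_\Sigma\,\theta$ up to uniformly bounded factors, since $f^\ast g$ is uniformly equivalent to $g_\BC$ and the circle fibres have fixed length $2\pi\ell$, and hence, for every $\epsilon>0$,
\[
\int_{\{r\geq R\}} r^{-\epsilon p}\, r^{-n}\,\dvol_M \leq C\int_R^\infty r^{-1-\epsilon p}\,dr<\infty .
\]
The measure $r^{-n}\dvol_M$ is not finite, but weighting by any negative power of $r$ makes it finite. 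This log-divergence is why (i) and (iii) require the strict inequality $\nu'>\nu$.

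For (i), we may use the equivalent norms of Remark \ref{rmk:Weighted:Sobolev}, which for $l=0$ do not distinguish $\Pi_0$ from $\Pi_\perp$. Then $\|\psi\|_{L^q_{\nu'}}^q = \int |r^{-\nu}\psi|^q\, r^{-(\nu'-\nu)q}\, r^{-n}\dvol_M$. Applying H\"older's inequality with exponents $p/q$ and $p/(p-q)$ bounds this by $\|\psi\|_{L^p_\nu}^q$ times $\bigl(\int r^{-(\nu'-\nu)qp/(p-q)} r^{-n}\dvol_M\bigr)^{(p-q)/p}$, and the last factor is finite by the display above. The case $q=p$ is immediate since $r\geq 1$. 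Part (iii) is the same computation with $|r^{-\nu}\psi|\leq \|\psi\|_{C^{0,\alpha}_\nu}$ pulled out pointwise: we get $\|\psi\|_{L^p_{\nu'}}^p\leq \|\psi\|_{C^0_\nu}^p\int r^{-(\nu'-\nu)p}r^{-n}\dvol_M$.

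For (ii), we use $r\geq 1$. Each term $\|r^{-\nu'+j}\nabla^j\Pi_0 u\|_{C^0}$ with $j\leq h$ is bounded by the corresponding term with weight $\nu$. For the oscillatory part, $r^{-\nu'+h}\leq r^{-\nu+k}$ because $h\leq k$ and $\nu'\geq\nu$. The H\"older seminorm is the only point needing care; for it we use the scaling technique of Remark \ref{rmk:Scaling:Technique}. On each annulus $\{2^jR\leq r\leq 2^{j+1}R\}$, covered by a fixed number of trivialising charts rescaled by $(2^jR)^{-1}$, the weighted $C^{k,\alpha}_\nu$ norm is equivalent, up to the factor $(2^jR)^{-\nu}$, to an unweighted norm on a region of bounded geometry. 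There the standard embedding $C^{k,\alpha}\subset C^{h,\beta}$ holds with uniform constants. Interpolation controls $[\nabla^h u]_\beta$ by $\|\nabla^h u\|_{C^0}$ and $\|\nabla^{h+1}u\|_{C^0}$, or by $[\nabla^k u]_\alpha$ when $h=k$, together with the weighted seminorm at scale $r$. Rescaling back and taking the supremum over $j$ gives the bound; the factor $r^{\nu-\nu'}\leq 1$ is harmless.

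The main obstacle I expect is bookkeeping in (ii) for the oscillatory component. Its $C^0$ terms carry the stronger weight $r^{-\nu+k}$, and the charts on which the circle fibres have fixed length rather than length proportional to $r$ do not rescale isotropically. I would handle this by using the equivalent H\"older norm of Remark \ref{rmk:Weighted:Sobolev}, justified by Lemma \ref{lem:Control:Oscillatory}, so that the oscillatory part need not be treated separately. The remaining checks are routine.
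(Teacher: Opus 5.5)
Your proof is correct, and it is the direct verification from Definitions \ref{def:Weighted:Sobolev} and \ref{def:Weighted:Holder} that the paper intends (the lemma is stated without proof as an easy consequence of the definitions); the integrability of any negative power of $r$ against $r^{-n}\dvol_M\sim r^{-1}dr$ is exactly why (i) and (iii) need $\nu'>\nu$. For (ii) you can drop the scaling technique and Lemma \ref{lem:Control:Oscillatory}: since $r^{\nu-k}\leq r^{\nu-j}$ for $j\leq k$, the source norm already gives $|\nabla^j u|\lesssim r^{\nu-j}$, so a mean-value estimate along short geodesics bounds the target H\"older seminorm directly (when $h=k$, simply use $d^{-\beta}\leq d^{-\alpha}$ for $d\leq 1$), and this also avoids your slightly inaccurate claim that rescaled trivialising charts have bounded geometry, which holds only after passing to the fibrewise covers of Remark \ref{rmk:Scaling:Technique}.
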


Nonlinearities later in the paper will be controlled thanks to the following weighted H\"older inequality, whose proof is immediate. We state it here for product of functions.

\begin{lemma}\label{lem:Weighted:Holder}
Fix $p_1,p_2\geq 1$ and $\nu_1,\nu_2\in\R$ and define $p$ and $\nu$ by $\tfrac{1}{p_1}+\tfrac{1}{p_2}=\tfrac{1}{p}$ and $\nu_1 +\nu_2 =\nu$. Then for every $u\in L^{p_1}_{\nu_1}$ and $v\in L^{p_2}_{\nu_2}$ we have
\[
\| u v \| _{L^p_\nu} \leq \| u \| _{L^{p_1}_{\nu_1}} \| v \| _{L^{p_2}_{\nu_2}}.
\]
Similarly, given $k\geq 0$ and $\alpha\in (0,1)$, the product $C^{k,\alpha}_{\nu_1}\times C^{k,\alpha}_{\nu_2}\ra C^{k,\alpha}_\nu$ is continuous.
\end{lemma}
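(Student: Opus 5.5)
The plan is to argue pointwise with the classical H\"older inequality, keeping track separately of the compact set $K$ and of the weights on $M\setminus K$. For functions, $\Pi_0$ is fibrewise averaging and is not multiplicative. I would therefore first use the equivalent norm from Remark \ref{rmk:Weighted:Sobolev}: with $l=0$ it is simply $\|u\|_{L^p(K)} + \|r^{-\nu}u\, r^{-n/p}\|_{L^p(M\setminus K)}$, with no projections involved. If one insists on the original definition \eqref{eq:Weighted:Sobolev} with $l=0$, the $\Pi_\perp$ term carries the same weight $r^{-\nu}$. Hence $\|\Pi_0 u\|+\|\Pi_\perp u\|$ is equivalent to $\|r^{-\nu}u\|$, since $\Pi_0$ is bounded on fibrewise $L^p$ (it is averaging over a circle of fixed length). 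So in either form the norm is equivalent to the plain weighted norm.

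With that norm the Sobolev estimate is immediate. On $K$, classical H\"older gives $\|uv\|_{L^p(K)}\le \|u\|_{L^{p_1}(K)}\|v\|_{L^{p_2}(K)}$. On $M\setminus K$ we have the pointwise identity
\[
r^{-\nu}|uv|\, r^{-n/p} = \big(r^{-\nu_1}|u|\,r^{-n/p_1}\big)\big(r^{-\nu_2}|v|\,r^{-n/p_2}\big),
\]
which holds because $\nu=\nu_1+\nu_2$ and $\tfrac{n}{p}=\tfrac{n}{p_1}+\tfrac{n}{p_2}$. H\"older with exponents $p_1,p_2$ then gives the bound. To get the clean constant $1$ stated in the lemma, one should take the norm to be the $\ell^p$-sum of the $K$ and exterior pieces, arranged so that H\"older applies to the total integral with weight $w=r^{-n}$ (and $r\equiv 1$ on $K'$). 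With the extended radial function $r\ge 1$ everywhere, the identity above holds on all of $M$. So I would simply write the norm globally as $\int_M |r^{-\nu}u|^p r^{-n}\dvol$, equivalent to the given one, and apply H\"older once.

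For H\"older spaces, again use the equivalent norm $\sum_j \|r^{-\nu+j}\nabla^j u\|_{C^0} + [r^{-\nu+k+\alpha}\nabla^k u]_\alpha$. By the Leibniz rule, $\nabla^j(uv)=\sum_{i}\binom{j}{i}\nabla^i u\,\nabla^{j-i}v$, and the weights split as $r^{-\nu+j}=r^{-\nu_1+i}\,r^{-\nu_2+j-i}$, so the $C^0$ terms are bounded by products of norms. The H\"older seminorm term is the only step needing care. Write
\[
r^{-\nu+k+\alpha}\nabla^i u\,\nabla^{k-i}v = \big(r^{-\nu_1+i}\nabla^i u\big)\big(r^{-\nu_2+k-i+\alpha}\nabla^{k-i}v\big),
\]
and use $[fg]_\alpha\le \|f\|_{C^0}[g]_\alpha+[f]_\alpha\|g\|_{C^0}$ on balls of radius comparable to the injectivity scale. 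The obstacle is that $r^{-\nu_1+i}\nabla^i u$ for $i<k$ needs a H\"older bound, and the seminorm is taken with weight $r^{\alpha}$. I would handle this via the scaling technique of Remark \ref{rmk:Scaling:Technique}. On each rescaled region, the lower-order derivatives are Lipschitz, controlled by the next derivative, and $r$ varies by a bounded factor. Hence $[r^{-\nu_1+i+\alpha}\nabla^i u]_\alpha\lesssim \|u\|_{C^{k,\alpha}_{\nu_1}}$ for $i<k$, and products of local norms are controlled by the standard compact-case inequality. Taking suprema gives continuity with a uniform constant.
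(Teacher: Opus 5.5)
Your proposal is correct and is exactly the argument the paper leaves as immediate. The weights factor as $r^{-\nu-n/p}=r^{-\nu_1-n/p_1}\,r^{-\nu_2-n/p_2}$ and $r^{-\nu+j}=r^{-\nu_1+i}\,r^{-\nu_2+j-i}$, so classical H\"older gives the $L^p$ bound. For H\"older spaces, Leibniz plus the product rule for H\"older seminorms works, applied on regions where $r$ is essentially constant and with the extra $r^{\alpha}$ attached to whichever factor carries the seminorm.

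Your switch to the projection-free equivalent norm is necessary, not cosmetic. With the literal $\Pi_0/\Pi_\perp$-split norm of Definition~\ref{def:Weighted:Sobolev} the constant $1$ fails. For example, take $p_1=p_2=2$, $\nu_1=\nu_2$, and $u=v$ a cut-off times $\cos\phi$ in a local trivialisation: the ratio of the two sides tends to $(\pi+2)/\pi$ as the support is pushed to infinity. So the lemma holds with constant $1$ for your norm, and only up to a constant for the original one.
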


In the rest of the paper we will mainly use weighted H\"older and $L^2$ spaces. We state here two results that will be used repeatedly throughout the paper.

\begin{lemma}\label{lem:Dual:weighted:Sobolev}
The $L^2$--inner product induces an isomorphism between $(L^2_{\nu})^\ast$ and $L^2_{-\nu-n}$. 
\end{lemma}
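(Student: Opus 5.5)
The plan is to reduce the statement to the standard duality between weighted $L^2$ spaces with respect to a fixed measure. Definition \ref{def:Weighted:Sobolev} with $l=0$ and $p=2$ gives
\[
\|\psi\|^2_{L^2_{\nu}} \simeq \int_M |\psi|^2\, w_\nu \,\dvol_M, \qquad w_\nu := r^{-2\nu-n}.
\]
This holds because on $M\setminus K$ we have $|\psi|^2 = |\Pi_0\psi|^2+|\Pi_\perp\psi|^2$ pointwise-fibrewise after integration along the circle fibres (the projections are $L^2$-orthogonal on each fibre), and on $K$ the weight $r$ is bounded above and below. So $L^2_\nu$ is, up to an equivalent norm, the Hilbert space $L^2(M, w_\nu \dvol_M)$ of sections of $V_M\otimes E$. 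Here we use that the closure of compactly supported smooth sections is the full weighted $L^2$ space.

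The key steps are as follows.

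First, for $\phi\in L^2_{-\nu-n}$ and $\psi\in L^2_\nu$, observe that $w_\nu^{1/2} w_{-\nu-n}^{1/2} = r^{-\nu-n/2}\, r^{\nu+n-n/2} = 1$. Hence Cauchy--Schwarz gives
\[
|\langle \psi,\phi\rangle_{L^2}| = \Big|\int_M \langle w_\nu^{1/2}\psi, w_{-\nu-n}^{1/2}\phi\rangle \dvol_M\Big| \le C\,\|\psi\|_{L^2_\nu}\|\phi\|_{L^2_{-\nu-n}}.
\]
This is also the special case $p_1=p_2=2$, $\nu_1+\nu_2=-n$ of Lemma \ref{lem:Weighted:Holder}, combined with the fact that $L^1_{-n}$ is $L^1(\dvol_M)$. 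So the map $\phi\mapsto \langle\cdot,\phi\rangle_{L^2}$ is a bounded injective map $L^2_{-\nu-n}\to (L^2_\nu)^\ast$. Injectivity follows by testing against compactly supported smooth sections.

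Second, for surjectivity, take $\Lambda\in (L^2_\nu)^\ast$. By the Riesz representation theorem in the Hilbert space $L^2(M,w_\nu\dvol_M)$ there is $\chi\in L^2_\nu$ with $\Lambda(\psi)=\int_M\langle\psi,\chi\rangle w_\nu \dvol_M$. Set $\phi = w_\nu\chi$. Then $\Lambda(\psi)=\langle\psi,\phi\rangle_{L^2}$, and
\[
\int|\phi|^2 w_{-\nu-n} = \int |\chi|^2 w_\nu^2 w_{-\nu-n} = \int|\chi|^2 w_\nu,
\]
since $w_\nu w_{-\nu-n}=1$. Therefore $\phi\in L^2_{-\nu-n}$, and its norm is comparable to $\|\Lambda\|$.

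The only point needing care is the equivalence of the norm in Definition \ref{def:Weighted:Sobolev} with the pure weighted $L^2$ norm. This requires checking that cutting at $\partial K$ and the fibrewise orthogonal splitting $\Pi_0\oplus\Pi_\perp$ introduce only bounded constants. Both are immediate, so I expect no real obstacle.
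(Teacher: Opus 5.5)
Your argument is correct and is the standard one the paper implicitly relies on; the lemma is stated there without proof. For $l=0$ the norm of Definition~\ref{def:Weighted:Sobolev} is equivalent to $\bigl(\int_M |\psi|^2 r^{-2\nu-n}\dvol_M\bigr)^{1/2}$, and $r^{-2\nu-n}\cdot r^{2\nu+n}=1$, so Cauchy--Schwarz and the Riesz representation theorem give the isomorphism with $L^2_{-\nu-n}$.

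One point of phrasing is loose: $|\psi|^2=|\Pi_0\psi|^2+|\Pi_\perp\psi|^2$ holds only after integrating over the fibres, and exactly only for the model data. The norm equivalence needs only the pointwise bound $|\psi|^2\le 2(|\Pi_0\psi|^2+|\Pi_\perp\psi|^2)$ together with the fibrewise $L^2$-boundedness of $\Pi_0$, so nothing is lost.
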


\begin{lemma}\label{lem:integration:parts}
Let $\psi_1\in L^2_{1,\nu_1}$ and $\psi_2 \in L^2_{1,\nu_2}$ with $\nu_1 + \nu_2 +n-1 \leq 0$. Then
\[
\langle D\psi_1, \psi_2 \rangle_{L^2} = \langle \psi_1, D\psi_2\rangle_{L^2}.
\]
\end{lemma}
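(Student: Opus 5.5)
Since $D$ is formally self-adjoint, the statement reduces to showing that the boundary term in the integration by parts formula vanishes in the limit. I would first prove it for compactly supported smooth sections, where it is just the pointwise identity $\langle D\psi_1,\psi_2\rangle - \langle \psi_1, D\psi_2\rangle = d^\ast X$ (or $-\operatorname{div} X$), with $X$ the vector field defined by $\langle X, v\rangle = -\langle \gamma(v)\psi_1,\psi_2\rangle$. This identity follows from skew-adjointness of $\gamma(v)$ and compatibility of $\nabla$ with $\gamma$ and the Hermitian metric, integrated by the divergence theorem. Then I would pass to general $\psi_i$ in the weighted spaces by a cut-off argument.

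Concretely, let $\chi_R$ be a cut-off function, equal to $1$ on $\{r\le R\}$, supported in $\{r\le 2R\}$, with $|d\chi_R|\le C R^{-1}$, built as a function of the radial function $r$. Apply the compactly supported identity to $\chi_R\psi_1$ and $\psi_2$ (valid by density/local smoothness, since only compact support of one factor is needed). This gives
\[
\langle D(\chi_R\psi_1),\psi_2\rangle_{L^2} - \langle \chi_R\psi_1, D\psi_2\rangle_{L^2} = 0,
\]
and $D(\chi_R\psi_1) = \chi_R D\psi_1 + \gamma(d\chi_R)\psi_1$.

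Letting $R\to\infty$ requires three estimates.

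First, the pairings $\langle D\psi_1,\psi_2\rangle$ and $\langle \psi_1, D\psi_2\rangle$ must be absolutely integrable, so that dominated convergence applies. Since $D\psi_1\in L^2_{\nu_1-1}$ and $\psi_2\in L^2_{\nu_2}$, the weighted H\"older inequality (Lemma \ref{lem:Weighted:Holder}) puts the product in $L^1_{\nu_1+\nu_2-1}$. The $L^1_\mu$ norm is $\int |u| r^{-\mu-n}$, and $\nu_1+\nu_2-1\le -n$ by hypothesis, so $r^{-\mu-n}\ge 1$ and $L^1_\mu\subset L^1$. The other pairing is symmetric.

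Second, and this is the main step, the error term $\int \langle \gamma(d\chi_R)\psi_1,\psi_2\rangle$ must tend to zero. It is bounded by $CR^{-1}\int_{R\le r\le 2R}|\psi_1||\psi_2|$, which is comparable to $C\int_{R\le r\le 2R} r^{-1}|\psi_1||\psi_2|$. Writing
\[
r^{-1} = r^{-\nu_1-\nu_2-n}\, r^{\nu_1+\nu_2+n-1},
\]
the second factor is at most $1$ because $\nu_1+\nu_2+n-1\le 0$. Cauchy--Schwarz with the weights $r^{-\nu_i-n/2}$ then bounds the term by
\[
\|\psi_1\|_{L^2_{\nu_1}(\{r\ge R\})}\,\|\psi_2\|_{L^2_{\nu_2}(\{r\ge R\})},
\]
which tends to $0$ as tails of convergent integrals.

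Third, the $\Pi_\perp$ parts need no separate treatment, because they carry stronger weights (Remark \ref{rmk:Weighted:Sobolev}) and the argument above only uses the $L^2_{\nu_i}$ control.

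The only delicate point is the borderline case $\nu_1+\nu_2+n-1=0$. There one must use tail-smallness of the norms rather than a power gain, which is why I estimate by norms restricted to $\{r\ge R\}$.
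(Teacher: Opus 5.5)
Your proposal is correct and follows essentially the same route as the paper: cut off with $\chi_R$ satisfying $|d\chi_R|\leq CR^{-1}$, reduce the error to $\int \langle\gamma(d\chi_R)\psi_1,\psi_2\rangle$, insert the weights $r^{-\nu_i-\frac{n}{2}}$, use $\nu_1+\nu_2+n-1\leq 0$ to discard the leftover power of $r$, and conclude by tail-smallness of the weighted norms. The differences are cosmetic. The paper uses the tail of $\psi_1$ alone against the full norm $\|\psi_2\|_{L^2_{\nu_2}}$ where you use the tails of both, and you justify absolute integrability via the weighted H\"older inequality where the paper simply asserts it.
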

\begin{proof}
Note that for $\nu_1+\nu_2\leq -n+1$ the $L^2$--inner products in the statement are finite.

Choose a sequence $\{ \epsilon_i\} \subset \R^+$ with $\epsilon_i\ra 0$ as $i\ra \infty$. Since $\| \psi_1 \| _{L^2_{\nu_1}}$ is finite, there exists a sequence $\{R_i\}$ with $R_i \ra \infty$ such that $\| \psi_1|_{\{ r\geq R_i\}} \| _{L^2_{\nu_1}}\leq \epsilon_i$. Fix cutoff functions $\chi_i$ such that $\chi_i\equiv 1$ for $r\leq R_i$, $\chi_i\equiv 0$ for $r\geq 2R_i$ and $|d\chi_i|\leq CR_i^{-1}$. Then integration by parts shows that

\begin{align*}
\left| \langle \chi_i D\psi_1,\psi_2\rangle_{L^2} - \langle \psi_1, \chi_iD\psi_2\rangle_{L^2}\right| &\leq \int_{r\geq R_i}{r^{\nu_1+\nu_2+n}|d\chi_i|\left( r^{-\nu_1-\frac{n}{2}}|\psi_1| \right) \left( r^{-\nu_2-\frac{n}{2}}|\psi_2| \right) \dvol}\\
 &\leq C R_i^{\nu_1+\nu_2+n-1}\|\psi_2\| _{L^2_{\nu_2}} \|\psi_1\| _{L^2_{\nu_1}(r\geq R_i)} \leq C\epsilon_i \stackrel{i\ra\infty}{\longrightarrow} 0. \qedhere 
\end{align*}
\end{proof}

\begin{remark*}
The Dirac-type operator $D$ can be replaced in this integration-by-parts formula with any other first-order operator and its adjoint.
\end{remark*}

\begin{remark*}
We observe explicitly that Lemmas \ref{lem:Weighted:Embedding} (iii) and \ref{lem:integration:parts} imply that
\[
\langle D\psi_1, \psi_2 \rangle_{L^2} = \langle \psi_1, D\psi_2\rangle_{L^2}
\]
for all $\psi_i\in C^{1,\alpha}_{\nu_i}$, $i=1,2$, with $\nu_1 + \nu_2 +n-1 <0$.
\end{remark*}

The next two results contain deeper estimates that are proved using the scaling technique of Remark \ref{rmk:Scaling:Technique}. The first result is a weighted version of the Sobolev Embedding Theorem. 

\begin{theorem}\label{thm:Sobolev}
Let $(M^{n+1},g)$ be an ALC manifold and fix $\nu\in\R$, $k,l\in\N_0$, $p\geq 1$ and $\alpha\in (0,1)$.

\begin{enumerate}
\item If $lp<n+1$, then for every $\gamma \in [1,\tfrac{n+1}{n+1-lp}]$ there exists a constant $C>0$ such that
\[
\| u \| _{L^{\gamma p}_{k,\nu}} \leq C \| u \| _{L^p_{k+l,\nu}}
\]
for all $u\in C^\infty_0(M)$.
\item If $lp \geq n+1+\alpha$, then there exists a constant $C>0$ such that
\[
\| u \|_{C^{k,\alpha}_\nu} \leq C \| u \| _{L^p_{k+l,\nu}}
\]
for all $u\in C^\infty_0(M)$.
\end{enumerate}
\end{theorem}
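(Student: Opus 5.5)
We follow the scaling technique of Remark \ref{rmk:Scaling:Technique}, using the equivalent norms of Remark \ref{rmk:Weighted:Sobolev}. On the compact set $K$ (enlarged slightly) both estimates are the standard Sobolev embedding for $V_M\otimes E$, so it suffices to bound the contribution of $\{r\geq R\}$. We split this region into annuli $A_k=\{2^kR\leq r\leq 2^{k+1}R\}$ in $\BC(\Sigma)$. Each $A_k$ is covered by a fixed number $N$ of subsets $U_{k,a}=\pi^{-1}(W_{k,a})$, where the $W_{k,a}\subset\tu{C}(\Sigma)$ are radial extensions of a fixed finite cover of $\Sigma$ by sets over which the circle bundle is trivial. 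We then cover each $U_{k,a}$ by $O((2^kR)^{n})$ product sets of the form (base ball of radius $\sim 1$) $\times S^1$, or by balls of radius $\sim 1$ in $\BC(\Sigma)$.

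The key point is that on such a unit-size region $B$ the metric $g$ has bounded geometry, with constants uniform in $k$. The model metric $g_\BC$ restricted to $W\times S^1$, with $W$ a base ball of radius $1$ at distance $r\gg1$ from the tip, is uniformly close to the flat product $B^n_1\times S^1_\ell$, and the connection $d\theta$ is $O(r^{-2})$ on it. The perturbation $f^*g-g_\BC=O(r^\tau)$, together with its derivatives, is small. Similarly, $\nabla$ differs from a connection that is trivialised by a parallel frame up to $O(r^{-1})$ terms. Hence the unweighted Sobolev inequalities hold on $B$ with a uniform constant $C$:
\[
\|u\|_{L^{\gamma p}_k(B)}\leq C\|u\|_{L^p_{k+l}(B')},\qquad \|u\|_{C^{k,\alpha}(B)}\leq C\|u\|_{L^p_{k+l}(B')},
\]
where $B'\supset B$ is a slight enlargement, in dimension $n+1$, under the stated conditions on $l,p$. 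On $B\subset A_k$ the weight $r^{-\nu+j}$ is comparable to $(2^kR)^{-\nu+j}$, and the volume factor $r^{-n}$ is comparable to $(2^kR)^{-n}$. We deliberately do \emph{not} rescale by $r$: the unit balls already have bounded geometry, because the fibres have fixed length. The weights on the higher derivatives, $r^{j}$, only improve the estimate when passing from $L^p_{k+l,\nu}$ on the right to $L^{\gamma p}_{k,\nu}$ on the left. This is because the $j$-th derivative on the left carries weight $r^{-\nu+j}$, while on the right every derivative of order $\leq k+l$ carries weight at least $r^{-\nu+j}$ for $j\leq k$. So for $r\geq1$ the weights match or favour the right-hand side.

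To sum, in case (i) we raise the local estimate to the power $\gamma p$ and sum over the unit sets, which overlap with bounded multiplicity. Since $\gamma\geq1$, $\sum a_i^{\gamma}\leq(\sum a_i)^{\gamma}$. The volume weight needs care: on the left it enters as $(2^kR)^{-n/(\gamma p)}$ and on the right as $(2^kR)^{-n/p}$. We therefore absorb the discrepancy by noting that $r^{-n/(\gamma p)}\leq r^{-n/p}\cdot r^{n/p-n/(\gamma p)}$. This is where the main obstacle lies, since the global inequality is not literally scale-invariant in the volume weight. We handle it by working with the norm as actually defined, $\|r^{-\nu}u\|^p$ against $r^{-n}\dvol$, and applying the local Sobolev inequality to $r^{-\nu-n/p}u$ rather than to $u$. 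This turns the weighted spaces into unweighted $L^p$ spaces on $M$, with $r^{-n}$ absorbed; derivatives falling on the factor $r^{-\nu-n/p}$ produce lower-order terms with extra decay. Because the equivalent norm includes the factor $r^{-n/\gamma p}$ on the left, we then compare $(2^kR)^{-n/(\gamma p)}$ with $(2^kR)^{-n/p}$. If this mismatch does not resolve, we would instead use the uniform local estimate on whole annuli, which have volume $\sim r^{n}$ times a unit set. In that case $\ell^{\gamma p}$-summation over the $O(r^n)$ unit pieces, combined with $\gamma\geq1$, gives the comparison with the volume-normalised $L^p$ norm, since $r^{-n}\dvol$ makes each annulus have unit measure up to constants.

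Case (ii) is simpler. The local estimate gives a pointwise bound on $r^{-\nu+j}|\nabla^j u|$ and on the weighted H\"older seminorm over each unit set $B$, in terms of the local $L^p_{k+l}$ norm on $B'$. This local norm is in turn bounded by the global weighted norm. Indeed, the global weighted norm computed with measure $r^{-n}\dvol$ dominates $(2^kR)^{-n}$ times the unweighted local norm, and since $r\geq1$ we have $r^{-n/p}\leq 1$, which is favourable in the right direction once we weight the local Sobolev inequality by $r^{-n/p}$ as above. We therefore expect case (ii) to follow directly, and case (i) to reduce to a careful bookkeeping of the volume weight.
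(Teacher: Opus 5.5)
Your argument has a genuine gap. It is exactly the volume-weight ``mismatch'' you flag but do not resolve.

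On a unit-size set $B$ at radius $\rho=2^kR$, the unweighted inequality $\|\nabla^j u\|_{L^{\gamma p}(B)}\le C\sum_{i=j}^{j+l}\|\nabla^i u\|_{L^p(B')}$ only gives, after inserting the weights, a bound in which the $i$-th term is controlled by the weighted norm up to a factor $\rho^{\,j-i+n(1-1/\gamma)/p}$. For $i=j+l$ this factor is $\le 1$, since $lp\ge n(1-1/\gamma)$ throughout the stated range. For $i=j$, however, it is $\rho^{n(1-1/\gamma)/p}$, which blows up whenever $\gamma>1$.

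None of your remedies removes this factor:
\begin{itemize}
\item $r^{-n/(\gamma p)}=r^{-n/p}\,r^{n/p-n/(\gamma p)}$ is an identity, and the extra factor grows.
\item Conjugating by $r^{-\nu-n/p}$ only moves the loss into the target, giving at best $L^{\gamma p}_{k,\nu+n(1-1/\gamma)/p}$.
\item Summing unit estimates over a whole annulus and then normalising by $r^{-n}$ reproduces the same factor. For a measure of mass $\sim1$ on each annulus, $L^{\gamma p}$ dominates $L^p$, not conversely.
\end{itemize}
In (ii) the sign is actually backwards. The weighted norm restricted to $B'$ is $\approx\rho^{-\nu+i-n/p}\|\nabla^i u\|_{L^p(B')}$, so the zeroth-order term of the local Morrey inequality costs a factor $\rho^{n/p}$, not $\rho^{-n/p}$. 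The lower-order terms need information at scale $r$, which unit balls cannot supply.

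The missing idea is the rescaling you deliberately avoid, as prescribed in Remark~\ref{rmk:Scaling:Technique}. Rescale each trivialised piece $U\cong W\times S^1$ of the annulus $\{2^kR\le r\le 2^{k+1}R\}$ by $(2^kR)^{-1}$, and pass to the $m$-fold cover, $m\sim 2^kR$, that unwraps the now short fibre. This cover has uniformly bounded geometry at unit scale, so the $(n+1)$-dimensional Sobolev and Morrey inequalities hold there with uniform constants; this is where the exponent $\frac{n+1}{n+1-lp}$ comes from. On descending, the covering degree contributes a factor $m^{1/p-1/(\gamma p)}$, which exactly converts $(n+1)$-dimensional scaling into the $r^{-n}\dvol$ normalisation:
\[
\rho^{-\nu-\frac{n}{\gamma p}}\|u\|_{L^{\gamma p}(U)}\le C\sum_{i=0}^{l}\rho^{-\nu+i-\frac{n}{p}}\|\nabla^i u\|_{L^p(U)}.
\]
The sum over $k$ then closes using $\gamma\ge1$. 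Part (ii) is the same argument with Morrey in place of Sobolev.

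Equivalently, you can split $u=\Pi_0u+\Pi_\perp u$:
\begin{itemize}
\item Apply Bartnik's scaling on the $n$-dimensional cone to $\Pi_0u$. In dimension $n$ the stated range of $\gamma$ is certainly admissible.
\item Apply your unit-ball argument to $\Pi_\perp u$. There the extra weight $r^{l}$ that the definition of $L^p_{k+l,\nu}$ places on the oscillatory part (cf.\ Lemma~\ref{lem:Control:Oscillatory}) absorbs $\rho^{n(1-1/\gamma)/p}$, precisely because $lp\ge n(1-1/\gamma)$.
\end{itemize}

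Finally, the local Morrey step needs $l-\frac{n+1}{p}\ge\alpha$, i.e.\ $lp\ge n+1+\alpha p$. The hypothesis $lp\ge n+1+\alpha$ as stated suffices only when $p=1$.
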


\begin{remark*}
By Kato's inequality the theorem is valid for functions as well as for sections of Riemannian and Hermitian vector bundles endowed with a metric connection.
\end{remark*}

Finally, the following theorem contains weighted elliptic estimates for admissible Dirac-type operators. 

\begin{theorem}\label{thm:Weighted:Regularity}
Let $D$ be an admissible Dirac-type operator on a complete ALC manifold $M^{n+1}$. Then for every $l\geq 0$, $p\geq 1$, $\alpha\in (0,1)$ and $\nu\in \R$ there exists $C>0$ such that
\[
\begin{aligned}
\| \psi \|_{L^p_{l+1,\nu+1}} &\leq C\left( \| D\psi\|_{L^p_{l,\nu}} + \| \psi \|_{L^p_{0,\nu+1}}\right) ,\\
\| \psi \|_{C^{l+1,\alpha}_{\nu+1}} &\leq C\left( \| D\psi\|_{C^{l,\alpha}_{\nu}} + \| \psi \|_{C^{0}_{\nu+1}}\right),\\
\| \psi \|_{C^{l+1,\alpha}_{\nu+1}} &\leq C \left( \| D\psi\|_{C^{l,\alpha}_{\nu}} + \| \psi \|_{L^2_{\nu+1}}\right)
\end{aligned}
\]
for all smooth compactly supported spinor fields $\psi$.
\end{theorem}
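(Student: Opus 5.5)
The plan is to follow the scaling technique of Remark \ref{rmk:Scaling:Technique}, treating the three estimates in turn. We use the equivalent norms of Remark \ref{rmk:Weighted:Sobolev}, so that it suffices to control $\sum_j \|r^{-\nu-1+j}\nabla^j\psi\|$ plus the weighted H\"older seminorm. On the compact piece $K$ the standard interior elliptic estimates for $D$ on a slightly larger compact set $K'$ give the bound by $\|D\psi\|_{L^p_l(K')}+\|\psi\|_{L^p(K')}$, and this is absorbed into the weighted norms. The substance lies on the end $\{r\geq R\}\subset\BC(\Sigma)$.

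On the end we cover the annuli $A_k=\{2^kR\leq r\leq 2^{k+1}R\}$ by a uniformly bounded number of sets $U$ over which $\BC(\Sigma)$ is a trivial circle bundle. Each $U$ is further covered by balls of radius $c\,2^kR$ in the base cone direction, times the full circle fibre. Rescaling the base directions by $(2^kR)^{-1}$, a region $\pi^{-1}(U\cap A_k)$ becomes $B\times S^1$, where $B$ is a fixed-size region and the circle has length $2\pi\ell/(2^kR)$. The circle-length collapse is the issue: uniform estimates cannot be applied directly on a collapsing region. The fix is to pass to the universal cover in the fibre direction, that is to work on $B\times\R$, or equivalently on a $2^kR$-fold cover $B\times S^1_{2\pi\ell}$ where the rescaled metric has bounded geometry uniformly in $k$. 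Indeed, the rescaled metric converges, with all derivatives, to the product-like metric $g_{\tu{C}}|_B+\ell^2\theta^2$ with curvature $d\theta$ of size $O((2^kR)^{-1})$ after rescaling, using Remark \ref{rmK:Difference:connections} and $|\nabla^j(f^\ast g-g_\BC)|=O(r^{\tau-j})$. The rescaled operator $(2^kR)D$ is a Dirac-type operator whose coefficients are uniformly bounded in $C^l$ by \eqref{eq:comparison:Dirac} and Definition \ref{def:Admissible:connection}. Standard interior $L^p$ and Schauder estimates on nested domains therefore hold with constants independent of $k$. Because the lifted section is periodic, the $L^p$ norms on the cover are just $(2^kR)$-multiples of those downstairs, equal on both sides, so the estimate descends. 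H\"older seminorms are local and descend directly.

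Undoing the scaling, each derivative costs a factor $(2^kR)^{-1}$, matching the weights $r^{-\nu-1+j}$. Summing the $p$-th powers over $k$ and over the bounded-multiplicity cover gives the first inequality, and taking the supremum gives the second. For the oscillatory part we also want decay $r^{-\nu-1+l+1}$ on all derivatives of $\Pi_\perp\psi$. This follows from the same estimate combined with Lemma \ref{lem:Control:Oscillatory}, applied iteratively: $\Pi_\perp$ commutes with $\overline\nabla$, and the error $\nabla-\overline\nabla$ is of lower order, absorbed for $R$ large. This step, checking that the stronger oscillatory weights in Definitions \ref{def:Weighted:Sobolev}--\ref{def:Weighted:Holder} are recovered rather than merely the conical ones, is where we expect the main bookkeeping obstacle. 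If it proves awkward, we would simply prove the estimate in the equivalent norms of Remark \ref{rmk:Weighted:Sobolev} and invoke that equivalence.

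For the third inequality, we take the second estimate and bound $\|\psi\|_{C^0_{\nu+1}}$. Locally on each rescaled ball in the cover, interior Sobolev and elliptic estimates give
\[
\sup_{B'}|\psi|\leq C\bigl(\|D\psi\|_{C^{0,\alpha}(B)}+\|\psi\|_{L^2(B)}\bigr),
\]
uniformly in $k$. Rescaling, the $L^2(B)$ norm on the cover corresponds to $r^{-n/2}\|\psi\|_{L^2(\pi^{-1}(U\cap A_k))}$ up to the fibre-length factor, which precisely matches the $r^{-n}$ volume weight in \eqref{eq:Weighted:Sobolev}. Hence $\sup r^{-\nu-1}|\psi|\leq C(\|D\psi\|_{C^{0,\alpha}_\nu}+\|\psi\|_{L^2_{\nu+1}})$, and combining with the second inequality concludes the proof.
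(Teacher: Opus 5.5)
Your proposal is correct and follows the paper's route. The paper proves this theorem only by appeal to the scaling technique of Remark~\ref{rmk:Scaling:Technique}: Bartnik-type rescaling of dyadic annuli, combined for the oscillatory part with passing to covers of degree about $2^kR$ of locally trivialised pieces of $\BC(\Sigma)$ so that the collapsing circle is unwrapped. That is what you carry out, and your local $L^2\to C^0$ estimate for the third inequality and your use of Lemma~\ref{lem:Control:Oscillatory} for the stronger oscillatory weights supply details the paper leaves implicit.

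One caveat, which applies equally to the paper's statement and is not specific to your argument: the interior $L^p$ estimates you invoke need $1<p<\infty$. In fact the first inequality fails for $p=1$ already locally, since the derivative of the fundamental solution of $D$ behaves like $|x|^{-(n+1)}$ and is not locally integrable in dimension $n+1$. So your argument establishes that inequality exactly for $p\in(1,\infty)$.
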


\section{Fredholm theory for Dirac-type operators on ALC manifolds}
\label{sec:fredholm}

This section contains the main results of the first part of the paper: a complete Fredholm theory in weighted $L^2$ and H\"older spaces for admissible Dirac-type operators on ALC manifolds. We continue working with the notation and set-up of Section \ref{sec:Set:up}. Thus $(M^{n+1},g)$ is an ALC manifold and $D\co V_M \otimes E \ra V_M \otimes E$ is an admissible Dirac-type operator. Since $M$ is noncompact, ellipticity is not sufficient to guarantee that $D$ acting on the weighted Banach spaces of Definitions \ref{def:Weighted:Sobolev} and~\ref{def:Weighted:Holder} is a Fredholm operator. As is well understood, $D$ is Fredholm if and only if it is ``invertible at infinity''. Recall that by \eqref{eq:Asymptotic:Dirac:decomposition} we can decompose $\overline{D}$ as $\Pi_0\,\overline{D}\,\Pi_0 + \Pi_\perp\,\overline{D}\,\Pi_\perp$. In view of the inequality \eqref{eq:Control:Oscillatory}, one expects that $\Pi_\perp\,\overline{D}\,\Pi_\perp$ is always invertible. Thus the Fredholm theory of $D$ is controlled by the mapping properties of the operator $\Pi_0\,\overline{D}\,\Pi_0$. Recall that the latter can be identified with a Dirac-type operator $D_\tu{C}$ on the cone $\tu{C}(\Sigma)$ that is adapted to the conical geometry. The theory developed by Lockhart--McOwen \cite{Lockhart:McOwen} gives in particular a Fredholm theory for this kind of operator.

\subsection{The model operator}

In this preliminary section we study mapping properties of the model operator $\overline{D}$ acting on weighted Sobolev spaces on an exterior region of $\BC (\Sigma)$. We split $\overline{D}=\Pi_0\overline{D}\Pi_0 + \Pi_\perp\overline{D}\Pi_\perp$ and study the two summands separately.

\subsubsection{Dirac-type operators on the cone\texorpdfstring{ $\tu{C}(\Sigma)$}{}}\label{sec:Model:Invariant}

By Definitions \ref{def:Admissible:bundle} and \ref{def:Admissible:connection} the operator $\Pi_0\, \overline{D}\, \Pi_0$ can be identified with a Dirac-type operator $D_\tu{C}$ acting on sections of a bundle $V_\tu{C}\otimes \widecheck{E}$ on an exterior region of the cone $\tu{C}(\Sigma)$. Here $V_\tu{C}=P_G\big( \tu{C}(\Sigma)\big)\times_GV$, where $P_G\big( \tu{C}(\Sigma)\big)$ is the $G$--structure on $T\tu{C}(\Sigma) \oplus \det T\tu{C}(\Sigma)$ that pulls back to the $G$--structure on $\BC (\Sigma)$ and $V$ is a $G$--Clifford module. The Hermitian bundle $V_\tu{C}\otimes \widecheck{E}$ is endowed with a connection $\overline{\nabla}$ in radial gauge.

We will understand the mapping properties of $\Pi_0\, \overline{D}\, \Pi_0$ by separation of variables. Therefore we now relate $D_\tu{C}$ to a Dirac-type operator on $\Sigma$, thought of as the hypersurface $\{ r=1\}$ in $\tu{C}(\Sigma)$ with outward unit normal $\partial_r$. A spinor $\psi$ on $\tu{C}(\Sigma)$ defines by restriction a section of the bundle $V_\tu{C}\otimes \widecheck{E}|_{\Sigma}$.

\begin{remark*}
If $V_{\tu{C}}$ is the bundle of complex differential forms then its restriction to $\Sigma$ is two copies of the bundle of complex differential forms on $\Sigma$, exchanged by Clifford multiplication by $\partial_r$ (\ie $dr\wedge\,\cdot\, + \partial_r\lrcorner\cdot$). Similarly, if $\SigmP$ is spin and $V_\tu{C}$ is the spinor bundle of $\tu{C}(\SigmP)$ then
\[
V_\tu{C}|_{\SigmP} \simeq 
\begin{dcases}
\slashed{S}(\SigmP)=\slashed{S}^+(\SigmP)\oplus\slashed{S}^-(\SigmP) & \mbox{ if } n \mbox{ is odd},\\
\slashed{S}(\SigmP)\oplus\slashed{S}(\SigmP) & \mbox{ if } n \mbox{ is even}.
\end{dcases}
\] 
Clifford multiplication by $\partial_r$ exchanges the two factors in these decompositions.
\end{remark*}
 
The Dirac-type operator $D_{\tu{C}}$ on $\tu{C}(\Sigma)$ induces a \emph{hypersurface Dirac-type operator} $D_\Sigma$ on $V_\tu{C}\otimes \widecheck{E}|_{\Sigma}$ defined by
\begin{equation}\label{eq:Hypersurface:Dirac}
D_\Sigma\psi = \sum_{i=1}^{n-1}{\gamma(\partial_r)\gamma (e_i)\overline{\nabla}^\Sigma_{e_i}\psi} -\frac{n-1}{2}\psi,
\end{equation}
where $\{ e_1,\dots, e_{n-1}\}$ is a orthonormal frame for $(\Sigma,g_\Sigma)$ and $\overline{\nabla}^\Sigma$ is the connection on $V_\tu{C}\otimes \widecheck{E}|_{\Sigma}$ induced from the Levi-Civita connection on $\Sigma$ and the connection $\nabla_{\overline{E}}$ on $\widecheck{E}$. Note that $D_\Sigma$ is an elliptic self-adjoint operator. Using the fact that the second fundamental form of $\Sigma$ in $\tu{C}(\Sigma)$ in the direction of $\partial_r$ is the identity, one has

\begin{align*}
D_\tu{C}\psi = & \gamma (\partial_r)\overline{\nabla}_{\partial_r}\psi -\gamma (\partial _r) \sum_{i=1}^{n-1}{\gamma(\partial_r)\gamma (e_i)\overline{\nabla}_{e_i}\psi}\\
=& \gamma (\partial_r)\overline{\nabla}_{\partial_r}\psi -\gamma (\partial _r) \left( \sum_{i=1}^{n-1}{\gamma(\partial_r)\gamma (e_i)\overline{\nabla}^\Sigma_{e_i}\psi} -\frac{n-1}{2}\psi\right)\\
=& \gamma (\partial_r)\overline{\nabla}_{\partial_r}\psi -\gamma (\partial _r)D_\Sigma\psi.
\end{align*}

Finally, exploiting the conical structure of $\tu{C}(\Sigma)$, every section $\psi$ of $V_\tu{C}\otimes \widecheck{E}|_{\Sigma}$ can be extended to a section of $V_\tu{C}\otimes \widecheck{E}$, still denoted by $\psi$, by parallel transport along radial geodesics. Then for any radial function $u(r)$ we have \cite[Proposition 2.5]{Chou}
\begin{equation}\label{eq:Dirac:Cone}
D_{\tu{C}}\big( u(r)\psi \big) = \gamma (\partial_r) \left( u'(r)\psi - \frac{u}{r}D_\Sigma\psi\right),
\end{equation}
where $u'$ denotes differentiation with respect to $r$. In particular, the eigenvalues of $D_\Sigma$ control the growth of harmonic spinors on $\tu{C}(\Sigma)$ since $D_{\tu{C}}(r^\lambda\psi)=0$ if and only if $D_\Sigma\psi=\lambda\psi$.

\begin{definition}\label{def:Indicial:roots}
Let $\mathcal{D}_{D_\tu{C}} \subset \R$ be the (discrete) set of eigenvalues of $D_\Sigma$. We refer to $\mathcal{D}_{D_\tu{C}}$ as the set of \emph{indicial roots} of $D_\tu{C}$.
\end{definition} 
(If $\Sigma$ is non-orientable, this is equivalent to considering eigenvalues of $D_\SigmP$ acting on $\Z_2$--invariant spinors on the oriented double cover $\SigmP$.)

For $\lambda\in\mathcal{D}_{\tu{C}}$ let $d(\lambda)$ be the (finite) dimension of the space of solutions to $D_\tu{C}\psi=0$ of the form $\psi = r^\lambda\sum_{i=1}^k{(\log{r})^i\psi_i}$, where $\psi_i$ are sections of $V_\tu{C}\otimes\widecheck{E}|_{\Sigma}$ extended radially to $\tu{C}(\Sigma)$. Then for $\nu_1 < \nu_2$ in $\R\setminus \mathcal{D}_{D_\tu{C}}$ set
\begin{equation}\label{eq:Jump:Index:Count}
N(\nu_1,\nu_2) = \sum_{\lambda\in \mathcal{D}_{\tu{C}}\cap (\nu_1,\nu_2)}{d(\lambda)}.
\end{equation}

\begin{remark*}
Exploiting \eqref{eq:Dirac:Cone}, one can check that in fact there are no elements in the kernel of $D_\tu{C}$ which are nontrivial polynomials in $\log{r}$. In particular $d(\lambda)$ is the dimension of the eigenspace of~$D_\Sigma$ of eigenvalue $\lambda$. For higher-order operators though $\log{r}$ terms can indeed appear.
\end{remark*}

Separation of variables using Fourier decomposition along the eigenspaces of $D_\Sigma$ allows one to understand completely the mapping properties of the operator $D_\tu{C}$ in weighted Sobolev spaces.

\begin{prop}\label{prop:Estimate:Cone}
Fix $R_0>0$ and $\nu\notin \mathcal{D}_{D_\tu{C}}$ (and $0<\alpha<1$). For every $\phi\in L^2_{\nu-1}$ ($\phi\in C^{0,\alpha}_{\nu-1}$) there exists a solution $\psi\in L^2_{1,\nu}$ ($\psi\in C^{1,\alpha}_{\nu}$) of the inhomogeneous Dirac equation $D_\tu{C}\psi=\phi$ in the exterior region $\{ r\geq R_0\}$. Moreover, there exists a constant $C>0$ independent of $\psi$ and $\phi$ such that
\[
\| \psi \|_{L^2_{1,\nu}} \leq C \| \phi \| _{L^2_{\nu-1}}, \qquad \| \psi \|_{C^{1,\alpha}_{\nu}} \leq C \| \phi \| _{C^{0,\alpha}_{\nu-1}}.
\]
\proof
Let $\{ \psi_j\}_{j\in\Z}$ be an $L^2$--orthonormal basis of eigenspinors for $D_\Sigma$ with $D_\Sigma\psi_i=\lambda_i\psi_i$. Write $-\gamma (\partial_r)\phi = \sum_{i\in\Z}{b_i(r)\, \psi_i}$ and look for a solution $\psi$ of the form $\psi=\sum_{i\in\Z}{a_i(r)\,\psi_i}$. Exploiting \eqref{eq:Dirac:Cone}, one can check that 
\[
a_i (r) = \begin{dcases}
 r^{\lambda_i}\int_{R_0}^{r}{s^{-\lambda_i}b_i (s)ds} & \mbox{if }\lambda_i <\nu,\\
 -r^{\lambda_i}\int_{r}^{\infty}{s^{-\lambda_i}b_i (s)ds} & \mbox{if }\lambda_i >\nu.
\end{dcases}
\] 
yields a solution to $D_\tu{C}\psi=\phi$. It is equally straightforward to verify that 
\[
\| \psi \| _{L^2_{0,\nu}} \leq C \| \phi\| _{L^2_{\nu-1}}, \qquad \| \psi\|_{C^0_\nu}\leq C  \| \phi\| _{C^{0,\alpha}_{\nu-1}}
\]
for a uniform constant $C$ that only depends on (a lower bound for) $R_0$ and the distance of $\nu$ from~$\mathcal{D}_{D_\tu{C}}$. Theorem \ref{thm:Weighted:Regularity} then concludes the proof.
\endproof
\end{prop}

\begin{remark}\label{rmk:Boundary:Conditions:Cone}
Note that the solution $\psi$ to $D_\tu{C}\psi=\phi$ on $\{r \geq R_0\}$ given by Proposition \ref{prop:Estimate:Cone} has the property that $\psi|_{\{ r=R_0\}}$ lies in the sum of eigenspaces of $D_\Sigma$ of eigenvalues $\lambda>\nu$.
\end{remark}

\begin{remark}
    We could carry out some of the above theory for some non-admissible operators $\overline D$, \eg the Dirac operator of an ALC spin manifold such that the spin structure on the end is non-projectable. As indicated in Remark \ref{rmk:nonproj}, one would then set $\Pi_0 = 0$, so in \ref{eq:Asymptotic:Dirac:decomposition} there would be only the ``oscillatory'' term, and no $S^1$-invariant term $D_C = \Pi_0 \overline D \Pi_0$. Therefore in this case there are no indicial roots, and the argument below leads to the conclusion that $\overline D$ is Fredholm for any rate $\nu$.
\end{remark}

\subsubsection{The model operator acting on $\ker{\Pi_0}$}\label{sec:Model:Oscillatory}

We now move on to study the mapping properties of the model operator $\overline{D}$ acting on spinors $\psi$ satisfying $\Pi_0\psi=0$. The key ingredients in this case are Lemma \ref{lem:Control:Oscillatory} and the Weitzenb\"ock formula \eqref{eq:Weitzenbock}.

Fix $r\geq R_0$ and work on the exterior domain $\{ r \geq R_0\}$ in $\BC (\Sigma)$. The crucial consequence of the Weitzenb\"ock formula \eqref{eq:Weitzenbock:Adapted} is the following integration-by-parts formula, \cf for example
\linebreak \cite[Equation~(2.6)]{Bartnik:Chrusciel},
\begin{equation}\label{eq:Bochner}
\int_{r\geq R_0}{|\overline{\nabla}\psi|^2 + \langle \overline{\mathcal{R}}\cdot\psi,\psi\rangle - \langle \gamma(d\theta)\overline{\nabla}_\xi\psi,\psi\rangle } = \int_{r\geq R_0}{|\overline{D}\psi|^2} + \int_{r=R_0}{\langle \psi, \overline{D}_\partial\,\psi \rangle}.
\end{equation}
Here $\overline{\mathcal{R}}$ is the curvature of the connection $\overline{\nabla}$ and $\overline{D}_\partial$ is the boundary Dirac-type operator analogous to the hypersurface Dirac-type operator $D_\Sigma$ of \eqref{eq:Hypersurface:Dirac}, $\theta$ is the locally defined connection 1-form and $\xi$ its dual. $\overline{D}_\partial$ is a self-adjoint elliptic operator.

The following proposition contains the crucial estimate.

\begin{prop}\label{prop:Estimate:Oscillatory}
For every $\nu\in\R$ there exists $R_0>0$ and $C>0$ such that the following holds. Assume that $\psi \in L^2_{1,\nu}(r\geq R_0)$ satisfies $\Pi_0\psi=0$ and $\langle\psi, \overline{D}_\partial\, \psi \rangle_{L^2(r=R_0)} \leq 0$. Then
\[
\| \psi \| _{L^2_{1,\nu}(r\geq R_0)} \leq C \|\overline{D}\psi\|_{L^2_{\nu-1}(r\geq R_0)}.
\]
\proof
We apply \eqref{eq:Bochner} to $r^{-\nu +1 -\frac{n}{2}}\psi$. The boundary term is non-positive and therefore
\[
\| \overline{\nabla} (r^{-\nu +1 -\frac{n}{2}}\psi)\|^2_{L^2} \leq \| \overline{D}(r^{-\nu +1 -\frac{n}{2}}\psi)\|^2_{L^2} + \| \overline{\mathcal{R}}\|_{L^\infty(r\geq R_0)} \| \psi\|^2_{L^2_{\nu-1}} + \|d\theta\|_{L^\infty(r\geq R_0)} \| \overline{\nabla}_\xi\psi\|_{L^2_{\nu-1}} \| \psi\|_{L^2_{\nu-1}}.
\]
Note that the $L^2$--norms are well defined since $\psi\in L^2_{1,\nu}$ and $\Pi_0\psi=0$.

Now, using the existence of a uniform constant $C>0$ such that $\overline{\nabla}r \leq C$, $\| \overline{\mathcal{R}}\|_{L^\infty(r\geq R_0)} \leq Cr^{-2}$ (by Proposition \ref{prop:Curvature:Model} and Remark \ref{rmk:Curvature:Cone}) and $\| d\theta\|_{L^\infty(r\geq R_0)} \leq Cr^{-2}$ (as in the proof of Proposition~\ref{prop:Curvature:Model}), we deduce
\[
\| \overline{\nabla}\psi \|^2_{L^2_{0,\nu-1}} \leq C\left( \| \overline{D}\psi\|^2_{L^2_{0,\nu-1}} + R_0^{-2} \left( \|\overline{\nabla}_\xi\psi\| _{L^2_{0,\nu-1}} + \|\psi \|_{L^2_{0,\nu-1}}\right) \|\psi \|_{L^2_{0,\nu-1}}\right).
\]
In view of Lemma \ref{lem:Control:Oscillatory} we now choose $R_0$ large enough so that the last terms can be absorbed in the left-hand-side of the inequality.
\endproof
\end{prop}

\begin{remark}\label{rmk:APS}
The requirement that $\langle \psi, \overline{D}_\partial\, \psi\rangle_{L^2(r=R_0)} \leq 0$ is the Atiyah--Patodi--Singer boundary condition \cite{APS} for the Dirac-type operator $\overline{D}$.
\end{remark}

\subsection{The Fredholm property}

Propositions \ref{prop:Estimate:Cone} and \ref{prop:Estimate:Oscillatory} are the key results needed to establish a Fredholm theory for the admissible Dirac-type operator $D$. In order to extend these estimates for the model operator~$\overline{D}$ on an exterior region in $\BC (\Sigma)$ to estimates for the Dirac-type operator $D$ on the whole ALC manifold $M$ we use cutoff functions and perturbation arguments.

\begin{theorem}\label{thm:Fredholm}
Fix $\nu \in \R \setminus \mathcal{D}_{D_\tu{C}}$ and $\alpha\in (0,1)$. Then there exists a compact set $K\subset M$ and a constant $C>0$ such that
\[
\| \psi \| _{L^2_{1,\nu}} \leq C \left( \| D\psi \| _{L^2_{\nu-1}} + \| \psi \| _{L^2 (K)}\right), \qquad \| \psi \| _{C^{1,\alpha}_{\nu}} \leq C \left( \| D\psi \| _{C^{0,\alpha}_{\nu-1}} + \| \psi \| _{L^2 (K)}\right)
\]
for all smooth compactly supported spinor field $\psi$. In particular, $D\co L^2_{1,\nu}\ra L^2_{\nu-1}$ and $D\co C^{1,\alpha}_\nu\ra C^{0,\alpha}_{\nu-1}$ are Fredholm operators.
\proof
The fact that the Fredholm property follows from the claimed estimate (for $D$ and $D^\ast= D$) is standard, \cf for example \cite[\S 2]{Lockhart:McOwen}. We therefore only explain how to use Propositions \ref{prop:Estimate:Cone} and \ref{prop:Estimate:Oscillatory} to prove the estimates.

Consider first the weighted $L^2$ estimate. Let $K'\subset K$ be compact subsets of $M$ such that $M \setminus K'$ is identified with the exterior region $\{r \geq R_0\}$ in $\BC (\Sigma)$. Here $R_0$ is sufficiently large so that Proposition \ref{prop:Estimate:Oscillatory} holds. We also assume that under the identification of $M \setminus K$ with an exterior region in $\BC (\Sigma)$, $K\setminus K'$ is identified with the region $\{ R_0 \leq r \leq R_0 +1\}$. Let $\chi, 1-\chi$ be a partition of unity subordinate to the cover $M = \left(M\setminus K'\right) \cup \text{int}(K)$, where $\text{int}(K)$ denotes the interior of $K$.

Propositions \ref{prop:Estimate:Cone} and \ref{prop:Estimate:Oscillatory} imply that
\[
\| \chi \psi \| _{L^2_{1,\nu}} \leq C \| \overline{D}(\chi\psi) \|_{L^2_{0,\nu-1}}.
\]
Since $D-\overline{D}=O(r^{\tau-1})$ for some $\tau<0$ (and similarly $\nabla - \overline{\nabla}=O(r^{\tau-1})$), by taking $K',K$ (and therefore $R_0$) larger if necessary, we can replace $\overline{D}$ with $D$ (and $\overline{\nabla}$ with $\nabla$). Combining the resulting estimate with standard elliptic regularity for $(1-\chi)\psi$ in the compact set $K$ yields the estimate of the theorem.

The proof of the weighted H\"older estimate is similar. Fix an auxiliary rate $\nu'>\nu$ such that $\nu'<\nu+1$ and $[\nu,\nu']$ does not contain any indicial root. By Lemma \ref{lem:Weighted:Embedding} (iii) $\|D\psi\|_{L^2_{\nu'-1}}\leq C \| D\psi\|_{C^{0,\alpha}_{\nu-1}}$. Hence the weighted $L^2$ estimate already established shows that
\[
\| \psi \| _{L^2_{1,\nu'}} \leq C \left( \| D\psi \| _{C^{0,\alpha}_{\nu-1}} + \| \psi \| _{L^2 (K)}\right).
\]
We want to apply the regularity estimate of Theorem \ref{thm:Weighted:Regularity}, but for this we need to improve from rate $\nu'$ to rate $\nu$. Consider $\chi\psi$, where $\chi$ is the cutoff function defined earlier. Now, on one side Proposition \ref{prop:Estimate:Cone} shows that $\|\Pi_0 (\chi\psi)\|_{C^{0,\alpha}_\nu}$ is controlled by the $C^{0,\alpha}_{\nu-1}$--norm of its image under $D$. On the other side, in view of Lemma \ref{lem:Control:Oscillatory} we have $\| \Pi_\perp (\chi\psi)\|_{L^2_\nu} \leq \| \Pi_\perp (\chi\psi)\|_{L^2_{\nu'-1}} \leq \|\chi\psi\|_{L^2_{\nu'}}$. Combining these observations with the third estimate in Theorem \ref{thm:Weighted:Regularity} concludes the proof. 
\endproof
\end{theorem}

\subsection{The index jump formula}

For $\nu \notin \mathcal{D}_{D_\tu{C}}$ let $i(\nu)$ denote the index of the Fredholm operator $D\co L^2_{1,\nu}\ra L^2_{\nu-1}$. It is not difficult to prove that $i(\nu)$ also coincides with the index of the Fredholm operator $D\co C^{1,\alpha}_\nu\ra C^{0,\alpha}_{\nu-1}$. The goal of this final part of the section is to prove the following index jump formula.

\begin{theorem}\label{thm:Index:jump}
For every $\nu,\nu'\notin \mathcal{D}_{D_\tu{C}}$ with $\nu < \nu'$ we have
\[
i(\nu')-i(\nu)=N(\nu,\nu'),
\]
where $N(\nu, \nu')$ is the number defined in \eqref{eq:Jump:Index:Count}.
\end{theorem}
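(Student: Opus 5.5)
By a continuity/stability argument it suffices to prove the formula when $(\nu,\nu')$ contains a single indicial root $\lambda$, or none. We then sum over consecutive indicial roots. If $[\nu,\nu']\cap\mathcal{D}_{D_\tu{C}}=\emptyset$, the constants in Proposition~\ref{prop:Estimate:Cone} are uniform for rates in $[\nu,\nu']$, and Proposition~\ref{prop:Estimate:Oscillatory} holds uniformly there too, after enlarging $R_0$. Hence the estimate of Theorem~\ref{thm:Fredholm} holds uniformly along the interval. Conjugating by $r^{\mu-\nu}$ gives a continuous family of Fredholm operators $L^2_{1,\nu}\to L^2_{\nu-1}$, so the index is constant. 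It therefore remains to treat an interval $(\nu,\nu')$ containing exactly one root $\lambda$, and to show $i(\nu')-i(\nu)=d(\lambda)$.

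For this I would follow the Lockhart--McOwen strategy, using duality and self-adjointness. First, $L^2_{1,\nu}\subset L^2_{1,\nu'}$, so $\ker_\nu D\subseteq\ker_{\nu'}D$. By Lemma~\ref{lem:Dual:weighted:Sobolev} and Lemma~\ref{lem:integration:parts}, the cokernel of $D\colon L^2_{1,\nu}\to L^2_{\nu-1}$ is identified with $\ker D$ on $L^2_{-\nu+1-n}$, which is the kernel at the dual rate $\nu^\ast=-\nu-n+1$ (using elliptic regularity, Theorem~\ref{thm:Weighted:Regularity}). So
\[
i(\nu')-i(\nu)=\bigl(\dim\ker_{\nu'}-\dim\ker_\nu\bigr)+\bigl(\dim\ker_{-\nu-n+1}-\dim\ker_{-\nu'-n+1}\bigr).
\]

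The key step is an asymptotic expansion. Take $\psi\in\ker_{\nu'}D$. Write $D\psi=0$ on the end as $D_\tu{C}\Pi_0\psi=-\Pi_0 Q\psi$ and $\overline D\Pi_\perp\psi=-\Pi_\perp Q\psi$. Since $Q$ decays like $r^{\tau-1}$, iterating Proposition~\ref{prop:Estimate:Cone} and Proposition~\ref{prop:Estimate:Oscillatory} gives $\Pi_\perp\psi\in L^2_{1,\nu}$. It also gives
\[
\Pi_0\psi=\chi\, r^\lambda\phi+\psi',\qquad \psi'\in L^2_{1,\nu},
\]
where $\phi$ is a $\lambda$-eigenspinor of $D_\Sigma$; here the Fourier coefficients of eigenvalue $\lambda$ pick up a homogeneous term, exactly as in the explicit formula for $a_i$. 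This defines a map $\ker_{\nu'}/\ker_\nu\hookrightarrow E_\lambda$, where $E_\lambda$ is the $\lambda$-eigenspace, of dimension $d(\lambda)$. The same applies at the dual rates, using the root $\lambda^\ast=-\lambda-n+1$, and one checks from the symmetry of $D_\Sigma$'s spectrum that $\lambda^\ast$ is a root with $E_{\lambda^\ast}=\gamma(\partial_r)E_\lambda$ up to sign.

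The hardest part is showing that the two images are complementary, i.e.\ that their dimensions add up to $d(\lambda)$. For $\psi_1\in\ker_{\nu'}$ and $\psi_2\in\ker_{-\nu-n+1}$, Lemma~\ref{lem:integration:parts} fails only at the borderline. Computing $\langle D\psi_1,\chi_R\psi_2\rangle-\langle\psi_1,D(\chi_R\psi_2)\rangle$ with cutoffs, only the leading terms survive as $R\to\infty$. This yields a boundary pairing
\[
\omega(\phi_1,\phi_2)=\int_\Sigma\langle\gamma(\partial_r)\phi_1,\phi_2\rangle,
\]
which is nondegenerate between $E_\lambda$ and $E_{\lambda^\ast}$ and must vanish on the images. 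That gives $\dim(\text{image}_1)+\dim(\text{image}_2)\le d(\lambda)$. For the reverse inequality I would prove surjectivity onto the annihilator by construction. Given $\phi\in E_\lambda$ orthogonal under $\omega$ to the dual image, solve $D\eta=-D(\chi r^\lambda\phi)$ with $\eta\in L^2_{1,\nu}$. The right-hand side lies in $L^2_{\nu-1}$, since $D(\chi r^\lambda\phi)=\chi Q r^\lambda\phi+$(compactly supported), and it is $L^2$-orthogonal to the cokernel precisely by the vanishing of $\omega$. Then $\chi r^\lambda\phi+\eta\in\ker_{\nu'}$. Combining the two inequalities gives exactly $d(\lambda)$.
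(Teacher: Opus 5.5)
Your argument is correct, but it takes a genuinely different route from the paper's.

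\textbf{The paper's route.} The paper follows Pacard. It builds a right inverse of $D$ on the end (Lemma~\ref{Step1}) and corrects the cut-off homogeneous solutions $\chi\psi_i$ to exact solutions $\widetilde\psi_i$ on $M\setminus K$. It then proves an asymptotic expansion (Lemma~\ref{Step3}) for every $\psi\in L^2_{1,\nu'}$ with $D\psi\in L^2_{\nu-1}$, not only for kernel elements. Finally it compares the augmented operator $D'(u,\psi)=D(u+\chi\psi)$ on $L^2_{1,\nu}\oplus W$ with $D$ at rate $\nu'$. Trivially $\tu{index}\,D'=i(\nu)+N$, and the two lemmas give equal kernels and isomorphic cokernels. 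There is no Green's-formula pairing and no use of the spectral symmetry of $D_\Sigma$, so the argument carries over essentially unchanged to the second-order operators of Theorem~\ref{thm:Fredholm:2nd:order}, where $\log r$ terms appear.

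\textbf{Your route.} You use the Fredholm alternative with $\nu^\ast=-\nu-n+1$, together with a boundary pairing $\omega$ between $E_\lambda$ and $E_{\lambda^\ast}=\gamma(\partial_r)E_\lambda$. You show that the images of $\ker_{\nu'}/\ker_\nu$ and $\ker_{\nu^\ast}/\ker_{\nu'^\ast}$ are exact $\omega$-annihilators of each other. The cross terms in the pairing do vanish, since they scale like $R^{\lambda-\nu'}$, $R^{\nu-\lambda}$ and $R^{\nu-\nu'}$. Solvability of $D\eta=-D(\chi r^\lambda\phi)$ is then exactly the condition that $\phi$ be $\omega$-orthogonal to the dual image.

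\textbf{What each buys.} Your approach gives finer information: how the jump $d(\lambda)$ splits between growth of the kernel and loss of cokernel. For instance, at the self-dual root $\lambda=-\tfrac{n-1}{2}$ you have $\nu'=\nu^\ast$, and the image is $\omega$-Lagrangian, so exactly half of $E_\lambda$ extends. This is the kind of information the paper later extracts by hand in Section~\ref{sec:Hodge:ALC}. The cost is that you rely on formal self-adjointness and on the anticommutation of $\gamma(\partial_r)$ with $D_\Sigma+\tfrac{n-1}{2}$, and the pairing becomes more delicate once logarithmic terms appear.

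\textbf{Small technical point.} Proposition~\ref{prop:Estimate:Oscillatory} is an a priori estimate that assumes membership in $L^2_{1,\nu}$. So ``iterating'' does not by itself upgrade $\Pi_\perp\psi$ from rate $\nu'$ to rate $\nu$. You need the argument in the proof of Lemma~\ref{Step3}: apply the APS right inverse at rate $\nu$ to $\overline{D}(\chi'\psi)$, then use uniqueness at rate $\nu'$, which is available since $\chi'\psi$ vanishes at $r=R_0$. You also need $\nu'-\nu$ small compared with the decay of $D-\overline D$, which your reduction step allows you to arrange.
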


It is enough to consider the case where $\nu = \nu_\ast -\epsilon$ and $\nu'=\nu_\ast + \epsilon$, where $\epsilon>0$ is chosen small enough so that $[\nu_\ast-\epsilon, \nu_\ast+\epsilon]$ does not contain any indicial root other than possibly $\nu_\ast$. 
The proof is modelled on \cite[Chapter 11]{Pacard}. We isolate two initial steps in the following two lemmas, which are as important as the theorem itself.

\begin{lemma}\label{Step1}
For every $\nu\notin\mathcal{D}_{D_\tu{C}}$ there exists a compact set $K\subset M$ and a constant $C>0$ such that for every $\phi\in L^2_{\nu-1}$ there exists a solution $\psi\in L^2_{1,\nu}$ to the inhomogeneous Dirac equation $D\psi=\phi$ on $M \setminus K$ and satisfying
\[
\| \psi \| _{L^2_{1,\nu}(M \setminus K)} \leq C \| \phi \| _{L^2_{\nu-1}}.
\]
\proof
For $K$ sufficiently large we can identify $M\setminus K$ with an exterior region $\{ r\geq R_0 \}$ in $\BC (\Sigma)$. The statement of the Lemma with $\overline{D}$ in place of $D$ follows immediately from Propositions \ref{prop:Estimate:Cone} and~\ref{prop:Estimate:Oscillatory}. Indeed, we decompose $\phi$ as $\Pi_0 \phi+ \Pi_\perp \phi$ and accordingly look for $\Pi_0 \psi$ and $\Pi_\perp \psi$ separately. Proposition \ref{prop:Estimate:Cone} guarantees the existence of $\Pi_0 \psi$ with the required estimate. Proposition \ref{prop:Estimate:Oscillatory} shows that the elliptic boundary value problem $D\Pi_\perp \psi = \Pi_\perp \phi$ with the Atiyah--Patodi--Singer boundary condition of Remark \ref{rmk:APS} is Fredholm with trivial kernel and cokernel (since the adjoint problem is the analogous boundary value problem with $-\nu-n+1$ in place of $\nu$). Thus $\overline{D}\co L^2_{1,\nu}(M \setminus K)\ra L^2_{\nu-1}(M \setminus K)$ has a bounded right inverse $\overline{G}$.

Since $D-\overline{D}=O(r^{\tau-1})$ for some $\tau<0$, for $R_0$ sufficiently large we can define a right inverse $G$ for $D$ by $G=\overline{G}(D\overline{G})^{-1}$. The inverse of $D\overline{G}$ is well defined since $D\overline{G}-\text{id}=(D-\overline{D})\overline{G}$ has arbitrarily small norm for $r\geq R_0$ with $R_0$ large. 
\endproof
\end{lemma}

Now let $\psi_1,\dots,\psi_N$ be a basis of the $N=d(\nu_*) = N(\nu,\nu')$--dimensional space of solutions to $D_{\tu{C}}\psi=0$ of the form $\psi = r^{\nu_\ast}\sum_{i=1}^k{(\log{r})^i\psi'_i}$ (where $\psi'_i$ are sections of $V_\tu{C}\otimes\widecheck{E}|_{\Sigma}$ as before \eqref{eq:Jump:Index:Count}). If $\nu_\ast\notin \mathcal{D}_{D_\tu{C}}$ then $N=0$. Using a cutoff function $\chi$ supported in the region $\{ r> R_0\}$ and satisfying $\chi\equiv 1$ on $\{ r\geq 2R_0\}$, we regard $\chi\,\psi_1,\dots, \chi\,\psi_N$, initially defined on the cone $\tu{C}(\SigmP)$, as spinors on $M\setminus K$. Since $\chi\,\psi_i\in L^2_{1,\nu'}$ and $\overline{D}\psi_i=0$, we have $D(\chi\,\psi_i) = (D-\overline{D})(\chi\,\psi_i)+\gamma (\overline{\nabla}\chi)\,\psi_i\in L^2_{\nu-1}$, provided $\epsilon = \nu_\ast-\nu=\nu'-\nu_\ast$ is chosen sufficiently small. We can then use Lemma \ref{Step1} to deform $\chi\,\psi_i$ to a solution $\widetilde{\psi}_i$ of $D\psi=0$ on $M\setminus K$ with
\[
\| \widetilde{\psi}_i -\chi\,\psi_i \| _{L^2_{1,\nu}} \leq C. 
\]

\begin{lemma}\label{Step3}
For every $\phi\in L^2_{\nu-1}$ satisfying $\phi=D\psi$ for some $\psi\in L^2_{1,\nu'}$ there exist $a=(a_1,\dots,a_N)\in\R^N$ and $\psi'\in L^2_{1,\nu}$ such that $\psi=\psi'+\chi\left( \sum_{i=1}^N{a_i\,\widetilde{\psi}_i}\right)$. Here $\chi$ is the cutoff function defined in the previous paragraph. Moreover, there exists a constant $C>0$ independent of $\phi,\psi,\psi'$ and $a$ such that
\[
\| \psi' \|_{L^2_{1,\nu}} + \|a\| \leq C\left( \| \phi \| _{L^2_{\nu-1}} + \| \psi \|_{L^2_{1,\nu'}}\right).
\]
\proof
As usual we identify $M \setminus K$ with an exterior region in $\BC (\SigmP)$.

Let $\chi'$ be a smooth function supported in $\{ r> R_0\}$ with $\chi'\equiv 1$ on $\{ r\geq R_0+1\}$. Then $\overline{D}(\chi' \psi)\in L^2_{\nu-1}$ and we can define $\psi_0 = \overline{G}\,\overline{D}(\chi' \psi)\in L^2_{1,\nu}$, where $\overline{G}$ is the right inverse of $\overline{D}$ constructed in the proof of Lemma \ref{Step1}. In particular, $\| \psi_0\|_{L^2_{1,\nu}}$ is controlled by $\| \phi \|_{L^2_{\nu-1}}$ and the $L^2$--norm of $\psi$ on the compact set $\{ R_0\leq r \leq 2R_0\}$. By Remarks \ref{rmk:Boundary:Conditions:Cone} and \ref{rmk:APS}, comparison of the boundary values on the hypersurface $\{ r=R_0\}$ implies that $\Pi_\perp (\chi' \psi) = \Pi_\perp \psi_0$, while there exists $a=(a_1,\dots,a_N)\in\R^N$ such that $\Pi_0 (\chi' \psi)=\Pi_0 \psi_0 + \sum_{i=1}^N{a_i\psi_i}$. Moreover, $\| a\|$ is controlled by $\| \phi \| _{L^2_{0,\nu-1}} + \| \psi \|_{L^2_{1,\nu'}}$. Thus $\psi=\psi_0 + \sum_{i=1}^N{a_i\psi_i}$ for $r\geq R_0$ and we can write
\begin{align*}
\psi = (1-\chi)\,\psi + \chi\, \psi &= (1-\chi)\, \psi + \chi\, \psi_0 + \chi \left( \sum_{i=1}^N{a_i\psi_i}\right)\\
&= (1-\chi)\, \psi + \chi\, \psi_0 + \chi \left( \sum_{i=1}^N{a_i(\psi_i-\widetilde{\psi}_i)}\right) + \chi \left( \sum_{i=1}^N{a_i\widetilde{\psi}_i}\right).
\end{align*}
Setting $\psi'=(1-\chi)\, \psi + \chi\, \psi_0 + \chi \left( \sum_{i=1}^N{a_i(\psi_i-\widetilde{\psi}_i)}\right)$ yields the result.
\endproof
\end{lemma}

\begin{remark*}
The asymptotic expansions established in this lemma will be used repeatedly in the rest of the paper. In practice, they are often more helpful than Theorem \ref{thm:Index:jump} itself in studying the jump of the kernel and cokernel of differential operators as $\nu$ crosses an indicial root.
\end{remark*}

We now have all the ingredients to conclude the proof of Theorem \ref{thm:Index:jump}.
\begin{proof}[Proof of Theorem \ref{thm:Index:jump}]
For any $\mu\notin \mathcal{D}_{D_\tu{C}}$ let $D_\mu$ denote the Fredholm operator $D_\mu\co L^2_{1,\mu}\ra L^2_{\mu-1}$. Set $W=\textup{Span}\{ \widetilde{\psi}_1,\dots, \widetilde{\psi}_N\}$ and define
\[
D'\co L^2_{1,\nu} \oplus W \ra L^2_{\nu-1}, \qquad D'(u,\psi) = D(u+\chi\psi).
\]
We have $\tu{index}\, D'  = \tu{index}\, D_\nu  + N$ so we have to show that $\tu{index}\, D' = \tu{index}\, D_{\nu'}$.

Now, the inclusion $L^2_{1,\nu}\oplus W\subset L^2_{1,\nu'}$ induces an inclusion $\tu{ker}(D')\subseteq \tu{ker}(D_{\nu'})$. Lemma \ref{Step3} applied to $\phi=0$ shows that $\tu{ker}(D')= \tu{ker}(D_{\nu'})$. On the other hand, the inclusions $L^2_{1,\nu}\oplus W\subset L^2_{1,\nu'}$ and $L^2_{\nu-1}\subset L^2_{\nu'-1}$ induce a map $\tu{coker}(D') \ra \tu{coker}(D_{\nu'})$. Lemma \ref{Step1} with $\nu'$ in place of $\nu$ shows that every class in $\tu{coker}(D_{\nu'})$ can be represented by a compactly supported spinor. In particular, the map $\tu{coker}(D') \ra \tu{coker}(D_{\nu'})$ is surjective. Lemma \ref{Step3} shows that this map is also injective.
\end{proof}

We conclude this section by considering natural extensions of the theory described here.

\subsection{Second-order elliptic operators}

For applications in the next sections we will also need to consider second-order elliptic operators such as the square $D^2$ of an admissible Dirac-type operator $D$ on an ALC manifold, and in particular the Bochner Laplacian acting on differential forms. We state here a theorem analogous to Theorems \ref{thm:Fredholm} and \ref{thm:Index:jump}. We omit the proof since it follows similar lines to the arguments used in the previous sections; for the case of the scalar Laplacian when $\Sigma$ is a round sphere see for example \cite[\S 2.2]{Minerbe:Mass}.

As for first-order operators, the leading-order behaviour at infinity (and therefore the Fredholm property) of $D^2$ is captured by the operator $\overline{D}^2$ on the asymptotic model $\BC (\Sigma)$. The operator $\Pi_0 \overline{D}^2 \Pi_0$ can be identified with an elliptic second-order operator $D_\tu{C}^2$ on the cone $\tu{C}(\Sigma)$. In order to understand the mapping properties of $D_\tu{C}^2$ we use separation of variables. For each $\lambda\in \R$ let $d(\lambda)$ be the dimension of the finite-dimensional vector space consisting of solutions to $D_\tu{C}^2\psi=0$ of the form $\psi = r^\lambda\sum_{i=1}^k{(\log{r})^i\psi_i}$ with $\psi_i$ sections of $V_\tu{C}\otimes\widecheck{E}|_{\Sigma}$ extended radially to $\tu{C}(\Sigma)$. There is a discrete set $\mathcal{D}_{D_\tu{C}^2}\subset\R$ of \emph{indicial roots} such that $d(\lambda)>0$. For $\nu_1 < \nu_2$ in $\R\setminus \mathcal{D}_{D_\tu{C}^2}$ set
\begin{equation}\label{eq:Jump:Index:Count:2nd:Order}
N_{D_\tu{C}^2}(\nu_1,\nu_2) = \sum_{\lambda\in \mathcal{D}_{D_\tu{C}^2}\cap (\nu_1,\nu_2)}{d(\lambda)}.
\end{equation}

\begin{theorem}\label{thm:Fredholm:2nd:order}
The linear elliptic operator $D^2\co L^2_{l,\nu}\ra L^2_{l-2,\nu-2}$ is Fredholm if and only if $\nu \in \R \setminus \mathcal{D}_{D_\tu{C}^2}$. Moreover, if $\nu,\nu'\notin \mathcal{D}_{D_\tu{C}^2}$ with $\nu < \nu'$ then the Fredholm index of $D^2\co L^2_{l,\nu'}\ra L^2_{l-2,\nu'-2}$ is equal to the sum of the Fredholm index of $D^2\co L^2_{l,\nu}\ra L^2_{l-2,\nu-2}$ and $N_{\mathcal{D}_C^2}(\nu,\nu')$.
\end{theorem}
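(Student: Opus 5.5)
I will mirror the first-order theory of Theorems \ref{thm:Fredholm} and \ref{thm:Index:jump}, replacing $\overline{D}$ by $\overline{D}^2$ and $D_\Sigma$ by the second-order indicial operator. First I write $D^2 = \overline{D}^2 + P$ on $M\setminus K$. By \eqref{eq:comparison:Dirac}, $D=\overline{D}+Q$ with $|\overline{\nabla}^jQ|=O(r^{-\min\{|\tau|,1\}-1-j})$. Hence $P=\overline{D}Q+Q\overline{D}+Q^2$ is a first-order operator whose coefficients decay one order faster than the scaling of $\overline{D}^2$. Therefore $P$ has small norm $L^2_{l,\nu}\to L^2_{l-2,\nu-2}$ on $\{r\geq R_0\}$ once $R_0$ is large. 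Since $[\overline{D},\Pi_0]=0$, we also have $\overline{D}^2=\Pi_0\overline{D}^2\Pi_0+\Pi_\perp\overline{D}^2\Pi_\perp$, so the two pieces can be treated separately.

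\emph{Invariant part.} $\Pi_0\overline{D}^2\Pi_0=D_\tu{C}^2$. By \eqref{eq:Dirac:Cone} this is an operator of the form $\gamma(\partial_r)$ applied to radial derivatives, with $D_\Sigma$ as tangential operator. Decomposing in eigenspinors of $D_\Sigma$ turns $D_\tu{C}^2\psi=\phi$ into second-order Euler-type ODEs on each mode. Each ODE has two indicial exponents, and a double exponent produces a $\log r$ solution; these exponents make up $\mathcal{D}_{D^2_\tu{C}}$. Choosing in each mode the Green's kernel integrating from $R_0$ or from $\infty$, according to whether the exponent lies below or above $\nu$ (as in Proposition \ref{prop:Estimate:Cone}), gives a bounded right inverse $L^2_{\nu-2}\to L^2_{\nu}$ for $\nu\notin\mathcal{D}_{D_\tu{C}^2}$. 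I then upgrade this to $L^2_{l,\nu}$ using the second-order analogue of Theorem \ref{thm:Weighted:Regularity}, proved with the scaling technique of Remark \ref{rmk:Scaling:Technique}.

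\emph{Oscillatory part.} I compose the first-order solution operators: on $\ker\Pi_0$, apply the right inverse $\overline{G}$ of Lemma \ref{Step1} twice, at rates $\nu-1$ and then $\nu$. Proposition \ref{prop:Estimate:Oscillatory} holds for every weight, so the oscillatory part is invertible at all rates. Lemma \ref{lem:Control:Oscillatory} supplies the extra decay required by the norms.

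\emph{Fredholm property and index jump.} Patching the two model inverses with cutoffs and perturbing by $P$ gives the estimate $\|\psi\|_{L^2_{l,\nu}}\le C(\|D^2\psi\|_{L^2_{l-2,\nu-2}}+\|\psi\|_{L^2(K)})$, together with an exterior right inverse, exactly as in Theorem \ref{thm:Fredholm} and Lemma \ref{Step1}. The formal adjoint of $D^2$ is $D^2$ at the dual rate $-\nu-n+2$ (Lemma \ref{lem:Dual:weighted:Sobolev}), which gives finite-dimensional cokernel, so $D^2$ is Fredholm. For the index jump I repeat Lemma \ref{Step3} and the proof of Theorem \ref{thm:Index:jump} verbatim. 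At an indicial root $\nu_\ast$ the $d(\nu_\ast)$ model solutions $r^{\nu_\ast}\sum(\log r)^i\psi_i$ are cut off and corrected to exact exterior solutions. Any kernel element at rate $\nu'$ then differs from a combination of these by an element at rate $\nu$, which yields $N_{D^2_\tu{C}}(\nu,\nu')$.

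\emph{Converse.} If $\nu\in\mathcal{D}_{D_\tu{C}^2}$, I take a model solution $r^{\nu}(\log r)^k\psi_0$ and cut it off on the dyadic annuli $R\le r\le R^2$. This builds a sequence of compactly supported sections with norm one in $L^2_{l,\nu}$ and image tending to zero, which escapes to infinity. Such a sequence contradicts the Fredholm estimate above, since the $\|\psi\|_{L^2(K)}$ term vanishes, so $D^2$ is not Fredholm at these weights.

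\emph{Main obstacle.} The hardest step is the invariant-part ODE analysis at double roots. There the log terms must be handled and the uniform constant depends on the distance of $\nu$ from $\mathcal{D}_{D_\tu{C}^2}$. I would handle this by explicit Hardy-type inequalities for the two-step integral kernels.
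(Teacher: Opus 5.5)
Your proposal is correct and is the adaptation the paper intends. The paper omits this proof, saying only that it follows the lines of Theorems \ref{thm:Fredholm} and \ref{thm:Index:jump}, and your route matches that: a mode-by-mode Euler ODE analysis of $D_{\tu{C}}^2$, a composed first-order inverse on $\ker\Pi_0$, a perturbation by $D^2-\overline{D}^2$, and then the Lemma \ref{Step1}/Lemma \ref{Step3} index-jump argument.

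Two minor remarks. For the converse, you should contradict the estimate $\|\psi\|\le C(\|D^2\psi\|+\|\pi_{\ker}\psi\|)$, which holds for any Fredholm operator; your escaping sequence is weakly null, so the finite-rank term tends to zero. The estimate you derived off the indicial set does not apply at an indicial $\nu$. Also, the double-root case is not a real obstacle: on each eigenspace of $D_\Sigma$ the operator $D_{\tu{C}}^2$ is a composition of two first-order Euler operators, each inverted as in Proposition \ref{prop:Estimate:Cone}.
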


Analogous statements hold for the operator $D^2$ acting on weighted H\"older spaces.

\begin{remark}
In fact Theorem \ref{thm:Fredholm:2nd:order} can be proven more generally also for second-order elliptic operators which are not the squares of a Dirac-type operator. In particular, we will need to use the fact that the statement of Theorem \ref{thm:Fredholm:2nd:order} holds for operators of the form $a\,d^\ast d + b\,dd^\ast$ ($a,b>0$) acting on differential forms on an ALC manifold. In this case the proof also uses similar arguments to the ones we exploited in the proof of Theorems \ref{thm:Fredholm} and \ref{thm:Index:jump}. 
\end{remark}

\subsection{Seifert ALC ends}
\label{subsec:seifert}

Foscolo~\cite{Foscolo:ALC:Spin7} has constructed infinitely many complete noncompact Ricci-flat manifolds (in fact, with exceptional holonomy) where the underlying manifold is the smooth total space of a principal circle orbibundle over an AC orbifold. These examples suggest a natural generalisation of Definition \ref{def:ALC:n} where now one assumes $\tu{BC}(\Sigma)$ to be a circle Seifert fibration over an orbifold cone $\tu{C}(\Sigma)$ with smooth total space. We will now briefly explain how, using the analytic framework introduced in \cite[\S 2]{Foscolo:ALC:Spin7}, the analysis of the previous sections extends naturally to this generalised setting, which we will refer to as \emph{manifolds with Seifert ALC ends}.

Let $(\Sigma, g_\Sigma)$ be a closed $(n-1)$-dimensional orbifold endowed with a smooth orbifold Riemannian metric. Following \cite[Definition 1.2]{Goette:Seifert}, we say that $\pi\co N\ra\Sigma$ is a Seifert circle fibration if $N$ is a smooth manifold with the structure of an orbibundle over $\Sigma$ with fibre $S^1$. We will always assume that $N$ is the associated bundle to a fixed principal orbibundle on $\Sigma$ with structure group $\tu{O}(2)$ and that the smooth manifold $N$ is oriented. Alternatively, we could define a Seifert circle fibration $\pi\co N\ra \Sigma$ to be a smooth manifold endowed with a foliation with 1-dimensional compact leaves and leaf space $\Sigma$. In the special case where the structure group of $\pi\co N\ra\Sigma$ reduces to $\tu{SO}(2)$, corresponding to Seifert ALC ends of cyclic type,
 the leaves of the foliation are the orbits for an $S^1$--action on $N$ and the orbit space $\Sigma$ is then a global quotient orbifold.

In any case, as in \S\ref{subsec:ALC} we let $\theta^2$ denote a connection on $\pi\co N\ra \Sigma$, interpreted as a locally circle-invariant symmetric bilinear form of rank $1$ on $TN$ which is positive definite on the vertical space $\ker\,\pi_\ast$ and gives length $2\pi$ to generic fibres, fix $\ell\in\R_{>0}$ and define $\tu{BC}(\Sigma)$ and its Riemannian metric as in \eqref{eq:FB:N}. Definition \ref{def:ALC:n}, the definition of a smooth (one-ended) ALC manifold, extends immediately to this setting. If we want to emphasize the fact that $\Sigma$ is an orbifold rather than a smooth manifold we will say that the complete manifold $M$ has a \emph{Seifert ALC} end.

The analysis of Dirac-type operators and their Laplacians on $\tu{BC}(\Sigma)$ has an immediate generalisation to the case where $\pi\co N\ra \Sigma$ is a Seifert circle fibration. Indeed, the analysis of Section \ref{sec:Model:Oscillatory} on $\ker\Pi_0$ goes through unchanged, and the only difference is the interpretation of operators and Banach spaces in Section \ref{sec:Model:Invariant}, \ie the analysis on the orbifold $\tu{C}(\Sigma)$ which is now allowed to have a singular set that extends to infinity. As explained in \cite[\S 2]{Foscolo:ALC:Spin7}, a convenient framework to interpret the discussion of Section \ref{sec:Model:Invariant} with only cosmetic changes is provided by the fact that analysis on~$\tu{C}(\Sigma)$ can be replaced by analysis on (locally) basic tensors on the smooth manifold $\tu{BC}(\Sigma)$ using the adapted connection (recall Definition~\ref{def:adapted}). With this change of language, the main results of this section, Theorems \ref{thm:Fredholm}, \ref{thm:Index:jump} and \ref{thm:Fredholm:2nd:order} go through without any modification of the statements or their proofs to the case of manifolds with a Seifert ALC end.

Finally, in view of the applications to weighted $L^2$--cohomology described in the following section, we note that there is a natural extension of many fundamental objects in algebraic and differential topology, such as homotopy and cohomology groups, to orbifolds. These notions can be used to interpret, for example, the cohomology of the orbifold $\Sigma$ and its relation with the cohomology of the total space $N$ of the Seifert fibration $\pi\co N\ra \Sigma$ via the Gysin sequence of the fibration. Here potentially different topological definitions of $H^\bullet(\Sigma)$ pose no subtleties and all coincide since the orders of the local uniformising groups of points in $\Sigma$ are bounded (since $\Sigma$ is compact), the relation with de Rham cohomology goes via the notion of basic cohomology of $N$ and a Hodge theorem for basic cohomology identifies the latter with spaces of $\overline{\nabla}$--covariantly closed and coclosed basic forms. For details, we refer the reader to the summary of the relevant literature given in \cite[\S 2.1.3]{Foscolo:ALC:Spin7}.

\subsection{Fibred-boundary metrics with totally geodesic flat torus fibres}

Finally, one can also consider a generalisation of the ALC asymptotic geometry to ends modelled on the product of a cone $\tu{C}(\Sigma)$ with a flat torus of rank $k$. We briefly explain how the main results of this section, Theorems \ref{thm:Fredholm}, \ref{thm:Index:jump} and \ref{thm:Fredholm:2nd:order}, also generalise to this setting. This generalisation includes the $4$-dimensional hyperk\"ahler 4-manifolds with so-called ALG and ALH asymptotics (but not those with ALG$^\ast$ and ALH$^\ast$ asymptotics) classified by Chen--Chen (in fact \cite[\S4 4]{Chen:Chen:I} and \cite[\S 4]{Chen:Chen:II} contain analysis of Laplacians on such manifolds along the lines we describe here) and also the complete $\tu{Spin}_7$--holonomy metrics recently produced by Cavalleri \cite{Cavalleri}.

Consider the total space $\pi\co N\ra \Sigma$ of a principal rank-$k$ torus bundle over a smooth compact manifold $\Sigma$. More generally, we can consider $N$ to be the smooth total space of a principal torus orbi-bundle over an orbifold $\Sigma$. Fix a principal connection $\theta=(\theta_1,\dots,\theta_k)$ on $\pi\co N\ra \Sigma$, a flat metric $\hat{g}=\hat{g}_{ij}$ on the $k$-torus and a Riemannian metric $g_\Sigma$ on $\Sigma$. We can then endow $N$ with the torus-invariant metric $g_N=g_\Sigma + \sum_{ij}\hat{g}_{ij}\,\theta_i\otimes\theta_j$. Thus $\pi\co (N,g_N)\ra (\Sigma,g_\Sigma)$ is a Riemannian submersion with totally geodesic fibres all isometric to the fixed flat torus $(T^k,\hat{g})$. As in \eqref{eq:FB:N} we can then construct a Riemannian submersion $\tu{BC}(\Sigma)=\tu{BC}(\Sigma,\pi,\theta,\hat{g})$ with the same totally geodesic flat torus fibres and horizontal metric given by the Riemannian cone $\tu{C}(\Sigma)$. We can also allow finite quotients of this model by a finite group $\Gamma$ that acts on the torus fibres via (not necessarily orientation-preserving) isometries.

\begin{remark*}
Let $(T,\hat{g})$ be a flat $k$-dimensional torus; its isometry group is isomorphic to a semidirect product $T\rtimes\Gamma$ with $\Gamma$ a subgroup of $\tu{GL}(k,\Z)\cap O(k)$. Let $\pi\co M\ra B$ be a smooth fibre bundle with fibre a $k$-dimensional torus and assume that the vertical tangent bundle $\ker\pi_\ast$ is endowed with a metric making each fibre of $\pi$ isometric to $(T,\hat{g})$. Then $\pi\co M\ra B$ is associated with a principal $(T\rtimes\Gamma)$--bundle over $B$. Since $\Gamma$ is discrete, by passing to the universal cover of $B$ we can therefore always assume that $\pi\co M\ra B$ is a principal torus bundle, while the map $\pi_1(B)\ra \Gamma$ is the monodromy of the flat bundle arising from the first cohomology of the fibres of $\pi$. In applications to higher-dimensional Ricci-flat manifolds, since the cross-section of the tangent cone at infinity must then have finite (orbifold) fundamental group, by passing to a finite cover we can always reduce to the situation of a principal torus-bundle. If instead the cross-section of the tangent cone at infinity has infinite fundamental group (\eg if the cross-section is a circle, which is relevant in $4$-dimensional hyperk\"ahler geometry) then in general one can only pass to a principal torus bundle by allowing $\Sigma$ to be noncompact. In the 4-dimensional hyperk\"ahler case, however, the ALG metrics we are aiming to generalise here have finite monodromy $\pi_1(S^1)\ra \tu{SL}(2,\Z)$ \cite{Hein,Chen:Chen:II,Chen:Chen:III} and so they also fit into our general scheme.
\end{remark*}

Now, with this generalised definition of $\tu{BC}(\Sigma)$ we can again introduce an adapted connection that captures the leading-order behaviour at infinity of the Levi-Civita connection of the model metric and  allows one to work on $\tu{BC}(\Sigma)$ as if on a product $\tu{C}(\Sigma)\times T^k$ (with the understanding that analysis on $\tu{C}(\Sigma)$ is interpreted as analysis on basic tensors on $\tu{BC}(\Sigma)$ when $\Sigma$ is an orbifold). The definition of an end modelled on $\tu{BC}(\Sigma)$ up to polynomially decaying errors can then be given exactly as in Definition \ref{def:ALC:n}. We will refer to this class of complete metrics with an asymptotic geometry modelled on the Riemannian submersion over a cone with totally geodesic flat torus fibres and polynomially decaying error as \emph{ALC metrics with $k$-dimensional torus fibres}. (Cavalleri \cite{Cavalleri} uses $AT^kC$  to denote this asymptotic geometry.) As observed in the introduction, interesting Ricci-flat metrics with this generalised type of ALC asymptotic geometry also  exist in abundance: for example, it includes 4-dimensional ALG hyperk\"ahler metrics \cite{Cherkis:Kapustin:ALG,Biquard:Minerbe,Hein,Chen:Chen:III} and all the complete $\tu{Spin}_7$--holonomy metrics constructed by Cavalleri \cite{Cavalleri} 
on principal $T^2$-bundles over AC Calabi--Yau $3$-folds. 

Given a Dirac-type operator $D$ satisfying similar admissibility conditions to the ones in Definitions \ref{def:Admissible:bundle} and \ref{def:Admissible:connection}, we can define projections $\Pi_0$ and $\Pi_\perp$ as in the ALC case via Fourier decomposition along the torus fibres. Here we are implicitly using the fact that the vertical 1-forms $\theta_1,\dots,\theta_k$ are parallel with respect to the adapted connection $\overline{\nabla}$ and therefore the frame bundle of $\tu{BC}(\Sigma)$ has a $\overline{\nabla}$--parallel reduction to the frame bundle of $\tu{C}(\Sigma)$. In the spin case, we restrict to the case of projectable spin-structures (this always occurs in the special holonomy case) so there is no difficulty in defining $\Pi_0$ and $\Pi_\perp$. Given these projections we again reduce the analysis of the model operator $\overline{D}$ on $\tu{BC}(\Sigma)$ to studying separately $\Pi_0 \overline{D} \Pi_0$ and $\Pi_\perp \overline{D} \Pi_\perp$. The former coincides with the conical Dirac-type operator studied in Section \ref{sec:Model:Invariant}, while the latter can be studied using the same methods of Section \ref{sec:Model:Oscillatory}. The statements and proofs of Theorems \ref{thm:Fredholm}, \ref{thm:Index:jump} and \ref{thm:Fredholm:2nd:order} therefore extend without difficulty to the more general class of Seifert ALC metrics with $k$-dimensional torus fibres.

\section{Weighted Hodge theory for ALC manifolds}\label{sec:Hodge:ALC}

As a nontrivial application of the Fredholm theory discussed in the previous sections, we now prove Hodge-theoretic results for arbitrary ALC manifolds. More precisely, we describe the space $\mathcal{H}_\lambda^k(M)$ of closed and coclosed $k$-forms on $M$ of decay $\lambda$ for $\lambda$ in ranges where $\mathcal{H}_\lambda^k(M)$ can be described in terms of topological data. The $L^2$--cohomology of ALC manifolds has already been computed in the work of Hausel--Hunsicker--Mazzeo \cite[Corollary 1]{HHM}: here we recover their results with independent proofs 
that avoid the geometric microlocal analytic tools used in their proofs. Moreover, our proof will also apply to the case of ALC ends of Seifert type and we can extend the discussion to weighted $L^2$--cohomology at rates of decay slower than the one corresponding to (unweighted) $L^2$--integrability (recall Remark~\ref{remark:lp:unweighted}).

\subsection{Indicial roots of \texorpdfstring{$\Delta$}{the Laplacian}}
Let $M$ be an ALC manifold of dimension $n+1$ and asymptotic to $\BC(\Sigma)$. When we treat the Laplacian $\triangle_M$ on $k$-forms as in \S\ref{sec:fredholm}, then $\Pi_0\overline{\triangle}\Pi_0$, where $\overline{\triangle}=d_{\overline{\nabla}}d_{\overline{\nabla}}^\ast + d_{\overline{\nabla}}^\ast d_{\overline{\nabla}}$ with $\overline{\nabla}$ the adapted connection of $\tu{BC}(\Sigma)$, is the direct sum of the Laplacian of $\tu{C}(\Sigma)$ on $k$-forms and on $(k-1)$-forms. In particular, the indicial roots of $\triangle_M$ are the indicial roots of those Laplacians on $\tu{C}(\Sigma)$. 
Moreover, the obvious analogue of  Lemma \ref{Step3} for $\Delta$ implies that if $(\lambda_0, \lambda_1)$ contains a unique indicial root $\lambda$ and $\rho \in \Omega^k(M)$ has rate $\lambda_1$ but $\Delta \rho$ has rate $\lambda_0$ (in particular, this certainly includes the case where $\rho$ is itself harmonic), then
\begin{equation}
    \label{eq:harmonic_leading}
\rho = \alpha + \theta \wedge \beta + O(r^{\lambda_0})
\end{equation} 
for harmonic forms $\alpha \in \Omega^k(\tu{C}(\Sigma))$ and $\beta \in \Omega^{k-1}(\tu{C}(\Sigma))$, both polynomials in $\log{r}$ with coefficients in the space of homogeneous forms of rate $\lambda$. A similar discussion holds for the Hodge operator $d+d^\ast$ replacing the Hodge Laplacian on $\tu{C}(\Sigma)$ with the first-order operator $d+d^\ast$: indeed, $\Pi_0 (d_{\overline{\nabla}}+ d^\ast_{\overline{\nabla}})\Pi_0$ is the direct sum of two copies of the operator $d+d^\ast$ on $\tu{C}(\Sigma)$.

Motivated by the previous paragraph we now collect some basic facts about harmonic or closed and coclosed forms homogeneous of order $\lambda$ on a Riemannian cone $\tu{C}(\Sigma)$ of dimension $n$. These results hold without any 
further geometric assumptions about the cross-section $\Sigma$ of the cone, \ie they apply to arbitrary 
ALC manifolds without assuming $\tu{C}(\Sigma)$ to be Ricci flat or to have special holonomy. 
These results can all be deduced from~\cite[Appendix A]{FHN:ALC:G2:from:AC:CY3}. The framework and language described in Section \ref{subsec:seifert} allow one to make sense of these results also in the case of ALC ends of Seifert type.

Let $\tu{C}(\Sigma)$ be a Riemannian cone of dimension $n$ over a smooth compact Riemannian $(n-1)$-manifold $\Sigma$.
Recall that a $k$-form $\gamma$ on $\tu{C}=
\tu{C}(\Sigma)$
is said to be homogeneous of order $\lambda$
if there exist smooth forms $\alpha$ and $\beta$ on $\Sigma$ so that
$\gamma = r^{\lambda+k} (\frac{dr}{r}\wedge \alpha+ \beta)$.

\begin{prop}\label{prop:Indicial:Roots:pforms}
Let $\tu{C}(\Sigma)$ be a Riemannian cone of dimension $n$.
\begin{enumerate}
\item There are no forms in the kernel of $d+d^\ast$ that are non-constant polynomials in $\log{r}$ with coefficients in the space of homogeneous forms of order $\lambda$.
\item If $p<\frac{n}{2}$ there are no closed and coclosed $p$-forms homogeneous of order $\lambda\in (-n+p, -p)
=(-\frac{n}{2}-\frac{n-2p}{2},-\frac{n}{2}+\frac{n-2p}{2}).$ 
Equivalently, if $p>\frac{n}{2}$ then there are no closed and coclosed $p$-forms homogeneous of order $\lambda\in (-p, -n+p)$.
\item If $p<\frac{n}{2}-1$ there are no harmonic $p$-forms homogeneous of order $\lambda\in (-n+p+2, -p)$.
\item If $p\neq \frac{n}{2}$ then every closed and coclosed $p$-form homogeneous of order $\lambda=-p$ is the pull-back of a harmonic $p$-form on $\Sigma$. If $p=\frac{n}{2}$ then every closed and coclosed $p$-form homogeneous of order $\lambda=-p$ is a linear combination of the pull-back of a harmonic $p$-form on $\Sigma$
and the image of the pull-back of a harmonic $p$-form on $\Sigma$ under the Hodge-star operator on $\tu{C}(\Sigma)$.
\item For $p\leq \frac{n}{2}$ all closed and coclosed $p$-forms of rate $\lambda >-p$ are exact and coexact. More precisely, any such $p$-form can be written as $d\left( \frac{r^{\lambda+p}}{\lambda+p}\alpha\right)$ where $\alpha$ is a $(p-1)$--form on $\Sigma$ that is coclosed and an eigenform of the Laplacian with eigenvalue $(\lambda+p)(\lambda+n-p)$.
\item Any form of rate $\lambda = -\frac{n}{2}+1$ in the kernel of $d+d^\ast$ is closed and coclosed.
\end{enumerate}
\end{prop}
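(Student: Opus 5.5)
The plan is to reduce every statement to an ODE/eigenvalue computation on the link $\Sigma$. Write a homogeneous $p$-form of order $\lambda$ as $\gamma = r^{\lambda+p}\big(\tfrac{dr}{r}\wedge\alpha+\beta\big)$ with $\alpha\in\Omega^{p-1}(\Sigma)$, $\beta\in\Omega^{p}(\Sigma)$. On $\tu{C}(\Sigma)$ the operators $d$ and $d^\ast$ act on such forms by explicit formulae in terms of $d_\Sigma$, $d^\ast_\Sigma$ and the factor $\lambda+p$ (and, for $d^\ast$, the factor $\lambda+n-p$ coming from the volume growth $r^{n-1}$). Thus $d\gamma=0$ and $d^\ast\gamma=0$ become the system
\[
d_\Sigma\beta=0,\quad (\lambda+p)\beta=d_\Sigma\alpha,\quad d^\ast_\Sigma\alpha=0,\quad d^\ast_\Sigma\beta=-(\lambda+n-p)\alpha
\]
(up to signs fixed by conventions). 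I will use the Hodge decomposition on the compact manifold $\Sigma$ throughout.

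\textbf{Parts (iv) and (v).} Suppose $\lambda+p\neq 0$. Then $\beta=d_\Sigma\alpha/(\lambda+p)$, so $d^\ast_\Sigma d_\Sigma\alpha=-(\lambda+p)(\lambda+n-p)\alpha$ with $d^\ast_\Sigma\alpha=0$. Either $\alpha=0$ (then $\beta=0$), or $\alpha$ is a coclosed eigenform with eigenvalue $\mu=-(\lambda+p)(\lambda+n-p)\ge 0$. The sign in the formulae must be fixed carefully so that the eigenvalue matches the statement of (v). Moreover, $\mu=0$ forces $d_\Sigma\alpha=0$, hence $\beta=0$ and $\gamma$ is purely $r^{\lambda+p-1}dr\wedge\alpha$; coclosedness then forces $\lambda=-n+p$, so $\alpha$ is harmonic.

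For (ii), $\mu\ge0$ means $\lambda$ lies outside the open interval between $-p$ and $-n+p$. The boundary case $\mu=0$ gives only $\lambda=-n+p$, the Hodge-star of the $\lambda=-p$ solutions. Hence there are no solutions in $(-n+p,-p)$ for $p<n/2$, and symmetrically for $p>n/2$.

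For (v), in the case $\mu>0$ we get $\gamma=d\big(\tfrac{r^{\lambda+p}}{\lambda+p}\alpha\big)$ directly. Coexactness follows by applying the same argument to $\ast\gamma$, an $(n-p)$-form. When $\lambda=-p$ the system gives $d_\Sigma\alpha=0$ and $(n-2p)\alpha=-d^\ast_\Sigma\beta$, and $\beta$ is closed. For $p\ne n/2$, pairing with harmonic projections shows $\alpha$ is exact and coexact, hence $0$, so $\beta$ is harmonic; this is (iv). For $p=n/2$, $\alpha$ and $\beta$ are each harmonic, which gives the stated linear combination.

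\textbf{Parts (i) and (vi).} For (i), substitute $\sum_i(\log r)^i\gamma_i$ into $d+d^\ast$. The top log power yields the homogeneous system for $\gamma_k$, and the next power yields $\gamma_k$ paired against $\mathrm{ker}$ of the self-adjoint operator $D_\Sigma$ of \eqref{eq:Dirac:Cone}. Self-adjointness gives a Fredholm alternative, forcing $\gamma_k=0$; this is the remark after \eqref{eq:Jump:Index:Count}. For (vi), write $\psi$ in the kernel of $d+d^\ast$ by degree. The equations $d\psi_{p-1}+d^\ast\psi_{p+1}=0$ together with an integration by parts over $\Sigma$ show $|d\psi|^2$ integrates to a boundary term scaling like $r^{2\lambda+n-2}$. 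At $\lambda=-\tfrac n2+1$ this is scale-invariant, and comparing it with the homogeneity argument of Lockhart--McOwen type yields $d\psi=0=d^\ast\psi$. I expect (vi) to be the main obstacle, and would check it by decomposing into eigenforms directly.

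\textbf{Part (iii).} For harmonic $p$-forms, decompose $\gamma$ into closed/coclosed pieces. A harmonic homogeneous form on a cone is either closed and coclosed, or $d\gamma$ (respectively $d^\ast\gamma$) is a nonzero closed and coclosed form of order $\lambda-1$ and degree $p+1$ (respectively $p-1$). Apply (ii) with the shifted degree and rate. The interval for degree $p+1$ at rate $\lambda-1$ is $(-n+p+2,-p)$, and the degree $p-1$ case gives a larger interval. Together with (ii) itself this rules out $\lambda\in(-n+p+2,-p)$, using $p<\tfrac n2-1$ so that $p+1<\tfrac n2$.
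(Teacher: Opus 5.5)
Your proof of (iii) has a genuine gap. It rests on the claim that a homogeneous harmonic form on a cone is either closed and coclosed, or else $d\gamma$ (respectively $d^\ast\gamma$) is closed and coclosed. This is false. Cones carry ``mixed'' homogeneous harmonic forms that are neither closed nor coclosed, for which $d\gamma$ is not coclosed and $d^\ast\gamma$ is not closed.

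The paper itself contains one. The form $\gamma_3=r^{\lambda+1}dr\wedge\alpha-\tfrac{1}{\lambda+2}r^{\lambda+2}d\alpha$ in Proposition~\ref{prop:Harmonic:2forms:cone}(iii) is harmonic, yet $d\gamma_3=-2r^{\lambda+1}dr\wedge d\alpha$ is not coclosed. Indeed, the $dr$-component of $d^\ast d\gamma_3$ is a nonzero multiple of $r^{\lambda-1}dr\wedge d^\ast_\Sigma d\alpha$, and $d^\ast_\Sigma d\alpha=\triangle_\Sigma\alpha\neq0$. In general these forms are $r^{\lambda+p-1}dr\wedge\phi-\tfrac{1}{\lambda+n-p-2}r^{\lambda+p}d_\Sigma\phi$, with $\phi$ a nonzero coexact $(p-1)$-eigenform of eigenvalue $(\lambda+p-2)(\lambda+n-p-2)$. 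They are closed and coclosed only when $\lambda=-\tfrac n2+1$.

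So the step ``apply (ii) to $d\gamma$ and $d^\ast\gamma$'' is unjustified as it stands. The repair is one more application of (ii). Since $\triangle\gamma=0$, you have $dd^\ast\gamma=-d^\ast d\gamma$, which is therefore both closed and coclosed, of degree $p$ and order $\lambda-2$. By (ii) it vanishes whenever $\lambda\in(-n+p+2,-p+2)$, an interval containing yours. Only then are $d\gamma$ and $d^\ast\gamma$ closed and coclosed. Your bookkeeping (degree $p+1$ and degree $p-1$ at order $\lambda-1$, then $\gamma$ itself) then finishes the proof.

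Two further points.

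First, the coclosedness equation has a definite sign, which you must compute rather than choose to match (v). One has $d^\ast\gamma=r^{\lambda+p-2}\bigl(d^\ast_\Sigma\beta-(\lambda+n-p)\alpha\bigr)-r^{\lambda+p-3}dr\wedge d^\ast_\Sigma\alpha$, so the condition is $d^\ast_\Sigma\beta=+(\lambda+n-p)\alpha$. You can test this on $du$ for $u$ a homogeneous harmonic polynomial on $\R^n$. With the sign you wrote, $\mu\geq0$ would force $\lambda$ \emph{into} $[-n+p,-p]$, and (ii) would come out backwards. With the correct sign, your (ii), (iv), (v) are a correct direct version of what the paper imports from \cite[Theorem A.2, Remark A.5]{FHN:ALC:G2:from:AC:CY3}. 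Your (i) is the argument of the remark after \eqref{eq:Jump:Index:Count}.

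Second, for (vi) your scaling idea works and is genuinely different from the paper's proof. The paper instead uses $\lambda+p=-(\lambda+n-p-2)$ and $\lambda+p-2=-(\lambda+n-p)$ to kill components in the classification. You should make the mechanism explicit:
\begin{itemize}
\item After (i) removes log terms, on $\{a\leq r\leq b\}$ one has $\int|d\psi_{p-1}|^2=c\log(b/a)$, where $c$ is the integral of $|d\psi_{p-1}|^2$ over $\{r=1\}$.
\item Meanwhile $-\int\langle d\psi_{p-1},d^\ast\psi_{p+1}\rangle$ equals, up to sign, the difference of the integrals of $d\psi_{p-1}\wedge\ast\psi_{p+1}$ over $\{r=b\}$ and $\{r=a\}$.
\item These two boundary integrals coincide because they scale like $r^{2\lambda+n-2}=r^0$.
\end{itemize}
Hence $c=0$, and so $d\psi_{p-1}=0=d^\ast\psi_{p+1}$.
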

\begin{proof}
Statement (i) is \cite[Proposition A.8]{FHN:ALC:G2:from:AC:CY3}. Statements (ii)--(v) are immediate consequences of \cite[Theorem A.2]{FHN:ALC:G2:from:AC:CY3} (which describes all possible harmonic forms homogeneous of order $\lambda$) and \cite[Remark A.5]{FHN:ALC:G2:from:AC:CY3} (which  describes which of these harmonic forms are closed and coclosed). The excluded rates in parts (ii) and (iii) follow from the fact that the Hodge Laplacian on $\Sigma$ on forms of any degree is a non-negative operator. See also \cite[Lemma 2.32]{Foscolo:ALC:Spin7}.

The final statement is not explicitly observed in \cite[Appendix A]{FHN:ALC:G2:from:AC:CY3}. 
To establish it first observe that every pure-degree component of $\gamma$ is a harmonic form of rate $\lambda$. Then by part (i) the assumption that $\gamma$ is in the kernel of $d+d^\ast$ implies that no logarithmic terms appear. Now, if $\lambda = -\frac{n}{2}+1$ then $\lambda +p = -(\lambda + n-p-2)$ and $\lambda +p -2 = -(\lambda + n-p)$ for any~$p$: then in the decomposition of each pure-degree component of $\gamma$ as a sum of forms of type (i), (ii) and (iv) in \cite[Theorem A.2]{FHN:ALC:G2:from:AC:CY3} the component of type (i) necessarily vanishes unless $\lambda=-n+p$ (that is, $p=\frac{n}{2}+1$) and the component of type (iv) vanishes unless $\lambda=-p$ (that is, $p=\frac{n}{2}-1$). (Of course these cases can only occur if $n$ is even.) The statement then follows from \cite[Remark A.5]{FHN:ALC:G2:from:AC:CY3}.
\end{proof}

In the following we will repeatedly use the following results, which are immediate applications of integration by parts and the excluded rates in Proposition \ref{prop:Indicial:Roots:pforms}.

\begin{lemma}\label{lem:Harmonic:to:Closed}
Let $M$ be an ALC manifold of dimension $n+1$.
\begin{enumerate}
\item A harmonic $k$-form $\gamma$ of rate $\lambda$ is necessarily closed and coclosed if either
\begin{enumerate}
\item $\lambda < -\frac{n}{2}+1$, or
\item $k\leq\frac{n}{2}-1$ and $\lambda<-k$.
\end{enumerate}
\item Any form $\gamma$ (possibly of mixed degree) of rate $\lambda\leq -\frac{n}{2}+1$ in the kernel of $d+d^\ast$ is closed and coclosed.
\end{enumerate}
\proof
Consider statement (i). Under hypothesis (a) the statement is a simple consequence of integration by parts Lemma \ref{lem:integration:parts}. 
Hypothesis (b) in the case of equality, \ie with $k=\frac{n}{2}-1$ in fact coincides with (a).
Under hypothesis (b) with strict inequality $k < \frac{n}{2}-1$ we apply Proposition \ref{prop:Indicial:Roots:pforms} (iii) with $p=k$ and $p=k-1$ to deduce that in fact $\gamma$ is of rate $-n+k+2<-\frac{n}{2}+1$ and so we can now apply part (a) to conclude the proof of (i).

Consider now statement (ii). Since each pure-degree component of $\gamma$ is harmonic of rate $\lambda$, 
if $\lambda<-\frac{n}{2}+1$ then we fall under hypothesis (a) of part (i). If $\lambda=-\frac{n}{2}+1$ then 
a priori we should expect that $d\gamma, d^\ast\gamma = O(r^{-\frac{n}{2}})$. 
But in fact, thanks to Proposition \ref{prop:Indicial:Roots:pforms} (vi), 
$d\gamma, d^\ast\gamma = O(r^{-\frac{n}{2}-\delta})$ for some $\delta>0$, and thus integration by parts is justified.
\endproof
\end{lemma}

The indicial roots calculations in Proposition \ref{prop:Indicial:Roots:pforms} focused mostly on closed and coclosed forms. In order to study $\mathcal{H}^k_{\lambda}(M)$ for $k=\lceil\frac{n}{2}\rceil$
we will also need the following result about harmonic forms. 

\begin{prop}\label{prop:Laplacian:BC:MidDegree}
Let $\tu{BC}(\Sigma)$ an ALC model of cyclic type of dimension $n+1$, let $\overline{\nabla}$ denote its adapted connection and set $k=\lceil\frac{n}{2}\rceil$, \ie $k=\frac{n}{2}$ if $n$ is even and $k=\frac{n+1}{2}$ if $n$ is odd. Let $\xi_\infty$ be an $S^1$--invariant $(k-1)$-form on $\tu{BC}(\Sigma)$ in the kernel of $d^\ast_{\overline{\nabla}}d_{\overline{\nabla}}+ d^\ast_{\overline{\nabla}}d_{\overline{\nabla}}$ which can be expressed as a polynomial in $\log{r}$ with coefficients in the space of homogeneous forms of rate $\lambda$.
\begin{enumerate}
\item If $n$ is even and $\lambda=-\frac{n}{2}+1$ then
\[
\xi_\infty = \alpha_1 + \alpha_2\log{r},
\]
where $\alpha_1,\alpha_2\in\mathcal{H}^{k-1}(\Sigma)$.
\item If $n$ is odd and $\lambda=-\frac{n}{2}+\frac{1}{2}+1$ then
\[
\xi_\infty = \theta\wedge\alpha_1 + r\alpha_2 + d(r\alpha_3),
\]
where $\alpha_1\in\mathcal{H}^{k-2}(\Sigma)$, $\alpha_2\in\mathcal{H}^{k-1}(\Sigma)$ and $\alpha_3$ is a coclosed $(k-2)$--form on $\Sigma$ such that $\triangle_\Sigma\alpha_3 = 2\alpha_3$.
\end{enumerate}
\proof
The statements are direct consequences of~\cite[Theorem A.2]{FHN:ALC:G2:from:AC:CY3} and~\cite[Proposition A.6]{FHN:ALC:G2:from:AC:CY3} that describe harmonic $p$-forms on $\tu{C}(\Sigma)$ which can be written as polynomials in $\log{r}$ with coefficients in the space of homogeneous forms of rate $\lambda$ applied with $p=k-1$ and $p=k-2$.
\endproof
\end{prop}

\subsection{Adjusting a closed form to a closed and coclosed representative of its cohomology class}

In order to understand the space of closed and coclosed forms on an ALC manifold $M$ we will need analytic results that allow us to adjust a closed form to a closed and coclosed one while maintaining some control on its decay rate, ideally by adding an exact form.

As above, we assume that we are working on an ALC manifold $M$ of dimension $n+1$.

\begin{prop}\label{prop:Closed:to:Coclosed:Laplacian}
Let $d^\ast\rho\in C^{0,\alpha}_{\lambda-1}$ be a coexact $(k-1)$--form with coprimitive $\rho\in C^{1,\alpha}_{\lambda+\epsilon}$ for $\epsilon>0$ small. If either
\begin{enumerate}
\item $\lambda\in (-\frac{n}{2},-\frac{n}{2}+1)$ and $\langle d^\ast\rho,\xi\rangle_{L^2}=0$ for all $\xi\in\mathcal{H}^{k-1}_{-n+1-\lambda}(M)$, or
\item $k \leq \frac{n}{2}$ and $\lambda\in (-n+k,-k+2)$
\end{enumerate}
then the Poisson equation
\[
\triangle\gamma = -d^\ast\rho
\]
has a solution $\gamma\in C^{2,\alpha}_{\lambda+1}$, and any such solution has $dd^\ast\gamma=0$. 

In particular, if $\rho$ is also assumed to be closed then the $k$-form $\rho+d\gamma$ is closed and coclosed.
\proof
To motivate the proof observe that if $\rho$ is closed then $\rho + d \gamma$ is coclosed precisely when $d^\ast d \gamma= - d^\ast \rho$. 
So if we can solve the Poisson equation $\Delta \gamma =- d^\ast \rho$ and additionally prove that $dd^\ast \gamma=0$ then $\rho + d\gamma$ 
is indeed coclosed as desired. 
So now we study the properties and solvability of this Poisson equation. 

Notice that any solution $\gamma\in C^{2,\alpha}_{\lambda+1}$ of it has the property that $d^\ast\gamma$ is a harmonic $(k-2)$--form of rate $\lambda$. The conditions $\lambda<-\frac{n}{2}+1$ or $k-2 \leq \frac{n}{2}-2 \leq \frac{n}{2}-1$ and $\lambda<-k+2$ guarantee that Lemma \ref{lem:Harmonic:to:Closed} (i) can be applied to conclude that 
($d^\ast \gamma$~is necessarily closed and coclosed and so) $dd^\ast\gamma=0$. 

Now, the obstructions to solve $\triangle\gamma=-d^\ast\rho$ with $\gamma\in C^{2,\alpha}_{\lambda+1}$  are identified with harmonic $(k-1)$--forms $\xi$ in $C^\infty_{-n-\lambda+1}$. The hypotheses $\lambda>-\frac{n}{2}$ or $k\leq \frac{n}{2}$ and $\lambda>-n+k$ then allow one to apply Lemma \ref{lem:Harmonic:to:Closed} (i) to conclude that any such harmonic form $\xi$ is in fact closed and coclosed. In particular,  under hypothesis (i) the result then follows immediately. 

Under hypothesis (ii), the conditions $k\leq \frac{n}{2}$ and $\lambda>-n+k$, imply that $\mathcal{H}^{k-1}_{-n+1-\lambda}(M)=\mathcal{H}^{k-1}_{-n+k-1+\delta}(M)$ for every $\delta>0$ by Proposition \ref{prop:Indicial:Roots:pforms} (ii). So we may assume 
that the obstructions $\xi$ belong to $\mathcal{H}^{k-1}_{-n+k-1+\delta}(M)$ 
for any $\delta>0$. Hence under the assumption $\lambda < -k+2$ we can also integrate by parts
\[
\langle d^\ast\rho,\xi\rangle_{L^2} = \langle \rho,d\xi\rangle_{L^2}=0
\]
because, by choosing $\delta>0$ small enough so that $\lambda +\epsilon +\delta < -k+2$, we have  $\lambda+\epsilon -n+k-1+\delta < -n+1$.
\endproof
\end{prop}

\begin{remark}\label{rmk:Closed:to:Cocolosed:Laplacian} The first part of the proof %
shows that any $(k-1)$--form $\gamma\in C^{2,\alpha}_{\lambda+1}$ that solves $\triangle\gamma=-d^\ast\rho$ automatically satisfies $dd^\ast\gamma=0$ as soon as either $\lambda<-\frac{n}{2}+1$ or $k \leq \frac{n}{2}+1$ and $\lambda<-k+2$.
\end{remark}

As we will see, the previous result will be enough to study closed and coclosed forms of degree $k<\frac{n}{2}$ and, hence by duality, also of degree $k>\frac{n}{2}+1$. However, more work is needed to deal with ``middle degree'' forms: the cases $k=\frac{n}{2}, \frac{n}{2}+1$ for $n$ even and $k=\frac{n+1}{2}$ for $n$ odd. The following result provides the simplest tool to study the change in the space of closed and coclosed forms as we move from $L^2$--rates ($\lambda < - \frac{n}{2}$) to slower decay rates.

\begin{prop}\label{prop:Closed:to:Coclosed:dd*}
There exist $\delta, \epsilon>0$ such that for every $\lambda\in (-\frac{n}{2}-\delta, -\frac{n}{2}+1)$ the following holds. Let $\rho\in C^{1,\alpha}_{\lambda+\epsilon}$ be a closed $k$-form such that $d^\ast\rho\in C^{0,\alpha}_{\lambda-1}$. If $\langle d^\ast\rho,\xi\rangle_{L^2}=0$ for all $\xi\in\mathcal{H}^{k-1}_{-n+1-\lambda}(M)$ then there exists a $k$--form $\rho'\in C^{1,\alpha}_\lambda$ such that $\rho+\rho'$ is closed and coclosed. 
\proof
Consider a solution of the equation $(d+d^\ast)\rho'=-d^\ast\rho$ for a multi-degree form $\rho'\in C^{1,\alpha}_{\lambda}$. Since $\lambda<-\frac{n}{2}+1$, integration by parts shows that $d\rho'$ is  $L^2$--orthogonal to $d^\ast\rho'$ and also to  $d^\ast\rho$, and therefore any solution $\rho'$ satisfies $d\rho'=0$ and $d^\ast\rho'=-d^\ast\rho$. Then replacing $\rho'$ with its degree-$k$ component  we deduce that the $k$-form $\rho+\rho'$ is closed and coclosed.

For $\lambda \in (-\frac{n}{2},-\frac{n}{2}+1)$ we can apply Proposition \ref{prop:Closed:to:Coclosed:Laplacian} (i) to solve $\triangle \gamma=-d^\ast \rho$ and $\gamma$ automatically satisfies $d d^\ast \gamma=0$. 
Then $\rho'=d \gamma$  is the desired $(k-1)$-form.

It remains to treat the case where $\lambda\in (-\frac{n}{2}-\delta,-\frac{n}{2}]$. The obstructions to solve $(d+d^\ast)\rho'=-d^\ast\rho$ for $\rho'\in C^{1,\alpha}_{\lambda}$ arise from multi-degree forms $\xi\in C^{\infty}_{-n+1-\lambda}$ in the kernel of $d+d^\ast$. If $\delta>0$ is small enough so that there are no indicial roots in $(-\frac{n}{2}+1,-\frac{n}{2}+1+\delta)$, then any such $\xi$ is in fact of rate $\mu\leq -\frac{n}{2}+1$ and so by Lemma \ref{lem:Harmonic:to:Closed} (ii) it is then closed and coclosed.
\endproof
\end{prop}

We illustrate how to use these propositions by accounting for all the contributions to $\mathcal{H}^k_\lambda(M)$, $\lambda\in (-k,-k+1]$, that do not have a topological interpretation. Motivated by Proposition~\ref{prop:Indicial:Roots:pforms} (v) we introduce the following notation.

\begin{definition}
Denote by $\mathcal{H}^k_\lambda(\Sigma)$ the space of $(k-1)$--forms on $\Sigma$ which are coclosed and eigenforms of the Laplacian with eigenvalue $(\lambda+k)(\lambda+n-k)$.
\end{definition}
By Proposition \ref{prop:Indicial:Roots:pforms} (v) every nonzero $\alpha\in \mathcal{H}^k_\lambda(\Sigma)$ defines a homogeneous closed and coclosed (in fact, exact and coexact) $k$-form $d\left( \frac{r^{\lambda+k}}{\lambda+k}\alpha\right)$ on $\tu{C}(\Sigma)$ of rate $\lambda$.

\begin{prop}\label{prop:ALC:closed:coclosed:exact}
Let $k\leq \frac{n}{2}$ and $\lambda_0\in (-k,-k+1]$. Then there is an injective linear map  $f\co\mathcal{H}^k_{\lambda_0}(\Sigma) \to \mathcal{H}^k_{\lambda_0}(M)$ such that $f(\alpha)\in \mathcal{H}^k_{\lambda_0}(M)$ is an exact form with $f(\alpha) = d\left( \frac{r^{\lambda_0+k}}{\lambda_0+k}\alpha\right) + O(r^{-\lambda_0-\epsilon})$ for some $\epsilon>0$.
\proof
Fix a cutoff function $\chi$ with $\chi\equiv 1$ on the ALC end. Then $\rho=d\left( \chi\, \frac{r^{\lambda_0+k}}{\lambda_0+k}\alpha\right)$ is a closed $k$-form on $M$ which coincides with $d\left( \frac{r^{\lambda_0+k}}{\lambda_0+k}\alpha\right)$ on the ALC end. Also,  if $\epsilon>0$ is small enough then $d^\ast \rho \in C^{0,\alpha}_{\lambda-1}$.

Consider now the Poisson equation $\triangle\gamma = -d^\ast\rho$ for $\gamma\in C^{2,\alpha}_{\lambda+1}$. Since $k\leq\frac{n}{2}$,  Proposition \ref{prop:Closed:to:Coclosed:Laplacian} (ii) immediately guarantees the existence of a solution $\gamma$ and that $dd^\ast\gamma=0$ holds automatically. 
By requiring that $\gamma$ be $L^2_{\lambda+1}$--orthogonal to harmonic forms in $C^{2,\alpha}_{\lambda+1}$ 
we can ensure uniqueness of $\gamma$ and hence deduce its linear dependence on $\rho$. We then define $f(\alpha)=\rho + d\gamma$.
\endproof
\end{prop}
 \begin{remark}
     \label{rmk:accounted}
     Together with Proposition \ref{prop:Indicial:Roots:pforms}(v), this tells us that for $k\leq \frac{n}{2}$ the changes in $\mathcal{H}^k_\lambda (M)$ for $\lambda\in (-k,-k+1)$ are \emph{all} accounted for by the subspaces $f(\mathcal{H}^k_\lambda(\Sigma))$. Note that we do not have an analogue of Proposition \ref{prop:ALC:closed:coclosed:exact} for rates $\lambda\in (-n+k-1,-n+k)$ for $k\leq \frac{n}{2}$, despite that by Proposition \ref{prop:Indicial:Roots:pforms}(v) all the indicial roots in this range also depend on the eigenspaces $\mathcal{H}^k_\lambda(\Sigma)$.
 \end{remark}
It turns out that the case where $k\in [\frac{n}{2},\frac{n}{2}+1]$ and $\lambda<-\frac{n}{2}$ is more delicate and to deal with it we will need the following extension of Proposition \ref{prop:Closed:to:Coclosed:Laplacian}.  The proof strategy rests on solving the same Poisson equation as previously, with 
the complication now being that we encounter genuine obstructions to solving it. 
To identify exactly what obstructions arise and then understand how we can compensate for them
Proposition~\ref{prop:Laplacian:BC:MidDegree} plays a key role: it determines the leading-order asymptotics
of the (nonclosed) harmonic forms responsible for these obstructions. 

\begin{prop}\label{prop:Closed:to:Coclosed:Laplacian:Mid:deg}
Let $k=\frac{n}{2}$ if $n$ is even or $k=\frac{n+1}{2}$ if $n$ is odd. Then there exists $\delta\in (0,1)$ with the following significance. 
Let $\rho\in C^{1,\alpha}_{\lambda}$ be a closed $k$-form
with $\lambda\in (-k-\delta,-k)$.
Then there exists a coclosed $(k-1)$--form $\gamma$ of the form $\gamma = \gamma_\infty + O(r^{\lambda+1})$
such that $\rho+d\gamma$ is closed and coclosed.
Here
\begin{enumerate}
\item if $n$ is even, $\gamma_\infty = \tau$ with $\tau\in\mathcal{H}^{k-1}(\Sigma)$, and
\item if $n$ is odd, $\gamma_\infty = \tau_1 + \frac{1}{r}\theta\wedge\tau_2$ with $\tau_1\in\mathcal{H}^{k-1}(\Sigma)$ and $\tau_2\in \mathcal{H}^{k-2}_-(\Sigma)$. Here the subscript $_-$ indicates that we consider harmonic forms with values in the flat orientation bundle of $\Sigma$.
\end{enumerate}
\proof
We again consider the equation $\triangle\gamma = -d^\ast\rho$ for $\gamma\in C^{2,\alpha}_{\lambda+1}$. The obstructions to solve it are identified with harmonic $(k-1)$--forms $\xi\in C^{\infty}_{-n+1-\lambda}$, \ie a solution exists if and only if $\langle d^\ast\rho,\xi\rangle_{L^2}=0$ for all such $\xi$. A harmonic form $\xi$ of this rate need not be  closed, but it must satisfy $dd^\ast\xi=0$ by Remark \ref{rmk:Closed:to:Cocolosed:Laplacian}, so that if a solution $\gamma$ exists then $\rho+d\gamma$ is closed and coclosed.

On the other hand, if $\xi$ is in fact closed then
\[
\langle d^\ast\rho,\xi\rangle_{L^2} = \langle \rho, d\xi\rangle_{L^2} =0.
\]
Here integration by parts is justified since, if $\delta$ is small enough so that $-n+1-\lambda$ is not an indicial root, then $\xi = O(r^{-n+1-\lambda-\epsilon})$ for some $\epsilon>0$ and $\rho = O(r^{\lambda})$. Hence the obstructions to solve $\triangle \gamma = -d^\ast\rho$ with $\gamma\in C^{2,\alpha}_{\lambda+1}$ arise from inner products
\[
\langle d^\ast\rho,\xi\rangle_{L^2}
\]
for all harmonic $(k-1)$--forms $\xi$ of rate $-n+1-\lambda$ which are \emph{not} closed.

We look more closely at such harmonic forms. If $\delta$ is sufficiently small Proposition \ref{prop:Laplacian:BC:MidDegree} implies that $\xi = \xi_\infty + O(r^{-n+1-\lambda-\mu})$ for some $\mu>0$ and
\begin{enumerate}
\item if $n$ is even,
\[
\xi_\infty = \alpha_1\log{r} + \alpha_2,
\]
where $\alpha_1,\alpha_2\in\mathcal{H}^{k-1}(\Sigma)$, and
\item if $n$ is odd,
\[
\xi_\infty = r\alpha_1 + \theta\wedge\alpha_2 + d(r\alpha_3),
\]
where $\alpha_1\in\mathcal{H}^{k-1}(\Sigma)$, $\alpha_2\in\mathcal{H}^{k-2}(\Sigma)$ and $\alpha_3$ is a coclosed $(k-2)$--form on $\Sigma$ such that $\triangle_\Sigma\alpha_3 = 2\alpha_3$.
\end{enumerate}
In particular,
\[
d\xi = \frac{dr}{r}\wedge\alpha_1 + O(r^{-n-\lambda-\mu}), \qquad d\xi = dr\wedge\alpha_1 + d\theta\wedge\alpha_2 + O(r^{-n-\lambda-\mu}) = dr\wedge\alpha_1 + O(r^{-n-\lambda-\mu})
\]
if $n$ is even and, respectively, odd. If $n$ is even, an integration by parts as in Lemma \ref{lem:Harmonic:to:Closed} implies that $d\xi=0$ whenever $\alpha_1=0$. If $n$ is odd, first of all note that the term $d(r\alpha_3)$ is the leading term in the asymptotic expansion of one of the exact forms in $f(\mathcal{H}^{k-1}_{-k+2}(\Sigma))$ constructed in Proposition \ref{prop:ALC:closed:coclosed:exact}. When $n$ is odd we can therefore assume without loss of generality that $\alpha_3=0$ by adding to $\xi$ a closed and coclosed form in $f(\mathcal{H}^{k-1}_{-k+2}(\Sigma))$. Then an integration by parts as in Lemma \ref{lem:Harmonic:to:Closed} shows that $d\xi=0$ whenever $\alpha_1=0=\alpha_2$ since if $\alpha_1=0$ then $d\xi$ is actually of order $O(r^{-k+\epsilon'})$ for any $\epsilon'>0$ by Proposition \ref{prop:Indicial:Roots:pforms}(ii).

Now, let $\chi$ be a cutoff function with $\chi\equiv 1$ on the ALC end and consider $k$-forms $d(\chi\gamma_\infty)$ with $\gamma_\infty$ as in the statement of the Proposition. Using the fact that $d^\ast d\xi=0$, we have
\[
\langle d^\ast d(\chi\gamma_\infty),\xi\rangle_{L^2}=\pm \int_N {\xi_\infty \wedge \ast d\gamma_\infty} \pm \int_N {\gamma_\infty\wedge\ast d\xi_\infty}.
\]
\begin{enumerate}
\item If $n$ is even, the first boundary term vanishes since $d\gamma_\infty=0$, while the second term 
\[
\int_N {\gamma_\infty\wedge\ast d\xi_\infty}= \pm \int_\Sigma{\tau\wedge\ast_\Sigma\alpha_1}.
\]
\item If $n$ is odd then both boundary terms are non-zero and an explicit calculation shows
\[
\int_N {\gamma_\infty\wedge\ast d\xi_\infty}= \pm \int_\Sigma{\tau_1\wedge\ast_\Sigma\alpha_1}, \qquad \int_N {\xi_\infty \wedge \ast d\gamma_\infty} = \pm \int_\Sigma{\alpha_2\wedge\ast_\Sigma\tau_2}.
\]
\end{enumerate}
We conclude that we can compensate for all the obstructions to solve $\triangle\gamma=-d^\ast\rho$ by adding an exact form $d(\chi\gamma_\infty)$ to $\rho$.
\endproof
\end{prop}

\begin{remark}\label{rmk:Closed:to:Coclosed:Laplacian:Mid:deg}
Note that $d(\chi\gamma_\infty)$ is compactly supported if $n$ is even, while if $n$ is odd then $d(\chi\gamma_\infty) = \frac{1}{r^2}\theta\wedge dr\wedge\tau_2 +O(r^{-k-1})=O(r^{-k})$.
\end{remark}

\subsection{Cohomology}

\newcommand{\bdrincl}{j_M}

We now come to discuss closed forms on $M$ and we begin with some observations about the cohomology of $M$. Considering $M$ as the interior of a compact manifold with boundary $N$ we have a long exact sequence:
\begin{equation}\label{eq:LES:M:boundary}
\cdots \ra H^{k-1}(N) \stackrel{\delta}{\ra} H^k_c(M) \ra H^k(M) \stackrel{\bdrincl^*}{\ra} H^k(N)\ra\cdots
\end{equation}
At the level of de Rham representatives, the map $H^k_c(M) \ra H^k(M)$ is induced by inclusion of the compactly supported subcomplex, the map $\bdrincl^*: H^k(M)\ra H^k(N)$ is restriction to the boundary and the snake map $\delta : H^{k-1}(N)\ra H^k_c(M)$ is induced by $\tau \mapsto d(\chi\, \tau)$, where $\chi$ is a cutoff function with $\chi\equiv 1$ on the ALC end.

Now, since $N$ is a circle fibration over $\Sigma$ we can also consider the Gysin sequence for $\pi\co N\ra \Sigma$. In the cyclic case, this reads
\begin{subequations}\label{eq:Gysin:bdry}
\begin{equation}\label{eq:Gysin:bdry:cyclic}
\cdots\ra H^{k-2}(\Sigma)\stackrel{\cup [d\theta]}{\ra} H^k(\Sigma) \stackrel{\pi^*}{\ra}  H^k(N) \stackrel{I}{\ra} H^{k-1}(\Sigma)\ra \cdots 
\end{equation}
where the last map $I$ is integration along the circle fibres. In the dihedral case, we can either work $\iota$--equivariantly on the principal circle bundle $\wt{N}\ra \wt{\Sigma}$ or equivalently consider cohomology of $\Sigma$ with values in the real line bundle with first Stiefel--Whitney class $w_1(N)$. Denoting by $H^\bullet_-(\Sigma)$ this cohomology, \eqref{eq:Gysin:bdry:cyclic} becomes
\begin{equation}\label{eq:Gysin:bdry:dihedral}
\cdots \ra H^{k-2}_-(\Sigma)\stackrel{\cup [d\theta]}{\ra} H^k(\Sigma) \stackrel{\pi^*}{\ra}  H^k(N) \stackrel{I}{\ra} H^{k-1}_-(\Sigma)\ra \cdots 
\end{equation}
\end{subequations}
If $N$ is a Seifert fibration over the orbifold $\Sigma$, the cohomology of $\Sigma$ is understood as the basic cohomology of $N$.

Now we wish to combine \eqref{eq:LES:M:boundary} and the Gysin sequence \eqref{eq:Gysin:bdry}. In particular, we shall be interested in the maps 
\begin{equation}\label{eq:Cohomology:X}
\delta \circ \pi^* : H^{\bullet}(\Sigma)\ra H^{\bullet+1}_c(M), \qquad I \circ \bdrincl^* : H^\bullet(M)\ra H^{\bullet-1}_-(\Sigma),
\end{equation}
that can be obtained by composing maps in the sequences
\eqref{eq:LES:M:boundary} and \eqref{eq:Gysin:bdry},
where abusing notation we set $H^{k-1}_-(\Sigma)=H^{k-1}(\Sigma)$ in the cyclic case. In that case we also set $\wt{\Sigma}=\Sigma$.

An efficient way to understand the kernels and images of the maps in \eqref{eq:Cohomology:X} is to consider the natural compactification $X$ of $M$ obtained by collapsing the circle fibres at infinity. Thus $X$ contains a copy of $\Sigma$ and $M=X\setminus\Sigma$. In other words, $X$ is obtained by gluing $M$ with the real rank-2 vector (orbi)bundle $E\ra\Sigma$ in which $N\ra\Sigma$ is the unit circle bundle. We can endow $X$ with a (orbi)smooth structure identifying the ALC end $(r_0,\infty)\times N$ with the complement $E^\ast = (0,x_0)\times N$ of the $0$-section in $E$ via $(r,p)\mapsto (\frac{1}{r},p)$. Then $X$ is a smooth manifold when the cross-section $\Sigma$ is smooth, but
when $N$ is a Seifert fibration over an orbifold~$\Sigma$, $X$ is itself an orbifold. However, since the singularities of $X$ are contained in the compact orbifold $\Sigma$ (so the minimum common multiple of the orders of the stabilisers of points in $X$ is finite) there is no issue in defining the cohomology of $X$, which can be taken equivalently as its (Haefliger) orbifold cohomology (\ie the singular cohomology of the classifying space of the groupoid representing $X$) or as the singular cohomology of the underlying topological space. Concretely, $H^\bullet (X)$ can be defined via the Mayer--Vietoris sequence associated to the cover $X=M\sqcup E$.

Now, the long exact sequence for the pair $(X,\Sigma)$, identifying $H^\bullet (X,\Sigma)$ with $H^\bullet_c(M)$, reads
\begin{equation}\label{eq:pair_exact}
\dots \ra H^{k-1}(\Sigma) \ra H^{k}_c(M) \stackrel{i_*}{\ra} H^{k}(X) \stackrel{j_X^*}{\ra} H^k(\Sigma) \ra H^{k+1}_c(M)\ra \dots
\end{equation}
where the maps $H^\bullet (\Sigma)\ra H^{\bullet +1}_c(M)$ coincide with the first map $\delta \circ \pi^*$ in \eqref{eq:Cohomology:X}. Thus we have isomorphisms
\begin{gather*}
\ker\, \delta \circ \pi^* : H^k(\Sigma)\ra H^{k+1}_c(M)\simeq \tu{im}\, j_X^*: H^k(X)\ra H^k(\Sigma) \\ H^k_c(M)/\left(\tu{im}\, \delta \circ \pi^* : H^{k-1}(\Sigma)\ra H^k_c(M) \right)\simeq \tu{im}\, i_* : H^k_c(M)\ra H^k(X).
\end{gather*}
Similarly, the long exact sequence for the
pair $(X,M)$ coupled with the Thom isomorphism $H^k(X,M)\simeq H^{k-2}_-(\Sigma)$ on (the double cover of) a tubular neighbourhood of $\Sigma$ in $X$ reads:
\begin{equation}\label{eq:thom_exact}
\dots\ra H^{k-2}_-(\Sigma)\ra H^k(X) \stackrel{i^*}{\ra} H^k(M)\ra H^{k-1}_-(\Sigma)\ra \dots
\end{equation}
The maps $H^\bullet(M)\ra H^{\bullet-1}_-(\Sigma)$ coincide with the second map $I \circ \bdrincl^*$ in \eqref{eq:Cohomology:X} and therefore we have isomorphisms
\[
\ker\, I \circ \bdrincl^* : H^k(M)\ra H^{k-1}_-(\Sigma)\simeq \tu{im}\, i^* : H^k(X)\ra H^k(M).
\]
We will use these isomorphisms freely in the rest of the section.

\begin{remark}\label{rmk:Extension:Closed}
We observe explicitly that
\begin{enumerate}
\item for every representative $\tau$ of a class in the kernel of $\delta \circ \pi^*: H^{k}(\Sigma)\ra H^{k+1}_c(M)$ there exists a closed $k$--form $\rho$ on $M$ with $\rho = \tau$ on the ALC end;
\item for every representative $\tau$ of a class in the image of $I \circ \bdrincl^*: H^k(M)\ra H^{k-1}_-(\Sigma)$ (thus $\tau$ can be lifted to a $\check{\iota}$--anti-invariant form on $\widetilde{\Sigma}$ in the dihedral case) there exists a closed $k$-form $\rho$ on $M$ with $\rho = \theta\wedge\tau + O(r^{-k})$ on the ALC end.
\end{enumerate}
Indeed, in the first case since $\ker\, \delta \circ \pi^*\simeq\tu{im}\, j_X^* : H^k(X)\ra H^k(\Sigma)$ there always exists a closed $k$-form $\tilde{\tau}$ on $X$ that in the tubular neighbourhood $E$ of $\Sigma$ takes the form $\tilde{\tau}=\tau + d\xi$ for a smooth $(k-1)$-form $\xi$ on $X$; we then set $\rho = \tilde{\tau}|_M + d(\chi\, \xi|_M)$ for a cutoff function $\chi$ with $\chi\equiv 1$ on the ALC end. In the second case, by assumption there exists a closed form on $N$ of the form $\theta\wedge\tau + u$ with $u\in\Omega^{k}(\Sigma)$ satisfying $du=-d\theta\wedge \tau$ and $[\theta\wedge\tau + u]$ in the image of $H^k(M)$. A procedure similar to the one given above yields a closed $k$-form $\rho$ on $M$ with $\rho = \theta\wedge\tau + u$ on the ALC end. Now note that $u=O(r^{-k})$.
\end{remark}

Finally, we make explicit the following simple consequences of Poincar\'e duality.

\begin{lemma}\label{lem:Poincare:Duality}
For all $k$ we have
\begin{enumerate}
\item $\tu{im}\, \bdrincl^* : H^k(M)\ra H^k(N)$ is the annihilator of $\tu{im}\, \bdrincl^* : H^{n-k}(M)\ra H^{n-k}(N)$,
\item $\tu{im}\, j_X^* : H^k(X)\ra H^k(\Sigma)\simeq \ker\, \delta \circ \pi^* : H^k(\Sigma)\ra H^{k+1}_c(M)$ is the annihilator of $\tu{im}\, I \circ \bdrincl^* : H^{n-k}(M)\ra H^{n-k-1}_-(\Sigma)$.
\end{enumerate}
\proof
The first statement is a a consequence of the fact that \eqref{eq:LES:M:boundary} is Poincar\'e self-dual, while the second one follows from the fact that the sequence \eqref{eq:pair_exact} is Poincar\'e dual to \eqref{eq:thom_exact}.
\endproof
\end{lemma}

Return now to the space $\mathcal{H}^k_\lambda(M)$ of closed and coclosed $k$-forms on the ALC manifold $M$. Since elements of $\mathcal{H}^k_\lambda(M)$ are closed, we have the natural projection map to de Rham cohomology
\[
\Psi : \mathcal{H}^k_\lambda (M) \to H^k(M),\qquad \rho \mapsto [\rho].
\]
For $\lambda < -k$, there is in addition a canonical map $\Phi$ to compactly supported de Rham cohomology $H^k_c(M)$. In order to define it we need the following observation.

\begin{lemma}\label{lem:Exact:forms:cone}
Let $\alpha$ be a smooth closed $k$-form on $\{ r\geq R\} \subset \tu{C}(\Sigma)$. If $|\alpha|_{g_{\tu{C}}}=O(r^\nu)$ for some $\nu<-k$ then $\alpha = d\beta$ for some $(k-1)$-form $\beta$ with $|\beta|_{g_{\tu{C}}}=O(r^{\nu+1})$.

Similarly, if $\alpha$ is a smooth closed $k$-form on $\{ r\geq R\} \subset \BC(\Sigma)$ such that $|\alpha|_{g_{\BC}}=O(r^\nu)$ for some $\nu<-k$, then $\alpha = d\beta$ for some $(k-1)$-form $\beta$ with $|\beta|_{g_{\BC}}=O(r^{\nu+1})$.
\begin{proof}
The statement about closed forms on $\tu{C}(\Sigma)$ is proved in \cite[Lemma 2.11]{Karigiannis}. The proof adapts easily to forms on $\BC (\Sigma)$. Indeed, write $\alpha = dr\wedge \alpha_{k-1} + \alpha_k$. Closure of $\alpha$ is equivalent to
\begin{equation}\label{eq:Exact:forms:cone}
\partial_r\alpha_k = d_N\alpha_{k-1}, \qquad d_N\alpha_k=0.
\end{equation}
Here $d_N$ denotes the differential on the slices $\{ r=\text{const}\}\simeq N$.

Considering the decomposition $\Omega^{l} (N) = \bigoplus_{p+q=l}{\pi^\ast \Omega^p(\Sigma)\otimes\Omega^q(S^1)}$ of the differential forms of the circle bundle $\pi\co N\ra \Sigma$ we further decompose
\[
\alpha_{k-1} = \theta \wedge \zeta_{k-2} + \zeta_{k-1}, \qquad \alpha_k = \theta \wedge \xi_{k-1} + \xi_{k}.
\]
The assumption $|\alpha|_{g_{\BC}}=O(r^\nu)$ implies that
\[
|\zeta_{k-2}|_{g_N} = O(r^{\nu+k-2}), \qquad |\zeta_{k-1}|_{g_N}=O(r^{\nu+k-1}),\qquad |\xi_{k-1}|_{g_N} = O(r^{\nu+k-1}), \qquad |\xi_k|_{g_N}=O(r^{\nu+k})
\]
where $g_N$ is the fixed metric $g_N = \pi^\ast g_\Sigma + \theta^2$. We then define
\[
\beta = -\int_r^\infty{\alpha_{k-1}}.
\]
The condition $\nu<-k$ guarantees that the previous integral converges and that $|\alpha_k|\ra 0$ as $r\ra \infty$. It is then straightforward to verify that $d\beta =\alpha$ using \eqref{eq:Exact:forms:cone}.
\end{proof}
\end{lemma}

This lemma not only says that if $\rho \in \mathcal{H}^k_\lambda(M)$ for $\lambda<-k$ then $\Psi(\rho)\in\tu{ker}\, \bdrincl^* : H^k(M)\ra H^k(N)\simeq\tu{im}\, H^k_c(M)\ra H^k(M)$ but also provides a canonical primitive for $\rho$ on the ALC end and therefore a canonical lift $\Phi(\rho)$ to $H^k_c(M)$. Indeed, for $\lambda<-k$ we can use Lemma \ref{lem:Exact:forms:cone} to write $\rho \in \mathcal{H}^k_\lambda(M)$ as $\rho=d\gamma' + \rho'$ where $\gamma'=O(r^{\lambda+1})$ and $\rho'$ is a closed compactly supported form. While $\gamma'$ involves a choice of cutoff function, changing that choice only changes $\rho'$ by addition of the differential of a compactly supported form. We can thus define a linear map
\[
\Phi\co \mathcal{H}^k_\lambda(M)\longrightarrow H^k_c(M), \qquad \rho \longmapsto [\rho'].
\]

For $\lambda < -k$, we can moreover consider $\overline \Phi : \mathcal{H}^k_\lambda (M) \to H^k(X)$, the composition of $\Phi$ with the push-forward $i_* : H^k_c(M) \to H^k(X)$. We can think of $\overline \Phi$ as extending a closed form smoothly to~$X$ (as zero on $\Sigma$).

In fact for each $k$ there exists $\delta\in (0,1)$ such that $\overline{\Phi}\co\mathcal{H}^k_\lambda(M)\ra H^k(X)$ is well defined for all $\lambda<-k+\delta$. Indeed, by Lemma \ref{Step3} on the ALC end we have $\rho = \theta\wedge\alpha +\beta +O(r^{-k-\epsilon})$ for closed and coclosed forms $\alpha,\beta$ on the cone $\tu{C}(\Sigma)$ homogeneous of order $\lambda$ and some $\epsilon>0$. (In fact $\alpha=0$ if $k\leq\frac{n}{2}$.) Since $\alpha$ is a $(k-1)$-form and $\lambda<-k+1$, by the statement of Lemma \ref{lem:Exact:forms:cone} for $\tu{C}(\Sigma)$ we have $\alpha = d\gamma'$ with $\gamma'=O(r^{\lambda+1})$. Thus $\theta\wedge\alpha = d(-\theta\wedge\gamma') + d\theta\wedge\gamma'$. The first term is the differential of a form of order $O(r^{\lambda+1})$ and the second term is of order $O(r^{\lambda-1})$ since $d\theta=O(r^{-2})$. We can therefore rewrite $\rho = \beta + d\gamma' + O(r^{-k-\epsilon})$ with $\gamma'=O(r^{\lambda+1})$. Applying Lemma \ref{lem:Exact:forms:cone} to $\rho-\beta-d\gamma'$ we conclude that along the ALC end $\rho = \beta + d\gamma$ with $\gamma=O(r^{\lambda+1})$. Finally, note that if $\delta>0$ is sufficiently small then $\beta$ must be a closed and coclosed $k$-form homogeneous of order $-k$. For $k\neq \frac{n}{2}$ Proposition \ref{prop:Indicial:Roots:pforms}(iv) implies that $\beta$ is the pull-back of a harmonic $k$-form on $\Sigma$. By cutting off $\gamma$ we can therefore smoothly extend $\rho$ to a closed form $\overline{\rho}$ on $X$ that restricts to $\beta$ on $\Sigma$. As before, changing the cutoff function only changes $\overline{\rho}$ by the differential of a form (with compact support in~$M$) and therefore $\overline{\Phi}(\rho)=[\overline{\rho}]\in H^k(X)$ is well-defined.

The same construction can be extended to the exceptional case $k=\frac{n}{2}$ since by Proposition \ref{prop:Indicial:Roots:pforms}(iv) now $\beta = \tau_1 + \frac{dr}{r}\wedge\tau_2= \tau_1 + d(\tau_2\log{r})$ for $\tau_1\in\mathcal{H}^k(\Sigma)$ and $\tau_2\in\mathcal{H}^{k-1}(\Sigma)$. In order to define $\overline{\rho}$ we then cut off both $\gamma$ and $\tau_2\log{r}$.

\begin{lemma}\label{lem:Injectivity:Compactly:Supported}
\hfill
\begin{enumerate}
    \item For $\lambda < -k$, the map $ \Phi\co \mathcal{H}^k_\lambda(M)\ra H^k_c(M)$ is injective.
    \item If $k \geq \frac{n}{2}+1$ and $\lambda < -n-1+k$ then $\Psi : \mathcal{H}^k_\lambda (M) \to H^k(M)$ is injective.
    \item If $\frac{n}{2}\leq k \leq \frac{n}{2}+1$ and $\lambda<-\frac{n}{2}$ then $\overline{\Phi}\co\mathcal{H}^k_\lambda(M)\ra H^k(X)$ is injective.
    \item $\overline \Phi$ is also injective if $k = \frac{n+1}{2}$ and $\lambda < -k+1$.
\end{enumerate}
\proof
We prove (i). First note that for $\lambda<-k$ we have $\mathcal{H}^k_\lambda(M)\subset L^2\mathcal{H}^k(M)$. This is obvious if $k\geq\frac{n}{2}$ and it is also true for $k<\frac{n}{2}$ by Proposition \ref{prop:Indicial:Roots:pforms} (ii). Thus we can always assume $\lambda<-\frac{n}{2}$.

Now, if $\rho\in \mathcal{H}^k_\lambda(M)$ write $\rho=d\gamma' + \rho'$ where $\gamma'=O(r^{\lambda+1})$ and $\rho'$ is a closed compactly supported form such that $[\rho']=\Phi (\rho)$. If $\rho=d\gamma''$ where $\gamma''$ is compactly supported, then $\rho=d\gamma$ with $\gamma=O(r^{\lambda+1})$. Then
\[
\|\rho\|^2_{L^2} = \langle d\gamma,d\gamma\rangle_{L^2} = \langle \gamma, d^\ast \rho\rangle_{L^2}=0.
\]

The other statements are variations of this argument. Consider first the case $\lambda <-\frac{n}{2}$, so that any $\rho\in\mathcal{H}^k_\lambda(M)$ is $L^2$--integrable. As above, integration by parts shows that $\int_M{\rho\wedge d\gamma}=0$ for all $\rho\in\mathcal{H}^k_\lambda(M)$ and $\gamma=O(r^{\lambda+1})$. Furthermore, note that $\Psi\co\mathcal{H}^k_\lambda(M)\ra H^k(M)$ is always defined, but, without any further assumption on $\lambda$ other than $\lambda<-\frac{n}{2}$, $\overline{\Phi}\co \mathcal{H}^k_\lambda(M)\ra H^k(X)$ is only defined for $k\leq \frac{n}{2}+1$ and $\Phi\co \mathcal{H}^k_\lambda(M)\ra H^k_c(M)$ is only defined if $k\leq \frac{n}{2}$. Dually, if $\rho\in\mathcal{H}^k_\lambda(M)$ then $\Phi(\ast\rho)$ is only defined for $k\geq \frac{n}{2}+1$ and $\overline{\Phi}(\ast\rho)$ only defined if $k\geq\frac{n}{2}$. These observations show that
\[
\|\rho\|_{L^2}^2=\begin{dcases}
\langle \Phi(\rho)\cup \Psi(\ast\rho),[M]\rangle & \mbox{ if } k\leq\tfrac{n}{2},\\
\langle \overline{\Phi}(\rho)\cup \overline{\Phi}(\ast\rho), [X]\rangle & \mbox{ if } \tfrac{n}{2}\leq k\leq \tfrac{n}{2}+1,\\
\langle\Psi(\rho)\cup \Phi(\ast\rho),[M]\rangle & \mbox{ if } k\geq\tfrac{n}{2}+1.\\
\end{dcases}
\]
The injectivity statements in the Lemma follow from these identities when $\lambda<-\frac{n}{2}$.

Finally, the excluded indicial roots in Proposition \ref{prop:Indicial:Roots:pforms}(ii) allow one to extend part (ii) up to $\lambda<-n-1+k$, and to deduce part (iv).
\endproof
\end{lemma}

\subsection{Weighted Hodge theory}

We are now ready to state and prove the main results of this section. Recall that we are working on an ALC manifold M of dimension $n + 1$, and we have defined
\begin{itemize}
\item injective maps $\Psi\co \mathcal{H}^k_\lambda(M)\ra H^k(M)$, $\overline{\Phi}\co \mathcal{H}^k_\lambda(M)\ra H^k(X)$ and $\Phi\co \mathcal{H}^k_\lambda(M)\ra H^k_c(M)$ for suitable choices of $k$ and $\lambda$ as described in Lemma \ref{lem:Injectivity:Compactly:Supported};
\item for $k\leq \frac{n}{2}$ and $\lambda\in (-k,-k+1]$ the subspace $f(\mathcal{H}^k_\lambda(\Sigma))\subset \mathcal{H}^k_\lambda (M)$ of Proposition \ref{prop:ALC:closed:coclosed:exact} (which by Remark \ref{rmk:accounted} account for all the changes in $\mathcal{H}^k_\lambda (M)$ for $\lambda\in (-k,-k+1)$).
\end{itemize}
By Proposition \ref{prop:Indicial:Roots:pforms} applied with $p=k$ and $p=k-1$ we know that
\begin{itemize}
\item $\mathcal{H}^k_\lambda (M)$ is constant for $\lambda\in (-n+k,-k)$ if $k<\frac{n}{2}$;
\item $\mathcal{H}^k_\lambda (M)$ is constant for $\lambda\in (-k+1,-n+k-1)$ if $k>\frac{n}{2}+1$;
\item $\mathcal{H}^\frac{n+1}{2}_\lambda (M)$ is constant for $\lambda\in (-\frac{n+1}{2},-\frac{n-1}{2})$ if $n$ is odd.
\end{itemize}

Our goal is to first describe the $L^2$--cohomology $L^2\mathcal{H}^k(M)$ corresponding to $\mathcal{H}^k_\lambda(M)$ with rates $\lambda\in (-\frac{n}{2}-\delta_1,-\frac{n}{2}+\delta_2)$ for suitable $\delta_1,\delta_2\geq 0$ depending on $k$ and spectral properties of $\Sigma$. Then we describe the change in $\mathcal{H}^k_\lambda(M)$ as $\lambda$ crosses
\begin{itemize}
    \item 
the lower and upper bounds of this range (depending on $k$),
\item the value $\lambda=-k+1$  for $k\leq\frac{n+1}{2}$
\item the value $\lambda=-n+k$  for $k\geq\frac{n+1}{2}$.
\end{itemize}
By \eqref{eq:harmonic_leading}, understanding these changes amounts to deciding which $k$-forms $\alpha + \theta \wedge \beta$, with $\alpha$ and $\beta$ harmonic on $\tu{C}(\Sigma)$ and homogeneous of rate $\lambda$, appear as highest-order terms of some $\rho \in \mathcal{H}^k_\lambda(M)$.

We begin by describing $\mathcal{H}^k_\lambda(M)$ for $\lambda$ in the $L^2$--range $(-\frac{n}{2}-\delta_1,-\frac{n}{2}+\delta_2)$, where we can use the maps $\Phi,\overline{\Phi},\Psi$ to identify $\mathcal{H}^k_\lambda(M)$ in terms of the cohomology groups of $M$ and $X$. 

\begin{theorem}\label{thm:ALC_hodge_fastdecay}
Let $M$ be an ALC manifold of dimension $n+1$ asymptotic to the cone $\tu{C}(\Sigma)$. Denote by $X$ the compactification of $M$ obtained by collapsing the circle fibres at infinity.
\begin{enumerate}
        \item For $k < \frac{n}{2}$, $\Phi\co \mathcal{H}^k_\lambda (M) \to H^k_c(M)$ is an isomorphism for all $\lambda\in (-n+k,-k)$.
        \item For $k=\frac{n}{2}$, there exists $\delta>0$ depending only on $\Sigma$ such that $\overline \Phi\co \mathcal{H}^k_\lambda (M) \to H^k(X)$ is injective with image equal to that of $H^k_c(M)$ (equivalently, $\Phi$ induces an isomorphism of $\mathcal{H}^k_\lambda (M)$ with the quotient $H^k_c(M)/\left(\tu{im}\, \delta \circ \pi^* : H^{k-1}(\Sigma)\ra H^k_c(M)\right)$) for all $\lambda\in (-{k-\delta},-k)$.
        \item For $k =\frac{n+1}{2}$, $\overline \Phi\co \mathcal{H}^k_\lambda (M) \to H^k(X)$ is an isomorphism for all $\lambda\in (-k,-k+1)$.
        \item For $k=\frac{n}{2}+1$, there exists $\delta>0$ depending only on $\Sigma$ such that $\Psi\co \mathcal{H}^k_\lambda (M) \to H^k(M)$ is injective with image equal to that of $H^k(X)$ (equivalently, $\overline{\Phi}$ induces an isomorphism between $\mathcal{H}^k_\lambda(M)$ and the quotient $H^k(X)/\tu{im}\, H^{k-2}_-(\Sigma)\ra H^k(X)$) for all $\lambda\in (-k+1-\delta,-k+1)$.
        \item For $k > \frac{n}{2} + 1$, $\Psi \co \mathcal{H}^k_\lambda (M) \to H^k(M)$ is an isomorphism for all $\lambda\in (-k+1,-n+k-1)$.
    \end{enumerate}
\proof
If $k<\frac{n}{2}$, the injectivity of $\Phi\co\mathcal{H}^k_\lambda(M)\ra H^k_c(M)$ is stated in Lemma \ref{lem:Injectivity:Compactly:Supported}(i) and the surjectivity is a consequence of Proposition \ref{prop:Closed:to:Coclosed:Laplacian}(ii).

If $k=\frac{n}{2}$, Lemma \ref{lem:Injectivity:Compactly:Supported} (ii) implies that $\overline{\Phi}$ is injective. Now let $[\rho]\in \tu{im}\, H^k_c(M)\ra H^k(X)$ be a cohomology class represented by a closed $k$-form $\rho$ on $M$ with compact support. Proposition \ref{prop:Closed:to:Coclosed:Laplacian:Mid:deg} implies that there exists a $(k-1)$--form $\gamma=\chi\tau + O(r^{\lambda+1})$ with $\tau\in\mathcal{H}^{k-1}(\Sigma)$ (and $\chi$ a cutoff function for the ALC end like in Remark \ref{rmk:Extension:Closed}) such that $\rho+d\gamma\in\mathcal{H}^k_\lambda(M)$. By the definition of $\Phi$ and $\overline{\Phi}$ we have $\Phi(\rho+d\gamma)=\Phi(\rho+d(\chi\tau))$ and, since $\chi\tau$ has a smooth extension to $X$, $\overline{\Phi}(\rho+d(\chi\tau))=\overline{\Phi}(\rho)\in H^k(X)$. This shows that the injective map $\overline{\Phi}$ is also surjective onto the subspace $\tu{im}\, H^k_c(M)\ra H^k(X)$. Equivalently, the map $\mathcal{H}^k_\lambda(M)\ra H^k_c(M)/\tu{im}\, \delta \circ \pi^* :  H^{k-1}(\Sigma)\ra H^k_c(M)$ induced by $\Phi$, which is injective since both $\Phi$ and $\overline{\Phi}$ are, is in fact an isomorphism. 

The proof for $k=\frac{n+1}{2}$ is similar. Firstly, Lemma \ref{lem:Injectivity:Compactly:Supported} (ii) implies that $\overline{\Phi}$ is injective. By Remark \ref{rmk:Extension:Closed}(i) every cohomology class on $X$ can be represented by a closed form $\rho$ whose restriction to the ALC end of $M$ coincides with the harmonic representative $\rho_\infty\in\mathcal{H}^k(\Sigma)$ of the image of $[\rho]$ under the map $H^k(X)\ra H^k(\Sigma)$. We would like to apply Proposition \ref{prop:Closed:to:Coclosed:Laplacian:Mid:deg} to solve $\triangle\gamma = -d^\ast\rho$ for $\gamma\in C^{2,\alpha}_{\lambda+1}$ with $\lambda\in (-k-\delta,-k)$; this is not immediately possible since now we do not assume that $\rho=O(r^\lambda)$, but one can check explicitly that the initial integration by parts
\[
\langle d^\ast\rho,\xi\rangle_{L^2}=\langle \rho,d\xi\rangle_{L^2} \pm \int_N{\xi\wedge \ast\rho} = \langle \rho,d\xi\rangle_{L^2}
\]
still holds for all $\xi\in C^{\infty}_{-n+1+k}$ since $\ast\rho = \pm r^{-2}dr\wedge\theta\wedge\ast_\Sigma\rho_\infty + O(r^\lambda)$ on the ALC end and the first term restricts to zero on the boundary at infinity $N$. Thus $d^\ast\rho$ is automatically $L^2$--orthogonal to closed forms $\xi$ of the appropriate decay and the proof of Proposition \ref{prop:Closed:to:Coclosed:Laplacian:Mid:deg} goes through unchanged. Thus we can add to $\rho$ a term of the form $d\gamma$ with $\gamma = \tau_1 + \frac{1}{r}\theta\wedge\tau_2 + O(r^{-k+1-\epsilon})$ on the ALC end (for $\epsilon>0$) to make it coclosed. Since $\rho+d\gamma = \rho + O(r^{-k})$ by Remark \ref{rmk:Closed:to:Coclosed:Laplacian:Mid:deg}, we conclude that the closed and coclosed form $\rho+d\gamma$ is still in $L^2$, and $\overline{\Phi}(\rho+d\gamma)=[\rho]$ by the definition of $\overline{\Phi}$. Thus $\overline \Phi$ is surjective.

In the cases $k\geq \frac{n}{2}+1$, we first note that injectivity of $\Psi$ follows Lemma \ref{lem:Injectivity:Compactly:Supported}(ii). Surjectivity then follows from the dimensions of the domain and codomain matching, which can be deduced from (i) and (ii) by duality. Indeed, working at the $L^2$--rate included in the allowed ranges, (i) and (ii) state
\[
L^2\mathcal{H}^{n+1-k}(M) \simeq \begin{dcases} H^{n+1-k}_c(M) & \mbox{if } k>\tfrac{n}{2}+1,\\
\tu{im}\, H^{n+1-k}_c(M)\ra H^{n+1-k}(X) & \mbox{if }k=\tfrac{n}{2}+1\end{dcases}
\]
via $\Phi$ and $\overline{\Phi}$ respectively. Since the Poincar\'e pairing is nondegenerate for $L^2$--integrable forms, we deduce
\[
L^2\mathcal{H}^{k}(M) \simeq \begin{dcases} H^{k}(M) & \mbox{if } k>\tfrac{n}{2}+1,\\
H^{k}(X)/\tu{im}\, H^{k-2}_-(M)\ra H^k(X)  & \mbox{if } k=\tfrac{n}{2}+1 \end{dcases}
\]
via $\Phi$ and $\overline{\Phi}$ respectively. Here we used the fact that the sequence \eqref{eq:pair_exact} is Poincar\'e dual to \eqref{eq:thom_exact}. Moreover, since by Lemma \ref{lem:Injectivity:Compactly:Supported} $\Psi\co L^2\mathcal{H}^k(M)\ra H^k(M)$ remains injective for $k=\frac{n}{2}+1$, by exactness of the sequence \eqref{eq:thom_exact}  we deduce that $\Psi$ induces an isomorphism between $L^2\mathcal{H}^k(M)$ and the image of $H^k(X)$ in $H^k(M)$.
\endproof
\end{theorem}

\begin{remark*}
Theorem \ref{thm:ALC_hodge_fastdecay} provides an independent calculation of the $L^2$--cohomology of ALC manifolds previously carried out by  Hausel--Hunsicker--Mazzeo~\cite[Corollary 1]{HHM}. The microlocal methods of~\cite{HHM} apply to more general fibred boundary metrics, while instead our ALC proof extends without change to the case of a Seifert ALC end.
\end{remark*}

We denote by $\triangle \mathcal{H}^k_\lambda(M)$ the quotient $\mathcal{H}^k_{\lambda+\epsilon}(M)/\mathcal{H}^k_{\lambda-\epsilon}(M)$ for $\epsilon>0$ small enough (so $\triangle\mathcal{H}^k_\lambda (M) = 0$ unless $\lambda$ is an indicial root). We already know that $\triangle \mathcal{H}^k_\lambda(M) = 0$ for $\lambda \in (-n+k, -k)$ if $k < \frac{n}{2}$, for $\lambda \in (-k+1, -n + k -1)$ for $k \geq \frac{n}{2}$, and for $\lambda \in (-k, -k+1)$ for $k = \frac{n+1}{2}$.

By \eqref{eq:harmonic_leading}, $\triangle \mathcal{H}^k_\lambda(M)$ embeds into the direct sum of homogeneous rate $\lambda$ harmonic $k$ and $k-1$-forms on $C(\Sigma)$, and we need to understand the image.

Topological contributions to $\triangle \mathcal{H}^k_\lambda(M)$ only occur for $\lambda\in \{ -k, -k+1\}$ and (by Hodge-star duality) $\lambda\in\{ -n+k-1,-n+k\}$. If $k \leq \frac{n}{2}$ then Proposition \ref{prop:ALC:closed:coclosed:exact}/Remark \ref{rmk:accounted} controls the jumps in the interval $(-k, -k+1)$, and dually in the interval $(-n+k-1, n+k)$ for $k \geq \frac{n}{2}+1$. However, since there cannot be any analogue of Proposition \ref{prop:ALC:closed:coclosed:exact} for rates $\lambda<-\frac{n}{2}$ it is hopeless to describe the jump $\triangle \mathcal{H}^k_\lambda(M)$ for $\lambda<-\frac{n}{2}$ as soon as indicial roots corresponding to the eigenspaces $\mathcal{H}^k_\lambda(\Sigma)$ play a role (\ie $\lambda < -n+k$ for $k \leq \frac{n}{2}$, or $\lambda < -k$ for $k \geq \frac{n}{2}+1$). For even if $\rho$ is a closed compactly supported form then when trying to make it coclosed terms due to indicial roots corresponding to $\mathcal{H}^k_\lambda(\Sigma)$ for $\lambda<-\frac{n}{2}$ could arise to compensate global obstructions. Similarly, when $n$ is odd and $k=\frac{n+1}{2}$ due to Remark \ref{rmk:Closed:to:Coclosed:Laplacian:Mid:deg} we cannot say anything also about the jump as $\lambda$ crosses the indicial root $-k$. Indeed, even if $\rho$ is a closed compactly supported form then Proposition \ref{prop:Closed:to:Coclosed:Laplacian:Mid:deg} only allows us to make $\rho$ closed and coclosed by adding a term $d\gamma$ which could introduce terms of order $O(r^{-k})$ of which we do not have explicit control.

Excluding these cases and observing that the case $k\geq\frac{n}{2}+1$ can be dealt with by an application of the Hodge-$\ast$ operator, the next theorem describes all topological contributions to $\triangle \mathcal{H}^k_\lambda(M)$.  

\begin{theorem}\label{thm:ALC_hodge_jump}
Let $M$ be an ALC manifold of dimension $n+1$ asymptotic to the cone $\tu{C}(\Sigma)$. Denote by $X$ the compactification of $M$ obtained by collapsing the circle fibres at infinity.
\begin{enumerate}
\item If $k<\frac{n}{2}$ then there are isomorphisms
\[
\triangle\mathcal{H}^k_\lambda (M)/f(\mathcal{H}^k_\lambda (\Sigma)) \simeq \begin{dcases}
\tu{im}\, H^{n+1-k}(M)\ra H^{n-k}_-(\Sigma) & \mbox{ if } \lambda=-n+k,\\
\tu{im}\, H^k(X)\ra H^k(\Sigma) & \mbox{ if } \lambda=-k,\\
\tu{im}\, H^k(M)\ra H^{k-1}_-(\Sigma)
& \mbox{ if }\lambda=-k+1,
\end{dcases}
\]
induced by $\Psi\circ\ast$, $\overline{\Phi}$ and $\Psi$, respectively. For $\lambda\in (-k,-k+1)$, $\triangle\mathcal{H}^k_\lambda (M) = f(\mathcal{H}^k_\lambda (\Sigma))$.
\item If $n$ is even and $k=\frac{n}{2}$ then there are isomorphisms
\[
\triangle\mathcal{H}^k_\lambda (M)/f(\mathcal{H}^k_\lambda (\Sigma)) \simeq \begin{dcases}
\tu{im}\, H^k(X)\ra H^k(\Sigma) \oplus \tu{im}\, H^{k+1}(M)\ra H^{k}_-(\Sigma) & \mbox{ if } \lambda=-k,\\
\tu{im}\, H^k(M)\ra H^{k-1}_-(\Sigma) & \mbox{ if }\lambda=-k+1.
\end{dcases}
\]
induced by $\overline{\Phi}\oplus (\Psi\circ\ast)$ and $\Psi$ respectively.
\item If $n$ is odd, $k=\frac{n+1}{2}$ and $\lambda=-k+1$ then there is an isomorphism
\[
\triangle\mathcal{H}^k_\lambda (M) \simeq 
\left( \tu{im}\, H^k(M)\ra H^{k-1}_-(\Sigma)\right)^{\oplus 2} 
\]
induced by $\Psi\oplus (\Psi\circ\ast)$.
\end{enumerate}
\proof
The scheme of proof is similar in all cases. Firstly, for $k<\frac{n}{2}$ we describe the change in $\mathcal{H}^k_{\lambda}(M)$ as $\lambda$ crosses the indicial root $\lambda=-n+k$ as a simple consequence of Theorem \ref{thm:ALC_hodge_fastdecay}. We then consider the possible asymptotic expansions of new elements in $\mathcal{H}^k_\lambda(M)$ as $\lambda$ crosses the indicial roots $\lambda=-k,-k+1$,  deriving natural injective maps to the spaces on the right-hand-sides of the statements. Surjectivity of these maps is then proved using Propositions \ref{prop:Closed:to:Coclosed:Laplacian} and/or \ref{prop:Closed:to:Coclosed:dd*}. The cases $k=\frac{n}{2}$ and $k=\frac{n+1}{2}$ follow similar lines, though some of the details differ, as already apparent in Theorem \ref{thm:ALC_hodge_fastdecay}.

If $k<\frac{n}{2}$ and $\lambda=-n+k$ then Theorem \ref{thm:ALC_hodge_fastdecay}(v) states that $\Psi\circ\ast$ induces an isomorphism between $\mathcal{H}^k_{\lambda+\epsilon}(M)$ and $H^{n+1-k}(M)$. Moreover, Proposition \ref{prop:Indicial:Roots:pforms}(iv) implies that any $\rho\in \mathcal{H}^k_{\lambda+\epsilon}(M)$ has an asymptotic expansion $\ast\rho = \theta\wedge\alpha + O(r^{\lambda-\epsilon})$ with $[\alpha]\in H^{n-k}_-(\Sigma)$ the image of $[\ast\rho]\in H^{n+1-k}(M)$.

We now study the new closed and coclosed forms that appear as $\lambda$ crosses the indicial roots $-k$ and $-k+1$, working modulo $f(\mathcal{H}^k_\lambda(\Sigma))$ for $\lambda=-k+1$. If $k<\frac{n}{2}$ then any $\rho\in \mathcal{H}^k_{\lambda}(M)$ (modulo $f(\mathcal{H}^k_\lambda(\Sigma))$ for $\lambda=-k+1$) is of the form $\rho = \rho_\infty + O(r^{\lambda-\epsilon})$ for some $\epsilon>0$ and
\[
\rho_\infty = \begin{dcases}
\alpha , \quad \alpha\in\mathcal{H}^k(\Sigma) & \mbox{ if } \lambda = -k,\\
\theta\wedge\beta, \quad \beta\in\mathcal{H}^{k-1}_-(\Sigma) & \mbox{ if } \lambda = -k+1.
\end{dcases}
\]
Thus if $\lambda=-k$ then $\overline{\Phi}(\rho)$ is well defined with $[\alpha]\in H^k(\Sigma)$ the image of $\overline{\Phi}(\rho)$ under $H^k(X)\ra H^k(\Sigma)$, while if $\lambda=-k+1$ then $[\beta]\in H^{k-1}_-(\Sigma)$ is the image of $\Psi(\rho)$ under $H^k(M)\ra H^{k-1}_-(\Sigma)$. The maps $\rho\mapsto [\alpha]$ and $\rho\mapsto [\beta]$ then induce canonical injective maps from $\triangle\mathcal{H}^k_\lambda(M)/f(\mathcal{H}^k_\lambda (\Sigma))$ into the claimed subspaces. These maps are also surjective: the existence of a closed form with the desired expansion at infinity is guaranteed by Remark \ref{rmk:Extension:Closed} and Proposition \ref{prop:Closed:to:Coclosed:Laplacian} allows one to modify an arbitrary choice of closed form with prescribed asymptotics into a closed and coclosed one. This completes the proof of (i).

\smallskip

Now consider the case where $n$ is even and $k=\frac{n}{2}$. As before, modulo $f(\mathcal{H}^k_\lambda(\Sigma))$, $\rho\in\mathcal{H}^k_{\lambda} (M)$ has an asymptotic expansion $\rho=\rho_\infty + O(r^{\lambda-\epsilon})$ where 
\[
\rho_\infty = \begin{dcases}
\alpha + \frac{dr}{r}\wedge\beta , \quad \alpha\in\mathcal{H}^k(\Sigma), \beta\in\mathcal{H}^{k-1}(\Sigma) & \mbox{ if } \lambda = -k,\\
\theta\wedge\tau, \quad \beta\in\mathcal{H}^{k-1}_-(\Sigma) & \mbox{ if } \lambda = -k+1.
\end{dcases}
\]
Note that $\frac{dr}{r}\wedge\beta = d(\beta\log{r})$ and
\[
\ast\rho = \pm \theta\wedge\ast_\Sigma\beta \pm \frac{dr}{r}\wedge\theta\wedge\ast_\Sigma\alpha + O(r^{\lambda-\epsilon})= \pm \theta\wedge\ast_\Sigma\beta \pm d\left(\log{r}\,\theta\wedge\ast_\Sigma\alpha\right) + O(r^{\lambda-\epsilon}).
\]
Then $[\alpha]\in H^k(\Sigma)$ is the image of $\overline{\Phi}(\rho)$ under $H^k(X)\ra H^k(\Sigma)$, while $\beta$ and $\tau$ are harmonic representatives of the images of $\Psi(\ast\rho)$ and $\Psi(\rho)$ in $H^{k-1}_-(\Sigma)$ for $\lambda=-k$ and $\lambda=-k+1$, respectively. The maps $\rho\mapsto [\alpha], [\ast_\Sigma\beta],[\tau]$ induce canonical injective maps from $\triangle\mathcal{H}^k_\lambda(M)/f(\mathcal{H}^k_\lambda(\Sigma))$ to these cohomological subspaces. We can prove surjectivity of these maps using Proposition \ref{prop:Closed:to:Coclosed:dd*}. Indeed, by Remark \ref{rmk:Extension:Closed} we can always find a closed form $\rho$ that satisfies the required asymptotic expansions. In order to apply Proposition \ref{prop:Closed:to:Coclosed:dd*} we need to check that $\langle d^\ast\rho,\xi\rangle_{L^2}=0$ for all $\xi\in\mathcal{H}^{k-1}_{-n+1-\lambda+\epsilon}(M)$.
\begin{itemize}
\item When $\lambda=-k+1$, one has $-n+1-\lambda+\epsilon = -\frac{n}{2}+\epsilon$ and therefore $\xi$ is in fact of rate $-\frac{n}{2}-1$: then the integrations by parts $\langle d^\ast\rho,\xi\rangle_{L^2} = \langle \rho,d\xi\rangle_{L^2}=0$ is completely justified.
\item When $\lambda=-k$, one has $-n+1-\lambda+\epsilon = -\frac{n}{2}+1+\epsilon$ and therefore, by part (i) of the Theorem which we have already proved, $\xi = \tau + O(r^{-\frac{n}{2}+1-\epsilon})$ for $[\tau]\in H^{k-1}(X)\ra H^{k-1}(\Sigma)$. Then, using the expansion above for $\ast\rho$, we calculate
\[
\langle d^\ast\rho,\xi\rangle_{L^2} = \pm\int_N{\theta\wedge\ast_\Sigma\beta\wedge\tau}.
\]
The latter integral vanishes by Lemma \ref{lem:Poincare:Duality}(ii).
\end{itemize}

Finally consider the case where $n$ is odd, $k=\frac{n+1}{2}$ and $\lambda=-k+1$. Any $\rho\in\mathcal{H}^k_{\lambda}(M)$ has an expansion $\rho=\rho_\infty + O(r^{\lambda-\epsilon})$ where
\[
\rho_\infty = dr\wedge\alpha + \theta\wedge\beta, \quad \alpha\in\mathcal{H}^{k-1}(\Sigma), \beta\in\mathcal{H}^{k-1}_-(\Sigma)
\]
We calculate
\[
\ast\rho = \pm dr\wedge\ast_\Sigma\beta \pm \theta\wedge\ast_\Sigma\alpha + O(r^{\lambda-\epsilon})
\]
and observe that
\[
dr\wedge\alpha+\theta\wedge\beta = d(r\alpha)+\theta\wedge\beta.
\]
We conclude that $[\ast_\Sigma\alpha],[\beta]\in \tu{im}\, H^k(M)\ra H^{k-1}_-(M)$ are the images of $\Psi(\ast\rho)$ and $\Psi(\rho)$, respectively. Conversely, given $\alpha,\beta$ as above we can always find a closed form $\rho$ with $\rho=\rho_\infty+O(r^{\lambda-\epsilon})$: we can just take the sum of $d(\chi r\alpha)$, where $\chi$ is the usual cutoff function such that $\chi\equiv 1$ on the ALC end, and a closed form on $M$ asymptotic to $\theta\wedge\beta$ whose existence is guaranteed by Remark \ref{rmk:Extension:Closed}(ii). We apply Proposition \ref{prop:Closed:to:Coclosed:dd*} with $\lambda=-\frac{n}{2}+1-\epsilon$. In order to conclude that we can perturb $\rho$ to a closed and coclosed form with the same expansion at infinity, we only need to check that $\langle d^\ast\rho,\xi\rangle_{L^2}=0$ for all $\xi\in \mathcal{H}^{k-1}_{-n+1-\lambda}(M)$. Since $-n+1-\lambda = -\frac{n}{2}+1-\epsilon$, any such $\xi$ has an expansion
\[
\xi = \tau +O(r^{\lambda+1}).
\]
Then a computation shows that
\[
\langle d^\ast\rho,\xi\rangle_{L^2} = \pm\int_N{\theta\wedge\ast_\Sigma\alpha'\wedge\tau}=\pm 2\pi \int_\Sigma{\ast_\Sigma\alpha'\wedge\tau}=0
\]
by Lemma \ref{lem:Poincare:Duality}(ii).
\endproof
\end{theorem}

\part{ALC \gtmfd s: deformation theory and applications}\label{Part2}

In the second part of the paper we study the deformation theory of ALC \gtwo--manifolds, \ie ALC \gtwo--holonomy metrics. Thanks to pioneering work of Joyce \cite{Joyce:Book}, compact \gtwo--manifolds have a nice deformation theory and form smooth moduli spaces locally immersed in $H^3(M)\oplus H^4(M)$ as a Lagrangian graph.  Nordstr\"om \cite{Nordstrom:ACyl:G2} and Karigiannis--Lotay \cite{Karigiannis:Lotay} have extended this theory to the noncompact ACyl and AC (and conically singular) settings respectively. We will now further extend these results to the ALC setting. Given the analytic set-up developed in Part~\ref{Part1}, this generalisation is fairly straightforward, though there are some subtleties regarding how to specialise the general Definition \ref{def:ALC:n} to the $\gtwo$--holonomy setting. Less routine work is contained in Sections~\ref{sec:Symmetries} and~\ref{sec:Dimension} that discuss, respectively, the extension of symmetries from the asymptotic model at infinity
and the calculation of the dimension of the moduli spaces of ALC \gtwo--metrics for many known examples. 

\section{The asymptotic geometry of ALC \gtmfd s}\label{sec:ALC:G2}

In this preliminary section we collect the necessary definitions and computations that will be used in the rest of Parts \ref{Part2} and \ref{Part3} of the paper.

\subsection{\gtmfd s}

In this brief section we collect well-known facts about \gtmfd s, \ie Riemannian $7$-manifolds $(M^7,g)$ with $\tu{Hol}(g)=\gtwo$.

The exceptional Lie group \gtwo~is the group of automorphisms of the octonions $\mathbb{O}$. In particular, \gtwo~acts on $\R^7=\Imag\mathbb{O}$ preserving the standard inner product and the \emph{cross-product} $u\times v = \Imag\, uv$. In fact $\gtwo$~can be defined as the subgroup of $\tu{GL}(7,\R)$ that preserves the $3$-form
\begin{equation}\label{eq:G2:form}
\varphi_0 (u,v,w)=\langle u\times v,w\rangle, \qquad \forall\,  u,v,w\in\R^7.
\end{equation}

A \gtstr~on a smooth $7$-manifold $M$ is the choice of a $3$-form $\varphi$ such that for all $p\in M$ there exists a frame $u\co \R^7\ra T_p M$ with $u^\ast\varphi = \varphi_0$. Because the $GL(7,\mathbb{R})$-orbit of $\varphi_0$ is open, this is an open condition on $\varphi$. Every \gtstr~$\varphi$ on $M$ defines a volume form $\dvol_\varphi$ and Riemannian metric $g=g_\varphi$ on $M$ via
\[
\tfrac{1}{6} (u\lrcorner \varphi) \wedge (v\lrcorner\varphi) \wedge \varphi = g (u,v)\, \dvol_\varphi, \qquad \forall\,  u,v\in TM.
\]
We will denote by $\psi$ the $4$-form $\psi = \ast_\varphi\varphi$.

There is an alternative definition of $\gtwo$--structures in spinorial terms. Indeed, note that since $\gtwo$ is simply connected, every $7$-manifold endowed with a {\gtstr} is automatically spin. Moreover, since the group $\gtwo$ can also be defined as the stabiliser in $\tu{Spin}(7)$ of a nonzero spinor, a {\gtstr} can also be specified via the choice of a spin structure on $M$, \ie a principal $\tu{Spin}(7)$--bundle $P\ra M$ such that $P\times_{\tu{Spin}(7)}\R^7\simeq TM$ where $\R^7$ is the representation of $\tu{Spin}(7)$ induced by the standard representation of $\tu{SO}(7)$, and a nowhere-vanishing spinor field $\Psi$ on $M$. The 3-form $\varphi$ is determined by the spinor $\Psi$ via the formula
\[
\varphi (u,v,w)=\langle u\cdot v\cdot w\cdot \Psi, \Psi\rangle, \qquad \forall u,v,w\in TM,
\]
see for example \cite[Section 2]{Agricola:Chiossi}. Here $u\cdot \Psi$ denotes Clifford multiplication.

We will now collect important identities for differential forms on $7$-manifolds endowed with a $\gtwo$--structure. We refer the reader mostly to Bryant \cite{Bryant:Rmks:G2} for their proof.

\begin{lemma}\label{lem:decomp:forms}
Let $M$ be a $7$-manifold endowed with a {\gtstr} $\varphi$. Then there are pointwise orthogonal decompositions
\[
\Omega^2(M) = \Omega^2_7(M) \oplus \Omega^2_{14}(M), \qquad \Omega^3(M) = \Omega^3_1(M)\oplus \Omega^3_7(M) \oplus \Omega^3_{27}(M)
\]
of differential forms according to irreducible representations of $\gtwo$, where: 
\[
\begin{gathered}
\Omega^2_7(M) = \{ \gamma^\sharp\lrcorner\varphi=\ast(\gamma\wedge\psi)\, |\, \gamma\in\Omega^1(M)\}, \qquad \Omega^3_7(M) = \{ \gamma^\sharp\lrcorner\psi=-\ast(\gamma\wedge\varphi)\, |\, \gamma\in\Omega^1(M)\},\\
\Omega^2_{14}(M)=\{ \tau\in\Omega^2(M)\, |\, \tau\wedge\psi=0\} = \{ \tau\in\Omega^2(M)\, |\, \ast\tau=-\tau\wedge\varphi\},\\
\Omega^3_1(M)=\{ f\varphi\, |\, f\in\Omega^0(M)\}, \qquad 
\Omega^3_{27}(M) = \{ \rho\in\Omega^3(B)\, |\, \rho\wedge\varphi=0=\rho\wedge\psi\}.
\end{gathered}
\]
\end{lemma}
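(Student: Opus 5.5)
This is a pointwise statement: every identity is an equality of tensor bundles. It therefore suffices to work on a single vector space $V=\R^7$ with the standard form $\varphi_0$ of \eqref{eq:G2:form}. The metric, orientation and $\psi=\ast\varphi_0$ are all $\gtwo$--invariant, so everything else is representation theory of $\gtwo$, together with a few explicit checks in a standard basis $e^{ijk}$ with $\varphi_0=e^{123}+e^{145}+e^{167}+e^{246}-e^{257}-e^{347}-e^{356}$.

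The first step is to decompose $\Lambda^2 V^\ast\cong\mathfrak{so}(7)$ under $\gtwo$. Since $\gtwo\subset \tu{SO}(7)$ has dimension $14$, we get $\mathfrak{so}(7)=\mathfrak{g}_2\oplus\mathfrak{g}_2^\perp$. The complement has dimension $7$, and identifying it with $V$ gives $\Lambda^2=\Lambda^2_{14}\oplus\Lambda^2_7$, both irreducible. The map $\gamma\mapsto\gamma^\sharp\lrcorner\varphi_0$ is equivariant and injective, because $\varphi_0$ is nondegenerate in the sense that $u\lrcorner\varphi_0\neq0$ for $u\neq 0$. Hence its image is $\Lambda^2_7$.

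The second step is to identify $\Lambda^2_{14}$ with the stabiliser algebra, namely the set of $\tau$ with $\tau\cdot\varphi_0=0$. To get the descriptions in the statement, I would consider the equivariant map $\tau\mapsto\ast(\tau\wedge\varphi_0)$ from $\Lambda^2$ to itself. By Schur's lemma it acts by scalars on the two summands. Evaluating on $e^{23}$ and on an element of $\mathfrak{g}_2$ such as $e^{23}-e^{45}$ gives the eigenvalues $2$ on $\Lambda^2_7$ and $-1$ on $\Lambda^2_{14}$. This yields $\ast\tau=-\tau\wedge\varphi$ on $\Lambda^2_{14}$.

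The condition $\tau\wedge\psi=0$ defines the kernel of an equivariant map $\Lambda^2\to\Lambda^6\cong V$. This map kills $\Lambda^2_{14}$ by Schur's lemma, and it is nonzero on $e^{23}$, so its kernel is exactly $\Lambda^2_{14}$. The identity $\gamma^\sharp\lrcorner\varphi=\ast(\gamma\wedge\psi)$ holds by the general formula $u\lrcorner\ast\alpha=\pm\ast(u^\flat\wedge\alpha)$, with the sign fixed by checking one vector.

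The third step handles $3$-forms, which have dimension $35$. The three pieces $\R\varphi_0$, $\{\gamma^\sharp\lrcorner\psi\}$ and $\Lambda^3_{27}$ are invariant subspaces. The middle one is the injective equivariant image of $V$. The identity $\gamma^\sharp\lrcorner\psi=-\ast(\gamma\wedge\varphi)$ follows from the same contraction/Hodge star formula. The last piece is defined as the kernel of the map $\rho\mapsto(\rho\wedge\varphi,\rho\wedge\psi)\in\Lambda^6\oplus\Lambda^7\cong V\oplus\R$. The main obstacle is showing that this kernel is irreducible of dimension $27$ and complementary to the other two pieces.

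To handle it, I would first show that the map is surjective: it is nonzero on $\varphi_0$, since $\varphi\wedge\psi=7\,\dvol$, and nonzero on the $7$-dimensional piece. This gives dimension $35-8=27$. Then I would use the known branching $\Lambda^3=\mathbf 1\oplus\mathbf 7\oplus\mathbf{27}$, where $\mathbf{27}\cong S^2_0V$ is irreducible and sits inside $\Lambda^3$ via the standard map $h\mapsto h\cdot\varphi_0$. Orthogonality of all the decompositions follows because distinct irreducible summands of an invariant inner product space are orthogonal, again by Schur's lemma. For full details I would cite Bryant \cite{Bryant:Rmks:G2}, as the paper indicates.
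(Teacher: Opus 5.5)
Your proposal is correct; the paper does not prove this lemma but simply cites Bryant, and your argument is the standard representation-theoretic proof that citation points to. One small cleanup: $e^{23}$ is not an eigenvector of $\tau\mapsto\ast(\tau\wedge\varphi)$ (one finds $\ast(e^{23}\wedge\varphi_0)=e^{45}+e^{67}$), so the eigenvalue $2$ is read off most cleanly on $e_1\lrcorner\varphi_0=e^{23}+e^{45}+e^{67}$, although your two evaluations together do pin down both scalars. Likewise, complementarity of $\Lambda^3_{27}$ follows from your surjectivity check, which shows that $\rho\mapsto(\rho\wedge\varphi,\rho\wedge\psi)$ restricts to an isomorphism from $\R\varphi\oplus\Lambda^3_7$ onto $\Lambda^6\oplus\Lambda^7$.
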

By acting with the Hodge-star operator $\ast$, Lemma \ref{lem:decomp:forms} implies analogous decompositions of the space of $4$-forms and $5$-forms. The decompositions of $3$-forms and $4$-forms can be used to describe the linearisation of the map $\varphi\mapsto \psi$, see \cite[Proposition 4]{Bryant:Rmks:G2}.

\begin{lemma}\label{lem:Linearisation:Hitchin:dual:3form:7d}
The linearisation of the map $\varphi\mapsto \psi$ is
\[
\rho \longmapsto \ast\left( \tfrac{4}{3}\pi_1\rho + \pi_7 \rho - \pi_{27}\rho\right).
\]
\end{lemma}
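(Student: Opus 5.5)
The statement is pointwise and $\gtwo$--equivariant, so I will prove it on $\R^7$ with $\varphi=\varphi_0$, using Schur's lemma to reduce to three computations. The map $\Theta\co\varphi\mapsto\psi=\ast_\varphi\varphi$ is defined on the open orbit of $\varphi_0$, which is a $\tu{GL}(7,\R)$--orbit. It is $\tu{GL}(7,\R)$--equivariant: $\Theta(A^\ast\varphi)=A^\ast\Theta(\varphi)$, because $g_\varphi$ and $\dvol_\varphi$ transform naturally. Differentiating at $\varphi_0$ shows that $d\Theta_{\varphi_0}\co\Lambda^3\ra\Lambda^4$ is $\gtwo$--equivariant. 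Composing with $\ast$ gives an equivariant endomorphism $\ast\, d\Theta$ of $\Lambda^3=\Lambda^3_1\oplus\Lambda^3_7\oplus\Lambda^3_{27}$. These three summands are pairwise non-isomorphic irreducible representations, so by Schur's lemma $\ast\, d\Theta=c_1\pi_1+c_7\pi_7+c_{27}\pi_{27}$ for real constants $c_i$. It remains to show $c_1=\tfrac43$, $c_7=1$ and $c_{27}=-1$.

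For $c_1$, use scaling. Set $\varphi_t=t^3\varphi$, which equals $(t\,\tu{id})^\ast\varphi$. Then $g_{\varphi_t}=t^2g$ and $\dvol_{\varphi_t}=t^7\dvol$, so on $4$-forms $\ast_t=t^{-1}\ast$ and hence $\psi_t=t^{-1}\ast(t^3\varphi)=t^{2}\cdot t^2\psi\cdot t^{-2}$. Equivalently, $\psi_t=(t\,\tu{id})^\ast\psi=t^4\psi=(t^3)^{4/3}\psi$. Writing $s=t^3$ and differentiating at $s=1$ gives $d\Theta(\varphi)=\tfrac43\psi=\ast(\tfrac43\varphi)$, so $c_1=\tfrac43$.

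For $c_7$, use infinitesimal rotations. Take $A_t=\exp(tX)$ with $X\in\mathfrak{so}(7)$ in the complement $\mathfrak{so}(7)\ominus\mathfrak{g}_2\cong\R^7$. Then $\rho=\mathcal{L}_X\varphi=X\cdot\varphi$ ranges over all of $\Lambda^3_7$. Since $A_t$ is orthogonal, $\ast$ is unchanged to first order, and we get $d\Theta(X\cdot\varphi)=X\cdot\psi=X\cdot\ast\varphi=\ast(X\cdot\varphi)$. Hence $c_7=1$.

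The step requiring care is $c_{27}$. Here use a symmetric $h\in\tu{Sym}^2_0$ acting as a derivation. Then $\rho=h\cdot\varphi$ lies in $\Lambda^3_{27}$, and every element of $\Lambda^3_{27}$ arises this way. By equivariance, $d\Theta(h\cdot\varphi)=h\cdot\psi$. The claim is $h\cdot\psi=-\ast(h\cdot\varphi)$. This holds because for a traceless symmetric derivation $h$, the Hodge star intertwines $h$ acting on $3$-forms with $-h$ acting on $4$-forms. Indeed, $\ast$ is $\tu{SO}(7)$--equivariant, so it commutes with skew derivations. A symmetric traceless derivation anticommutes with $\ast$: under the metric change $g\mapsto g+2\epsilon h$ induced by $\exp(\epsilon h)$, the pairing $\alpha\wedge\ast\beta=\langle\alpha,\beta\rangle\dvol$ forces $\ast_{(\exp\epsilon h)^\ast}=\exp(\epsilon h)^\ast\circ\ast\circ\exp(-\epsilon h)^\ast$. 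Differentiating, together with the fact that for diagonal $h=\mathrm{diag}(\mu_i)$ the star sends $e^{ijk}$, on which $h$ acts with weight $\mu_i+\mu_j+\mu_k$, to $\pm e^{lmnp}$ with weight $-(\mu_i+\mu_j+\mu_k)$, gives $h\cdot\ast\alpha=-\ast(h\cdot\alpha)$. Hence $c_{27}=-1$.

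Alternatively, one can check a single explicit element, such as $h=e^1\otimes e^1-e^2\otimes e^2$, in coordinates; this is \cite[Proposition 4]{Bryant:Rmks:G2}.
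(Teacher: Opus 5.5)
Your proof is correct. The paper does not prove this lemma itself; it only points to \cite[Proposition 4]{Bryant:Rmks:G2}. Your argument is therefore a self-contained replacement for that citation.

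Your route works as follows. $\tu{GL}(7,\R)$--equivariance of $\Theta$ is combined with the fact that the derivation action $a\mapsto a\cdot\varphi$ maps $\mathfrak{gl}(7)=\R\,\mathrm{id}\oplus\mathfrak{g}_2\oplus\R^7\oplus\tu{Sym}^2_0$ onto $\Lambda^3_1\oplus\Lambda^3_7\oplus\Lambda^3_{27}$ with kernel $\mathfrak{g}_2$, since the orbit of $\varphi_0$ is open. This reduces the lemma to how $\ast$ interacts with scalar, skew and traceless symmetric derivations, and it makes transparent where the constants $\tfrac43$, $1$ and $-1$ come from. The citation buys brevity; your route buys an explanation.

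Because you realise every element of each summand as $\lambda\,\mathrm{id}\cdot\varphi$, $X\cdot\varphi$ or $h\cdot\varphi$, you in fact compute $d\Theta$ on all of $\Lambda^3$ directly, so Schur's lemma is superfluous. If you keep it, you should say that these $\gtwo$--modules are of real type, so that equivariant endomorphisms are real scalars.

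Two passages need cleaning up.

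In the $c_1$ step, the sentence using $\ast_t=t^{-1}\ast$ is wrong. $\varphi_t$ is a $3$-form, and on $3$-forms $\ast_t=t^{7-6}\ast=t\ast$, so $\psi_t=t\cdot t^3\psi=t^4\psi$. As written, $t^{-1}\ast(t^3\varphi)=t^2\psi$ contradicts your next line. Your equivariance line $\psi_t=(t\,\mathrm{id})^\ast\psi=t^4\psi$ is correct and is all you need, so delete the faulty sentence.

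In the $c_{27}$ step, the naturality identity $\ast_{A^\ast g}=A^\ast\circ\ast\circ(A^\ast)^{-1}$ holds for every $A$ and cannot by itself force anticommutation. The real proof is your weight count. There $\ast e^{ijk}=\pm e^{lmnp}$ and $\mu_l+\mu_m+\mu_n+\mu_p=\operatorname{tr}h-(\mu_i+\mu_j+\mu_k)$, that is, $h\circ\ast+\ast\circ h=(\operatorname{tr}h)\ast$. Combine this with diagonalising $h$ in an oriented orthonormal frame and the $\tu{SO}(7)$--equivariance of $\ast$.
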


A crucial invariant associated to a $\gtwo$--structure on a $7$-manifold $M$ is its intrinsic torsion, which measures the failure of the reduction of the frame bundle of $M$ to $\gtwo$ to be parallel with respect to the Levi-Civita connection of the metric induced by the $\gtwo$--structure.

\begin{lemma}\label{lem:Torsion:G2:structures}
Let $M$ be a $7$-manifold endowed with a {\gtstr} $\varphi$. Then there exists a function $\tau_0$, a $1$-form $\tau_1$, $\tau_2\in \Omega^2_{14}(M)$ and $\tau_3\in\Omega^3_{27}(M)$ such that
\begin{align*}
&d\varphi = \tau_0\psi + 3\tau_1\wedge\varphi + \ast \tau_3,\\
&d\psi = 4\tau_1\wedge\psi + \tau_2\wedge\varphi.
\end{align*}
In terms of the definition of the $\gtwo$--structure via a nowhere vanishing spinor field $\Psi$ on $M$, the forms $(\tau_0,\tau_1,\tau_2,\tau_3)$ can be identified by $\nabla\Psi$ via
\[
\nabla_X\Psi = S(X)\cdot \Psi,
\]
where $S\in C^\infty(M,\tu{End}\, TM)$ is determined by $\tau_0,\tau_1,\tau_2,\tau_3$ via the isomorphism of $\gtwo$--representations
\[
\tu{End}\,\R^7 = \Lambda^2\R^7\oplus \left( \R\,\tu{id}\oplus \tu{Sym}^2_0(\R^7)\right) \simeq (\Lambda^2_7 \oplus \Lambda^2_{14}) \oplus (\Lambda^0\oplus \Lambda^3_{27}).
\]
\end{lemma}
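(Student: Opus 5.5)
The statement is pointwise linear algebra together with the standard spinorial description of \gtstr s. I would work first on $\R^7$ with $\varphi_0$ and $\psi_0=\ast\varphi_0$ and then transplant everything to $M$ frame by frame.

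\emph{Existence of the torsion forms.} By Lemma \ref{lem:decomp:forms} and Hodge duality, $\Lambda^4=\Lambda^4_1\oplus\Lambda^4_7\oplus\Lambda^4_{27}$ and $\Lambda^5=\Lambda^5_7\oplus\Lambda^5_{14}$. Here $\Lambda^4_1=\R\psi$, $\Lambda^4_7=\{\gamma\wedge\varphi\}$ and $\Lambda^4_{27}=\ast\Lambda^3_{27}$. Also $\Lambda^5_7=\{\gamma\wedge\psi\}$ and $\Lambda^5_{14}=\{\tau\wedge\varphi:\tau\in\Lambda^2_{14}\}$; the map $\tau\mapsto\tau\wedge\varphi$ is injective on $\Lambda^2_{14}$ because it equals $-\ast\tau$. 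Decomposing $d\varphi$ and $d\psi$ accordingly gives
\[
d\varphi=\tau_0\psi+3\tau_1\wedge\varphi+\ast\tau_3, \qquad d\psi=4\tau_1'\wedge\psi+\tau_2\wedge\varphi.
\]
The one substantive point is that $\tau_1'=\tau_1$. To see this, note that $\varphi$ and $\psi$ lie in a single $\mathrm{GL}(7)$-orbit, so $d\varphi$ and $d\psi$ are both determined by the intrinsic torsion. The intrinsic torsion takes values in $\R^7\otimes\mathfrak{g}_2^\perp\simeq\R^7\otimes\R^7\simeq\Lambda^0\oplus\Lambda^1\oplus\Lambda^2_{14}\oplus\Lambda^3_{27}$, which contains only one copy of $\R^7$. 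Write $d\varphi=\sum_i e^i\wedge\nabla_{e_i}\varphi$ with $\nabla_X\varphi=S(X)\cdot\varphi$, and similarly for $\psi$. By Schur's lemma, the $7$-dimensional components of $d\varphi$ and $d\psi$ are then both universal multiples of the same vector. The constants $3$ and $4$ come from a single test computation, for instance a conformal change $\varphi\mapsto e^{3f}\varphi$, under which $\psi\mapsto e^{4f}\psi$. This gives $d(e^{3f}\varphi)=3df\wedge e^{3f}\varphi$ and $d(e^{4f}\psi)=4df\wedge e^{4f}\psi$, so with this normalisation $\tau_1=df$ in both equations. This also shows that $d\psi$ has no $\Lambda^5$-component beyond the $7$ and $14$ parts, and no additional relation is needed.

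\emph{Spinorial version.} Write $\Psi$ for the unit spinor. The orthogonal complement of $\R\Psi$ in the real spinor space $\slashed{S}\simeq\R^8$ is $\{v\cdot\Psi: v\in\R^7\}$. Since $|\Psi|$ is constant, $\nabla_X\Psi\perp\Psi$, so $\nabla_X\Psi=S(X)\cdot\Psi$ for a unique $S\in\mathrm{End}(TM)\simeq(\R^7)^*\otimes\R^7$. Differentiating the formula $\varphi(u,v,w)=\langle u\cdot v\cdot w\cdot\Psi,\Psi\rangle$ then expresses $\nabla\varphi$, and hence $d\varphi$ and $d^\ast\varphi$, linearly and $\gtwo$-equivariantly in terms of $S$.

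Decompose $\mathrm{End}\,\R^7=\Lambda^2_7\oplus\Lambda^2_{14}\oplus\R\,\mathrm{id}\oplus\mathrm{Sym}^2_0$, identifying $\Lambda^2_7\simeq\Lambda^1$ and $\mathrm{Sym}^2_0\simeq\Lambda^3_{27}$. By Schur's lemma, the map $S\mapsto(\tau_0,\tau_1,\tau_2,\tau_3)$ acts by a scalar on each isotypic component. Each scalar is nonzero, which one checks on a single element of each component: $S=\mathrm{id}$ produces $\tau_0$, $S\in\Lambda^2_7$ produces $\tau_1$, $S\in\Lambda^2_{14}$ produces $\tau_2$, and $S\in\mathrm{Sym}^2_0$ produces $\tau_3$. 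It follows that the map is an isomorphism of representations, which is the stated identification.

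\emph{Main obstacle.} The only delicate step is bookkeeping: fixing the normalisation constants, checking that no component vanishes, and in particular confirming that the single $\R^7$ appearing in $d\varphi$ and $d\psi$ is consistent. I would handle all of this with the conformal test case and by citing \cite{Bryant:Rmks:G2} and \cite{Agricola:Chiossi} for the explicit constants.
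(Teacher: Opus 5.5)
Your argument is correct and is essentially the standard one behind the paper's proof, which consists only of citations to \cite[Proposition 1]{Bryant:Rmks:G2} and \cite[Section 4]{Agricola:Chiossi}. There, as in your sketch, the single copy of $\R^7$ in $\R^7\otimes\mathfrak{g}_2^\perp$ together with Schur's lemma makes the two $7$-components proportional, and the conformal change $\varphi\mapsto e^{3f}\varphi$, $\psi\mapsto e^{4f}\psi$ fixes the constants $3$ and $4$. One cosmetic slip: $\varphi$ and $\psi$ have different degrees, so they do not lie in one $\mathrm{GL}(7)$-orbit; what you actually use is that both are \gtwo-invariant, so $\nabla\varphi$ and $\nabla\psi$ depend linearly and equivariantly on the intrinsic torsion.
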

For a proof see Bryant~\cite[Proposition 1]{Bryant:Rmks:G2} and for more details on the spinorial description of the intrinsic torsion see \cite[Section 4]{Agricola:Chiossi}.

If $d\varphi=0=d\psi$ (equivalently, $\nabla\Psi=0$), \ie $\tau_i=0$ for all $i$, we say that $\varphi$ is \emph{torsion-free} and that $(M,\varphi)$ is a \emph{\gtmfd}. If $\varphi$ is torsion-free then linear algebra and the representation theory of $\gtwo$ imply that $\varphi$ is parallel for the Levi-Civita connection of $g_{\varphi}$ and therefore the holonomy of $g_{\varphi}$ reduces from $\sorth{7}$ to (a subgroup of) $\gtwo$. We also note that a crucial consequence of the torsion-free condition is the vanishing of the Ricci curvature, see \cite[\S 4.5.3]{Bryant:Rmks:G2}.

We conclude with some useful identities for functions and $1$-forms on a {\gtmfd} $(M,\varphi)$. In the lemma below the $\tu{curl}$ operator acting on $1$-forms is defined by
\begin{equation}\label{eq:curl}
\tu{curl}\,\gamma =\ast(d\gamma\wedge \psi).
\end{equation}
The $1$-form $\tu{curl}\, \gamma$ can be identified with the 
$\Omega^2_7$-component of the $2$-form $d \gamma$ (up to a proportionality constant). 

\begin{lemma}\label{lem:identities:0:1:forms:torsion:free} Let $(M,\varphi)$ be a $\gtwo$--manifold. For $f\in\Omega^0(M)$ and $\gamma\in\Omega^1(M)$ the following identities hold:
\begin{enumerate}
\item $\pi_1\left( d\ast d (f\varphi)\right) = \tfrac{2}{7}(\triangle f)\psi$.
\item\label{lem:identities:0:1:forms:diff} $\pi_1 d (\gamma^\sharp\lrcorner\varphi) = -\tfrac{3}{7}(d^\ast\gamma)\varphi$ and $\pi_7d (\gamma^\sharp\lrcorner\varphi) = \tfrac{1}{2}\ast\big( \tu{curl}\,\gamma \wedge \varphi\big)$.
\item\label{lem:identities:0:1:forms:7:component} $ d(\gamma^\sharp\lrcorner\varphi) + \ast  d(\gamma^\sharp\lrcorner\psi)=d\ast(\gamma\wedge\psi) - d^\ast (\gamma\wedge\varphi) = \ast(\tu{curl}\,\gamma\wedge\varphi )- (d^\ast\gamma)\varphi\in\Omega^3_{1\oplus 7}(M)$.
\item\label{lem:identities:0:1:forms:Dirac} The Dirac operator $\slashed{D}$ can be identified with either of the following two operators:
\[
\begin{gathered}
\slashed{D}\co \Omega^0(M)\oplus \Omega^1(M)\ra \Omega^0(M)\oplus \Omega^1(M), \qquad (f,\gamma)\mapsto (d^\ast\gamma, d f + \tu{curl}\, \gamma),\\
\slashed{D}\co \Omega^0(M)\oplus \Omega^1(M)\ra \Omega^3_{1\oplus 7}(M), \qquad (f,\gamma)\mapsto \pi_{1\oplus 7}\Big(\ast d (f\varphi) + d \ast(\gamma\wedge\psi)\Big).
\end{gathered}
\]
\item\label{lem:identities:0:1:forms:gauge:fixing} $\pi _7 d^\ast d (\gamma^\sharp\lrcorner\varphi)= \left(dd^\ast \gamma + \tfrac{2}{3} d^\ast d\gamma\right)^\sharp\lrcorner\varphi$.
\end{enumerate}
\proof
The lemma can be deduced from \cite[Proposition 3]{Bryant:Rmks:G2} and the identification of the Dirac operator of a $\gtwo$--manifold $M$ via the isomorphism between the spinor bundle and $\R\oplus TM$, see for example \cite[Equation (6.2)]{Nordstrom:ACyl:G2}. The last identity is \cite[Proposition 2.24]{Karigiannis:Lotay}.
\endproof
\end{lemma}

\begin{remark}\label{rmk:identities:0:1:forms:torsion:free}
Taking $L^2$--inner products with compactly supported forms and using part (iii) of Lemma \ref{lem:identities:0:1:forms:torsion:free} yields the following additional useful identity: if $\rho\in\Omega^3_{27}$ then $\pi_7 d^\ast\rho=0$ if and only if $\pi_7 d\rho=0$. Note that this can be regarded as a linearised version of the torsion constraints of Lemma \ref{lem:Torsion:G2:structures}.
\end{remark}

\subsection{ALC \gtmfd s}

In this section we specialise Definition \ref{def:ALC:n} to the case of ALC \gtmfd s. As we will see, in the \gtwo--setting it is necessary to refine the asymptotic model $\BC (\Sigma)$ and it is natural to prescribe the asymptotic geometry of an ALC {\gtmfd} in terms of this refined model.

The end of an ALC manifold $(M,g)$ admits an almost parallel vector field: indeed, by \eqref{eq:Levi:Civita}, $\nabla\xi = O(r^{-2})$, where $\xi$ is the vector field that generates the circle action (in the dihedral case, defined only up to a sign) on the end. Thus, if $\tu{Hol}(g)\subseteq\gtwo$ then the (double cover of the) tangent cone at infinity $\tu{C}(\Sigma)$ of $(M,g)$ must have (restricted) holonomy contained in $\sunitary{3}=\gtwo\cap \tu{GL}(6,\R)$, \ie it must be a Calabi--Yau cone of complex dimension $3$.

We first define an ALC \gtwo{} end $\BC(\Sigma)$ of \emph{cyclic type}. In this case, the above discussion means we need to take $\Sigma$ be a closed Sasaki--Einstein $5$-manifold, \ie a closed Riemannian $5$-manifold $(\SigmP,g_\SigmP)$ such that the Riemannian cone $\tu{C}(\SigmP)$ over $\SigmP$ is a Calabi--Yau cone. More precisely, a Sasaki--Einstein structure on a $5$-manifold $\Sigma$ is a special type of \sutwostr: this is a $4$-tuple $(\eta, \omega_1,\omega_2,\omega_3)$ of differential forms on $\Sigma$ satisfying the following algebraic constraints. The 1-form $\eta$ vanishes nowhere and therefore defines a codimension $1$ distribution $\ker{\eta}$ that we will denote by $\mathcal{H}$. $(\omega_1,\omega_2,\omega_3)$ is a triple of $2$-forms that at every point span a definite subspace of $\Lambda^2\mathcal{H}^\ast$: $\eta\wedge \omega_1^2\neq 0$ and
\[
\omega_i\wedge\omega_j = \delta_{ij}\,\omega_1^2
\]
for $i,j=1,2,3$. Since for any oriented $4$-dimensional vector space $V$ the $3$-dimensional space $\Lambda^2_+V^\ast$ of self-dual $2$-forms on $V$ is naturally oriented, it makes sense to require  further that $(\omega_1,\omega_2,\omega_3)$ is an oriented basis of the subspace $\Lambda^2_+\mathcal{H}^\ast$ of $\Lambda^2\mathcal{H}^\ast$ that they span. Since $\sunitary{2}\subset\sorth{4}\subset\sorth{5}$ every {\sutwostr} induces a Riemannian metric $g_\SigmP$. The {\sutwostr} $(\eta,\omega_1,\omega_2,\omega_3)$ is called \emph{Sasaki--Einstein} if
\begin{equation}\label{eq:Sasaki:Einstein}
d\eta = 2\omega_1, \qquad d\omega_2 = -3\eta\wedge\omega_3, \qquad d\omega_3 = 3\eta\wedge\omega_2.
\end{equation}
These equations are equivalent to requiring that the conical {\suthreestr} on $\tu{C}(\SigmP)$ defined by
\begin{equation}\label{eq:conical:CY}
\omega_\tu{C} = rdr\wedge\eta + r^2\omega_1, \qquad \Omega_\tu{C} = r^2(dr+ir\eta)\wedge (\omega_2 + i\omega_3)
\end{equation}
is torsion-free, \ie that $\omega_\tu{C}$ and $\Omega_\tu{C}$ are both closed, and therefore defines a conical Calabi--Yau structure on $\tu{C}(\SigmP)$. Since every Calabi--Yau manifold is Ricci-flat, we immediately deduce that every Sasaki--Einstein $5$-manifold is Einstein with positive scalar curvature $\text{Scal}(g_\SigmP)=20$. In particular, complete Sasaki--Einstein $5$-manifolds are compact with finite fundamental group.

Let $\pi\co N\ra \SigmP$ be a principal circle bundle. Fix a connection $\theta$ on $N$. We can think of the pair $(N,\theta)$ as defined on the cone $\tu{C}(\SigmP)$ by radial extension and ask for $\theta$ to be a \emph{Hermitian--Yang--Mills} (HYM) connection with respect to the conical Calabi--Yau structure \eqref{eq:conical:CY}, \ie we ask that $d\theta$ be a primitive $(1,1)$--form on the cone $\tu{C}(\SigmP)$: explicitly,
\[
d\theta\wedge\omega_\tu{C}^2=0=d\theta\wedge\Omega_\tu{C}.
\]
Using the expression \eqref{eq:conical:CY} for the conical Calabi--Yau structure $(\omega_\tu{C},\Omega_{\tu{C}})$, it is easy to see that the radial extension of $\theta$ to $\tu{C}(\SigmP)$ is HYM if and only if the 2-form $d\theta$ on $\SigmP$ satisfies $d\theta\wedge\omega_i =0$ for $i=1,2,3$. In other words, $d\theta$ is a section of $\Lambda^2_-\mathcal{H}^\ast$, or equivalently it is a primitive $(1,1)$--form with respect to the transverse K\"ahler structure on $\mathcal{H}$ induced by the closed non-degenerate $2$-form $\omega_1$ and the metric.   

It is easy to produce such a HYM connection. Every harmonic $2$--form on a Sasaki--Einstein manifold is basic primitive of type $(1,1)$, \ie a section of $\Lambda^2_-\mathcal{H}^\ast$. Indeed, by Boyer--Galicki \cite[Proposition 7.4.13]{Boyer:Galicki} every harmonic $2$-form on a Sasaki manifold is primitive and basic harmonic. On the other hand a Bochner formula due to Goto \cite[Lemma 5.3]{Goto:Crepant} shows that basic harmonic $2$-forms on a Sasaki--Einstein manifold must be of type $(1,1)$. Thus given the $U(1)$--bundle $N\ra\SigmP$, the harmonic representative of $c_1 (N)\in H^2(\SigmP)$ is the curvature of a HYM connection $\theta$ on $N$. Moreover, if we assume that $\SigmP$ is simply connected, then every $\unitary{1}$--bundle $N$ admits a unique HYM connection.

Using the conical Calabi--Yau structure $(\omega_\tu{C}, \Omega_\tu{C})$ and the HYM connection $\theta$,  on $\tu{BC}(\SigmP)$ we now define a $\gtwo$--structure
\begin{subequations}\label{eq:G2:infinity}
\begin{equation}
\varphi_\BC = \theta\wedge\omega_\tu{C} + \Real\Omega_\tu{C}.
\end{equation}
The metric induced by $\varphi_\BC$ coincides with $g_\BC$ from \eqref{eq:FB:N}, with $\ell=1$ (the latter can always be achieved by scaling) and the dual 4-form of $\varphi_\BC$ is given by
\begin{equation}
\psi_\BC = -\theta\wedge\Imag\Omega_\tu{C} + \tfrac{1}{2}\omega_\tu{C}^2.
\end{equation}
\end{subequations}

The \gtwo--structure $\varphi_\BC$ therefore captures the leading-order asymptotic behaviour of any ALC \gtwo--manifold asymptotic to $\BC (\SigmP)$. A drawback is that the $3$-form $\varphi_\BC$ is not closed except when the connection $\theta$ is flat (while $\psi_\BC$ is always closed since $d\theta\wedge\Imag\Omega_\tu{C}=0$
because $\theta$ is HYM). However, we observe that there is a canonical correction to $\varphi_\BC$ to make it closed:
\begin{subequations}\label{eq:G2:infinity:closed}
\begin{equation}
\varphi '_\BC = \theta\wedge\omega_\tu{C} + \Real\Omega_\tu{C} - \tfrac{1}{2}r^2 \eta\wedge d\theta.
\end{equation}
The correction term is of order $O(r^{-1})$ with respect to $g_\BC$ and therefore $\varphi '_\BC$ is still a~\gtstr~for $r$ sufficiently large. Using the fact that $d\theta\wedge\omega_i=0$ for $i=1,2,3$, one can verify that the correction term we added satisfies $(\varphi'_\BC - \varphi_\BC)\wedge\varphi_\BC =0= (\varphi'_\BC - \varphi_\BC)\wedge\psi_\BC$. In other words, $\varphi'_\BC - \varphi_\BC\in \Omega^3_{27}$ with respect to $\varphi_\BC$ and therefore, using Lemma \ref{lem:Linearisation:Hitchin:dual:3form:7d}, we can compute
\begin{equation}
\psi'_\BC = \ast_{\varphi_\BC}\varphi_\BC - \ast_{\varphi_\BC}(\varphi'_\BC-\varphi_\BC) + Q(\varphi'_\BC - \varphi_\BC)= \psi_\BC - \tfrac{1}{2}\theta\wedge rdr \wedge d\theta +O(r^{-2}),
\end{equation}
\end{subequations}
where we also used the fact that $\ast_\SigmP d\theta = -\eta\wedge d\theta$ since $d\theta$ is a basic primitive $(1,1)$-form. Here $Q(\varphi'_\BC - \varphi_\BC)$ contains all the nonlinear terms, hence the estimate $Q(\varphi'_\BC - \varphi_\BC)=O(r^{-2})$ holds with analogous estimates for all derivatives.

\begin{remark}\label{rmk:G2:infinity:pertubed}
In particular, it follows that the torsion of the \gtwo--structure $\varphi'_\BC$ is of order $O(r^{-3})$. Indeed, $d\psi'_\BC = \tfrac{1}{2} r dr\wedge d\theta\wedge d\theta + O(r^{-3})$.
\end{remark}

Now we consider ALC \gtwo{} ends $\BC(\Sigma)$ of \emph{dihedral type}. In this case $\Sigma$ is non-orientable, and the structure we need is a Sasaki--Einstein structure $(\eta,\omega_1,\omega_2,\omega_3)$ on its oriented double cover~$\SigmP$, such that the nontrivial deck transformation $\check \iota$ acts by
\[
\check{\iota}^\ast(\eta,\omega_1,\omega_2,\omega_3)=(-\eta, -\omega_1, \omega_2, -\omega_3).
\]
Using the expression \eqref{eq:conical:CY} for $(\omega_\tu{C},\Omega_\tu{C})$ in terms of the Sasaki--Einstein structure, that is equivalent to the extension of $\check{\iota}$ to $C(\SigmP)$ acting by
\[
\check{\iota}^\ast\omega_\tu{C}=-\omega_\tu{C}, \qquad \check{\iota}^\ast\Omega_\tu{C}=\overline{\Omega}_\tu{C}
\]
(in particular, this makes $\check \iota$ an \emph{antiholomorphic involution} on $C(\SigmP)$) and in turn to the standard involution $\iota$ on $\tsp$ preserving $\varphi_\BC$ from \eqref{eq:G2:infinity}. Taking into account that $\check{\iota}^\ast d\theta=-d\theta$, the standard involution moreover preserves the canonical closed correction $\varphi'_\BC$ defined in \eqref{eq:G2:infinity:closed}.

We are finally able to give the definition of an ALC $\gtwo$--manifold.

\begin{definition}\label{def:ALC:G2}
Let $(M^7,\varphi)$ be a smooth 7-manifold endowed with a $\gtwo$--structure $\varphi$.
\begin{enumerate}
\item $(M,\varphi)$ is an \emph{ALC $\gtwo$--manifold of cyclic type} asymptotic to $\BC (\Sigma)$ with rate $\nu<-1$ if $\Sigma$ is a Sasaki-Einstein 5-manifold, there exists a compact set $K\subset M$, a positive number $R>0$ and a diffeomorphism $f\co \BC(\Sigma) \cap \{ r> R\} \ra M\setminus K$ such that for all $j\geq 0$
\[
|\nabla_{\varphi'_\BC}^j (f^\ast \varphi - \varphi'_\BC) |_{g_{\varphi'_\BC}}=O(r^{\nu-j})
\]
for $\varphi'_\BC$ defined by the Sasaki-Einstein structure as in \eqref{eq:G2:infinity:closed}.
\item $(M,\varphi)$ is an \emph{ALC $\gtwo$--manifold of dihedral type} asymptotic to $\BC (\Sigma)$ with rate $\nu<-1$ if $\Sigma$ is the quotient of a Sasaki-Einstein 5-manifold $\SigmP$ by an anti-holomorphic involution, there exists a compact set $K\subset M$, a positive number $R>0$ and a double cover $f_+\co \BC(\Sigma^+) \cap \{ r> R\} \ra M\setminus K$ such that the group of deck transformations is generated by the standard involution $\iota$ of $\BC(\SigmP)$ and for all $j\geq 0$
\[
|\nabla_{\varphi'_\BC}^j (f_+^\ast \varphi - \varphi'_\BC) |_{g_{\varphi'_\BC}}=O(r^{\nu-j}).
\]
\end{enumerate}
\end{definition}

\begin{remark*}
Since $\varphi'_\BC = \varphi_\BC + O(r^{-1})$ we can equivalently use the metric $g_\BC = g_{\varphi_\BC}$ to compute covariant derivatives and norms.
\end{remark*}

Note that with this definition every ALC $\gtwo$--manifold is an ALC manifold of rate $\tau=-1$ according to Definition \ref{def:ALC:n} and that $\nu$ in Definition \ref{def:ALC:G2} measures the rate of subleading corrections beyond the canonical closed modification $\varphi'_\BC$ of the model $\varphi_\BC$. It is not clear a priori whether every complete noncompact $\gtwo$--holonomy metric which is ALC in the sense of Definition \ref{def:ALC:n} must satisfy the asymptotic conditions of Definition \ref{def:ALC:G2}, but all known examples do. We note that the strong decay assumption $\nu<-1$ does simplify the deformation theory, as gauge-fixing conditions for the action of diffeomorphisms would be more complicated otherwise (see \cite[Theorem 5.6]{Karigiannis:Lotay} for this discussion in the AC setting).

\begin{remark*}
Our definition assumes $\ell=1$, but the general case $\ell>0$ can be achieved by scaling. Since we are interested in the moduli space of ALC $\gtwo$--holonomy metrics, fixing this scaling freedom by setting $\ell=1$ is natural. 
\end{remark*}

\subsection{Indicial roots}\label{sec:Indicial:Roots:CY:cone}

By Theorems \ref{thm:Fredholm}, \ref{thm:Index:jump} and \ref{thm:Fredholm:2nd:order}, in order to understand the Fredholm property and calculate the index of differential operators on ALC \gtmfd s it is necessary to study the induced operators on the tangent cones $\tu{C}(\Sigma)$ at infinity, which is a Calabi--Yau cone in the cyclic case,  and a quotient $\tu{C}(\SigmP)/\check{\iota}$ of a Calabi--Yau cone in the dihedral case. We have given a careful exposition of the indicial roots of natural differential operators on $3$-dimensional Calabi--Yau cones in \cite[Section 4]{FHN:ALC:G2:from:AC:CY3}. Because of the spectral properties of Sasaki--Einstein 5-manifolds, these results improve on the calculation of indicial roots of the Hodge Laplacian and first-order operator $d+d^\ast$ described for an arbitrary $n$-dimensional cone in Proposition \ref{prop:Indicial:Roots:pforms}. For the convenience of the reader we will now state all the results we need in this paper and refer back to \cite{FHN:ALC:G2:from:AC:CY3} for their proofs.

Let $\SigmP$ be a Sasaki--Einstein $5$-manifold and consider the Calabi--Yau cone $\tu{C}(\SigmP)$. We first make a couple of remarks regarding spectral properties of $\SigmP$, as described in \cite[Proposition 4.9]{FHN:ALC:G2:from:AC:CY3}.

First of all, the Lichnerowicz--Obata Theorem states that the first nonzero eigenvalue of the scalar Laplacian on $\SigmP$ is strictly larger than $5$ unless $\SigmP$ has constant curvature, see the first part of \cite[Proposition 4.9]{FHN:ALC:G2:from:AC:CY3} and references therein. For the round $5$-sphere the first nonzero eigenvalue is exactly $5$, with $6$-dimensional eigenspace corresponding to the restriction of linear functions on~$\R^6$. If $\SigmP=\Sph^5/\Gamma$ is a \emph{smooth} nontrivial finite quotient of the round $5$-sphere $\Sph^5$ then $\Gamma$ cannot have any nontrivial fixed point on~$\R^6$ and therefore the first eigenvalue on~$\SigmP$ is strictly larger than $5$. On the other hand, if $\SigmP$ is allowed to be an orbifold $\Sph^5/\Gamma$ (but $N\ra \SigmP$ is a principal orbibundle with smooth total space) then there could be $\Gamma$--invariant eigenfunctions with eigenvalue $5$. We will say that $\SigmP$ (smooth or an orbifold) is \emph{exceptional} if there are nontrivial eigenfunctions with eigenvalue~$5$. Unless specified otherwise, in this section we will assume that the Sasaki--Einstein 5-manifold $\SigmP$ is \emph{not} exceptional. 

\begin{remark*}
In the dihedral case the involution $\check{\iota}$ must necessarily act freely on $\SigmP$, otherwise the standard involution $\iota$ would have fixed points on $\BC (\SigmP)$. Therefore the Laplacian acting on $\check{\iota}$--invariant functions on $\SigmP$ always has first nonzero eigenvalue strictly larger than $5$. However, $\check{\iota}$--anti-invariant functions also play a role, for example when studying indicial roots for $\BC (\SigmP)$ on $1$-forms, which can be of the form $\gamma = u\, \theta$ for a function $u$.
\end{remark*}

Besides the exceptional Sasaki--Einstein spaces, a second distinguished class of Sasaki--Einstein manifolds is that of \emph{regular} Sasaki--Einstein manifolds, \ie Sasaki--Einstein manifolds where the vector field dual to the contact 1-form $\eta$, usually called the Reeb vector field, generates a free circle action.

\begin{lemma}[last part of {\cite[Proposition 4.9]{FHN:ALC:G2:from:AC:CY3}}]
\label{lem:regular_se}
For a regular Sasaki--Einstein 5-manifold $\SigmP$, the first nonzero eigenvalue of the Laplacian acting on coclosed $2$-forms is strictly greater than 4.
\end{lemma}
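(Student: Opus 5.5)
The plan is to use the regularity hypothesis to reduce the problem to a spectral question on a Kähler--Einstein orbifold surface, and then bound eigenvalues there with Weitzenb\"ock formulas. Since the Reeb field $\eta^\sharp$ generates a free circle action, $\pi\co \SigmP\to B$ is a principal circle bundle over a smooth K\"ahler--Einstein del Pezzo surface $B$ (four real dimensions). The transverse K\"ahler form is $\omega_1 = \tfrac12 d\eta$, and $\mathrm{Ric}_B = 6 g_B$. I would Fourier-decompose a coclosed eigen-$2$-form $\beta$ (with $\triangle\beta = \mu\beta$, $\mu>0$) under the Reeb action. I would also split it into horizontal and vertical pieces as $\beta = \eta\wedge a + b$, where $a$ and $b$ are horizontal forms of charge $k\in\Z$, i.e.\ sections of $\Lambda^\bullet B\otimes L^k$ for the line bundle $L$ associated to $\SigmP\to B$. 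The Laplacian on $\SigmP$ then acts on these components as a twisted Laplacian on $B$ plus explicit zeroth-order terms. These terms involve $k^2$ (from $\mathcal{L}_{\eta^\sharp}^2$), together with wedging and contraction with $d\eta = 2\omega_1$, i.e.\ the Lefschetz operators $L$ and $\Lambda$.

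The first step handles the invariant modes, $k=0$. Here $a$ and $b$ are genuine forms on $B$, and the coclosed condition on $\SigmP$ becomes $d_B^\ast b = 0$ together with a relation involving $\Lambda a$ and $d_B^\ast a$. If $b$ is closed, then the eigenvalue is governed by the Laplacian of $B$ acting on coclosed $2$-forms and $1$-forms (the latter via $a$). For $1$-forms, the Bochner formula with $\mathrm{Ric}_B=6$ gives eigenvalue at least $12$ on coexact $1$-forms, and eigenvalues of exact $1$-forms equal those of functions. For functions, Lichnerowicz--Obata on a K\"ahler--Einstein surface gives $\lambda_1\geq 12$ (in fact $2\cdot 6$). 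The extra Lefschetz terms coming from $d\eta$ only increase $\mu$: the $\omega_1$-component produces an additional $+4$ or $+8$ shift, as in the analogous computation for the round $S^5\to\CP^2$, where the first coclosed eigenvalue is known explicitly.

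For the harmonic $2$-forms on $B$ lifted to $\SigmP$: primitive $(1,1)$ classes lift to harmonic forms on $\SigmP$, which are excluded since $\mu>0$, and $\omega_1$ itself is not closed on $\SigmP$. For the remaining non-harmonic coclosed $2$-forms, the Weitzenb\"ock formula on $\Lambda^2_\pm B$ with Einstein constant $6$ gives $\mu\geq 12$ on $\Lambda^+$ and uses the anti-self-dual Weyl term on $\Lambda^-$. The anti-self-dual piece is the main obstacle, because $\Lambda^-$ curvature is not controlled by Einstein alone. I would instead use that a coexact $2$-form $d^\ast_B\sigma$ has the same eigenvalue as the coclosed exact $1$-form/function spectrum (eigenvalue $\geq 12$), via $\triangle$ commuting with $d$ and $d^\ast$. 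This reduces everything to $1$-forms and functions, for which the bounds above apply.

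The second step handles the nonzero charge modes, $k\neq 0$. Here $\mathcal{L}_{\eta^\sharp}$ contributes $k^2\geq1$ to the rough Laplacian. The twisted Kodaira--Nakano/Weitzenb\"ock identity on $L^k$-valued forms adds curvature terms proportional to $k\,\omega_1$, which have a sign depending on the $(p,q)$-type. The key estimate to aim for is $\mu \geq k^2 + 2|k|\cdot c + (\text{Ricci term})$, with $c$ explicit. Combining this with the transverse Lichnerowicz bound $\mu \geq \lambda_1 \geq 12$ on the horizontal part should comfortably beat $4$. I expect the bookkeeping of signs in the $d\eta$-coupling for $|k|=1$ to be the delicate point. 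My first check would be the extremal case $\SigmP=S^5$, where all eigenvalues are explicit and the minimal coclosed eigenvalue should be compared against $4$, to confirm the constants. Strictness follows because equality would force $\beta$ to be parallel-type, contradicting the compactness of $\SigmP$ and the fact that $\mu>0$.
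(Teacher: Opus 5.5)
Your proposal has a genuine gap. The nonzero-charge modes carry the whole content of the lemma, and they are never estimated. You only name a target inequality $\mu\ge k^2+2|k|c+\cdots$ with $c$ undetermined.

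The ingredient you propose for finishing, the transverse Lichnerowicz bound $\ge 12$ on the horizontal part, is false for $k\neq 0$. There the horizontal components are sections of $\Lambda^\bullet B\otimes L^k$, and twisted Laplacians obey no such bound. Already on the round $S^5\to\mathbb{CP}^2$ (where $\mathrm{Ric}_B=6$ and $\lambda_1(B)=12$), the restriction of $z_1$ has charge $1$ and eigenvalue $5$, so its twisted horizontal eigenvalue is $4$. Moreover, the smallest coclosed $2$-form eigenvalue $9$ of $S^5$ occurs only in charges $\pm1$ (restrictions of constant $(2,1)$-forms on $\mathbb{C}^3$) and $\pm 3$ ($\omega_2\pm i\omega_3$), never in charge $0$.

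Nor does the twisted Kodaira--Nakano identity give the curvature term you hope for on the horizontal $2$-form. The term $[iR,\Lambda]$ is a multiple of $k(p+q-2)$, which vanishes in bidegree $p+q=2$. Concretely, write $\beta=\eta\wedge a+b$ in charge $k$ (normalising $\eta(\xi)=1$). Coclosedness then reads $d_H^\ast b=ik\,a$ and $d_H^\ast a=\langle d\eta,b\rangle$, and you may take $\beta$ to be an eigenform of $\ast d$, whose square is $-\triangle$ on coexact $2$-forms. If $a=0$, the eigenvalue equation forces $d_Hb=0$, $\ast_B b=\pm b$ and $\mu=k^2$ \emph{exactly}.

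Regularity still permits $|k|=1,2$ on $S^5$ and $|k|=\tfrac32$ on $T^{1,1}$. Excluding $\mu\le 4$ there therefore needs the vanishing of the Dolbeault groups $H^{p,q}$, $p+q=2$, of $B$ twisted by the charge-$k$ line bundle (a Bott/Kodaira-type statement), or the explicit spectra of these homogeneous spaces. A Weitzenb\"ock sign will not do it. In the coupled branch $a\neq 0$, the relation $d_H^\ast b=ik\,a$ must actually be exploited. Your plan uses regularity only through ``$k^2\ge 1$'', and the closing sentence about strictness has no content.

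The charge-$0$ step is also unjustified as written, though it is repairable. The constraint is $d_B^\ast a=2\langle\omega_1,b\rangle$, so it involves $\Lambda b$, not $\Lambda a$. The identity $\mu\|\beta\|^2=\|d_Ba\|^2+\|d_Bb+2\omega_1\wedge a\|^2$ contains the cross term $4\langle d_Bb,\omega_1\wedge a\rangle$, which has no sign. So ``the Lefschetz terms only increase $\mu$'' is a guess. Incidentally, $\omega_1=\tfrac12 d\eta$ is closed; what fails is coclosedness.

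To repair this sector, first discard the harmonic part of $b$, which is excluded by $\mu>0$ and $\int_B d_B^\ast a=0$. The K\"ahler identity $[d^\ast,L]=-d^c$ then turns the cross term into $2\|d_B^\ast a\|^2-4\langle (d^c)^\ast b,a\rangle$, with $\|(d^c)^\ast b\|\le\|d_Bb\|$. Your base bounds $\ge 12$ then give $\mu\ge 6$ in this sector.
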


Building on these spectral properties, one obtains the following refined statements on the excluded indicial roots of the Hodge Laplacian and the first-order operator $d+d^\ast$, improving on the ones stated in Proposition \ref{prop:Indicial:Roots:pforms}.

\begin{prop}\label{prop:Harmonic:Functions:Cone} 
Let $u$ be a harmonic function on $\tu{C}(\SigmP)$ homogeneous of order $\lambda$. Then $u=0$ if $\lambda\in [-5,1]\setminus \{ -4,0\}$ and $u=Kr^\lambda$ for some $K\in\R$ if $\lambda=-4,0$.
\end{prop}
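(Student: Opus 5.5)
Write $u=r^\lambda f$ with $f$ a function on $\SigmP$. Using the cone Laplacian $\triangle_{\tu{C}} = -\partial_r^2 - \tfrac{5}{r}\partial_r + r^{-2}\triangle_{\SigmP}$ on the $6$-dimensional cone, $u$ is harmonic if and only if
\[
\triangle_{\SigmP} f = \lambda(\lambda+4) f .
\]
This is separation of variables exactly as in \eqref{eq:Dirac:Cone} for the scalar Laplacian. The set of possible $\lambda$ is therefore determined by the spectrum of $\triangle_{\SigmP}$: a nonzero $f$ exists precisely when $\lambda(\lambda+4)$ is an eigenvalue.

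The first step is to treat the eigenvalue $0$. Then $f$ is constant, since $\SigmP$ is connected, and $\lambda(\lambda+4)=0$ forces $\lambda\in\{0,-4\}$, giving $u = K r^\lambda$. Conversely, if $\lambda\in\{-4,0\}$, the only eigenvalue possible is $0$, so $u=Kr^\lambda$.

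The second step is to show that any nonconstant eigenfunction has eigenvalue $\mu>5$. This holds because $\SigmP$ is Einstein with $\Ric = 4 g_{\SigmP}$ (scalar curvature $20$ in dimension $5$). Lichnerowicz then gives $\mu\geq 5$, with equality only in the exceptional case by Obata's theorem. Since we assume $\SigmP$ is not exceptional, as recalled before the statement (\cite[Proposition 4.9]{FHN:ALC:G2:from:AC:CY3}), we get $\mu>5$. The solutions of $\lambda(\lambda+4)=\mu$ are $\lambda=-2\pm\sqrt{4+\mu}$, and $\mu>5$ gives $\sqrt{4+\mu}>3$. Hence $\lambda>1$ or $\lambda<-5$, so no nonconstant mode contributes for $\lambda\in[-5,1]$.

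Combining the two steps: for $\lambda\in[-5,1]\setminus\{-4,0\}$ neither eigenvalue $0$ nor any $\mu>5$ works, so $f=0$. For $\lambda\in\{-4,0\}$ only constants occur.

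The statement assumes $u$ is homogeneous, so there are no $\log r$ terms, since these arise only at the double root $\lambda=-2$, where $f=0$ anyway. The only genuinely non-formal input is the strict inequality $\mu>5$, which rests on the non-exceptional hypothesis. In the dihedral case the same argument applies to $\check\iota$-invariant functions, and the preceding remark guarantees the strict inequality there.
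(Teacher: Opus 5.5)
Your proposal is correct and follows the same route the paper relies on: it cites \cite[Proposition 4.9]{FHN:ALC:G2:from:AC:CY3} and the Lichnerowicz--Obata discussion preceding the statement. Separation of variables gives $\triangle_{\SigmP} f=\lambda(\lambda+4)f$, and the bound $\mu\geq 5$ combined with the non-exceptional hypothesis excludes every nonconstant mode in $[-5,1]$; an eigenvalue exactly $5$ would produce precisely the endpoints $\lambda\in\{-5,1\}$, which is why the exceptional case yields only the open interval.
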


\begin{remark}\label{rmk:Harmonic:Functions:Cone}
If $\SigmP$ is exceptional then replacing the closed interval $[-5,1]$ with the open interval $(-5,1)$ yields a valid conclusion.
\end{remark}

In the following results, we will denote by $\mathcal{K}(\SigmP)$ the space of Killing fields on the Sasaki--Einstein manifold $\SigmP$ and by $\mathcal{K}_0(\SigmP)$ the subspace of those Killing fields that preserve the Sasaki--Einstein structure and not only the metric.

\begin{prop}\label{prop:Harmonic:1Forms:Cone}
Let $\gamma$ be a harmonic $1$-form homogeneous of order $\lambda$. Then
\[
\gamma = \begin{cases}
Krdr + d\bigl( \tfrac{1}{2}r^2\alpha \bigr) + r^2\beta & \text{if } \lambda=1,\\
d\bigl( \tfrac{1}{\lambda+1}r^{\lambda+1}\alpha \bigr), \text{ where }\triangle\alpha=(\lambda +5)(\lambda +1)\alpha & \text{if } \lambda \in (0,1),\\
0 & \text{if } \lambda\in [-4,0],\\
r^\lambda \alpha\, dr -\frac{r^{\lambda+1}}{\lambda+3} d\alpha, \text{ where }\triangle\alpha = (\lambda+3)(\lambda-1)\alpha & \text{if } \lambda\in (-5,-4),\\
Kr^{-5}dr + (r^{-5}\alpha dr + \tfrac{1}{2}r^{-4}d\alpha) + r^{-4}\beta & \text{if } \lambda = -5.
\end{cases}
\]
where $K\in\R,\, \triangle\alpha=12\alpha$ and $\beta^\flat\in\mathcal{K}(\SigmP)$ in the first and last cases.
Moreover, a harmonic $1$-form $\gamma$ homogeneous of order $\lambda\in [-5,1]$ is coclosed if and only if $\lambda\in [-4,1)$, or $\lambda=1$ and $\gamma$ does not contain a term of the form $Krdr$ for $K\in\R$, or $\lambda=-5$ and $\gamma = Kr^{-5}dr + r^{-4}\beta$ for some $K\in\R$ and $\beta^\flat\in\mathcal{K}(\SigmP)$.

Finally, there are no homogeneous $1$-forms $\gamma$ of order $\lambda\in [-4,0]$ such that $\triangle\gamma - \tfrac{1}{3}d^\ast d\gamma=0$.
\end{prop}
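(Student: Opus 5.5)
Since $\gamma$ is homogeneous of order $\lambda$, I will write $\gamma = r^{\lambda}\alpha\,dr + r^{\lambda+1}\beta$ with $\alpha$ a function and $\beta$ a $1$-form on $\SigmP$. I will then decompose $\beta = d_\Sigma u + \beta_0$ with $d^\ast_\Sigma\beta_0=0$, expanding $u$, $\alpha$ in eigenfunctions of $\triangle_\Sigma$ and $\beta_0$ in coclosed eigenforms. The general separation-of-variables description of harmonic forms on cones, \cite[Theorem A.2]{FHN:ALC:G2:from:AC:CY3}, then sorts harmonic $1$-forms of order $\lambda$ into three families:
\begin{itemize}
\item[(a)] exact forms $d(r^{\lambda+1}f)$, where $r^{\lambda+1}f$ is harmonic;
\item[(b)] forms $r^{\lambda+1}\beta_0$ with $\beta_0$ a coclosed eigenform;
\item[(c)] the ``$d^\ast$-type'' forms $r^\lambda\alpha\,dr - \tfrac{r^{\lambda+1}}{\lambda+3}d\alpha$, together with the Jordan-block/logarithmic and degenerate cases $Kr\,dr$, $Kr^{-5}dr$ occurring where indicial roots collide.
\end{itemize}
The task is to determine, for $\lambda\in[-5,1]$, which eigenvalues can actually occur. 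Each family yields a quadratic relation between $\lambda$ and the relevant eigenvalue, exactly as in Proposition~\ref{prop:Indicial:Roots:pforms}.

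\emph{Family (a).} Here $r^{\lambda+1}f$ is a harmonic function of order $\lambda+1$. By Proposition~\ref{prop:Harmonic:Functions:Cone}, order $\lambda+1\in[-4,2]$ forces either $f$ constant with $\lambda+1\in\{0,-4\}$, which gives $\gamma=0$ or $d(r^{-4})\propto r^{-5}dr$, or $\lambda+1\in(1,2]$. The latter gives the case $\lambda\in(0,1]$ with $\triangle\alpha=(\lambda+5)(\lambda+1)\alpha$, and at $\lambda=1$ the eigenvalue $12$. Harmonicity of $r^{\lambda+1}\alpha$ is precisely this eigenvalue relation.

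\emph{Family (b).} For $r^{\lambda+1}\beta_0$ to be harmonic, $\beta_0$ must be a coclosed eigenform with eigenvalue $\mu$ satisfying $\mu=(\lambda+2)^2-4$, up to checking the normalisation. So $\mu=(\lambda+4)\lambda$, and in $[-5,1]$ this is nonnegative only for $\lambda\le -4$ or $\lambda\ge 0$. I will then use the key spectral input: on an Einstein manifold with $\text{Ric}=4g$, Bochner gives $\triangle\beta_0\ge 8\beta_0$ on coclosed $1$-forms, with equality exactly for Killing fields. The values $\lambda=1$ and $\lambda=-5$ both give $\mu=5$, which looks inconsistent with the bound $\mu\ge 8$. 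I expect the correct exponent bookkeeping to give $\mu=(\lambda+1)(\lambda+5)+\ldots$, with equality at $\lambda=1,-5$ corresponding to $\mu=8$ (Killing fields), and strict exclusion for $\lambda\in(-5,1)$. Getting this exact indicial relation right is the main computational point, and I will carry it out directly with $\triangle(r^{\lambda+1}\beta_0)$ on the cone.

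\emph{Family (c).} The relation is $\triangle\alpha=(\lambda+3)(\lambda-1)\alpha$, which is negative for $\lambda\in(-3,1)$ and lies in $(0,5)$ for $\lambda\in(-4,0]$. Lichnerowicz--Obata applied to the non-exceptional $\SigmP$ excludes every $\lambda$ except $\lambda\in(-5,-4)$, which gives the stated form, and the endpoint $\lambda=-5$ with eigenvalue $12$. Constant $\alpha$ forces $\lambda\in\{-3,1\}$; these are the degenerate $K\,dr$-type cases, which sit at $\lambda=1,-5$ through the duality $\lambda\leftrightarrow -4-\lambda$.

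\emph{Coclosedness.} I will compute $d^\ast$ on each family. Families (a) and (b) are coclosed, since harmonic functions give $d^\ast d=0$. Family (c) has $d^\ast\gamma\propto r^{\lambda-1}\alpha\neq0$ unless the relevant coefficient vanishes, and for the $Kr\,dr$ term, $d^\ast(r\,dr)=-6\ne0$. At $\lambda=-5$, $Kr^{-5}dr=-\tfrac14 d(r^{-4})$ is coclosed. This yields the stated characterisation.

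\emph{Final claim.} Suppose $\triangle\gamma-\tfrac13 d^\ast d\gamma=0$ with $\lambda\in[-4,0]$. Splitting $\gamma$ into exact and coexact parts on the cone, this operator acts as $\triangle$ on the exact/$dd^\ast$ part and as $\tfrac23\triangle$ on the coclosed part. Hence each part, or each eigen-component in the separation of variables, satisfies a rescaled indicial equation whose roots are the same as before after the splitting. I would redo the three families with the modified coefficient. This yields relations of the form $\mu=c(\lambda)$ which, in $[-4,0]$, again fall below the thresholds $5$ (functions) and $8$ (coclosed $1$-forms). I expect this last bookkeeping, together with pinning down the exact indicial polynomials, to be the main obstacle.
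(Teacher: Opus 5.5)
Your route (separation of variables via \cite[Theorem A.2]{FHN:ALC:G2:from:AC:CY3}, Lichnerowicz--Obata for function modes, and a Bochner bound for coclosed modes) is the one the paper relies on; it defers the proof to that reference. Your treatment of family (a) and of coclosedness is fine. However, family (b) is not a bookkeeping detail you can postpone: the relation $\lambda(\lambda+4)$ is the one for functions, and your tentative $(\lambda+1)(\lambda+5)+\ldots$ is not right either. For $\beta_0$ coclosed on $\SigmP$ one has $d^\ast(r^{a}\beta_0)=0$ and $\triangle(r^{a}\beta_0)=r^{a-2}\bigl(\triangle_{\SigmP}\beta_0-a(a+2)\beta_0\bigr)$ on the $6$-dimensional cone. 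With $a=\lambda+1$ this gives $\mu=(\lambda+1)(\lambda+3)=(\lambda+2)^2-1$. This is $<8$ exactly for $\lambda\in(-5,1)$ and equals $8$ at $\lambda=1,-5$. Since $\Ric_{\SigmP}=4g$, Bochner gives $\mu\geq 8$ on nonzero coclosed $1$-forms, with equality iff $\beta_0^\sharp$ is Killing. That yields precisely the $r^2\beta$ and $r^{-4}\beta$ terms and nothing else.

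Family (c) needs two corrections. First, $(\lambda+3)(\lambda-1)$ lies in $(0,5)$ for $\lambda\in(-4,-3)$, not $(-4,0]$, and equals $5$ at $\lambda=-4$; that value is excluded only by non-exceptionality. Second, the constant mode cannot be dispatched by duality. The formula with $\tfrac{1}{\lambda+3}$ degenerates at $\lambda=-3$, so you must check directly that $\triangle(r^\lambda dr)=-(\lambda+5)(\lambda-1)r^{\lambda-2}dr$. Hence $r^{-3}dr$ is not harmonic, and the only constant-mode contributions are $r\,dr$ at $\lambda=1$ (family (c)) and $r^{-5}dr\propto d(r^{-4})$ at $\lambda=-5$ (family (a)).

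For the final statement, the proposed decoupling is invalid. On a mode $f$ with $\triangle_{\SigmP}f=(\lambda-1)(\lambda+3)f$, the Laplacians of the exact piece and of the coclosed piece are both multiples of $d(r^{\lambda-1}f)$, which is exact and coexact, so they can cancel. Concretely, with $\gamma_c=r^\lambda f\,dr-\tfrac{1}{\lambda+3}r^{\lambda+1}d_{\SigmP}f$, the form $d(r^{\lambda+1}f)-6\gamma_c$ satisfies $dd^\ast\gamma+\tfrac23 d^\ast d\gamma=0$ but is not harmonic. The roots happen to coincide with those of $\triangle$, but a clean argument is the following. Applying $d^\ast$ to $dd^\ast\gamma+\tfrac23 d^\ast d\gamma=0$ shows $d^\ast\gamma$ is a harmonic function homogeneous of order $\lambda-1\in[-5,-1]$. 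By Proposition~\ref{prop:Harmonic:Functions:Cone} it vanishes unless $\lambda=-3$, where $d^\ast\gamma=Kr^{-4}$. If $d^\ast\gamma=0$, then $\gamma$ is harmonic and vanishes by the first part. If $\lambda=-3$, the constant-mode component $c\,r^{-3}dr$ of $\gamma$ must satisfy the equation on its own, i.e.\ $dd^\ast(c\,r^{-3}dr)=8c\,r^{-5}dr=0$. Hence $c=0$, and therefore $K=-2c=0$.
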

The last statement is relevant to the indicial roots of the operator in Lemma \ref{lem:identities:0:1:forms:torsion:free}\ref{lem:identities:0:1:forms:gauge:fixing}. 

\begin{remark}\label{rmk:Harmonic:1Forms:Cone}
If $\SigmP$ is exceptional then the conclusions remain valid provided every interval containing/excluding an endpoint $-4$ or $0$ is replaced by the corresponding open/closed interval. 
\end{remark}

Analogously to Lemma \ref{lem:identities:0:1:forms:torsion:free}\ref{lem:identities:0:1:forms:Dirac}, the spinor representation of $\tu{Spin}(6)$ can be identified with $\R\oplus\R\oplus\R^6$ as an $\sunitary{3}\subset \tu{Spin}(6)$ representation and under this identification the Dirac operator of a Calabi--Yau 3-fold (in particular, the Dirac operator of $\tu{C}(\SigmP)$) can be identified with the operator from $ \Omega^0\oplus\Omega^0\oplus\Omega^1$ into itself defined by 
\[
(f,g,\gamma)\longmapsto \left(d^\ast\gamma, -d^\ast J\gamma , \tu{curl}_\Omega\gamma + df-Jdg\right),
\]
where $\tu{curl}_\Omega\gamma = \ast (d\gamma\wedge\Real\Omega)$. For more details, see \cite[Lemma 2.22]{FHN:ALC:G2:from:AC:CY3}.

\begin{prop}\label{prop:Dirac:cone}
Let $(f,g,\gamma)$ be a harmonic spinor on the Calabi--Yau cone $\tu{C}(\SigmP)$, where $f,g,\gamma$ are homogeneous of order $\lambda$. Then
\[
(f,g,\gamma) = \begin{cases}
\bigl( 0,0,r^2\beta + d\bigl( \tfrac{1}{2}r^2\alpha\bigr) \bigr) \text{ where }\beta^\flat\in\mathcal{K}_0(\SigmP) \text{ and }\triangle\alpha=12\alpha & \text{if } \lambda=1,\\
\bigl( 0,0, d\bigl( \tfrac{1}{\lambda+1}r^{\lambda+1}\alpha\bigr)\bigr) \text{ where }\triangle\alpha=(\lambda+5)(\lambda+1)\alpha & \text{if } \lambda\in (0,1),\\
\left( K_1,K_2,0\right) \text{ where }K_1,K_2\in\R & \text{if } \lambda=0,\\
\left( 0,0,0\right) & \text{if } \lambda \in (-5,0),\\
\left( 0,0,K_1 r^{-5}dr + K_2 r^{-4}\eta \right) \text{ where }K_1,K_2\in\R & \text{if } \lambda=-5.
\end{cases}
\]
\end{prop}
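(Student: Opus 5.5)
The plan is to reduce the Dirac equation on the cone to statements about harmonic functions and harmonic $1$-forms that are already available, namely Propositions \ref{prop:Harmonic:Functions:Cone} and \ref{prop:Harmonic:1Forms:Cone}. Since $\tu{C}(\SigmP)$ is Ricci-flat, the Lichnerowicz formula gives $\slashed{D}^2=\nabla^\ast\nabla$ on spinors. Under the identification of spinors with $\Omega^0\oplus\Omega^0\oplus\Omega^1$, which is parallel on a Calabi--Yau manifold, $\slashed{D}^2$ is therefore the Hodge Laplacian acting componentwise (on $1$-forms, Bochner with $\Ric=0$). Hence $f$ and $g$ are harmonic functions homogeneous of order $\lambda$, and $\gamma$ is a harmonic $1$-form homogeneous of order $\lambda$. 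The first-order equations are
\[
d^\ast\gamma=0,\qquad d^\ast J\gamma=0,\qquad \tu{curl}_\Omega\gamma+df-Jdg=0 .
\]
Since $J$ is parallel, $J\gamma$ is also a harmonic $1$-form of order $\lambda$.

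The key steps, in order, are as follows.

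\emph{Step 1 (the range $(-5,0)$ and $\lambda=-4$).} For $\lambda\in(-5,1)\setminus\{-4,0\}$ we have $f=g=0$ unless $\lambda\in(0,1)$, where Proposition \ref{prop:Harmonic:Functions:Cone} gives no nonzero solutions at all; so $f=g=0$ for $\lambda\in(-5,1]\setminus\{-4,0\}$. At $\lambda=-4$ we get $f=K_1r^{-4}$ and $g=K_2r^{-4}$, while $\gamma=0$ by Proposition \ref{prop:Harmonic:1Forms:Cone}. The equation then gives $df=Jdg$, which is impossible for nonzero radial functions since $Jdr=r\eta$ (up to sign) is not a multiple of $dr$. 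Hence $K_1=K_2=0$. On $[-4,0)$, $\gamma=0$ by the same proposition. On $(-5,-4)$, $\gamma=r^\lambda\alpha\,dr-\tfrac{r^{\lambda+1}}{\lambda+3}d\alpha$ is not coclosed unless it vanishes, so $\gamma=0$.

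\emph{Step 2 ($\lambda=0$).} Here $\gamma=0$ and $f,g$ are constants, giving $(K_1,K_2,0)$.

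\emph{Step 3 ($\lambda\in(0,1)$ and $\lambda=1$).} Now $f=g=0$, so we need $\gamma$ harmonic with $d^\ast\gamma=d^\ast J\gamma=0$ and $\tu{curl}_\Omega\gamma=0$. For $\lambda\in(0,1)$ we have $\gamma=d(\cdot)$, so $\tu{curl}_\Omega\gamma=0$ automatically. We must also check $d^\ast J\gamma=0$, i.e.\ that $J\gamma$ is coclosed; since $J\gamma$ is a harmonic $1$-form of order $\lambda\in(0,1)$, it is automatically of the same exact type and coclosed by Proposition \ref{prop:Harmonic:1Forms:Cone}. For $\lambda=1$ we discard $Krdr$ (not coclosed). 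Applying the same reasoning to $J\gamma$, whose $Krdr$ component is $J$ of the $r^2\eta$-part of $r^2\beta$, forces $\beta$ to have no Reeb component mixing. The remaining condition $\tu{curl}_\Omega(r^2\beta)=0$ is the holomorphy of the corresponding vector field, i.e.\ $\beta^\flat$ must preserve $\Omega_\tu{C}$ as well as $\omega_\tu{C}$; this cuts $\mathcal{K}$ down to $\mathcal{K}_0$.

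\emph{Step 4 ($\lambda=-5$).} First, $f,g$ are multiples of $r^{-5}$: the relevant eigenvalue is $0$ with $\lambda(\lambda+4)=5$, and nonconstant solutions do not appear. I would verify this by noting that $r^{\lambda}u$ is harmonic iff $\triangle_\Sigma u=-\lambda(\lambda+4)u$, which is negative at $\lambda=-5$, so $f=g=0$. Then $\gamma$ is coclosed harmonic, hence $K_1r^{-5}dr+r^{-4}\beta$. Imposing that $J\gamma$ is also coclosed and that $\tu{curl}_\Omega\gamma=0$ forces $\beta$ to be a multiple of $\eta$: the $r^{-4}\beta$ term is a Kelvin-type inversion of the $\lambda=1$ Killing term, and by a similar curl computation only the Reeb field, paired with $dr$ via $J$, survives.

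I expect the main obstacle to be the curl computations at the endpoints $\lambda=1$ and $\lambda=-5$: identifying exactly which Killing fields give curl-free forms. I would handle these by writing $\beta$ in terms of the Sasaki--Einstein structure \eqref{eq:conical:CY} and computing $d(r^2\beta)\wedge\Real\Omega_\tu{C}$ directly, or, for $\lambda=-5$, $d(r^{-4}\beta)\wedge\Real\Omega_\tu{C}$. For $\lambda=-5$ I would show that any non-Reeb component produces a nonzero $\Omega^{2}$ $(2,0)$-part.
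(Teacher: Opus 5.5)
Your reduction is the right one. On the Ricci-flat cone $\slashed{D}^2$ acts componentwise as the Hodge Laplacian, so you impose $d^\ast\gamma=0$, $d^\ast J\gamma=0$ and $\tu{curl}_\Omega\gamma+df-Jdg=0$ on the lists in Propositions \ref{prop:Harmonic:Functions:Cone} and \ref{prop:Harmonic:1Forms:Cone}. This correctly settles $\lambda\in(-5,0]$ and $\lambda\in(0,1)$.

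\textbf{The gap is at $\lambda=-5$.} There you only assert that $d^\ast J\gamma=0$ and $\tu{curl}_\Omega\gamma=0$ force $\gamma=K_1r^{-5}dr+r^{-4}\beta$ to have $\beta\in\R\eta$, appealing to a ``Kelvin-type'' analogy with $\lambda=1$. That analogy points the wrong way: at $\lambda=1$ the surviving Killing fields form $\mathcal{K}_0$, which \emph{excludes} the Reeb field, whereas at $\lambda=-5$ \emph{only} the Reeb field survives. So no actual computation has been supplied.

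Your justification that $f=g=0$ at $\lambda=-5$ is also wrong. $r^\lambda u$ is harmonic iff $\triangle_\Sigma u=\lambda(\lambda+4)u$, which is $+5u$ at $\lambda=-5$, not negative. The vanishing comes from the standing non-exceptional assumption (first nonzero eigenvalue $>5$); it is just Proposition \ref{prop:Harmonic:Functions:Cone}, since $-5\in[-5,1]$.

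\textbf{Fix via the $\lambda\leftrightarrow-5-\lambda$ symmetry.} In \eqref{eq:Hypersurface:Dirac}, $\gamma(\partial_r)$ is $\overline{\nabla}^\Sigma$-parallel and anticommutes with each $\gamma(\partial_r)\gamma(e_i)$. Hence $D_\Sigma\psi=\lambda\psi$ implies $D_\Sigma(\gamma(\partial_r)\psi)=(-5-\lambda)\gamma(\partial_r)\psi$. By \eqref{eq:Dirac:Cone}, the map $r^{\lambda}\psi\mapsto r^{-5-\lambda}\gamma(\partial_r)\psi$ is therefore an isomorphism from harmonic spinors of order $\lambda$ to those of order $-5-\lambda$.

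Apply this to the $\lambda=0$ solutions $(K_1,K_2,0)$. Reading Clifford multiplication off the formula for $\slashed{D}$ in the $(f,g,\gamma)$ model, you get $(0,0,r^{-5}(K_1\,dr-K_2\,J dr))$. Since $J dr=\pm r\eta$, this is $K_1r^{-5}dr\mp K_2r^{-4}\eta$, which is exactly the $\lambda=-5$ statement.

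\textbf{Alternative: your direct route.} The identity you need is $d(r^{-4}\beta)=r^{-6}d(r^2\beta)-6r^{-5}dr\wedge\beta$.
\begin{itemize}
\item If $X=\beta^\sharp$ is holomorphic on the cone, then $d(r^2\beta)=dX^\flat$ has type $(1,1)$.
\item The $(2,0)$-part of $d(r^{-4}\beta)$ is then that of $-6r^{-5}dr\wedge\beta$. This vanishes iff $\beta$ is pointwise proportional to $\eta$.
\item A Killing field pointwise proportional to $\xi$ is a constant multiple of it.
\end{itemize}
Holomorphy of every Killing field still needs an argument. For instance, $(\nabla X^\flat)^{2,0}$ is parallel because Kähler curvature has type $(1,1)$. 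Via $\Omega_\tu{C}$, parallel $(2,0)$-forms correspond to parallel vector fields, which exist only when $\SigmP$ is exceptional.

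\textbf{A smaller inaccuracy at $\lambda=1$.} $\tu{curl}_\Omega(X^\flat)=0$ means $X$ is holomorphic, which does not give $\mathcal{L}_X\Omega_\tu{C}=0$. The Reeb field is holomorphic with $\mathcal{L}_\xi\Omega_\tu{C}=3i\,\Omega_\tu{C}$. What cuts the Killing fields down to $\mathcal{K}_0$ is $d^\ast J\gamma=0$. For holomorphic Killing $X$ with $\mathcal{L}_X\Omega_\tu{C}=iq\,\Omega_\tu{C}$, one has $\mathcal{L}_{JX}\Omega_\tu{C}=i\mathcal{L}_X\Omega_\tu{C}$. Hence $\operatorname{div}(JX)=-2q$, so $J(X^\flat)$ is coclosed iff $q=0$.

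You should also record that $d^\ast J\,du=0$ for every function $u$, by the Kähler identities. This is why the $d(\tfrac12 r^2\alpha)$ terms impose no condition.
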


\begin{remark}\label{rmk:Dirac:cone}
If $\SigmP$ is exceptional then there are additional elements in the kernel of the Dirac operator of the form $(0,0,d(r\alpha))$ where $\alpha$ is an eigenfunction of eigenvalue $5$: in terms of spinor fields these are just the additional parallel spinors arising from the fact that $\tu{C}(\SigmP)$ splits off some Euclidean factor when $\SigmP$ is exceptional.
\end{remark}

\begin{prop}\label{prop:Harmonic:2forms:cone}
Let $\gamma$ be a harmonic $2$-form on $\tu{C}(\SigmP)$ homogeneous of order $\lambda$. Decompose $\gamma = \gamma_1+\gamma_2+\gamma_3+\gamma_4$ as in \cite[Theorem A.2]{FHN:ALC:G2:from:AC:CY3}.
\begin{enumerate}
\item $\gamma_1=r^{\lambda+1}dr\wedge df$ where $f$ is a function on $\SigmP$ satisfying $\triangle f=\lambda(\lambda+4)f$. In particular, $\gamma_1=0$ for all $\lambda\in [-5,1]$.
\item $\gamma_2=0$ for all $\lambda\in (-6,0)$. If $\lambda=-6$ or $\lambda=0$ then $\gamma_2 = d\left( \tfrac{1}{\lambda+2}r^{\lambda+2}\alpha\right)$, where $\alpha^\flat\in\mathcal{K}(\SigmP)$.
\item $\gamma_3=0$ for all $\lambda\in (-4,2)$. If $\lambda=-4$ or $\lambda=2$ then $\gamma_3 = r^{\lambda+1}dr\wedge\alpha - \tfrac{1}{\lambda+2}r^{\lambda+2}d\alpha$, where $\alpha^\flat\in\mathcal{K}(\SigmP)$.
\item $\gamma_4= r^{\lambda+2}\beta$ for a coclosed $2$-form $\beta$ on $\SigmP$ satisfying $\triangle\beta = (\lambda+2)^2\beta$. In particular, if $\SigmP$ is regular then $\gamma_4=0$ for all $\lambda\in [-4,0]\setminus\{ -2\}$ by Lemma \ref{lem:regular_se}.
\end{enumerate}
Finally, the only harmonic $2$-forms on $\tu{C}(\SigmP)$ which are polynomials in $\log{r}$ with coefficients given by homogeneous $2$-forms are of the form $\tau_1\,\log{r} + \tau_0$ for harmonic $2$-forms $\tau_0,\tau_1$ on $\SigmP$.
\end{prop}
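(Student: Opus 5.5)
Here $\tu{C}(\SigmP)$ has real dimension $n=6$ and $\gamma$ is a $2$-form, so I will work with the four-type decomposition of \cite[Theorem A.2]{FHN:ALC:G2:from:AC:CY3}. For $p=2$ it expresses every harmonic homogeneous $2$-form of order $\lambda$ on a cone through data on $\SigmP$:
\begin{itemize}
\item $\gamma_1 = r^{\lambda+1}dr\wedge df$, built from an exact $1$-form $df$;
\item $\gamma_2 = d\bigl(\tfrac{1}{\lambda+2}r^{\lambda+2}\alpha\bigr)$, built from a coclosed $1$-form $\alpha$;
\item $\gamma_3 = r^{\lambda+1}dr\wedge\alpha-\tfrac{1}{\lambda+2}r^{\lambda+2}d\alpha$, built from a coclosed $1$-form $\alpha$;
\item $\gamma_4 = r^{\lambda+2}\beta$, with $\beta$ a coclosed $2$-form.
\end{itemize}
Each type comes with an eigenvalue relation between the relevant Laplacian eigenvalue on $\SigmP$ and $\lambda$, namely a quadratic in $\lambda$ fixed by the degrees. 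The plan is to write these relations down for $n=6$, $p=2$, and then feed in the spectral facts about Sasaki--Einstein $5$-manifolds from \cite[Proposition 4.9]{FHN:ALC:G2:from:AC:CY3} to rule out each type on the stated interval.

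For $\gamma_1$ the relation is $\triangle f=\lambda(\lambda+4)f$. On $[-5,1]$ the eigenvalue $\lambda(\lambda+4)$ never exceeds $5$, so Lichnerowicz--Obata in the non-exceptional case forces $f$ to be constant. Then $df=0$ and so $\gamma_1=0$.

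For $\gamma_2$ and $\gamma_3$ the coclosed $1$-form $\alpha$ must satisfy an eigenvalue equation $\triangle\alpha=\mu(\lambda)\alpha$, with $\mu$ quadratic and symmetric about the midpoint of the respective interval: $\mu=(\lambda+2)(\lambda+4)$-type for $\gamma_2$ and $(\lambda)(\lambda+6)$-type expressions shifted for $\gamma_3$. The exact form of $\mu$ will be read off from Theorem A.2. The input I need is the Bochner bound on coclosed $1$-forms on an Einstein manifold with $\Ric=4g$, which gives $\triangle\alpha\ge 8\alpha$, with equality exactly for Killing fields. Combining the inequality with the quadratic $\mu(\lambda)$ shows the equation has no solution strictly inside $(-6,0)$, respectively $(-4,2)$. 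At the endpoints equality holds in the Bochner bound, so $\alpha^\flat$ is Killing, which gives the stated forms.

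For $\gamma_4$ the relation $\triangle\beta=(\lambda+2)^2\beta$ is immediate from the formula. In the regular case, Lemma~\ref{lem:regular_se} gives $(\lambda+2)^2>4$ whenever $\beta\ne0$. This excludes $\lambda\in[-4,0]$ except $\lambda=-2$, where $\beta$ is harmonic.

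For the log statement I will invoke \cite[Proposition A.6]{FHN:ALC:G2:from:AC:CY3}. Logarithmic terms occur only at a double indicial root of some type. A double root arises for type (iv) when $\lambda=-2$ and $\triangle\beta=0$, and for types (ii)/(iii) when the quadratic has a double root. The latter happens at $\lambda=-3$, respectively $\lambda=-1$, which would require the eigenvalue of $\triangle\alpha$ to lie below $8$, and this is excluded. Only type (iv) at $\lambda=-2$ survives, yielding $\tau_1\log r+\tau_0$ with $\tau_i$ harmonic on $\SigmP$.

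The main obstacle is bookkeeping: getting the eigenvalue-versus-$\lambda$ relations of Theorem A.2 correct for $p=2$ and $n=6$, and matching the Bochner threshold $8$ to the endpoints $-6,0$ and $-4,2$.
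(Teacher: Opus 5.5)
Your plan is essentially the paper's: the paper gives no separate argument and simply reads the four eigenvalue relations off \cite[Theorem A.2]{FHN:ALC:G2:from:AC:CY3}. It then feeds in the spectral facts for Sasaki--Einstein $5$-manifolds (Lichnerowicz--Obata, $\triangle\geq 8$ on coclosed $1$-forms with equality exactly for Killing fields, Lemma~\ref{lem:regular_se}), and uses \cite[Proposition A.6]{FHN:ALC:G2:from:AC:CY3} for the logarithmic statement.

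\textbf{Fix for $\gamma_3$.} Your placeholder is wrong: the relation is $\triangle\alpha=\lambda(\lambda+2)\alpha$, not a $\lambda(\lambda+6)$-type expression. This equals $8$ exactly at $\lambda=-4,2$, which is what makes your Bochner argument work.

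\textbf{Double roots.} The double roots of types (i), (ii), (iii) sit at $\lambda=-2,-3,-1$. They would require the negative eigenvalues $-4,-1,-1$, so nonnegativity of the Laplacian alone excludes them. You left type (i) out of that list.
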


\begin{remark}\label{rmk:Harmonic:2forms:cone}
When $\SigmP$ is exceptional the only modification needed is in case (i), where the closed interval $[-5,1]$ is replaced by the open interval $(-5,1)$.
\end{remark}

\begin{prop}\label{prop:Harmonic:3forms:Cone}
Let $\gamma$ be a harmonic $3$-form on $\tu{C}(\SigmP)$ homogeneous of order $\lambda$. Decompose $\gamma = \gamma_1+\gamma_2+\gamma_3+\gamma_4$ as in \cite[Theorem A.2]{FHN:ALC:G2:from:AC:CY3}.
\begin{enumerate}
\item $\gamma_1=0$ for all $\lambda\in (-5,1)\setminus\{-3,-1\}$. If $\lambda=-1$ or $\lambda=-3$ then $\gamma_1 = r^{\lambda+2}dr\wedge \alpha$ where $\alpha$ is a harmonic $2$-form. If $\lambda=1$ or $\lambda=-5$ then $\gamma_1 = r^{\lambda+2}dr\wedge d\beta$ for some $\beta^\flat\in\mathcal{K}(\SigmP)$.
\item $\gamma_2 = d\left( \tfrac{1}{\lambda+3}r^{\lambda+3}\alpha\right)$, where $\alpha$ is a coexact $2$-form on $\SigmP$ satisfying $\triangle\alpha = (\lambda+3)^2\alpha$. In particular, if $\SigmP$ is regular then $\gamma_2=0$ for all $\lambda\in [-5,-1]$  by Lemma \ref{lem:regular_se}.
\item $\gamma_3 = r^{\lambda+2}dr\wedge\alpha - \tfrac{1}{\lambda+1}r^{\lambda+3}d\alpha$, where $\alpha$ is a coexact $2$-form on $\SigmP$ satisfying $\triangle\alpha = (\lambda+1)^2\alpha$. In particular, if $\SigmP$ is regular then $\gamma_3=0$ for all $\lambda\in [-3,1]$ by Lemma \ref{lem:regular_se}.
\item $\ast\gamma_4$ satisfies the same conditions as $\gamma_1$.
\end{enumerate}
Finally, if $\SigmP$ is a regular Sasaki--Einstein $5$-manifold then there are no harmonic $3$-forms on $\tu{C}(\SigmP)$ which are nontrivial polynomials in $\log{r}$ with coefficients in the space of homogeneous $3$-forms.
\end{prop}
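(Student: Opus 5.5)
The plan is to reduce everything to the general classification of harmonic $p$-forms on a cone recorded in \cite[Theorem A.2]{FHN:ALC:G2:from:AC:CY3}, specialised to $n=6$, $p=3$, and then insert the spectral information available on a Sasaki--Einstein $5$-manifold. Write $\gamma=\gamma_1+\gamma_2+\gamma_3+\gamma_4$ according to that theorem. Each piece is built from a form on $\SigmP$: $\gamma_1$ from a closed $2$-form, $\gamma_2$ and $\gamma_3$ from coexact $2$-forms, and $\gamma_4$ is the Hodge-dual counterpart of $\gamma_1$. The admissible eigenvalue is a quadratic in $\lambda$. For $\gamma_2=d(\frac{1}{\lambda+3}r^{\lambda+3}\alpha)$ the indicial equation gives $\triangle\alpha=(\lambda+3)^2\alpha$, and for $\gamma_3$ it gives $\triangle\alpha=(\lambda+1)^2\alpha$. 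These two relations are read off directly, which proves the first halves of (ii) and (iii).

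For the vanishing statements in (ii) and (iii) I would use Lemma \ref{lem:regular_se}. If $\SigmP$ is regular, every nonzero coexact (so coclosed) $2$-form eigenvalue is strictly greater than $4$. For $\lambda\in[-5,-1]$ we have $(\lambda+3)^2\le 4$, and for $\lambda\in[-3,1]$ we have $(\lambda+1)^2\le4$. Hence $\alpha=0$ in both cases. There is one edge case: when the quadratic is $0$, a coexact form with eigenvalue $0$ is harmonic and coexact, hence zero.

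For (i), the forms $\gamma_1=r^{\lambda+2}dr\wedge\alpha$ with $\alpha$ closed reduce to two possibilities.
\begin{itemize}
\item Either $\alpha$ is harmonic on $\SigmP$, which forces $\lambda\in\{-1,-3\}$ because the weight $r^{\lambda+2}$ must match the harmonic behaviour $\lambda+2=\pm1$ in the middle degree for $n=6$.
\item Or $\alpha=d\beta$ with $\beta$ a coclosed $1$-form eigenform whose eigenvalue is a quadratic $q(\lambda)$ in $\lambda$.
\end{itemize}
In the second case I would use the estimate from \cite[Proposition 4.9]{FHN:ALC:G2:from:AC:CY3} (and its use in Proposition \ref{prop:Harmonic:1Forms:Cone}): on an Einstein $5$-manifold with $\tu{Ric}=4g$ the coclosed $1$-form spectrum is at least $8$, with equality exactly for Killing fields. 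Checking that $q(\lambda)<8$ on $(-5,1)$ and $q=8$ at the endpoints $1,-5$ gives $\gamma_1=0$ there, and $\gamma_1=r^{\lambda+2}dr\wedge d\beta$ with $\beta^\flat$ Killing at the endpoints. I expect this to be the main computational step. The obstacle is getting the quadratic and the sharp constant right, including the fact that the threshold $8$ is attained only by Killing fields, so I would verify it against Proposition \ref{prop:Harmonic:1Forms:Cone}. Statement (iv) then follows by applying (i) to $\ast\gamma_4$, since the Hodge star exchanges the two types in \cite[Theorem A.2]{FHN:ALC:G2:from:AC:CY3}.

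Finally, for the absence of logarithmic terms I would use \cite[Proposition A.6]{FHN:ALC:G2:from:AC:CY3}. Polynomials in $\log r$ occur only at double indicial roots, that is, where the two roots of one of the quadratics coincide or where pieces of different type share a rate. From the relations above:
\begin{itemize}
\item for $\gamma_2$ the double root is $\lambda=-3$ with $\triangle\alpha=0$;
\item for $\gamma_3$ it is $\lambda=-1$ with $\triangle\alpha=0$;
\item for $\gamma_1$ and $\gamma_4$ the log-producing coincidences occur at harmonic $2$-forms or at eigenvalues $\le4$.
\end{itemize}
Coexact harmonic forms vanish, and Lemma \ref{lem:regular_se} excludes coexact eigenvalues in $(0,4]$ in the regular case. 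The remaining case is the mixing of $\gamma_1$ and $\gamma_4$ at $\lambda=-3$ through harmonic $2$-forms. Here I would check directly that $dr\wedge\alpha\log r$ plus corrections fails to be harmonic, by reusing the mechanism of Proposition \ref{prop:Indicial:Roots:pforms}(i) and the last statement of Proposition \ref{prop:Harmonic:2forms:cone}. This is the second delicate point.
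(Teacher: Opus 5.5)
\textbf{Parts (i)--(iv).} These are correct and follow the paper's own route: the paper simply specialises \cite[Theorem A.2]{FHN:ALC:G2:from:AC:CY3} to $n=6$, $p=3$ and inserts the spectral facts of \cite[Proposition 4.9]{FHN:ALC:G2:from:AC:CY3}. The quadratic you leave implicit in (i) is $(\lambda+1)(\lambda+3)=(\lambda+2)^2-1$. It is $<8$ on $(-5,1)$ and $=8$ at $\lambda=1,-5$, as you anticipate.

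\textbf{The logarithmic statement: gap.} Your list of double indicial roots misses the only coincidence that actually produces logarithms. The case you single out as delicate is not a source of them.

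\emph{The missed case.} Types 2 and 3 are built from the \emph{same} coexact $2$-form $\alpha$ with $\triangle\alpha=\mu\alpha$. They form one coupled fourth-order system with indicial roots $-3\pm\sqrt{\mu}$ (type 2) and $-1\pm\sqrt{\mu}$ (type 3). These collide at $\lambda=-2$ when $\mu=1$. There the type-2 solution $d(r\alpha)$ and the type-3 solution $dr\wedge\alpha+r\,d\alpha$ coincide, and $\log r\, d(r\alpha)$ is a genuine harmonic $3$-form (not even closed).

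More generally, for any harmonic $\omega$ homogeneous of order $\lambda$ on the $6$-dimensional cone, one has $\nabla_{\partial_r}\omega=\lambda r^{-1}\omega$ and $\triangle\log r=-4r^{-2}$. Hence $\triangle(\log r\,\omega)=(\triangle\log r)\,\omega-2r^{-1}\nabla_{\partial_r}\omega=-2(\lambda+2)r^{-2}\omega$. This is why \cite[Proposition A.6]{FHN:ALC:G2:from:AC:CY3} confines logarithmic solutions to $\lambda=-2$.

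\emph{The case you flagged.} $\gamma_1$ and $\gamma_4$ never produce logarithms. Their roots $-2\pm\sqrt{1+\mu}$ are always simple. Moreover $dr\wedge\alpha$ (with $\alpha$ a harmonic $2$-form) and $\beta$ (a harmonic $3$-form) live over different eigenspaces on $\SigmP$, so nothing couples them at $\lambda=-3$. Also, Lemma \ref{lem:regular_se} concerns coclosed $2$-forms. It says nothing about the closed $2$-forms or coclosed $3$-forms entering $\gamma_1,\gamma_4$, so your phrase ``eigenvalues $\le 4$'' has no content for those types.

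\emph{The repair.} A harmonic polynomial in $\log r$ of positive degree can only occur at $\lambda=-2$. Its top coefficient is itself a homogeneous harmonic $3$-form of order $-2$. At $\lambda=-2$:
\begin{itemize}
\item parts (i) and (iv) require the eigenvalue $(\lambda+1)(\lambda+3)=-1<0$, which is impossible;
\item parts (ii) and (iii) require a coexact $2$-form with $\triangle\alpha=\alpha$, which Lemma \ref{lem:regular_se} excludes since $1\le 4$.
\end{itemize}
So for regular $\SigmP$ there are no nonzero homogeneous harmonic $3$-forms of order $-2$, and hence no logarithmic ones.
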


Finally we state the indicial roots of the first-order operator $d+d^\ast$. We will be particularly interested in understanding closed and coclosed even-degree forms of rate $-2$ and odd-degree forms of rate~$-3$.

\begin{prop}\label{prop:Closed:Even:Forms}
Let $\gamma$ be a closed and coclosed form of even degree on $\tu{C}(\SigmP)$ homogeneous of rate $\lambda=-2$. Then $\gamma$ only has components of pure degree $2$ and $4$, both individually closed and coclosed:
\[
\gamma = \tau_1 + rdr\wedge \eta\wedge\tau_2
\]
for harmonic $2$-forms $\tau_1,\tau_2$ on $\SigmP$. Moreover, there are no closed and coclosed $2$-forms on $\tu{C}(\SigmP)$ homogeneous of rate $\lambda\in (-6,0)\setminus \{ -2\}$ and if $\SigmP$ is a regular Sasaki--Einstein $5$-manifold then there are no closed and coclosed even-degree forms on $\tu{C}(\SigmP)$ homogeneous of rate $\lambda\in (-4,0) \setminus\{ -2\}$.  
\end{prop}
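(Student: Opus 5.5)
The plan is to reduce to pure-degree components and then invoke the classification of harmonic forms on $\tu{C}(\SigmP)$ in degrees $0$ to $3$, using Hodge duality on the $6$-dimensional cone for degrees $4$ and $6$. Write $\gamma=\gamma_0+\gamma_2+\gamma_4+\gamma_6$. Since $(d+d^\ast)\gamma=0$ with $\gamma$ homogeneous of rate $-2$, each component is harmonic, because $\triangle=(d+d^\ast)^2$ preserves degree. By Proposition~\ref{prop:Indicial:Roots:pforms}(vi) with $n=6$, a form of rate $-\frac{n}{2}+1=-2$ in the kernel of $d+d^\ast$ is closed and coclosed. The same argument, using the homogeneous forms of \cite[Theorem A.2]{FHN:ALC:G2:from:AC:CY3} and \cite[Remark A.5]{FHN:ALC:G2:from:AC:CY3}, should show that each pure-degree component is individually closed and coclosed. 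The reason is that at rate $-2$ the coupling terms between adjacent degrees (type (i) and type (iv) pieces) can survive only in degrees $p=\frac n2\pm1$, and there we can check directly that they vanish.

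With that reduction, I would dispose of each degree in turn. For $\gamma_0$, Proposition~\ref{prop:Harmonic:Functions:Cone} shows that a harmonic function of order $-2\in[-5,1]\setminus\{-4,0\}$ vanishes. For $\gamma_6=\ast f$, the function $f$ is harmonic of the same order, so $\gamma_6=0$. For $\gamma_2$, I would use Proposition~\ref{prop:Harmonic:2forms:cone} at $\lambda=-2$. Component (i) vanishes since $-2\in[-5,1]$. Component (ii) vanishes since $-2\in(-6,0)$. Component (iii) vanishes since $-2\in(-4,2)$. Component (iv) is $\beta$, a coclosed $2$-form on $\SigmP$ with $\triangle\beta=0$, so $\beta$ is harmonic and $\gamma_2=\tau_1$. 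For $\gamma_4$, I would apply the same result to $\ast\gamma_4$, which is a closed and coclosed $2$-form of rate $-2$. Hence $\ast\gamma_4=\tau$ is harmonic on $\SigmP$, and therefore $\gamma_4=\ast_{\tu{C}}\tau=\pm rdr\wedge\ast_\SigmP\tau$. Using $\ast_\SigmP\tau=-\eta\wedge\tau$ for basic primitive $(1,1)$ harmonic forms (Boyer--Galicki and Goto, as recalled before \eqref{eq:G2:infinity}), this equals $rdr\wedge\eta\wedge\tau_2$ up to sign.

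For the remaining rates, the $2$-form claim follows from the same case analysis. For closed and coclosed $2$-forms with $\lambda\in(-6,0)\setminus\{-2\}$, components (ii) and (i) vanish, except possibly (i) at $\lambda$ near $-5$ or $1$, which Proposition~\ref{prop:Indicial:Roots:pforms}(ii) already excludes in the range $(-4,-2)$. Component (iii) is not closed and coclosed away from its special rates, by \cite[Remark A.5]{FHN:ALC:G2:from:AC:CY3}. Component (iv) is $r^{\lambda+2}\beta$ with $\beta$ coclosed; it is closed only if $(\lambda+2)\,dr\wedge\beta=0$ and $d\beta=0$, which forces $\lambda=-2$. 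Together these give $\gamma_2=0$ on all of $(-6,0)\setminus\{-2\}$.

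In the regular case, degrees $0$ and $6$ vanish on $(-4,0)$ by Proposition~\ref{prop:Harmonic:Functions:Cone}. Degrees $2$ and $4$ vanish by the preceding paragraph and Hodge duality. The main obstacle is the first step: verifying that at rate $-2$ the mixed-degree kernel of $d+d^\ast$ genuinely splits into individually closed and coclosed pieces. To do this I would track through \cite[Theorem A.2]{FHN:ALC:G2:from:AC:CY3}, as in the proof of Proposition~\ref{prop:Indicial:Roots:pforms}(vi), checking that the possible couplings between degrees $2$ and $4$ involve only type (i)/(iv) pieces, which were shown above to vanish at $\lambda=-2$.
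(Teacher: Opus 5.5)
Your analysis at $\lambda=-2$ is correct, including $\gamma_4=\pm r\,dr\wedge\eta\wedge\tau$. However, the step you call the main obstacle is not one. Once $d\gamma=0=d^\ast\gamma$, which Proposition~\ref{prop:Indicial:Roots:pforms}(vi) gives at rate $-2$, each $\gamma_k$ is closed and coclosed. This is because $d\gamma_k$ and $d^\ast\gamma_k$ lie in degrees $k\pm1$, and these degrees are distinct for distinct even $k$. At $\lambda=-2$ you do not even need (vi): by Proposition~\ref{prop:Harmonic:2forms:cone}, every harmonic $2$-form of rate $-2$ is a harmonic form on $\SigmP$, and so is already closed and coclosed.

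The genuine gap is in the regular clause. Your argument there never uses regularity. If it were valid, it would give vanishing on all of $(-6,0)\setminus\{-2\}$ with no hypothesis on $\SigmP$. For forms with $d\gamma=0=d^\ast\gamma$ that is indeed true, and it follows trivially from the second claim together with $\ast$. So the clause only has content for the class you work with in your first paragraph, namely even forms with $(d+d^\ast)\gamma=0$. For those forms at $\lambda\neq-2$, Proposition~\ref{prop:Indicial:Roots:pforms}(vi) is not available. The components are only known to be harmonic, and $d\gamma_2=-d^\ast\gamma_4$ may be nonzero, so ``the preceding paragraph'' does not apply.

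What you need is that, for regular $\SigmP$, every harmonic $2$-form of rate $\lambda\in(-4,0)\setminus\{-2\}$ vanishes. Types (i)--(iii) of Proposition~\ref{prop:Harmonic:2forms:cone} vanish on $(-4,0)$. Type (iv) is $r^{\lambda+2}\beta$ with $\triangle\beta=(\lambda+2)^2\beta$ and $(\lambda+2)^2\in(0,4)$, which Lemma~\ref{lem:regular_se} excludes. Hodge $\ast$ then handles degree $4$, and Proposition~\ref{prop:Harmonic:Functions:Cone} handles degrees $0$ and $6$.

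There is also a smaller gap in your second claim. The type (i) piece $r^{\lambda+1}dr\wedge df$, with $\triangle f=\lambda(\lambda+4)f$, can be a nonzero harmonic form in $(-6,0)$ when $\lambda\le-5$, since then $\lambda(\lambda+4)\in[5,12)$ can be a Laplace eigenvalue. Proposition~\ref{prop:Indicial:Roots:pforms}(ii) only covers $(-4,-2)$ and says nothing there. The missing observation is that this piece is never coclosed. Indeed, $\ast_{\tu{C}}(r^{\lambda+1}dr\wedge df)=\pm r^{\lambda+4}\ast_{\SigmP}df$. Its differential contains the non-$dr$ term $\pm r^{\lambda+4}\ast_{\SigmP}\triangle f$, which is nonzero for nonconstant $f$ and cannot be cancelled by the other types.
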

 
\begin{prop}\label{prop:Closed:Odd:Forms}
Let $\gamma$ be a closed and coclosed form of odd degree on $\tu{C}(\SigmP)$ homogeneous of rate $\lambda\in [-4,0]$. Then $\gamma$ is of pure degree $3$. Moreover, if $\lambda=-3$ then
\[
\gamma = \eta\wedge\tau_1 + \frac{dr}{r}\wedge\tau_2
\]
for harmonic $2$-forms $\tau_1,\tau_2$ on $\SigmP$ and if $\lambda\in [-4,0]\setminus \{ -3\}$ then $\gamma = d\bigl( \frac{r^{\lambda+3}}{\lambda+3}\alpha\bigr)$, where $\alpha$ is a coclosed $2$-form on $\SigmP$ satisfying $\triangle\alpha=(\lambda+3)^2\alpha$. In particular, if $\SigmP$ is a regular Sasaki--Einstein $5$-manifold then there are no closed and coclosed odd-degree forms on $\tu{C}(\SigmP)$ homogeneous of rate $\lambda\in [-4,-1]\setminus\{ -3\}$.
\end{prop}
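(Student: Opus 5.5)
The plan is to reduce the statement to the classification of homogeneous harmonic forms on a cone already recorded in \cite[Theorem A.2, Remark A.5]{FHN:ALC:G2:from:AC:CY3}, combined with the spectral input for Sasaki--Einstein $5$-manifolds (Lichnerowicz--Obata and Lemma \ref{lem:regular_se}) as packaged in Propositions \ref{prop:Harmonic:Functions:Cone}, \ref{prop:Harmonic:1Forms:Cone} and \ref{prop:Harmonic:3forms:Cone}.

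\emph{Step 1: the form has pure degree $3$.} Write $\gamma=\gamma^{(1)}+\gamma^{(3)}+\gamma^{(5)}$. Since $\gamma$ is closed and coclosed, so is each pure-degree component. Applying the Hodge star, $\ast\gamma^{(5)}$ is a closed and coclosed $1$-form, homogeneous of order $-6-\lambda\cdot$(in the paper's convention, $\ast$ sends rate $\lambda$ $p$-forms to rate $\lambda$ $(6-p)$-forms). By Proposition \ref{prop:Harmonic:1Forms:Cone}, a harmonic $1$-form of order $\lambda\in[-4,0]$ vanishes. This kills both $\gamma^{(1)}$ and $\ast\gamma^{(5)}$ for $\lambda\in[-4,0]$.

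\emph{Step 2: the degree-$3$ piece.} Decompose $\gamma^{(3)}=\gamma_1+\gamma_2+\gamma_3+\gamma_4$ as in Proposition \ref{prop:Harmonic:3forms:Cone}, and use \cite[Remark A.5]{FHN:ALC:G2:from:AC:CY3} to decide which types are closed and coclosed.
\begin{itemize}
\item Type $\gamma_3=r^{\lambda+2}dr\wedge\alpha-\frac{1}{\lambda+1}r^{\lambda+3}d\alpha$ is coclosed but never closed unless it vanishes, since $d$ of it is a nonzero multiple of $r^{\lambda+2}dr\wedge d\alpha$. It is therefore excluded.
\item Types $\gamma_1$ and $\gamma_4$ survive only at $\lambda=-1,-3$ within $[-4,0]$. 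At $\lambda=-1$, $\gamma_1=r\,dr\wedge\alpha$ is not closed unless $\alpha=0$, so it is excluded; dually, $\gamma_4$ is excluded at $\lambda=-3$.
\item At $\lambda=-3$, $\gamma_1=r^{-1}dr\wedge\tau_2$ with $\tau_2$ harmonic, and this is closed and coclosed.
\item At $\lambda=-3$, $\gamma_4=\ast(\text{such})$, which is $\pm\eta\wedge J\tau$ up to the cone's $\ast$. The computation $\ast_{\tu{C}}(r^{-1}dr\wedge\tau)=r^{-3}\cdot r^{2}\cdot r\,\eta\wedge\ast_{4}\tau$ on horizontal basic forms gives exactly the form $\eta\wedge\tau_1$, using that harmonic $2$-forms on $\SigmP$ are basic anti-self-dual, so $\ast_\SigmP\tau=-\eta\wedge\tau$.
\item Type $\gamma_2=d\bigl(\frac{r^{\lambda+3}}{\lambda+3}\alpha\bigr)$ with $\triangle\alpha=(\lambda+3)^2\alpha$, $\alpha$ coexact, is closed and coclosed. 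At $\lambda=-3$ it degenerates to eigenvalue $0$, that is, $\alpha$ is harmonic and coexact, hence $\alpha=0$.
\end{itemize}
Together these give the two displayed forms, including the $\lambda\neq-3$ case.

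\emph{Step 3: the regular case.} If $\SigmP$ is regular, Lemma \ref{lem:regular_se} gives $\triangle>4$ on coclosed $2$-forms. The remaining case $\lambda\in[-4,-1]\setminus\{-3\}$ has $(\lambda+3)^2\le4$, so $\alpha=0$ (the $\alpha$ harmonic case having been handled above).

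\emph{Main obstacle.} The main difficulty is the bookkeeping in Step 2 against \cite[Remark A.5]{FHN:ALC:G2:from:AC:CY3}. In particular one must check that there are no logarithmic terms, which follows from Proposition \ref{prop:Indicial:Roots:pforms}(i), and must correctly identify the $\lambda=-3$ forms using the anti-self-duality of harmonic $2$-forms on Sasaki--Einstein $5$-manifolds (Boyer--Galicki and Goto, as recalled above).
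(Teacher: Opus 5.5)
Your route is the one the paper relies on: it gives no argument here beyond citing \cite{FHN:ALC:G2:from:AC:CY3}, and your reduction to \cite[Theorem A.2, Remark A.5]{FHN:ALC:G2:from:AC:CY3} plus Proposition \ref{prop:Harmonic:1Forms:Cone} and Lemma \ref{lem:regular_se} is the natural way to do it. Steps 1 and 3 and your final lists of surviving forms are correct. (Step 1 is where the standing non-exceptional assumption is used, and it is genuinely needed at $\lambda=0$: on $\C^3=\tu{C}(\Sph^5)$ the $1$-forms $dx_i$ are closed, coclosed and homogeneous of rate $0$.) However, several individual verdicts in Step 2 are wrong as written and need fixing.

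\emph{The $\lambda=-1$ case.} At $\lambda=-1$, $\gamma_1=r\,dr\wedge\alpha=d(\tfrac12 r^2\alpha)$ is closed. It drops out because it is not coclosed: for a coclosed $2$-form $\alpha$ on $\SigmP$ one has $d^\ast(f\,dr\wedge\alpha)=-(f'+f/r)\alpha$, which equals $-2\alpha$ for $f=r$ and $0$ for $f=r^{-1}$. Dually, $\gamma_4=r^2\beta$ with $\beta$ harmonic is coclosed but not closed at $\lambda=-1$, and that is the rate at which it is excluded. Your sentence excluding $\gamma_4$ at $\lambda=-3$ contradicts your next bullet. At $\lambda=-3$, $\gamma_4$ is a harmonic $3$-form on $\SigmP$, closed and coclosed, and it gives exactly the term $\eta\wedge\tau_1$.

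\emph{The component $\gamma_3$.} With the normalisation of Proposition \ref{prop:Harmonic:3forms:Cone}(iii) one computes
\[
d\gamma_3=-\tfrac{2(\lambda+2)}{\lambda+1}\,r^{\lambda+2}\,dr\wedge d\alpha,\qquad d^\ast\gamma_3=-2(\lambda+2)\,r^{\lambda+1}\alpha .
\]
So $\gamma_3$ is neither closed nor coclosed for $\lambda\neq-2$, not ``coclosed but never closed''. At $\lambda=-2$ it equals $d(r\alpha)$ with $\triangle\alpha=\alpha$, which is closed and coclosed. Your conclusion survives only because, at that rate, this form coincides with the type-(ii) form $d\bigl(\tfrac{r^{\lambda+3}}{\lambda+3}\alpha\bigr)$.

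\emph{Testing components separately.} You should say why this is legitimate. $d\gamma$ splits into $dr\wedge(\text{harmonic})$ coming from $\gamma_4$ and $dr\wedge(\text{exact})$ coming from $\gamma_3$. Likewise $d^\ast\gamma$ splits into a harmonic part from $\gamma_1$ and a coexact part from $\gamma_3$. These pieces are $L^2(\SigmP)$-orthogonal, so each must vanish separately.

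The check for logarithmic terms is unnecessary, since $\gamma$ is homogeneous by hypothesis.
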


Propositions \ref{prop:Harmonic:3forms:Cone}, \ref{prop:Closed:Even:Forms} and \ref{prop:Closed:Odd:Forms} hold unchanged when $\SigmP$ is exceptional.

\section{Deformation theory}
\label{sec:def_theory}

This section has two goals. The first is to develop the deformation theory of ALC $\gtwo$--manifolds. The second goal is to then consider torsion-free ALC \gtwo--structures on an exterior domain in $\BC(\Sigma)$ and prove that they are all exponentially decaying to circle-invariant $\gtwo$--structures (as stated in Theorem \ref{mthm:Asymptotics}). The latter result will have important applications in Section \ref{sec:Symmetries}; we have included this result in this section as it is a consequence of a suitable reformulation of the torsion-free condition for small perturbations of the closed $\gtwo$--structure $\varphi'_\BC$, in a similar way as in the first part of the section where we consider (global) small perturbations of a given ALC torsion-free $\gtwo$--structure.   

\subsection{Moduli spaces of ALC \gtwo--manifolds}

In this subsection we apply the weighted Fredholm analysis for ALC spaces developed earlier in the paper to study the local structure of the moduli space of ALC \gtmetric s on a fixed smooth $7$-manifold $M$. We will follow closely the strategy employed in \cite[\S\S 3.2 and 4.2]{Nordstrom:Thesis} to study deformations of compact and ACyl \gtmfd s respectively (for the latter, see also \cite{Nordstrom:ACyl:G2}). The same strategy has been used to study deformations of AC and conically singular \gtmfd s in \cite{Karigiannis:Lotay}. We will therefore concentrate our attention on describing the adaptations of the linear-analytic arguments needed to accommodate the ALC setting, referring the reader back to \cite{Nordstrom:Thesis}, \cite{Nordstrom:ACyl:G2} and \cite{Karigiannis:Lotay} for further details of the argument
as needed.

We wish to define a moduli space of torsion-free ALC \gtwo-structures asymptotic to a \emph{fixed} $\BC(\Sigma)$, quotiented by an appropriate group of diffeomorphisms \emph{asymptotic to the identity} on the ALC end.
We fix a torsion-free ALC {\gtstr} $\varphi$ on $M$ asymptotic to $\tu{BC}(\Sigma)$. Fix $\mu<-1$ and let $\mathcal{Z}^3_\mu$ be the space of smooth closed $3$-forms on $M$ of the form $\varphi + \rho$ with $\rho\in C^\infty_\mu$. We denote by $\mathcal{C}_\mu$ and~$\mathcal{X}_\mu$ the subsets consisting of closed \gtstr s and of torsion-free ones respectively: $\mathcal{C}_\mu$ is open in~$\mathcal{Z}^3_\nu$ and $\mathcal{X}_\mu$ is closed in $\mathcal{C}_\mu$.

\begin{remark}
\label{rmk:mu_vs_nu}
    If the torsion-free ALC \gtwo-structure $\varphi$ we started with is ALC with rate $\nu$, then $\mathcal{X}_\nu$ can simply be described as the set of torsion-free \gtwo-structures that are ALC with rate $\nu$ to the same model $\BC(\Sigma)$ as $\varphi$.
    When we consider $\mu$ distinct from the rate $\nu$ of $\varphi$ itself then $\mathcal{X}_\mu$ need not be ALC with rate $\nu$ in the sense of Definition \ref{def:ALC:G2}, but in any case:
    \begin{itemize}
    \item %
    elements of $\mathcal{X}_\mu$ are ALC with rate $\tu{max}(\mu, \nu)$ to the same $\BC(\Sigma)$
    \item replacing the given $\varphi$ with any other element of $\mathcal{X}_\mu$ would yield the same set $\mathcal{X}_\mu$.
    \end{itemize}
\end{remark}

Let $\mathcal{D}_{\mu+1}$ be the set of diffeomorphisms of $M$ generated by vector fields $X\in C^\infty_{\mu+1}$. $\mathcal{D}_{\mu+1}$ acts on~$\mathcal{Z}^3_\mu$ by pull-back preserving $\mathcal{C}_\mu$ and $\mathcal{X}_\mu$. The moduli space we want to study is $\mathcal{M}_\mu = \mathcal{X}_\mu/\mathcal{D}_{\mu+1}$. The main result of the subsection, Theorem \ref{thm:Moduli:Spaces}, is that $\mathcal{M}_\mu$ is a smooth finite-dimensional manifold near $\varphi$ for generic $\mu \in (-3-\delta,-1)$ which is locally diffeomorphic to the space $\mathcal{H}^3_\mu(M)$ of closed and coclosed $3$-forms in $C^\infty_\mu$. Here $\delta>0$ is a constant that depends only on $\Sigma$; with more work one could study the deformation problem in the range $\mu\leq -3-\delta$, but note that $\mathcal{X}_{\mu'}\subseteq\mathcal{X}_\mu$ if $\mu'\leq \mu$: restricting to a moduli space consisting entirely of elements with very fast decay seems unlikely to arise in applications and therefore we do not pursue these refinements here.

As usual in deformation theory problems, it will be necessary to work with weighted Banach spaces instead of $C^\infty_\mu$. We will work with weighted $L^2$--spaces but we could have equally chosen to work with weighted H\"older spaces. We will therefore fix $l\geq 1$ sufficiently large and consider spaces $\mathcal{X}_{l,\mu}\subset \mathcal{C}_{l,\mu}\subset\mathcal{Z}^3_{l,\mu}$ of 3-forms of class $L^2_{l,\mu
}$ acted upon by the space $\mathcal{D}_{l+1,\mu+1}$ of diffeomorphisms generated by vector fields of class $L^2_{l+1,\mu+1}$. Here we must choose $l$ sufficiently large so that elements in $L^2_{l,\mu}$ are in $C^0_\mu$: by Theorem \ref{thm:Sobolev}, any $l$ with $2l>7$ satisfies this requirement.

\subsubsection{Gauge-fixing}

The first step of the proof is to construct a slice for the action of the diffeomorphism group. As in \cite{Nordstrom:Thesis}, the main tool for analysing the gauge-fixing problem is the Dirac operator and its identification with the operator $\slashed{D}\co \Omega^0\oplus\Omega^1\ra \Omega^3_1\oplus\Omega^3_7$ of Lemma \ref{lem:identities:0:1:forms:torsion:free}(iv).

In order to study the mapping properties of $\slashed{D}$ we need the following preliminary vanishing result for decaying harmonic functions and $1$-forms on ALC \gtmfd s.

\begin{lemma}\label{lem:Vanishing:harmonic}
There are no nontrivial harmonic functions or $1$-forms in $L^2_{\delta}$ for $\delta <0$. As a consequence the Laplacian $\triangle\co L^2_{2,2+\delta}\ra L^2_\delta$ acting on functions and $1$-forms is an isomorphism for all $\delta\in (-4,0)$.
\proof
If $\delta<-2$ then we can integrate by parts (and use the Ricci-flatness of $M$ in the Weitzenb\"ock formula for $1$-forms) to obtain
\[
0=\langle \triangle u, u\rangle_{L^2} = \| \nabla u\|_{L^2}^2, \qquad 0=\langle \triangle \gamma, \gamma\rangle_{L^2} = \| \nabla \gamma\|_{L^2}^2.
\] 
Hence every harmonic function (respectively, $1$-form) in $L^2_{2,\delta}$ is constant (parallel) and therefore vanishes (since $f$ and $|\gamma|$ must decay at infinity). On the other hand, by Propositions \ref{prop:Harmonic:Functions:Cone} and~\ref{prop:Harmonic:1Forms:Cone} there are no indicial roots for the Laplacian acting on functions and $1$-forms in $(-4,0)$ and therefore the space of harmonic functions and $1$-forms in $L^2_{2,\delta}$ remains trivial for all $\delta < 0$.

The final statement is then a consequence of the duality statement of Lemma \ref{lem:Dual:weighted:Sobolev}.
\endproof
\end{lemma}

\begin{remark}\label{rmk:Vanishing:Harmonic}
Propositions \ref{prop:Harmonic:Functions:Cone} and \ref{prop:Harmonic:1Forms:Cone} and their modifications in Remark \ref{rmk:Harmonic:Functions:Cone} and \ref{rmk:Harmonic:1Forms:Cone} when $\Sigma$ is exceptional allow one to extend this result. For example, the scalar Laplacian $\triangle\co L^2_{2,2+\delta}\ra L^2_\delta$ remains surjective with kernel given by constant functions for all $\delta\in (-4,1]$ unless $M$ is ALC of cyclic type asymptotic to $\tu{C}(\Sigma)$ with $\Sigma$ exceptional.
\end{remark}

\begin{remark*}
On a Riemannian $7$-manifold $(M,g)$ with holonomy contained in $\gtwo$ the decomposition of $\Lambda^k T^\ast M$ into irreducible representations for $\gtwo$ is parallel. In particular, the Hodge Laplacian preserves this decomposition. Moreover, if $\Lambda^k T^\ast M$ contains a copy of the trivial or standard representation of $\gtwo$ then the restriction of the Hodge Laplacian to that factor coincides with the scalar Laplacian or, respectively, the Hodge Laplacian acting on $1$-forms. In particular, Lemma \ref{lem:Vanishing:harmonic} implies that on an ALC $\gtwo$--manifold every decaying harmonic $2$-form is of type $14$ and every decaying harmonic $3$-form is of type $27$.
\end{remark*}

\begin{prop}\label{prop:Mapping:Dirac}
$\slashed{D}\co L^2_{1,\delta+1}\ra L^2_{\delta}$ is injective for $\delta <-1$ and surjective for $\delta >-6$.
\proof
If $(f,\gamma)$ is in the kernel of $\slashed{D}$ then it is also in the kernel 
of ${\slashed{D}}\,^2$ and since $M$ is Ricci-flat (and hence scalar-flat)
$f$ and $\gamma$ are therefore harmonic. By Lemma \ref{lem:Vanishing:harmonic}, if $\delta +1<0$ then $f=0=\gamma$.
For $-6-\delta <0$, applying Lemma \ref{lem:Vanishing:harmonic} similarly proves the surjectivity statement by duality.
\endproof
\end{prop}

Now, the tangent space $T_\varphi\mathcal{Z}^3_{l,\mu}$ is the vector space
\[
T_\varphi\mathcal{Z}^3_{l,\mu} = \{ \rho\in\Omega^3_{l,\mu}\, |\, d\rho=0\}
\]
and by Lemma \ref{lem:decomp:forms} the tangent space at $\varphi$ of its $\mathcal{D}_{l+1,\mu+1}$--orbit is naturally identified with the vector space $d\left( \Omega^2_{7,l+1, \mu+1}\right)$, where $\Omega^2_{7,l+1,\mu+1}$ is the space of $L^2_{l+1,\mu+1}$--forms of type $\Omega^2_7$. Define
\[
\mathcal{K}_{l,\mu} = \{ \rho\in \Omega^3_{27,l,\mu}\, |\, d\rho=0\}.
\]
We show that $\mathcal{K}_{l,\mu}$ is a direct complement of $d(\Omega^2_{7,l+1,\mu+1})$ in $T_\varphi\mathcal{Z}^3_{l,\mu}$.

\begin{prop}\label{prop:Slice:Thm}
For all $\mu\in (-6,-1)$
\[
T_\varphi\mathcal{Z}^3_{l,\mu} = \mathcal{K}_{l,\mu} \oplus d(\Omega^2_{7,l+1,\mu+1})
\]
is a direct sum decomposition.
\proof
By Proposition \ref{prop:Mapping:Dirac} the Dirac operator $\slashed{D}\co L^2_{l+1,\mu+1}\ra L^2_{l,\mu}$ is an isomorphism in the range specified. Thus every $3$-form $\rho\in \Omega^3_{l,\mu}$ can be written uniquely as
\[
\rho = d(\gamma^\sharp\lrcorner\varphi) + \ast d(f\varphi) + \rho_0,
\]
with $\gamma,f\in L^2_{l+1,\mu+1}$ and $\rho_0$ in $\Omega^3_{27}$.

Note in particular that $\pi_1 (d\rho_0)=0$: indeed, $\rho_0 \wedge \varphi=0$ (since $\rho_0$ has type 27) and so by differentiation $d\rho_0\wedge\varphi=0$. Now, if $\rho$ is closed, then by Lemma \ref{lem:identities:0:1:forms:torsion:free} (i)
\[
\tfrac{2}{7}(\triangle f)\psi = \pi_1 d\left( d(\gamma^\sharp\lrcorner\varphi) + \ast d(f\varphi)\right) = -\pi_{1}d\rho_0 =0.
\]
Since $\mu +1 <0$ then $f=0$ by Lemma \ref{lem:Vanishing:harmonic}. Therefore $\rho = d(\gamma^\sharp\lrcorner\varphi)+\rho_0$ and thus $d\rho_0=d \rho=0$.
\endproof
\end{prop}

\subsubsection{The deformation problem}

Let $\varphi$ be a torsion-free ALC~\gtstr. Given a closed form $\rho\in\Omega^3_{l,\mu}$ with $\| \rho \| _{L^2_{l,\mu}}$ and $\|\rho\| _{C^0}$ sufficiently small, consider the closed $G_2$--structure $\varphi_{\rho}=\varphi+\rho$. Denote by $\psi_\rho$ the $4$--form $\ast_{\varphi_\rho}\varphi_\rho$. We want to study the equation $d\psi_\rho=0$.

The crucial observation is that since $d\varphi_\rho=0$, the $5$-form $d\psi_\rho$ has no $7$ component with respect to $\varphi_\rho$, \ie $d\psi_\rho\wedge (X\lrcorner\varphi_\rho)=0$ for every tangent vector $X\in T_x M$ at any point of $M$, see Lemma \ref{lem:Torsion:G2:structures}. This fact is at the core of any reformulation of the equation $d\psi_\rho=0$. In our context we find the following formulation to be the most convenient.

\begin{prop}\label{prop:Deformation:Problem}
For $\mu<-1$, there exists $\epsilon>0$ such that if $\| \rho\| _{C^0} <\epsilon$ then the following holds. Suppose that there exists $\gamma\in\Omega^1_{l+1,\mu+1}$ such that
\[
d\psi_\rho = d\ast_\varphi d(\gamma^\sharp\lrcorner\varphi).
\]
Then $d\psi_\rho=0$.
\proof
Since $\varphi_\rho$ is closed, the torsion constraints of Lemma \ref{lem:Torsion:G2:structures} imply that for every $\eta\in\Omega^1_{1,-5-\mu}$ we have
\[
\langle d^{\ast_\varphi} d(\gamma^\sharp\lrcorner\varphi), \eta^\sharp\lrcorner\varphi_\rho \rangle _{L^2} =  -\int{d\psi_\rho\wedge (\eta^\sharp\lrcorner\varphi_\rho)}=0.
\]
In other words, $\gamma$ lies in the kernel of $\pi_7^{\varphi_\rho}d^{\ast_\varphi}d(\,\cdot\,^\sharp\lrcorner\varphi)\co \Omega^1_{2,\mu+1}\ra \Omega^2_7\cap L^2_{\mu-1} \simeq_{\varphi_\rho}\Omega^1_{\mu-1}$.

We have to show that this operator is injective. Since $\| \rho\| _{C^0} <\epsilon$ the operator $\pi^{\varphi_\rho}_7\left( d^\ast d (\, \cdot\,\lrcorner\varphi) \right)$ differs from $\pi _7^\varphi\left( d^\ast d (\,\cdot\,\lrcorner\varphi) \right)$ by a bounded operator with norm $O(\epsilon)$. Since $\varphi$ is torsion-free we can identify $\pi _7 d^\ast d (\gamma^\sharp\lrcorner\varphi)$ with $dd^\ast \gamma + \tfrac{2}{3} d^\ast d\gamma$, see Lemma \ref{lem:identities:0:1:forms:torsion:free}\ref{lem:identities:0:1:forms:gauge:fixing}. It is therefore enough to show that $dd^\ast  + \tfrac{2}{3} d^\ast d$ is bounded below for the range of $\mu$ specified and then choose $\epsilon$ small enough. Since $dd^\ast + \tfrac{2}{3} d^\ast d$ is a Fredholm operator,  existence of a positive lower bound for its operator norm is equivalent to it being injective.

In order to show that the kernel of $dd^\ast + \tfrac{2}{3} d^\ast d$ acting on $\Omega^1_{\mu+1}$ is trivial for $\mu<-1$, first observe that if $\mu\leq -3$ then by Lemma \ref{lem:integration:parts} we can integrate by parts and conclude that $d\gamma=0=d^\ast \gamma$ and thus $\gamma=0$ by Lemma \ref {lem:Vanishing:harmonic}. It is therefore enough to ensure that there are no indicial roots for $dd^\ast + \tfrac{2}{3} d^\ast d$ in the range $(-2,0)$. If we write a 1-form $\gamma$ on $\BC (\Sigma)$ as $\gamma = u\,\theta + \gamma_H$ and replace $d$ and $d^\ast$ with the covariant differential and codifferential associated with the adapted connection $\overline{\nabla}$ (for which $\theta$ is parallel), then it is clear that the indicial roots for $dd^\ast + \tfrac{2}{3} d^\ast d$ are given by the indicial roots for $\tfrac{2}{3} d^\ast d$ acting on functions and $dd^\ast + \tfrac{2}{3} d^\ast d$ acting on $1$-forms on $\tu{C}(\Sigma)$. The fact that there are none of these in the interval $(-2,0)$ is guaranteed by Proposition \ref{prop:Harmonic:Functions:Cone} and, respectively, the last statement of Proposition \ref{prop:Harmonic:1Forms:Cone}. 
\endproof
\end{prop}

\subsubsection{Application of the Implicit Function Theorem}

By Lemma \ref{lem:Linearisation:Hitchin:dual:3form:7d} the linearisation of the map $\rho\mapsto\psi_\rho$ is given by $\rho\mapsto \ast \left( \tfrac{4}{3}\pi_1(\rho) + \pi_7 (\rho) - \pi_{27}(\rho)\right)$. The next lemma provides control of the nonlinear terms of the equation $d\psi_\rho=0$.

\begin{lemma}\label{lem:Hitchin:dual}
For $2l > 9$ we have
\[
\psi_\rho = \psi + \ast \left( \tfrac{4}{3}\pi_1(\rho) + \pi_7 (\rho) - \pi_{27}(\rho)\right) + \ast Q_\varphi (\rho),
\]
where $Q_\varphi (\rho)\in\Omega^3_{l,\mu}$ and $\| Q_\varphi (\rho) \|_{L^2_{l,\mu}}\leq C \| \rho \|^2 _{L^2_{l,\mu}}$.
\proof
The proof is analogous to \cite[Lemma 5.16]{Karigiannis:Lotay} and uses the ALC weighted Sobolev Embedding Theorem \ref{thm:Sobolev}.
\endproof
\end{lemma}

Exploiting Propositions \ref{prop:Slice:Thm} and \ref{prop:Deformation:Problem}, modulo the action of $\mathcal{D}_{l+1,\mu+1}$ we can formulate the deformation problem for the ALC torsion-free $\gtwo$--structure $\varphi$ as the vanishing of the map
\[ F : \mathcal{K}_{l,\mu} \oplus \Omega^1_{l+1,\mu+1} \to d^\ast\Omega^3_{l,\mu}, \;
(\rho,\gamma)\longmapsto d^\ast\rho - d^\ast d(\gamma^\sharp\lrcorner\varphi) - d^\ast Q_\varphi (\rho).
\]
The linearised operator is
\begin{equation}\label{eq:Linearised:Def:ALC:G2}
\mathcal{L}_\varphi (\rho,\gamma) = d^\ast\rho - d^\ast d (\gamma^\sharp\lrcorner \varphi),
\end{equation}
while in view of Lemma \ref{lem:Hitchin:dual} the nonlinearities are contained in the operator
\[
\mathcal{N}_\varphi (\rho,\gamma) =  -d^\ast Q_\varphi (\rho),
\]
which has range contained in $d^\ast\Omega^3_{l,\mu}$.

\begin{lemma}\label{lem:Linearised:Def:ALC:G2}
There exists $\delta>0$ such that, for every $\mu\in(-3-\delta,-1)$ away from a finite set of indicial roots, the operator $\mathcal{L}_\varphi\co \mathcal{K}_{l,\mu} \oplus \Omega^1_{l+1,\mu+1}\ra d^\ast \Omega^3_{l,\mu}$ of \eqref{eq:Linearised:Def:ALC:G2} is a surjective operator between Banach spaces, with kernel $\mathcal{H}^3_\mu (M) \oplus \{ 0\}$.
\proof
The claim about the kernel of $\mathcal{L}_\varphi$ is proved as follows.  $(\rho,\gamma)\in \ker\mathcal{L}_\varphi$ if and only if it solves
the system $d \rho=0$, $d^\ast \rho = d^\ast d (\gamma^\sharp\lrcorner\varphi)
.$
Since $\rho\in\Omega^3_{27}$, Remark \ref{rmk:identities:0:1:forms:torsion:free} then implies that $0=\pi_7d\rho = \pi_7 d^\ast \rho=\pi_7d^\ast d(\gamma^\sharp\lrcorner\varphi)$. We have already argued in the proof of Proposition \ref{prop:Deformation:Problem} that 
$\pi_7d^\ast d(\gamma^\sharp\lrcorner\varphi)=0$ forces $\gamma=0$ provided $\mu <-1$. Hence $\rho$ is a closed and coclosed
$3$-form of type 27. Conversely, note that every closed and coclosed 3-form of rate $\mu<-1$ must lie in $\Omega^3_{27}$ because of Lemma \ref{lem:Vanishing:harmonic}. 
Note that so far we have needed only to assume that $\mu< -1$.

For the surjectivity statement, given an element $d^\ast\xi$ with $\xi\in \Omega^3_{l,\mu}$ we first consider the following Poisson equation
for a $2$-form $\sigma\in \Omega^2_{l+1,\mu+1}$
\[
\triangle\sigma = d^\ast\xi.
\]
When $\mu \in (-3,-1)$ we can appeal to part (ii) of Proposition \ref{prop:Closed:to:Coclosed:Laplacian} to find a solution of this equation with $\sigma \in\Omega_{l+1,\mu+1}$. Moreover, this solution $\sigma$ must in fact be coclosed, because (as observed in the proof of Proposition \ref{prop:Closed:to:Coclosed:Laplacian}) $d^\ast \sigma$ is a harmonic $1$-form of rate $\mu<-1$, and by Lemma~\ref{lem:Vanishing:harmonic} this
must vanish. Since $\sigma$ is coclosed, we have $d^\ast\rho=d^\ast\xi$ with $\rho=d\sigma\in L^2_{l,\mu}$. Working $L^2_{\mu+1}$--orthogonally to harmonic $2$-forms to make the solution $\sigma$ unique, Fredholmness of the Hodge Laplacian (guaranteed by our assumption that $\mu$ is not an indicial root) implies the existence of uniform estimates $\| \rho\|_{L^2_{l,\mu}}\leq C\| d^\ast\xi\|_{L^2_{l-1,\mu-1}}$ for a constant $C>0$ independent of $\rho,\xi$. In particular, these estimates show that $d^\ast\Omega^3_{l,\mu}$ is closed in $\Omega^2_{l-1,\mu-1}$ and therefore a Banach space as claimed. The result is therefore proven for $\mu\in (-3,-1)$.

In order to work at rates below $\mu=-3$ we need to use (case~(i) of) Proposition \ref{prop:Closed:to:Coclosed:Laplacian:Mid:deg} instead of Proposition \ref{prop:Closed:to:Coclosed:Laplacian} to solve the Poisson equation. This implies that for $\delta>0$ sufficiently small and $\mu \in (-3-\delta,-3)$ we can still solve $\triangle\sigma=d^\ast\xi$ (uniquely by working orthogonally to harmonic forms) but where the solution is now a coclosed $2$-form $\sigma$ in $\Omega^2_{l+1,\mu+1}\oplus V$, where $V$ is the finite-dimensional space of 2-forms that can be written as $\chi \tau$, for $\chi$ a (fixed) cutoff function with $\chi\equiv 1$ on the ALC end and $\tau\in\mathcal{H}^2(\Sigma)$. Nevertheless, we still have that $d\sigma\in L^2_{l,\mu}$ (because as noted in Remark~\ref{rmk:Closed:to:Coclosed:Laplacian:Mid:deg} $d(\chi\tau)$ is compactly supported), and so the rest of the proof in the case $\mu\in (-3,-1)$ goes through unchanged.
\endproof
\end{lemma}

Applying the Implicit Function Theorem in Banach spaces to $F$ now yields the following result.

\begin{theorem}\label{thm:Moduli:Spaces}
Let $(M,\varphi)$ be an ALC $\gtwo$--manifold asymptotic to $\tu{BC}(\Sigma)$, and let
$\mu\in (-3-\delta,-1)$ for $\delta>0$ given in Lemma \ref{lem:Linearised:Def:ALC:G2} be outside a finite set of indicial roots. Then the moduli space $\mathcal{M}_\mu$ of ALC torsion-free $\gtwo$--structures on~$M$ with the same asymptotics as $\varphi$ up to rate $\mu$ is a smooth manifold with tangent space at the class of $\varphi$ identified with  $\mathcal{H}^3_\mu (M,g_{\varphi})$.
\end{theorem}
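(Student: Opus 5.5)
We apply the Implicit Function Theorem to $F$, following the compact and ACyl arguments of \cite{Nordstrom:Thesis,Nordstrom:ACyl:G2} and the AC version in \cite{Karigiannis:Lotay}.

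\emph{Step 1: local zero set.} Work with $F\co \mathcal{K}_{l,\mu}\oplus\Omega^1_{l+1,\mu+1}\ra d^\ast\Omega^3_{l,\mu}$. By Lemma \ref{lem:Hitchin:dual} and the weighted Sobolev embedding (Theorem \ref{thm:Sobolev}), $F$ is a smooth map between Banach spaces near the origin. Its nonlinear part $\mathcal{N}_\varphi$ is quadratic, with values in $d^\ast\Omega^3_{l,\mu}$; by Lemma \ref{lem:Linearised:Def:ALC:G2} this target is a closed subspace. The same lemma shows that $d F_{(0,0)}=\mathcal{L}_\varphi$ is surjective with kernel $\mathcal{H}^3_\mu(M)\oplus\{0\}$. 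This kernel is finite-dimensional, since harmonic forms of non-indicial rate form the kernel of a Fredholm operator (Theorem \ref{thm:Fredholm:2nd:order}), so it has a closed complement. The Implicit Function Theorem therefore gives a smooth finite-dimensional submanifold $F^{-1}(0)$ near the origin, a graph over a neighbourhood of $0$ in $\mathcal{H}^3_\mu(M)$.

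\emph{Step 2: zeros of $F$ are torsion-free.} If $F(\rho,\gamma)=0$, then $d^\ast\bigl(\ast(\psi_\rho-\psi)\bigr)$, up to sign conventions, equals $d^\ast d(\gamma^\sharp\lrcorner\varphi)$. Applying $\ast$ gives $d\psi_\rho=d\ast d(\gamma^\sharp\lrcorner\varphi)$. Proposition \ref{prop:Deformation:Problem} then yields $d\psi_\rho=0$, provided $\|\rho\|_{C^0}$ is small, which holds by Sobolev embedding. Hence $\varphi+\rho\in\mathcal{X}_{l,\mu}$. Elliptic regularity (Theorem \ref{thm:Weighted:Regularity}), applied to the torsion-free equation in harmonic gauge, gives smoothness with all derivatives decaying at rate $\mu$. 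So the solutions lie in $\mathcal{X}_\mu$.

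\emph{Step 3: slice.} Every element of $\mathcal{X}_\mu$ near $\varphi$ should be $\mathcal{D}_{\mu+1}$-equivalent to $\varphi+\rho$ with $\rho\in\mathcal{K}_{l,\mu}$, uniquely up to a neighbourhood of the identity. For this, take the map $\mathcal{D}_{l+1,\mu+1}\times\mathcal{K}_{l,\mu}\ra\mathcal{Z}^3_{l,\mu}$, $(\Phi,\rho)\mapsto\Phi^\ast(\varphi+\rho)$. Proposition \ref{prop:Slice:Thm} shows its differential at $(\mathrm{id},0)$ is an isomorphism, via the identification of $\Omega^2_7$ with vector fields through $\varphi$. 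The inverse function theorem then gives a local product structure, and Step 1 identifies $\mathcal{M}_\mu$ locally with $F^{-1}(0)$.

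Two points remain. Uniqueness requires that distinct slice elements are not related by diffeomorphisms far from the identity; I would handle this as in \cite{Nordstrom:Thesis}, using local properness of the action, which holds because isometries asymptotic to the identity are controlled by the decaying harmonic vector fields, and these vanish by Lemma \ref{lem:Vanishing:harmonic}. The action of $\mathcal{D}_{l+1,\mu+1}$ on $L^2_{l,\mu}$ forms also loses a derivative, so pulling back does not give a smooth map. As in \cite{Karigiannis:Lotay}, I would get around this by using smoothness of the torsion-free structures together with the regularity from Step 2. With both points settled, the tangent space is $\ker\mathcal{L}_\varphi\cong\mathcal{H}^3_\mu(M,g_\varphi)$.

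I expect this slice argument, and in particular the uniqueness together with the derivative loss, to be the main obstacle.
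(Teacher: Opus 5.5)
Your proposal is correct and follows the paper's route: the slice of Proposition~\ref{prop:Slice:Thm}, the reformulation of the torsion-free condition in Proposition~\ref{prop:Deformation:Problem}, the surjectivity and kernel computation of Lemma~\ref{lem:Linearised:Def:ALC:G2}, and then the Implicit Function Theorem applied to $F$. The technical points you flag are the injectivity of the slice into the quotient and the loss of a derivative under pull-back; the paper does not carry these out itself but delegates them to \cite{Nordstrom:Thesis} and \cite{Karigiannis:Lotay}, so your sketch is, if anything, more detailed than the paper's one-line proof.
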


As in Remark \ref{rmk:mu_vs_nu}, if we take $\mu$ to be the asymptotic rate $\nu$ of $\varphi$ itself then we can think of $\mathcal{M}_\nu$ simply as the moduli space of ALC torsion-free \gtstr s asymptotic to a fixed $\BC(\Sigma)$ with rate $\nu$, and we obtain Theorem \ref{mthm:Moduli}. (If $\nu$ itself is an indicial root, then we can use $\mu$ just larger than~$\nu$, and $\mathcal{M}_\mu$ is the same set as $\mathcal{M}_\nu$.)

\subsection{Asymptotic behaviour on the end}

Let us now consider an arbitrary torsion-free $\gtwo$--structure $\varphi$ defined on an exterior domain in $\BC (\Sigma)$ of the form
\[
\varphi = \varphi'_\BC + \rho 
\]
for some $\rho\in\Omega^3_\nu$ for $\nu <-1$. We want to exploit the diffeomorphism freedom to put $\varphi$ in suitable gauge with respect to $\varphi'_\BC$, so that the torsion-free condition can be reformulated as an elliptic PDE problem. As a consequence, we will derive the decay conditions of part (i) of Theorem \ref{mthm:Asymptotics}.

We begin with a suitable slice theorem along an ALC end. It is analogous to the gauge-fixing result on the end of an AC $\gtwo$--manifold discussed in \cite[\S 6.6]{Karigiannis:Lotay}. The proof of our result differs from the argument used by 
Karigiannis--Lotay, in that we employ a suitable boundary value problem for the Laplacian on differential forms. This gives what seems to us the most streamlined proof of the result and allows us to work entirely on an exterior domain rather than on a complete ALC (or AC) manifold.

\begin{prop}\label{prop:Slice:End}
Fix $\nu\in (-5,-1)$ and let $\varphi=\varphi'_{\tu{BC}}+\rho$ be an ALC torsion-free $\gtwo$--structure defined on an exterior domain in $\BC (\Sigma)$ with $\rho\in\Omega^3_\nu$. For $R_0>0$ sufficiently large, there exists a diffeomorphism $f$ of $\{ r\geq R_0\}\subset \BC (\Sigma)$ of class $C^\infty_{\nu+1}$ such that
\[
\left( f^\ast\varphi -\varphi'_\BC\right)\wedge\varphi'_\BC =0=\left( f^\ast\varphi -\varphi'_\BC\right)\wedge\psi'_\BC, 
\]
\ie $f^\ast\varphi -\varphi'_\BC$ lies in the space of $3$-forms of type $27$ with respect to the model closed \gtwo--structure~$\varphi'_\BC$.
\end{prop}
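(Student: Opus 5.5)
We look for $f$ as the time-one flow $f=\exp(X)$ of a vector field $X\in C^\infty_{\nu+1}$ on $\{r\geq R_0\}$, and we set up the problem exactly as in the global slice theorem, Proposition \ref{prop:Slice:Thm}. The difference is that we now work with the closed model $\varphi'_\BC$ on an exterior domain and impose boundary conditions at $r=R_0$.

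To first order, $f^\ast\varphi-\varphi'_\BC=\rho+d(X\lrcorner\varphi'_\BC)+O(|X|^2,|\rho||X|)$, using that $\varphi$ is closed. The condition that this lies in $\Omega^3_{27}$ amounts to the vanishing of its $\Omega^3_1\oplus\Omega^3_7$ components, a system of $1+7$ equations for the $7$ components of $X$. This system is overdetermined by one, so we add a scalar unknown, as in the Dirac operator of Lemma \ref{lem:identities:0:1:forms:torsion:free}\ref{lem:identities:0:1:forms:Dirac}. Concretely, we seek $(u,\gamma)$ solving
\[
\pi_{1\oplus 7}\bigl(d(\gamma^\sharp\lrcorner\varphi'_\BC)+\ast d(u\varphi'_\BC)\bigr)=-\pi_{1\oplus7}\rho+\text{(nonlinear terms)}.
\]
At leading order this is the operator $\slashed D$ for the model $\varphi_\BC$. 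It differs from $\slashed D$ by terms of size $O(r^{-2})$, and by Remark \ref{rmk:G2:infinity:pertubed} the torsion of $\varphi'_\BC$ is $O(r^{-3})$. Afterwards we show that $u$ must vanish. As in the proof of Proposition \ref{prop:Slice:Thm}, apply $\pi_1 d$ and use that $\varphi$ and $\varphi'_\BC$ are both closed. This yields a scalar equation $\triangle u=\text{(small terms in }u)$. Choosing a boundary condition $u=0$ on $\{r=R_0\}$, the maximum principle together with decay of $u$ forces $u=0$.

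The linear step is solving $\slashed D(u,\gamma)=\phi$ on $\{r\geq R_0\}$ with estimates in $C^{1,\alpha}_{\nu+1}$. This is Lemma \ref{Step1} applied to the Dirac operator. By Proposition \ref{prop:Dirac:cone}, the model has no indicial roots in $(-5,0)$, and $\nu+1\in(-4,0)$ lies in this interval; the restriction $\nu\in(-5,-1)$ appears designed for exactly this. Lemma \ref{Step1}, combined with Propositions \ref{prop:Estimate:Cone} and \ref{prop:Estimate:Oscillatory} (APS boundary condition on $\Pi_\perp$ and the spectral boundary condition of Remark \ref{rmk:Boundary:Conditions:Cone} on $\Pi_0$), gives a bounded right inverse $G$ with $\|(u,\gamma)\|_{C^{1,\alpha}_{\nu+1}}\leq C\|\phi\|_{C^{0,\alpha}_\nu}$.

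The nonlinear terms are quadratic in $(X,\rho)$ with appropriate weights. The error $\varphi'_\BC-\varphi$ on the linearised operator is $O(r^{\nu})$ with $\nu<-1$. Taking $R_0$ large therefore makes all of these perturbations small, and a contraction mapping argument on a small ball in $C^{k,\alpha}_{\nu+1}$ produces $X$. Higher regularity follows from Theorem \ref{thm:Weighted:Regularity}.

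The main obstacle is showing that $u$ vanishes, i.e. making the scalar boundary condition compatible with the right inverse. The boundary conditions of Lemma \ref{Step1} are spectral (APS-type), not Dirichlet for $u$. Our first attempt is to solve, instead of the Dirac system, a second-order problem. The unknown is a $1$-form $\gamma$ with $\pi_7$-equation $(dd^\ast+\tfrac23 d^\ast d)\gamma=\dots$, as in Proposition \ref{prop:Deformation:Problem}, and we impose Schwarz's classical absolute or relative boundary conditions for forms on $\{r=R_0\}$; Proposition \ref{prop:Harmonic:1Forms:Cone} says there are no indicial roots in the relevant range. We then check separately that the $\pi_1$-component vanishes automatically: it satisfies a Laplace equation with zero boundary data and decay, so Proposition \ref{prop:Harmonic:Functions:Cone} and integration by parts force it to vanish. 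This Schwarz-boundary-condition route is the one the introduction advertises, so it should be the intended one.
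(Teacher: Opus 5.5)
\paragraph{There is a genuine gap.} It sits exactly at the step you flag as the main obstacle, and your second route does not close it. The whole difficulty is to produce a right inverse whose auxiliary scalar vanishes on $\{r=R_0\}$. That scalar is only known to satisfy a homogeneous Laplace equation, and decaying harmonic functions on an exterior domain do exist (already at rate $-4$, which lies in your weighted space since $\nu+1>-4$). As you say, the right inverse of Lemma~\ref{Step1} imposes spectral conditions and gives no control of $u$ on the boundary.

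Your replacement, solving $(dd^\ast+\tfrac23 d^\ast d)\gamma=\dots$ with absolute or relative conditions, changes the problem. That operator is $\pi_7 d^\ast d(\gamma^\sharp\lrcorner\varphi)$, so you are imposing the divergence-type gauge $\pi_7 d^\ast\Theta=0$ on $\Theta=\xi-d(\gamma^\sharp\lrcorner\varphi)$. You are not imposing the eight conditions $\pi_{1\oplus7}\Theta=0$. For closed $\Theta$, the gauge condition only says that $(\pi_1\Theta,\pi_7\Theta)$ lies in the kernel of a first-order Dirac-type operator on $\{r\geq R_0\}$. On a closed Ricci-flat manifold such solutions are parallel, but on an exterior domain nothing forces them to vanish. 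Their boundary values are those of $\pi_{1\oplus7}\xi$, corrected by $d^\ast\gamma$ and $\tu{curl}\,\gamma$, and homogeneous conditions on $\gamma$ do not control these. In particular, your claim that the $\pi_1$-component has zero boundary data is unjustified: with relative conditions $d^\ast\gamma=0$ on the boundary, so $\pi_1\Theta=\pi_1\xi$ there. The $\pi_7$-component is not addressed at all; even if $\pi_1\Theta$ vanished, $\pi_7\Theta$ would still be a decaying harmonic $1$-form on the exterior domain with no boundary condition.

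\paragraph{The missing idea.} You need a way to place a Dirichlet condition on the auxiliary scalar. The paper keeps the Dirac system, parametrises its unknowns as $D^\ast$ of a potential, and imposes Schwarz's conditions on the potential.
\begin{enumerate}
\item Take the operator $D(f,X^\flat)=\pi_{1\oplus7}\bigl(d^\ast(f\psi)+d(X\lrcorner\varphi)\bigr)$ built from the torsion-free $\varphi$ itself, not from the model $\varphi_\BC$. Then, up to constants, $D^\ast(u\varphi+\beta^\sharp\lrcorner\varphi)=(d^\ast\beta,\tu{curl}\,\beta+du)$ and $DD^\ast(u\varphi+\beta^\sharp\lrcorner\varphi)=(\triangle u,\triangle\beta)$ exactly.
\item Solve this with $u=0$, $\beta=0$ and $d^\ast\beta=0$ on $\{r=R_0\}$. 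The kernel and cokernel vanish by integration by parts and a Bochner argument (Ricci-flatness, positive mean curvature of $\{r=R_0\}$), at rates where integrating by parts is allowed. Since functions and $1$-forms have no indicial roots in the relevant range, nothing changes for $\nu\in(-5,-1)$. Uniform bounds in $R_0$ follow by rescaling, with the fibrewise Poincar\'e inequality controlling $\Pi_\perp$.
\item Replace $D$ by $D'$, defined using the projections and Hodge star of $\varphi'_\BC$. For $R_0$ large, $D'D^\ast$ stays invertible, and $X^\flat$ is the $1$-form component of $D^\ast(D'D^\ast)^{-1}\pi'_{1\oplus7}\xi$.
\item The auxiliary scalar is now $f=d^\ast\beta$, which vanishes on the boundary by construction. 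For closed $\xi$, the form $\xi-\mathcal{L}_X\varphi-d^{\ast'}(f\psi')$ has type $27$ for the closed $\varphi'_\BC$. Wedging with $\varphi'_\BC$ and differentiating gives $\triangle' f=0$ exactly, so $f=0$ by the maximum principle.
\end{enumerate}
With this linear map in hand, your contraction-mapping step goes through as you describe.
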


We can formulate the statement as a nonlinear PDE problem for a vector field $X$ on $\{ r\geq R_0\}$ of class $C^{l+1,\alpha}_{\nu+1}$.
The linearisation of this problem is
\[
X\longmapsto \pi'_{1\oplus 7}\mathcal{L}_X\varphi
\]
(where $\pi'_{1\oplus7}$ means projection to the type 1+7 components defined in terms of $\varphi'_\BC$)
and we have to show that this map surjects onto the space $\pi'_{1\oplus 7}\mathcal{Z}_\nu (r\geq R_0)$, where $\mathcal{Z}_\nu (r\geq R_0)$ is the set of closed $3$-forms on $\{ r\geq R_0\}$ of class $C^{l,\alpha}_{\nu}$. Once this is established, the result follows from a straight-forward application of the contraction mapping theorem.

We follow the same strategy as in the proof of Proposition \ref{prop:Slice:Thm}.  However some adaptations are required because (a) we now work on an exterior domain in $\BC (\Sigma)$ (rather than on a complete ALC manifold $M$) and (b) $\varphi'_\BC$ is closed but not torsion-free.

\begin{lemma}
For $R_0$ sufficiently large, there is a bounded (with bound independent of~$R_0$) linear map $Y$ from $\Omega^3_{%
\nu}(r \geq R_0)$ to vector fields of class $C^{l+1,\alpha}_{\nu+1}$ on $\{r \geq R_0\}$ such that $\xi -\mathcal{L}_{Y(\xi)}\varphi$ has type $27$ with respect to $\varphi'_\BC$ for any closed $\xi \in \Omega^3_\nu(r \geq R_0)$.
\end{lemma}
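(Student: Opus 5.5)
We mimic the proof of Proposition \ref{prop:Slice:Thm}, replacing the Dirac operator on the complete manifold $M$ with the corresponding operator on the exterior domain $\{r\geq R_0\}$. The aim is to find, for a closed $\xi\in\Omega^3_\nu(r\geq R_0)$, a vector field $Y=\gamma^\sharp$ and a function $f$ such that
\[
\pi'_{1\oplus 7}\xi=\pi'_{1\oplus7}\bigl(d(\gamma^\sharp\lrcorner\varphi)+\ast d(f\varphi)\bigr),
\]
and then to show that $f$ may be dropped. Since $\xi$ is closed, $\mathcal{L}_Y\varphi=d(Y\lrcorner\varphi)$ for closed $\varphi$. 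We therefore look for $\gamma$ directly.

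\textbf{Step 1: model invertibility on the exterior domain.} We work first with $\varphi'_\BC$ (equivalently $\varphi_\BC$, up to $O(r^{-1})$ terms) and use the adapted connection. By Lemma \ref{lem:identities:0:1:forms:torsion:free}(iv), the map $(f,\gamma)\mapsto\pi_{1\oplus7}\bigl(\ast d(f\varphi)+d\ast(\gamma\wedge\psi)\bigr)$ is the Dirac operator up to zeroth-order terms that are $O(r^{-2})$, so it is admissible in the sense of \eqref{eq:comparison:Dirac}. Lemma \ref{Step1} then gives a bounded right inverse $G$ on $\{r\geq R_0\}$ at rate $\nu+1$, provided $\nu+1$ is not an indicial root. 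The indicial roots are the rates of harmonic spinors on $\tu{C}(\Sigma)$ together with the $\theta$-components of functions and $1$-forms. By Proposition \ref{prop:Dirac:cone} and Proposition \ref{prop:Harmonic:Functions:Cone} there are none in $(-5,0)$, and $\nu+1\in(-4,0)$. The bound on $G$ is independent of $R_0$ by scaling, as in Remark \ref{rmk:Scaling:Technique}.

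For the nonlinear problem we need the higher-regularity H\"older version. This follows from the H\"older analogue of Lemma \ref{Step1} combined with the estimates of Theorem \ref{thm:Weighted:Regularity}.

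\textbf{Step 2: set the map and check the type-27 condition.} We take $(f,\gamma)=G(\pi'_{1\oplus7}\xi)$ and set $Y=\gamma^\sharp$. It then remains to show that $f=0$, or at least that the $\Omega^3_1$-part of the residual vanishes. Following Proposition \ref{prop:Slice:Thm}, we apply $\pi_1 d$ and use closedness of $\xi$ and of $d(\gamma^\sharp\lrcorner\varphi)$, together with Lemma \ref{lem:identities:0:1:forms:torsion:free}(i). This shows that $f$ satisfies $\triangle f=0$ up to error terms of order $O(r^{-2})\cdot f$, which arise because $\varphi'_\BC$ is closed but not torsion-free (Remark \ref{rmk:G2:infinity:pertubed} gives torsion $O(r^{-3})$).

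\textbf{Step 3: remove $f$ (main obstacle).} On an exterior domain, a harmonic function need not vanish just because it decays. A decaying harmonic function with prescribed boundary data exists, so the argument of Lemma \ref{lem:Vanishing:harmonic} does not transfer directly. This is where boundary conditions enter.

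To handle it, we choose $G$ through a boundary value problem in the spirit of Schwarz \cite{Hodge:BVP}. Concretely, we solve for $\gamma$ via the Poisson problem $(dd^\ast+\tfrac{2}{3}d^\ast d)\gamma=\ldots$ from Lemma \ref{lem:identities:0:1:forms:torsion:free}(v), imposing Dirichlet-type conditions on $\{r=R_0\}$. The idea is to replace the unknown $f$ by an equation forcing the $\Omega^3_7$-component to match, and then to absorb the $\Omega^3_1$-component using the function $f$ itself, solved with Dirichlet data $f|_{r=R_0}=0$. The resulting harmonic $f$ has vanishing boundary values and decays, so $f=0$ by the maximum principle. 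The $O(r^{-2})$ perturbation is then absorbed for $R_0$ large, as in Proposition \ref{prop:Estimate:Oscillatory}.

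I expect that getting a well-posed elliptic boundary problem for the coupled system $(f,\gamma)$, with $R_0$-uniform estimates at rate $\nu+1$, will be the hardest part. If that fails, a fallback is to solve first for $\gamma$ with the $7$-part, using invertibility of $dd^\ast+\tfrac23 d^\ast d$ at rates in $(-4,0)$ (by the last statement of Proposition \ref{prop:Harmonic:1Forms:Cone}) with absolute/relative boundary conditions. The $1$-part would then be handled by the scalar Dirichlet problem.
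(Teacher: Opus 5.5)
Step 3 has a genuine gap, and it is where the whole content of the lemma lies. You correctly see that you need a solution whose scalar component $f$ vanishes on $\{r=R_0\}$, so that harmonicity and decay force $f=0$. But you never produce such a solution, and the recipe you sketch cannot work.

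The unknown $f$ enters only through $\ast d(f\varphi)=\ast(df\wedge\varphi)$, or through $d^{\ast'}(f\psi')=\pm\ast'(df\wedge\varphi'_\BC)$ using $d\varphi'_\BC=0$. Either way this is purely of type $7$, so $f$ cannot ``absorb the $\Omega^3_1$-component''. The type-$1$ equation is $d^\ast\gamma=\ldots$, which is a condition on $\gamma$. Solving for $\gamma$ from the type-$7$ part via $dd^\ast+\tfrac23 d^\ast d$ and then handling ``the $1$-part'' by a scalar Dirichlet problem for $f$ therefore decouples the Dirac system incorrectly, and your fallback has the same defect. Step 1 does not rescue this either. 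The right inverse from Lemma \ref{Step1} (APS condition on $\ker\Pi_0$, spectral condition of Remark \ref{rmk:Boundary:Conditions:Cone} on invariant modes) gives no control of $f|_{r=R_0}$. So a decaying harmonic $f$ with nonzero boundary values can survive your Step 2.

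The missing idea is to impose the boundary conditions on the second-order operator $DD^\ast$ and then apply $D^\ast$. Take $D(f,X^\flat)=\pi_{1\oplus7}(d^\ast(f\psi)+d(X\lrcorner\varphi))$, with projection taken with respect to the torsion-free $\varphi$. Up to constants, $D^\ast(u\varphi+\gamma^\sharp\lrcorner\varphi)=(d^\ast\gamma,\tu{curl}\,\gamma+du)$ and $DD^\ast=(\triangle u,\triangle\gamma)$. One solves this system with Schwarz's Dirichlet conditions $u|_{r=R_0}=0$ and $\gamma|_{r=R_0}=0=d^\ast\gamma|_{r=R_0}$.

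This boundary value problem is invertible at rates in $(-5,-1)$ for the following reasons:
\begin{itemize}
\item decaying Dirichlet harmonic functions vanish by the maximum principle;
\item decaying Dirichlet harmonic $1$-forms vanish by a Bochner argument, using Ricci-flatness and positive mean curvature of $\{r=R_0\}$;
\item the indicial roots show that kernel and cokernel do not change across this range.
\end{itemize}
The bound uniform in $R_0$ comes from rescaling so that the circle fibres shrink. The fibrewise Poincar\'e inequality handles the higher modes, and the conical model handles the invariant ones.

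One then perturbs to $D'$, with the projection and the term $d^{\ast'}(f\psi')$ taken with respect to $\varphi'_\BC$, and defines $Y(\xi)^\flat$ as the $\Omega^1$-component of $D^\ast(D'D^\ast)^{-1}\pi'_{1\oplus7}\xi$. The scalar component is then $f=d^\ast\gamma$, which vanishes on the boundary by construction. Since $\xi$ and $\varphi'_\BC$ are both closed, one gets $\triangle'f=0$ exactly, with no error terms. Hence $f=0$ by the maximum principle.
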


\begin{proof}
 Consider first the operator %
 $D \co \Omega^0 \oplus \Omega^1 \ra \Omega^3_{1\oplus 7}, \; (f, X^\sharp) \mapsto \pi_{1\oplus 7}(d^\ast(f\psi) + d(X \lrcorner \varphi))$ (projecting to the type 1+7 part with respect to the torsion-free \gtstr{} $\varphi$).

Since $D$ can be identified with the Dirac operator (using Lemma \ref{lem:identities:0:1:forms:torsion:free} which applies because $\varphi$ is torsion-free), up to constant multiplicative factors
\begin{align*}
D^\ast(u \varphi + \gamma^\sharp\lrcorner\varphi) &= (d^\ast\gamma, (\tu{curl}\,\gamma + du)^\flat) \\
DD^\ast(u \varphi + \gamma^\sharp\lrcorner\varphi) &= (\triangle u, \triangle \gamma)
\end{align*}
We apply the Hodge theory for manifolds with boundary described in \cite{Hodge:BVP} to the boundary value problem
\[
(\triangle u, \triangle \gamma) = (\xi_0, \xi_1), \qquad u|_{r=R_0}=0, \qquad \gamma|_{r=R_0}=0=d^\ast\gamma|_{r=R_0},
\]
to show that $DD^\ast$ is an isomorphism on the domain defined by these boundary conditions.

Schwarz~\cite[Theorem 2.5.4]{Hodge:BVP} establishes the main existence and uniqueness result in weighted $L^2$ spaces for exterior domains in $\R^n$. Given the materials of Part \ref{Part1}, his argument immediately generalises to the exterior domain $\{ r\geq R_0\}$ of $\BC (\Sigma)$. For $\nu<-3$ an integration by parts shows that the kernel of the boundary value problem
\begin{equation}\label{eq:BVP:Exterior:Domain:kform}
\triangle \zeta = \zeta', \qquad \zeta|_{r=R_0}=0=d^\ast\zeta|_{r=R_0}
\end{equation}
for a $k$-form $\zeta$ in $C^{l+1,\alpha}_{\nu+1}$ is identified with the space $\mathcal{H}^k_{\nu+1,D}$ of closed and coclosed $k$-forms of the given rate satisfying the Dirichlet boundary condition $\zeta|_{r=R_0}=0$. By duality, for $\nu>-3$ the cokernel of the boundary value problem \eqref{eq:BVP:Exterior:Domain:kform} is identified with $\mathcal{H}^k_{-5-\nu,D}$. The indicial roots computations of Propositions \ref{prop:Harmonic:1Forms:Cone} and \ref{prop:Harmonic:1Forms:Cone} imply that for $k=0,1$ both the kernel and cokernel of the problem \eqref{eq:BVP:Exterior:Domain:kform} remain constant for $\nu\in (-5,-1)$.

When $k=0$ it is immediate that $\mathcal{H}^0_{\nu+1,D}$ for $\nu<-3$ and $\mathcal{H}^0_{-5-\nu,D}$ for $\nu>-3$ both vanish. A Bochner argument using the Dirichlet boundary conditions at $\{r=R_0\}$ and the decay at infinity shows that the same vanishing results hold for $\mathcal{H}^1_{\nu+1,D}$ for $\nu<-3$ and $\mathcal{H}^1_{-5-\nu,D}$ for $\nu>-3$. Here, following \cite[Theorem 2.6.4]{Hodge:BVP}, the Bochner technique yields a vanishing statement since the Ricci curvature of $g_\varphi$ vanishes and by taking $R_0$ larger if necessary we can assume that the hypersurface $\{ r=R_0\}$ has positive mean curvature. Indeed, the mean curvature $H_{g_{\varphi'_\BC}}$ of $\{ r=R_0\}$ with respect to $g_{\varphi'_\BC}$ is strictly positive and of order $O(R_0^{-1})$, while the difference $H_{g_\varphi}-H_{g_{\varphi'_\BC}}$ is of order $O(R_0^{\nu-1})$ since $\varphi-\varphi'_\tu{BC}\in C^\infty_\nu$.

Thus $DD^\ast$ is an isomorphism.
To see that its inverse is uniformly bounded as $R_0$ increases, note that by rescaling we can think of the operators as being defined on a fixed region $\{r \geq 1\}$, but with the circle fibration shrinking having circumference $\ell = O(R_0^{-1})$. On the higher Fourier modes, the fibrewise Poincar\'e inequality from Lemma \ref{lem:Control:Oscillatory}
gives a lower bound for $DD^\ast$ proportional to $\ell^{-1}$, while for sections in $\ker \Pi_\perp$ the operator becomes arbitrarily close to the model operator on the cone $\tu{C}(\Sigma)$.

Now consider $D' \co \Omega^0 \oplus \Omega^1 \ra \Omega^3_{1\oplus 7}, \; (f, X^\sharp) \mapsto \pi'_{1\oplus 7}(d^{\ast'}(f\psi') + d(X \lrcorner \varphi)$. Here $\ast'$ denotes the Hodge-star operator with respect to $g_{\varphi'_\tu{BC}}$ and $\psi'=\ast_{\varphi'_\tu{BC}}\varphi'_\tu{BC}$. By increasing $R_0$, we can make the operator norm of the difference between $D'$ and $D$ arbitrarily small, ensuring that $D'D^\ast$ too is an isomorphism (with uniformly bounded inverse)

Given $\xi \in \Omega^3_\nu(r \geq R_0)$, we can thus let $Y(\xi)^\flat$ be the $\Omega^1$-component of $D^\ast(D'D^\ast)^{-1}(\pi'_{1\oplus 7}\xi)$. Then
\[ \xi - \mathcal{L}_{Y(\xi)}\varphi - d^{\ast'}(f\psi') \]
has type 27 with respect to $\varphi'_\BC$ for some decaying function $f$ on ${r \geq R_0}$ that vanishes on the boundary $r = R_0$ (specifically $f = d^\ast \gamma$, where $\gamma^\sharp \lrcorner \varphi$ is the type 7 component of $(D'D^\ast)^{-1}(\xi)$). Because $d\varphi'_\BC = 0$, it follows that $\varphi'_\BC \wedge d(\xi - \mathcal{L}_{Y(\xi)}\varphi - d^{\ast'}(f\psi')) = 0$. Therefore, if $d\xi = 0$ too then, using again that $\varphi'_\tu{BC}$ is closed, $\Delta'f = 0$, so $f = 0$ by the maximum principle, and $\xi - \mathcal{L}_{Y(\xi)}\varphi$ has type 27 as required.
\end{proof}

\begin{proof}[Proof of Proposition \ref{prop:Slice:End}]
For $f = \exp X$, we are trying to solve
\begin{align*}
   \pi'_{1\oplus7}(f^*\varphi - \varphi'_\BC) &= 0 \\
   \Leftrightarrow \pi_{1\oplus7}\mathcal{L}_X \varphi &= -\pi'_{1\oplus7}f^*\varphi + \pi_{1\oplus7}\mathcal{L}_X \varphi + \varphi'_\BC\\
   \Leftrightarrow X &= Y\big( \varphi'_\BC - \pi'_{1\oplus7} \varphi - \pi'_{1\oplus7}((f^* - \mathrm{Id}^* - \mathcal{L}_X)\varphi)\big)
\end{align*}
The last term is quadratic in $X$, with bounds independent of $R_0$, while the norm of the constant (in $X$) term $\varphi'_\BC - \pi'_{1\oplus7}\varphi$ can be made arbitrarily small by increasing $R_0$. Thus for sufficiently large $R_0$ we can solve by the contraction mapping theorem.
\end{proof}
The final main result of this section is the following improved asymptotic control of torsion-free ALC $\gtwo$--structures: they are all exponentially close to an $S^1$--invariant $\gtwo$--structure. 

\begin{theorem}\label{thm:Exp:Decay:ALC:G2}
Let $\varphi$ be a torsion-free $\gtwo$--structure defined on the exterior domain $\{r\geq R_0\}$ in $\BC (\Sigma)$ such that
\[
\varphi = \varphi'_\BC + \rho
\]
for a 3-form $\rho$ in $C^\infty_\nu$ for some $\nu<-1$ and such that $\rho\wedge\varphi'_\BC =0=\rho\wedge\psi'_\BC$. Write $\rho = \rho_0 + \rho_\perp$, where $\rho_0$ is the $S^1$--invariant component of $\rho$. Then taking $R_0$ larger if necessary there exist constants $C_k,c>0$ such that
\[
\sup_{r=R}|\nabla^k \rho_\perp| \leq C_k\, e^{-cR}.
\]
for all $R>R_0$ and $k\geq 0$.
\proof
Since $\rho$ lies in $\Omega^3_{27}$ with respect to $\varphi'_\BC$, we write the torsion-free condition as the elliptic system
\[
d\rho=0, \qquad \ast d\psi'_\BC + d^{\ast}\rho + d^\ast Q(\rho)=0,
\]
where the Hodge-$\ast$ operator and the nonlinear term $Q$ are both computed using the closed model $\gtwo$--structure $\varphi'_\BC$. Since the latter is $S^1$--invariant, note that $d\psi'_\BC$ and $Q(\rho_0)$ are also $S^1$--invariant, and $d^\ast$ preserves the decomposition into $S^1$--invariant and oscillatory components. Hence $\rho_0$ and $\rho_\perp$ satisfy the system
\[
d\rho_0=0=\ast d\psi'_\BC + d^\ast\rho_0 + d^\ast Q(\rho_0+\rho_\perp)_0, \qquad d\rho_\perp = 0 = d^\ast\rho_\perp + d^\ast\left( Q(\rho_\perp + \rho_0)- Q(\rho_0)\right)_\perp.
\]

The key consequence of the equation that $\rho_\perp$ satisfies is that, similarly to Lemma \ref{lem:Hitchin:dual} (which we can apply even if $\varphi'_\BC$ is not torsion-free since the torsion and all its derivatives decay), for every $\delta>0$ we can find $R_0$ sufficiently large so that we have a pointwise estimate
\[
|d^\ast\left( Q(\rho)-Q(\rho_0)\right)_\perp| \leq C \left( | \nabla \rho| \, | \rho_\perp| + |\rho|\, |\nabla\rho_\perp| \right) \leq \delta \left( r^{-1}| \rho_\perp| + |\nabla\rho_\perp|\right) 
\]
for $r>R_0$. Here we used the fact that $| \rho| + r|\nabla \rho| = O(r^{\nu})$ is arbitrarily small for $r$ large enough.

An application of the estimates of Proposition \ref{prop:Estimate:Oscillatory} then implies
\[
\| \rho_\perp\|_{L^2_{1,\mu}} \leq C\delta\, \|\rho_\perp\|_{L^2_{1,\mu}} + C\|\rho_\perp\|_{L^2_\nu}
\]
so that $\rho_\perp\in L^2_{1,\mu}$ for any $\mu\in \R$. The same estimate in H\"older spaces already implies that $\rho_\perp$ decays faster than any polynomial.

In order to show exponential decay, fix a smooth cutoff function $\chi$ with $\chi\equiv 1$ for $r$ large. We calculate
\[
|(d+ d^\ast)(\chi\rho_\perp)|^2 \leq C_1\, |\nabla\chi|^2\, |\rho_\perp|^2 + C_2\, \delta^2 \left( |\chi\rho_\perp|^2 + |\nabla (\chi\rho_\perp)|^2\right).
\]
On the other hand, an integration by parts (allowed because of the fast decay of $\rho_\perp$ at infinity) using the Weitzenb\"ock formula, the decay of the curvature term and Lemma \ref{lem:Control:Oscillatory} implies
\[
\|(d+ d^\ast)(\chi\rho_\perp)\|_{L^2} \geq C \|\chi\rho_\perp\|_{L^2}.
\]
For $\delta$ small enough, by choosing a cutoff function with uniformly bounded gradient such that $\chi\equiv 0$ for $r\leq R$ and $\chi\equiv 1$ for $r\geq R+1$, we deduce that
\[
\| \rho_\perp\|_{L^2 (r>R+1)} \leq C \|\rho_\perp\|_{L^2(R<r<R+1)}.
\]
The classical hole-filling trick than shows that $F(R)=\|\rho_\perp\|^2_{L^2 (r>R)}$ satisfies the inequality
\[
F(R+1)\leq e^{-c}F(R)
\]
for some $c>0$. Hence we have exponential decay of the $L^2$--norm of $\rho_\perp$. The elliptic regularity of Theorem \ref{thm:Weighted:Regularity} finally implies pointwise exponential decay of $\rho_\perp$ and all its derivatives.
\endproof
\end{theorem}

\begin{remark}\label{rmk:Exp:Decay:ALC:G2}
Following the notation in the statement of the theorem, set $\varphi_0=\varphi'_\BC + \rho_0$. Since
\[
\ast d\psi_0 = \ast d\psi'_\BC +d^\ast\rho_0 + d^\ast Q(\rho_0) = d^\ast \left( Q(\rho_0)-Q(\rho)_0\right),
\]
$\varphi_0$ is a closed $S^1$-invariant $\gtwo$--structure with exponentially decaying torsion.
\end{remark}

\section{Extension of symmetries from the asymptotic model}\label{sec:Symmetries}

A natural question when studying complete noncompact Ricci-flat manifolds (or manifolds with other curvature constraints) with tame asymptotic geometry, is whether symmetries of the asymptotic model at infinity extend to symmetries of the manifold itself. In this section we address this question for ALC $\gtwo$--manifolds.

For cyclic (but not dihedral) ALC $\gtwo$--manifolds there is a distinguished subgroup of the symmetry group of $\BC(\Sigma)$: 
the circle action arising from the principal bundle structure of $\BC(\Sigma)$. We prove that this circle subgroup of the end symmetries always extends as a global circle symmetry. 

\begin{theorem}\label{thm:Circle:Symmetry:Cyclic:ALC}
Let $(M,\varphi)$ be a cyclic ALC $\gtwo$--manifold asymptotic to $\BC(\Sigma)$ with rate $\nu\in (-3,-1)$. Then the circle symmetry of $\BC (\Sigma)$ extends to a circle action on $M$ by ALC diffeomorphisms of rate $\nu$ that preserve $\varphi$.
\end{theorem}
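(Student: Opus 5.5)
The plan follows the two-step strategy sketched in the introduction. First we extend each individual element of the circle action. Then we show that the resulting automorphisms assemble into a genuine circle action rather than an action of a covering group.

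\textbf{Step 1: extending a single rotation.} For $t\in\R/2\pi\Z$, let $R_t$ be the principal circle action on $\BC(\Sigma)$, which preserves $\varphi'_\BC$. Since $M$ is of cyclic type, $R_t$ is defined on the end. Fix a smooth path of diffeomorphisms $F_t$ of $M$ that agree with $R_t$ on $\{r\geq 2R\}$, for instance by cutting off the generating vector field $\xi$. Then $F_t^\ast\varphi$ is torsion-free, ALC to the same model, and satisfies $F_t^\ast\varphi-\varphi'_\BC=R_t^\ast\rho$ on the end.

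\begin{itemize}
\item To compare $F_t^\ast\varphi$ with $\varphi$ at a good rate, first apply Proposition~\ref{prop:Slice:End}, which gives an end gauge in which $\rho$ has type 27.
\item Then apply Theorem~\ref{thm:Exp:Decay:ALC:G2}: writing $\rho=\rho_0+\rho_\perp$ with $\rho_\perp$ exponentially decaying and $\rho_0$ invariant, we get $R_t^\ast\rho-\rho=R_t^\ast\rho_\perp-\rho_\perp=O(e^{-cr})$.
\item The gauge diffeomorphism $f$ of Proposition~\ref{prop:Slice:End} is of class $C^\infty_{\nu+1}$ and does not commute with $R_t$. To handle this, conjugate it away: replace $F_t$ by $f\circ R_t\circ f^{-1}$ on the end, which is again asymptotic to $R_t$ at rate $\nu+1$.
\end{itemize}

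With this choice, $F_t^\ast\varphi-\varphi$ is exponentially decaying on the end, so $F_t^\ast\varphi\in\mathcal{X}_\mu$ for every $\mu$; in particular it lies in $\mathcal{X}_\mu$ for some $\mu$ near $-3$ below. Since $F_0=\mathrm{Id}$, the path $t\mapsto[F_t^\ast\varphi]$ is a continuous path in $\mathcal{M}_\mu$ starting at $[\varphi]$.

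We now show this path is constant. By Theorem~\ref{thm:Moduli:Spaces}, $\mathcal{M}_\mu$ is a manifold with tangent space $\mathcal{H}^3_\mu(M)$, and the derivative of the path is the harmonic representative of $\mathcal{L}_{\tilde\xi}\varphi$, up to gauge. The key observation is that this variation is \emph{exact} and fast-decaying, namely $d(\tilde\xi\lrcorner\varphi)$ plus exponentially small terms, so it has a compactly supported-type primitive.

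\begin{itemize}
\item By Lemma~\ref{lem:Injectivity:Compactly:Supported}(i)/(iii), taking $\mu$ slightly below $-3$ (allowed since $\delta>0$), the map $\overline\Phi$ is injective on $\mathcal{H}^3_\mu$.
\item Because the variation is exact with a decaying primitive, its class under $\overline\Phi$ vanishes, so the tangent vector is zero.
\end{itemize}

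Applying this at every point along the path (the same argument works with base point $F_s^\ast\varphi$) shows the path is constant. Hence there exist $G_t\in\mathcal{D}_{\mu+1}$ with $G_t^\ast F_t^\ast\varphi=\varphi$, so $\Theta_t=F_t\circ G_t$ is an automorphism of $\varphi$ asymptotic to $R_t$. Finally, Theorem~\ref{thm:Exp:Decay:ALC:G2} together with the gauge-fixing shows $\Theta_t$ is ALC of rate $\nu$.

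\textbf{Step 2: assembling a circle action.} The automorphism group of the complete Ricci-flat metric $g_\varphi$ is a Lie group. The automorphisms asymptotic to some end symmetry form a closed subgroup $A$, and restriction to infinity gives a homomorphism $A\to S^1$ that is onto by Step 1. Its kernel consists of automorphisms asymptotic to the identity, and we must show this kernel is trivial.

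\begin{itemize}
\item If $\Sigma$ is not exceptional, use the Biquard--Hein function $u$ with $\triangle u=1$, which exists by Remark~\ref{rmk:Vanishing:Harmonic} and is unique up to a constant. Any automorphism in the kernel preserves $u$ (the difference $u\circ\Theta-u$ is harmonic and bounded, hence constant, hence zero) and therefore preserves the foliation by its gradient flow lines. Since the automorphism converges to the identity at infinity along each flow line, it is the identity.
\item In the flat case, the result follows instead from the rigidity theorem (Theorem~\ref{mthm:PositiveMass}).
\end{itemize}

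Once the kernel is trivial, the identity component of $A$ is a circle, which gives the required circle action.

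\textbf{Main obstacle.} I expect the hardest step to be the first one. It requires arranging that $F_t^\ast\varphi$ and $\varphi$ agree to a rate beyond $-3$, which needs care with commuting the gauge $f$ past $R_t$. It also requires showing that the resulting tangent vectors are cohomologically trivial, which depends on having the injectivity of $\overline\Phi$ available at that rate.
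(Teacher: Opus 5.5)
Your proposal is correct and is essentially the paper's proof. You extend each rotation using the end gauge of Proposition~\ref{prop:Slice:End}, the exponential decay of Theorem~\ref{thm:Exp:Decay:ALC:G2}, and the ALC moduli space at a rate near $-3$ where $\mathcal{H}^3_\mu(M)$ injects into cohomology (this is Lemma~\ref{lem:sextension_merge} and Proposition~\ref{prop:S1_surj}, where the paper takes $\mu=-3+\epsilon$ and uses constancy of $([\varphi_t],[\ast\varphi_t])$ instead of your zero-derivative argument); you then exclude a covering action by injectivity of restriction to infinity, via the gradient foliation of $u$ or flatness in the exceptional case (Proposition~\ref{prop:Decaying:symmetries}).

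Two justifications need tightening. First, the tangent vector does not vanish because the variation has a decaying primitive (the natural one, $\tilde\xi\lrcorner\varphi\approx\omega_{\tu{C}}$, does not decay). It vanishes because its image in $H^3(M)$ is the class of an exact form and $H^3(X)\to H^3(M)$ is injective (Remark~\ref{rmk:inj_across_L2}), so $\overline\Phi$ kills it. Second, $u\circ\Theta-u$ is only $O(r^{\nu+2})$, not bounded when $\nu>-2$; argue via sublinear growth (no indicial roots in $(0,1]$) that it is a constant $c$, and then $c=0$ because the isometry $\Theta$ maps the compact set $\{u\le a\}$ onto $\{u\le a+c\}$.
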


In the proof of this theorem we will distinguish two cases according to whether or not the Sasaki--Einstein structure on $\Sigma$ is exceptional (in the sense of Section \ref{sec:Indicial:Roots:CY:cone}). In the exceptional case we prove that in fact $M$ must be a finite quotient of the flat ALC~\gtmfd~$\Sph^1 \times \C^3$. This turns out to be a consequence of a more general result of independent interest, Theorem \ref{thm:ALC:G2:Positive:Mass}: 
a cyclic ALC $\gtwo$--manifold asymptotic to a flat circle bundle $\BC(\Sigma)\ra \tu{C}(\Sigma)$ cannot have full holonomy $\gtwo$.

These two theorems are motivated by analogous results in the 4-dimensional ALF hyperk\"ahler setting, both due to Minerbe. The fact that any cyclic ALF hyperk\"ahler metric admits a triholomorphic circle action is the key  step in Minerbe's classification of such metrics in \cite{Minerbe:Ak}.  The main result of \cite{Minerbe:Mass} implies that there are no non-flat ALF hyperk\"ahler manifolds asymptotic to $\R^3\times \Sph^1$.

The problem of extending more general symmetries of $\BC (\Sigma)$ has a different character from the circle symmetry extension problem. Even at the linearised level, there is no reason to expect a  symmetry of $\Sigma$ that lifts to $\BC (\Sigma)$ to preserve all homogeneous $\overline{\nabla}$--covariantly closed and coclosed 3-forms on $\BC (\Sigma)$ of rate $\nu$, unless such forms are all accounted for by topology. In this direction, 
we can prove a symmetry inheritance result
provided we make an injectivity assumption 
about the relevant local Torelli-type data. 
The following result is motivated by an analogous statement for AC $\gtwo$--manifolds due to Karigiannis--Lotay \cite[Proposition 6.8]{Karigiannis:Lotay}.

\begin{theorem}\label{thm:Symmetries}
Let $(M,\varphi)$ be an ALC $\gtwo$--manifold (of cyclic or dihedral type) asymptotic to $\BC(\Sigma)$ with rate $\nu\in (-3,-1)$. Assume that
\begin{equation}\label{eq:Symmetries:Assumption}
\ker\left( \mathcal{H}^3_{\nu}(M) \longrightarrow H^3(M)\oplus H^4(M)\right)=\{ 0\}
\end{equation}
for the natural map $\rho\mapsto ([\rho],[\ast\rho])$, $\rho\in \mathcal{H}^3_\nu(M)$. Then every continuous group of automorphisms of~$\BC(\Sigma)$ (commuting with the standard involution in the dihedral case) extends to a group of ALC automorphisms of $M$ of rate $\nu$.
\end{theorem}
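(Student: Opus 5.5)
The plan follows the two-step strategy sketched in the introduction: first extend each end symmetry individually using the moduli space of Theorem \ref{thm:Moduli:Spaces}, then show that the extensions assemble into a group action. Let $G$ be a connected Lie group acting on $\BC(\Sigma)$ by automorphisms of the model, preserving $\varphi'_\BC$ and commuting with $\iota$ in the dihedral case.

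Fix $g\in G$. Since $G$ is connected, $g$ is isotopic to the identity through model automorphisms. Choose a cutoff of such an isotopy to build a diffeomorphism $F_g$ of $M$ which equals $f\circ g\circ f^{-1}$ on $M\setminus K$; in the dihedral case work on the double cover and use that $g$ commutes with $\iota$. Because $g$ preserves $\varphi'_\BC$ and $g_\BC$, the form $F_g^\ast\varphi$ is a torsion-free \gtstr{} that is ALC of rate $\nu$ to the same model. It therefore defines a point of $\mathcal{X}_\nu$, or of $\mathcal{X}_\mu$ with $\mu$ slightly larger than $\nu$ if $\nu$ is an indicial root.

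Now let $g_t$ be a path from the identity to $g$ and consider the resulting path $[F_{g_t}^\ast\varphi]$ in $\mathcal{M}_\nu$. I want to show this path is constant. Its velocity at time $t$ is represented by $\mathcal{L}_{X_t}(F_{g_t}^\ast\varphi)$, where $X_t$ is a vector field that agrees on the end with the pushforward of a model Killing field. This is exact: it equals $d(X_t\lrcorner\,\cdot)$ of the current \gtstr. After projection to the slice $\mathcal{K}$ of Proposition \ref{prop:Slice:Thm}, its class in $\mathcal{H}^3_\nu$ has trivial de Rham class. The dual class $[\ast\rho]$ also vanishes, since $\mathcal{L}_X\psi=d(X\lrcorner\psi)$. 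The Lie derivative is not in $C^\infty_\nu$, only exact with a non-decaying primitive, so I need to check that the harmonic representative $\rho$ of the tangent vector still has $([\rho],[\ast\rho])=0$. I would do this by computing the tangent vector as $\mathcal{L}_X\varphi-\mathcal{L}_{\tilde X}\varphi$ for some $\tilde X\in C^\infty_{\nu+1}$ given by gauge fixing. With this, assumption \eqref{eq:Symmetries:Assumption} forces the tangent vector to vanish. Hence $F_g^\ast\varphi=G^\ast\varphi$ for some $G\in\mathcal{D}_{\nu+1}$ in the identity component, and $F_g\circ G^{-1}$ is an automorphism of $\varphi$ asymptotic to $g$.

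I expect this cohomological check to be the main obstacle. It requires careful justification of integration by parts at rate $\nu\in(-3,-1)$, using the decay of $\mathcal{L}_X\varphi$ relative to the model together with the expansions of Lemma \ref{Step3}.

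It remains to pass from individual extensions to a group action. Let $A$ be the group of automorphisms of $(M,\varphi)$ asymptotic to elements of $G$. The restriction map $A\to G$ is surjective by the previous step. I would show that its kernel, consisting of automorphisms asymptotic to the identity, is trivial; this is Proposition \ref{prop:Decaying:symmetries}, which I take as available. The argument uses the canonical function $u$ with $\triangle u=1$, unique up to constants by Remark \ref{rmk:Vanishing:Harmonic}, and its gradient flow. Any decaying automorphism preserves $u$, hence commutes with the flow. It therefore acts as the identity on the flow lines reaching infinity, so it is the identity by analyticity. The flat-cone case is handled separately via Theorem \ref{thm:ALC:G2:Positive:Mass}. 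Injectivity then gives a group isomorphism $A\cong G$, and $A$ acts smoothly because it is a closed subgroup of the isometry group.
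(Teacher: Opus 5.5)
Your proposal is correct and follows essentially the paper's route. Lemma \ref{lem:sextension_merge} cuts off the isotopy and observes that $\varphi_t=G_t^\ast\varphi$ is a path in $\mathcal{M}_\nu$ with constant periods $([\varphi_t],[\ast\varphi_t])$, along which the period map is an immersion by \eqref{eq:Symmetries:Assumption}; Proposition \ref{prop:Decaying:symmetries} then supplies injectivity of the restriction map, exactly as in your final step. The obstacle you anticipate is not real: the target of \eqref{eq:Symmetries:Assumption} is ordinary de Rham cohomology, so once you write the slice component as $\rho=\mathcal{L}_Y\varphi_t$ with $Y=X_t-\tilde X$, you get $\rho=d(Y\lrcorner\varphi_t)$ and $-\ast\rho=\mathcal{L}_Y\psi_t=d(Y\lrcorner\psi_t)$, both exact, with no decay of the primitives and no integration by parts needed.
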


\begin{remark}
\label{rmk:regular:SE} Thanks to Propositions \ref{prop:Closed:Even:Forms} and \ref{prop:Closed:Odd:Forms}, the assumption \eqref{eq:Symmetries:Assumption} is always satisfied when $\Sigma$ is a regular Sasaki--Einstein manifold, see Remark \ref{rmk:inj_across_L2} stated in the next section as an application of the Hodge theory of Section \ref{sec:Hodge:ALC} to ALC $\gtwo$--manifolds.
\end{remark}

The proofs of both Theorems~\ref{thm:Circle:Symmetry:Cyclic:ALC} and ~\ref{thm:Symmetries} have two steps. The first step uses the ALC deformation theory developed in Section~\ref{sec:def_theory} to prove the results about
extending continuous symmetries of $\BC(\Sigma)$ to global symmetries
(ALC automorphisms) of $M$; 
the second step shows that such a global symmetry extension is unique. 
For the extension of the principal circle action in the cyclic setting, Theorem~\ref{thm:Exp:Decay:ALC:G2} plays a crucial role in the proof of the first step. The assumption~\eqref{eq:Symmetries:Assumption} is used only in the first step
for other types of end symmetry.
The second step proceeds differently according to whether or not 
$\Sigma$ is exceptional. In the exceptional case we argue directly based on the fact that $M$ must be flat in this case. In the non-exceptional cases we prove the existence of a canonical foliation of the end of $M$ by gradient flow lines of a distinguished (unique up to constants) function asymptotic to~$\frac{1}{2}r^2$. 
The key point is that the canonical nature of this foliation implies that any automorphism of~$M$ must therefore act on these flow lines.

The rest of this section is structured as follows. In Section~\ref{ss:symmetries} we discuss symmetries of the model end $\BC(\Sigma)$ in both the cyclic and dihedral cases and explain what we mean by an ALC automorphism of rate $\nu$. Section~\ref{ss:sym:extension} proves the results about the existence of ALC automorphisms extending end symmetries. Section~\ref{ss:canonical:foliation} proves the existence of a canonical foliation on the end of any non-exceptional ALC~\gtmfd~and uses this to prove the uniqueness of these symmetry extensions. Section~\ref{ss:uniqueness:symmetry}  concludes this section with a quick application of these symmetry extension results to prove strong uniqueness results for ALC \gtwo--manifolds with certain asymptotic models.

\subsection{Symmetries}
\label{ss:symmetries}

We begin by explaining what we mean by automorphisms of $\BC(\Sigma)$ and ALC automorphisms of $M$ of rate~$\nu$ for an ALC \gtwo-structure. 
We do this for the cyclic case first and then explain the
modifications needed in the dihedral setting. 

In the cyclic case the group of automorphisms of $\BC(\SigmP)$, which we will denote by $\tu{Aut}\,\BC(\SigmP)$, is the group of bundle automorphisms of $\pi\co N\ra \SigmP$ that preserve the 1-form $\theta$ and the Sasaki--Einstein structure $(\eta,\omega_1,\omega_2,\omega_3)$ on $\SigmP$. In particular, the radial extension of any such diffeomorphism to $\BC(\SigmP)$ preserves the model closed $\gtwo$--structure $\varphi'_\BC$.

As a closed subgroup of the isometry group of a closed manifold, $\tu{Aut}\,\BC(\SigmP)$ is a compact Lie group. An element in the Lie algebra $\Lie{aut}\,\BC(\SigmP)$ must have the form
\[
\mu\, \xi + X_0,
\]
where $c\in \R$,  $X_0$ is the horizontal lift of a vector field on $\SigmP$ preserving both the whole Sasaki--Einstein $\sunitary{2}$--structure and the 2-form $d\theta$, and $\mu$ is a function on $\SigmP$ such that $d\mu = -X_0 \lrcorner d\theta$ (such a function $\mu$ always exists because, since $\Sigma$ has finite fundamental group, the condition that $X_0$ preserve $d \theta$ implies that $X_0 \lrcorner d\theta$ is an exact $1$-form). When $X_0$ is identically zero, we get a multiple of the vertical vector field $\xi$, the generator of the principal circle action on $\BC(\Sigma)$. The latter gives a distinguished circle subgroup of $\tu{Aut}\,\BC(\Sigma)$ in the cyclic case. 

In the dihedral case,  $\tu{Aut}\,\BC(\Sigma)$ means the  subgroup of  $\tu{Aut}\,\BC(\SigmP)$  that commutes with the involution $\iota$ of the oriented double cover $\BC(\SigmP) \to \BC(\Sigma)$. In particular, in the dihedral case the distinguished circle subgroup of $\tu{Aut}\,\BC(\Sigma)$ present in the cyclic case no longer exists.

We are now ready to define the group of ALC automorphisms of an ALC $\gtwo$--manifold.

\begin{definition}\label{def:ALC:automorphism}
  Let $(M,\varphi)$ be an ALC $\gtwo$--manifold asymptotic to $\BC (\Sigma)$ with rate $\nu<-1$ and let $f\co \BC (\Sigma)\cap \{ r>R\}\ra M\setminus K$ be the ALC chart at infinity. A diffeomorphism $F\co M\ra M$ is said to be an ALC automorphism of $(M,\varphi)$ with rate $\nu$ if $F^\ast\varphi=\varphi$ and there exist an element $F_\BC\in \tu{Aut}\,\BC(\Sigma)$ and a vector field $V\in C^\infty_{\nu+1}$ on $\BC (\Sigma)\cap \{ r>R\}$ such that
  \[
  f^{-1}\circ F \circ f = \exp(V)\circ F_\BC.
  \]
\end{definition}

In other words, the ALC automorphisms of $M$ are those automorphisms that are asymptotic to an element of $\tu{Aut}\,\BC(\Sigma)$ with polynomial rate. We will denote the group of ALC automorphisms of $(M,\varphi)$ of rate $\nu$ by $\tu{Aut}_\nu (M,\varphi)$.
 
\begin{remark}
\label{rmk:aut_flat}
    In the trivial case of the flat ALC~\gtmfd~$\Sph^1 \times \C^3$, translations of the $\C^3$ factor are not ALC automorphisms in the sense above, because the corresponding vector field $V$ does not decay along the end. Thus $\tu{Aut}_\nu(\Sph^1 \times \C^3) = \unitary{1} \times \sunitary{3}$ for every $\nu < -1$.
\end{remark}
\subsection{Global extensions of ALC end symmetries}
\label{ss:sym:extension}
In this subsection we prove the first step of both the symmetry extension 
theorems described in the opening part of this section, \ie we prove results about end symmetries extending to global ALC automorphisms of $(M,\varphi)$.

\begin{lemma}
\label{lem:sextension_merge}
    Let $(M,\varphi)$ be an ALC \gtwo-manifold asymptotic to $\BC(\Sigma)$ with rate $\nu < -1$, and let $\mu \in (-3, \nu]$ such that the natural map $\mathcal{H}^3_\mu (M) \to H^3(M) \oplus H^4(M)$ is injective.
    Let $F_t$ be a path in $\tu{Aut}\,\BC(\Sigma)$ with $F_0 = \tu{Id}$ such that $F^*_t\varphi - \varphi$ has rate $\mu$ for every $t$. Then there is $F \in \tu{Aut}_\mu(M, \varphi)$ with asymptotic limit $F_1$.
\end{lemma}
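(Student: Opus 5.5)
The plan is to extend each end symmetry to a global ALC diffeomorphism of $M$, show that the pulled-back structure lies in the same moduli component as $\varphi$, and then correct it by a decaying diffeomorphism using Theorem \ref{thm:Moduli:Spaces}.

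For each $t$, extend $F_t$ (transported to the end via the chart $f$) to a diffeomorphism $\widetilde F_t$ of $M$. To do this, choose a cutoff and interpolate through the isotopy generated by the time-dependent vector field $\frac{d}{dt}F_t$, cut off to vanish on a compact set. This makes $\widetilde F_t$ a smooth path of diffeomorphisms with $\widetilde F_0=\tu{Id}$ that agree with $F_t$ on $\{r\geq 2R\}$. Then $\varphi_t:=\widetilde F_t^\ast\varphi$ is a path of torsion-free $\gtwo$--structures.

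On the end, $F_t$ preserves $\varphi'_\BC$, so $\varphi_t-\varphi=F_t^\ast(\varphi-\varphi'_\BC)-(\varphi-\varphi'_\BC)$. This difference has rate $\mu$: it is $O(r^\nu)$ because $\nu\le$ the rate of $\varphi$ with respect to the fixed model, and the hypothesis on $F_t^\ast\varphi-\varphi$ gives rate $\mu$ directly. Hence $\varphi_t\in\mathcal X_\mu$ for all $t$, and $t\mapsto[\varphi_t]$ is a continuous path in $\mathcal M_\mu$ starting at $[\varphi]$. If $\mu$ happens to be an indicial root, we perturb $\mu$ upward slightly, as in the remark after Theorem \ref{thm:Moduli:Spaces}; injectivity is preserved for a small perturbation because $\mathcal H^3_\mu$ is locally constant on one side.

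The key step is to show that the path $[\varphi_t]$ is constant. Consider the map $\mathcal M_\mu\to H^3(M)\oplus H^4(M)$, $[\phi]\mapsto([\phi],[\ast_\phi\phi])$. This is well defined because elements of $\mathcal D_{\mu+1}$ lie in the identity component, so they act trivially on cohomology. The cohomology classes are also well defined: $\phi-\varphi$ decays, and $\ast_\phi\phi$ is closed. By Theorem \ref{thm:Moduli:Spaces}, the differential of this map at any point is the map $\mathcal H^3_\mu\to H^3\oplus H^4$, $\rho\mapsto([\rho],[\ast\rho])$. Here the linearisation of $\phi\mapsto\ast\phi$ on type-27 forms is $-\ast\rho$, by Lemma \ref{lem:Linearisation:Hitchin:dual:3form:7d}, and the sign is harmless. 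By hypothesis this differential is injective at $\varphi$; by upper semicontinuity of kernel dimension it remains injective at nearby points, and in fact at all points along the path, since the hypothesis is invariant under $\widetilde F_t$. The map is therefore a local immersion, hence locally injective. On the other hand, $[\varphi_t]=\widetilde F_t^\ast[\varphi]=[\varphi]$ and similarly for $\psi$, because $\widetilde F_t$ is isotopic to the identity. So the path is mapped to a constant. A continuous path along which a locally injective map is constant is itself constant (the set of $t$ with $[\varphi_t]=[\varphi]$ is open and closed), so $[\varphi_1]=[\varphi]$.

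Consequently there is $G\in\mathcal D_{\mu+1}$ with $G^\ast\varphi_1=\varphi$, that is, $(\widetilde F_1\circ G)^\ast\varphi=\varphi$. Set $F=\widetilde F_1\circ G$. On the end $G=\exp(V')$ with $V'\in C^\infty_{\mu+1}$, so $f^{-1}\circ F\circ f=F_1\circ\exp(V')=\exp\bigl((F_1)_\ast V'\bigr)\circ F_1$, and $(F_1)_\ast V'$ still has rate $\mu+1$ because $F_1$ is an isometry of the model preserving $r$. Thus $F\in\tu{Aut}_\mu(M,\varphi)$ with asymptotic limit $F_1$.

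I expect the main obstacle to be the local-injectivity step. It needs care about the identification of the tangent space together with $\mathcal D_{\mu+1}$-invariance, and about why the hypothesis is needed at every point of the path rather than only at $\varphi$; the invariance under $\widetilde F_t$ is the intended fix.
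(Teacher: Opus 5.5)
Your proposal is correct and follows essentially the same route as the paper. You extend the path $F_t$ to diffeomorphisms of $M$ that agree with $F_t$ on the end; the paper uses $(r,n)\mapsto(r,F_{t\rho(r)}(n))$ instead of a cut-off isotopy, but this difference is cosmetic. You then observe that $[\widetilde F_t^\ast\varphi]$ is a path in $\mathcal{M}_\mu$ with constant image in $H^3(M)\oplus H^4(M)$, invoke Theorem~\ref{thm:Moduli:Spaces} with the injectivity hypothesis to conclude the path is constant, and compose $\widetilde F_1$ with the resulting element of $\mathcal{D}_{\mu+1}$, exactly as the paper does.

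Your open--closed argument, which transports the immersion property to every $[\varphi_t]$ via $\widetilde F_t$-invariance of the hypothesis, makes explicit a step the paper only asserts from the immersion property near $[\varphi]$.
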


\begin{proof}
    For each $t \in [0,1]$, define a diffeomorphism $G_t : M \to M$ as the identity on the compact piece, and mapping $(r, n) \mapsto \left(r, F_{t\rho(r)}\right)$ on the end $\BC(\Sigma) = \R^+ \times N$, for a cutoff function $\rho$ that is 0 for small $r$ and 1 for large $r$.

    Consider the path of torsion-free \gtwo-structures $\varphi_t := G_t^\ast \varphi$ on $M$. Because $G_t$ is exactly $F_t$ outside a compact set, $\varphi_t$ has rate $\mu$ to the same $\BC(\Sigma)$. Thus $\varphi_t$ defines a path in the moduli space~$\mathcal{M}_\mu$ of ALC torsion-free \gtstr s with the same asymptotics as $\varphi$ up to rate $\mu$.

    Theorem \ref{thm:Moduli:Spaces} together with the hypothesis on $\mathcal{H}^3_\mu(M)$ implies that $\mathcal{M}_\mu \to H^3(M) \oplus H^4(M)$ is an immersion on a neighbourhood of the class of $\varphi$. Because $([\varphi_t], [*\varphi_t]) \in H^3(M) \oplus H^4(M)$ is constant in $t$, that means all $\varphi_t$ represent the same element of $\mathcal{M}_\mu$. In particular, there exists an $H \in \mathcal{D}_{\mu+1}$ (\ie a diffeomorphism $H : M \to M$ asymptotic with rate $\mu+1$ to the identity) such that $H^\ast \varphi_1 = \varphi_0$.
    Then $F := G_1 \circ H : M \to M$ satisfies $F^\ast \varphi = H^\ast G_1^\ast \varphi = H^\ast \varphi_1 = \varphi$. Hence $F$ is an automorphism of $\varphi$, and it is moreover ALC with the same asymptotics as $G_1$, \ie it is asymptotic to the given~$F_1$.
\end{proof}
\begin{prop}\label{prop2:Decaying:symmetries}
Let $(M,\varphi)$ be an ALC \gtwo-manifold asymptotic to $\BC(\Sigma)$ with rate $\nu < -1$. If \eqref{eq:Symmetries:Assumption} holds, then the image of the restriction map
\[
\tu{Aut}_\nu (M,\varphi) \longrightarrow \tu{Aut}\,\BC(\Sigma)
\]
contains the identity component of $\tu{Aut}\,\BC(\Sigma)$.
\end{prop}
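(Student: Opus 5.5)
The plan is to apply Lemma~\ref{lem:sextension_merge} with $\mu=\nu$ to paths of automorphisms of the model end. The identity component of the compact Lie group $\tu{Aut}\,\BC(\Sigma)$ is path connected. So every element $F_1$ in it is joined to the identity by a smooth path $F_t$ inside $\tu{Aut}\,\BC(\Sigma)$; concretely, a one-parameter subgroup, or a finite product of such, suffices because $\exp$ generates the identity component. The hypothesis \eqref{eq:Symmetries:Assumption} is precisely the injectivity of $\mathcal{H}^3_\nu(M)\to H^3(M)\oplus H^4(M)$ that the lemma requires, with $\mu=\nu$. We may assume $\nu\in(-3,-1)$ as in Theorem~\ref{thm:Symmetries}. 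If $\nu$ is an indicial root, we replace it by a slightly larger non-indicial $\mu$, for which $\mathcal{H}^3_\mu=\mathcal{H}^3_\nu$ and hence injectivity persists, as noted after Theorem~\ref{thm:Moduli:Spaces}.

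The key remaining step is to verify the decay hypothesis of the lemma: for each $t$, $F_t^\ast\varphi-\varphi$ has rate $\nu$ on the end. Here $F_t$ acts through its radial extension, or through the diffeomorphism $G_t$ built in the lemma's proof, which agrees with $F_t$ for large $r$. To check this, write $f^\ast\varphi=\varphi'_\BC+\rho$ with $\rho\in C^\infty_\nu$. Since $F_t\in\tu{Aut}\,\BC(\Sigma)$ preserves $\theta$, the Sasaki--Einstein structure and $r$, it preserves $\varphi'_\BC$. Hence $F_t^\ast f^\ast\varphi-f^\ast\varphi=F_t^\ast\rho-\rho$.

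Next we bound $F_t^\ast\rho$. The map $F_t$ is an isometry of $g_\BC$ that preserves the adapted connection, so $|\nabla^j F_t^\ast\rho|$ at a point $x$ equals $|\nabla^j\rho|$ at $F_t(x)$, which is $O(r^{\nu-j})$ because $r$ is preserved. Therefore $F_t^\ast\rho\in C^\infty_\nu$, uniformly in $t$, and the difference has rate $\nu$. In the dihedral case, the elements commute with $\iota$, so all of this descends through the double cover $f_+$.

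The lemma then produces $F\in\tu{Aut}_\nu(M,\varphi)$ asymptotic to $F_1$, that is, with $f^{-1}\circ F\circ f=\exp(V)\circ F_1$ for some $V\in C^\infty_{\nu+1}$. This shows that $F_1$ lies in the image of the restriction map.

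I expect the main obstacle to be well-definedness rather than existence. The restriction map only makes sense if the asymptotic limit $F_\BC$ of an ALC automorphism is uniquely determined. Suppose $\exp(V)\circ F_\BC=\exp(V')\circ F'_\BC$ with $V,V'$ decaying. Then $F'_\BC\circ F_\BC^{-1}$ is an isometry of $\BC$ that is $C^0$-close to the identity at large $r$, with error $O(r^{\nu+1})\to0$. Since this isometry is radially invariant, it must be the identity. With that settled, the argument above is routine given Lemma~\ref{lem:sextension_merge}.
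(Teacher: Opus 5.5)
Your proposal is correct and follows the paper's argument: join $F_\BC$ to the identity by a path in $\tu{Aut}\,\BC(\Sigma)$, then apply Lemma~\ref{lem:sextension_merge} with $\mu=\nu$. The decay hypothesis you verify is automatic at $\mu=\nu$, and the paper leaves that check implicit.

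One small caveat: your indicial-root aside replaces $\nu$ by a larger $\mu$, which falls outside the lemma's allowed range $\mu\in(-3,\nu]$. It would also only produce an element of $\tu{Aut}_\mu\supseteq\tu{Aut}_\nu$ rather than of $\tu{Aut}_\nu$. The paper does not treat that case either, so this does not affect the comparison.
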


\begin{proof}
    Given an element $F_\BC : N \to N$ of the identity component of $\tu{Aut}\,\BC(\Sigma)$, consider a path $F_t$ in  $\tu{Aut}\,\BC(\Sigma)$ from $F_0 = \tu{Id}$ to $F_1 = F_\BC$.
    Hypothesis \eqref{eq:Symmetries:Assumption} means that we can apply Lemma \ref{lem:sextension_merge} with $\mu = \nu$.
\end{proof}

In the special case of the principal $S^1$-action on an end of cyclic type, we do not need to assume \eqref{eq:Symmetries:Assumption} thanks to the exponentially asymptotic $S^1$-invariance of the end assured by Theorem \ref{thm:Exp:Decay:ALC:G2}.

\begin{prop}\label{prop:S1_surj}
Let $(M, \varphi)$ be an ALC \gtwo-manifold asymptotic to a cyclic $\BC(\Sigma)$.
For any $\phi \in \R$ and every $\nu \in (-3,-1)$ there exists an ALC automorphism $F_\phi \in \tu{Aut}_\nu(M,\varphi)$ asymptotic to the principal action of $e^{i\phi}$ on $\BC(\Sigma)$.
\end{prop}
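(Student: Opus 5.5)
We plan to mimic the proof of Lemma \ref{lem:sextension_merge}, replacing the injectivity hypothesis \eqref{eq:Symmetries:Assumption} with the exponential $S^1$-invariance of the end given by Theorem \ref{thm:Exp:Decay:ALC:G2}. Using Proposition \ref{prop:Slice:End}, we first change the ALC chart $f$ by a diffeomorphism of class $C^\infty_{\nu+1}$; this does not affect the class of ALC automorphisms of rate $\nu$. In the new chart $f^\ast\varphi-\varphi'_\BC$ has type $27$ with respect to $\varphi'_\BC$. Theorem \ref{thm:Exp:Decay:ALC:G2} then gives $f^\ast\varphi=\varphi_0+\rho_\perp$ on the end. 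Here $\varphi_0=\varphi'_\BC+\rho_0$ is $S^1$-invariant and $\rho_\perp$ decays exponentially together with all its derivatives.

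Let $R_t$ denote the principal action of $e^{it\phi}$ on $\BC(\Sigma)$. We build $G_t$ as in the proof of Lemma \ref{lem:sextension_merge}: it is the identity on the compact part and $R_{t\chi(r)}$ on the end, with $\chi$ a radial cutoff. Since $R_t$ preserves $\varphi_0$, we get $G_t^\ast\varphi-\varphi=R_t^\ast\rho_\perp-\rho_\perp$ outside a compact set. This difference decays exponentially, so it has rate $\mu$ for every $\mu$. Fix $\mu\in(-3,\nu]$ away from indicial roots. Then $\varphi_t=G_t^\ast\varphi$ is a continuous path in $\mathcal{X}_\mu$.

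The step we expect to be the main obstacle is showing that this path is constant in the moduli space $\mathcal{M}_\mu$ without assuming injectivity of $\mathcal{H}^3_\mu(M)\to H^3(M)\oplus H^4(M)$. Our plan is to compute the derivative. We have $\frac{d}{dt}\varphi_t=G_t^\ast\mathcal{L}_{Y}\varphi=G_t^\ast d(Y\lrcorner\varphi)$, where $Y=\phi\,\chi\,\xi$ plus compactly supported corrections. On the end, $\mathcal{L}_{\phi\xi}\varphi=\phi\,\mathcal{L}_\xi\rho_\perp$ is exponentially decaying. So $\dot\varphi_t$ is a closed $3$-form, exact with an exponentially decaying primitive modulo $d(C^\infty_{\mu+1}$ vector fields$)$. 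Its projection onto the slice $\mathcal{K}_{l,\mu}$ (Proposition \ref{prop:Slice:Thm}) and then onto the tangent space $\mathcal{H}^3_\mu(M)$ should vanish.

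To see this, pair with any $h\in\mathcal{H}^3_\mu(M)$. Write $\dot\varphi_t=d\beta$ with $\beta$ equal to $\phi\chi\,\xi\lrcorner\varphi$ plus a compactly supported term. The term $\xi\lrcorner\varphi$ is not decaying, so the cleaner route is the following. Take $\beta'=\phi\chi\,\xi\lrcorner\rho_\perp$ plus compactly supported terms. Since $\mathcal{L}_\xi\varphi_0=0$ and $d(\xi\lrcorner\varphi_0)=0$ on the end, the difference $\beta-\beta'$ is closed on the end, and we can arrange that $d\beta=d\beta'$ after absorbing a compactly supported closed correction. Then $\beta'$ decays exponentially, so integrating by parts gives $\langle d\beta',h\rangle=\langle\beta',d^\ast h\rangle=0$. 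A cohomological check is still needed: the class of $\xi\lrcorner\varphi_0$ restricted to the end must extend appropriately. The fallback is to treat the exact form $d(\chi\,\xi\lrcorner\varphi_0)$ directly as the infinitesimal action of an element of $\mathcal{D}$ generated by a non-decaying but $S^1$-invariant vector field. This element fixes $\varphi_0$ on the end, and its component along $\mathcal{H}^3_\mu$ is computed as a boundary integral at infinity involving $\rho_\perp$, which vanishes in the limit.

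Given that the derivative has zero component in $\mathcal{H}^3_\mu(M)$, the local chart of Theorem \ref{thm:Moduli:Spaces} shows that $[\varphi_t]$ is constant in $\mathcal{M}_\mu$. Hence there is $H\in\mathcal{D}_{\mu+1}$ with $H^\ast\varphi_1=\varphi$, and $F_\phi=G_1\circ H$ is the required automorphism, exactly as in Lemma \ref{lem:sextension_merge}. Because every $\mu\in(-3,\nu]$ is allowed, $F_\phi$ lies in $\tu{Aut}_\nu(M,\varphi)$ for each such rate.
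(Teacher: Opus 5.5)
Your setup is sound: gauge-fixing the end with Proposition~\ref{prop:Slice:End}, applying Theorem~\ref{thm:Exp:Decay:ALC:G2}, and building $G_t$ as in Lemma~\ref{lem:sextension_merge}. However, the step you single out as the main obstacle is never actually proved, and it comes from a misdiagnosis. You do not need to avoid the injectivity of $\mathcal{H}^3_\mu(M)\to H^3(M)\oplus H^4(M)$. The point of the exponential decay is that $G_t^\ast\varphi-\varphi$ has rate $\mu$ for \emph{every} $\mu$, so you may take $\mu=-3+\epsilon$. At such rates the injectivity holds for every ALC \gtwo--manifold by Remark~\ref{rmk:inj_across_L2}, which follows from Theorem~\ref{thm:Topological:Hodge:G2}(i)--(ii). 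Lemma~\ref{lem:sextension_merge} then applies verbatim, and $\tu{Aut}_{-3+\epsilon}(M,\varphi)\subset\tu{Aut}_\nu(M,\varphi)$. That is the paper's entire proof.

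Your substitute argument fails at the rates $\mu\in(-3,\nu]$ that you fix. There $\mathcal{H}^3_\mu(M)\not\subset L^2$, so $L^2$--pairings with $h\in\mathcal{H}^3_\mu(M)$ do not detect the $\mathcal{H}^3_\mu$--component of $\dot\varphi_t$. Write $\dot\varphi_t=\rho_0+d(Z\lrcorner\varphi)$ with $\rho_0$ in the slice and $Z\in C^\infty_{\mu+1}$. Then $\langle\rho_0,h\rangle$ need not converge, and $\langle d(Z\lrcorner\varphi),h\rangle=0$ requires $2\mu+1\leq-5$ in Lemma~\ref{lem:integration:parts}.

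The non-decaying piece is not absorbed either. The form $d(\chi\,\xi\lrcorner\varphi_0)=d\chi\wedge(\xi\lrcorner\varphi_0)$ is a nonzero compactly supported form, so you cannot arrange $d\beta=d\beta'$. Its pairing with $h$ is $\pm\int_{\{r=R\}}(\xi\lrcorner\varphi_0)\wedge\ast h$. This is independent of $R$, since both forms are closed on the end, so it cannot ``vanish in the limit''. It also involves $\xi\lrcorner\varphi_0$, not $\rho_\perp$. It is a pairing of classes in $H^2(N)$ and $H^4(N)$, and for $\mu>-3$ the class of $\ast h$ in $H^4(N)$ need not vanish.

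A correct infinitesimal version of your idea exists, but it needs the same missing ingredient. The variation $\dot\varphi_t$ is exact, its $\ast$-variation is exact, and it decays fast. Hence its slice component lies in $\mathcal{H}^3_{\mu'}(M)$ for $\mu'$ near $-3$ and has zero image in $H^3(M)\oplus H^4(M)$. Only the injectivity in Remark~\ref{rmk:inj_across_L2} then lets you conclude that this component vanishes.
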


\begin{proof}
    Let $F_t \in \tu{Aut}\,\BC(\Sigma)$ be the principal action of  $e^{it\phi}$. Then Theorem~\ref{thm:Exp:Decay:ALC:G2} implies that each $F_t^\ast \varphi - \varphi$ is exponentially decaying, so it is of polynomial decay rate $\mu$ for any $\mu$, in particular for $\mu = -3 + \epsilon$. By Remark \ref{rmk:inj_across_L2} in the next section (as an application of the Hodge theory proved in Section \ref{sec:Hodge:ALC} to ALC $\gtwo$--manifolds) the hypotheses of Lemma \ref{lem:sextension_merge} are satisfied.
\end{proof}

\begin{corollary}\label{cor:Circle:Symmetry:Cyclic:ALC:Killing:Field}
For any $\nu > -2$, there exists a $1$-form $\gamma$ on $M$ with $\gamma = \theta + O(r^{\nu})$ such that $\mathcal{L}_{\gamma^\sharp}\varphi=0$. In particular, $\gamma$ is coclosed and harmonic with $\pi_7 d \gamma=0$.
\end{corollary}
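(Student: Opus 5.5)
The plan is to obtain the $1$-form $\gamma$ as the dual of the infinitesimal generator of the circle of ALC automorphisms produced by Proposition \ref{prop:S1_surj}. For each $\phi\in\R$ that proposition, applied with a rate $\nu'\in(-3,-1)$, gives $F_\phi\in\tu{Aut}_{\nu'}(M,\varphi)$ asymptotic to $e^{i\phi}$. I would first upgrade this to a one-parameter group, by showing that the restriction map $\tu{Aut}_{\nu'}(M,\varphi)\to\tu{Aut}\,\BC(\Sigma)$ is injective near the identity. Failing that, I would use that $\tu{Aut}_{\nu'}(M,\varphi)$ is a closed subgroup of the isometry group of $g_\varphi$, hence a Lie group. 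The elements $F_\phi$ for small $\phi$ then lie in its identity component, which maps onto the principal circle. So there is an element $X$ of its Lie algebra whose image in $\Lie{aut}\,\BC(\Sigma)$ is $\xi$; rescaling $\phi$, $X$ is just the derivative of a smooth path $F_\phi$ chosen inside the group.

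Next I would read off the asymptotics. By Definition \ref{def:ALC:automorphism}, $f^{-1}\circ F_\phi\circ f=\exp(V_\phi)\circ e^{i\phi}$ with $V_\phi\in C^\infty_{\nu'+1}$. Differentiating at $\phi=0$ gives $X=\xi+V'$ with $V'\in C^\infty_{\nu'+1}$; this requires only that the $V_\phi$ depend smoothly on $\phi$ in the weighted space, which follows from the contraction/implicit-function construction. Since $\nu'$ can be taken arbitrarily close to $-3$, for any $\nu>-2$ we may choose $\nu'+1<\nu$. Then $\gamma:=X^\flat=\theta+O(r^\nu)$, using that $g_\varphi=g_\BC+O(r^{-1})$ and $\xi^\flat=\theta$ for $g_\BC$ with $\ell=1$.

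The remaining identities are standard. Since $F_\phi^\ast\varphi=\varphi$, we have $\mathcal{L}_X\varphi=0$, so $X$ is a Killing field of the Ricci-flat metric $g_\varphi$. Hence $d^\ast\gamma=0$, and $\triangle\gamma=2\Ric(\gamma)=0$. Moreover $\mathcal{L}_X\varphi=d(X\lrcorner\varphi)=0$ for the torsion-free $\varphi$. Lemma \ref{lem:identities:0:1:forms:torsion:free}(ii) then gives $\tu{curl}\,\gamma=0$, that is, $\pi_7d\gamma=0$.

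The main obstacle is the first step: producing a genuine smooth one-parameter family, rather than the individual automorphisms $F_\phi$, so that differentiation is legitimate. The Lie group argument via the isometry group, together with the fact that the kernel of restriction is discrete (two automorphisms with the same asymptotics differ by a decaying isometry, whose generator would be a decaying Killing field and hence vanishes by Lemma \ref{lem:Vanishing:harmonic}), should suffice. It also shows that the Lie algebra element $X$ is uniquely determined.
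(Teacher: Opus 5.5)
Your argument follows the paper's proof. The paper takes the automorphisms $F_\phi\in\tu{Aut}_{\nu-1}(M,\varphi)$ of Proposition~\ref{prop:S1_surj}, sets $\gamma^\sharp=\frac{d}{d\phi}F_\phi\big|_{\phi=0}$, and reads off $\mathcal{L}_{\gamma^\sharp}\varphi=0$ and the asymptotics.

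For the identities the paper uses Lemma~\ref{lem:identities:0:1:forms:torsion:free}(iii). Both $d(\gamma^\sharp\lrcorner\varphi)$ and $d(\gamma^\sharp\lrcorner\psi)$ vanish, so $d^\ast\gamma=0=\tu{curl}\,\gamma$. It then uses $\ast d\gamma=-\varphi\wedge d\gamma$ for the type-$14$ form $d\gamma$ to get $d^\ast d\gamma=0$. Your route via the Killing identity $\triangle\gamma=2\Ric(\gamma)$ and part (ii) of that lemma is equally valid.

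The paper simply treats $F_\phi$ as a smooth family. Of your two justifications, the implicit-function one is correct and suffices on its own. The diffeomorphism $H_t\in\mathcal{D}_{\mu+1}$ with $H_t^\ast G_t^\ast\varphi=\varphi$ in the proof of Lemma~\ref{lem:sextension_merge} comes from the slice construction behind Theorem~\ref{thm:Moduli:Spaces}. So for small $t$ it can be chosen smooth in $t$, in the weighted space, with $H_0=\tu{Id}$. Differentiating the path $G_t\circ H_t$ at $t=0$ is then all you need; no one-parameter subgroup is required.

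Drop the isometry-group detour. It presupposes that $\tu{Aut}_{\nu'}(M,\varphi)$ is closed in the isometry group and that the asymptotic-limit map to $\tu{Aut}\,\BC(\Sigma)$ is continuous, and you justify neither. Even granting both, the weighted smoothness of $V_\phi$ along $\exp(\phi X)$ would again need the implicit-function argument. Nor could you borrow injectivity from Proposition~\ref{prop:Decaying:symmetries}, since it depends on this corollary through Corollary~\ref{cor:flat}.

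The step that fails as written is the passage from $\gamma^\sharp$ to $\gamma$. From $\gamma^\sharp=\xi+O(r^{\nu'+1})$ and $g_\varphi=g_\BC+O(r^{-1})$ you only get $\gamma=\theta+O(r^{-1})$. This falls short of the claim exactly for $\nu<-1$, which is the range used in Theorem~\ref{thm:ALC:G2:Positive:Mass}, where $d\gamma-d\theta$ must decay faster than $r^{-2}$. The paper reads the rate off the asymptotics of $F_\phi$, effectively at the level of the vector field.

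To pass to the $1$-form you need that the $\xi$-row of the metric is better behaved than the metric as a whole. The correction $\varphi'_\BC-\varphi_\BC=-\tfrac12 r^2\eta\wedge d\theta$ is horizontal, so $\xi\lrcorner\varphi'_\BC=\omega_{\tu{C}}$. A short computation using $d\theta\wedge\omega_i=0$ shows that the bilinear form $\tfrac16(u\lrcorner\varphi'_\BC)\wedge(v\lrcorner\varphi'_\BC)\wedge\varphi'_\BC$ has the same $\xi$-row as for $\varphi_\BC$. Since the correction has type $27$, the volume form changes only at second order, so $g_{\varphi'_\BC}(\xi,\cdot)=(1+O(r^{-2}))\theta$.

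Combined with $f^\ast g_\varphi-g_{\varphi'_\BC}=O(r^{\nu_M})$, where $\nu_M<-1$ is the ALC rate of $\varphi$, this gives $\gamma=\theta+O(r^{\max\{\nu,\nu_M\}})$. That is what the later application uses. When $d\theta=0$, as there, the $O(r^{-1})$ term is absent from the outset.
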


\begin{proof}
    Proposition \ref{prop:S1_surj} gives a 1-parameter family $F_\phi \in \tu{Aut}_{\nu-1}(M,\varphi)$. Defining the vector field $\gamma^\sharp$ as $\frac{dF_\phi}{d\phi}$ at $\phi = 0$ ensures that $\mathcal{L}_{\gamma^\sharp}\varphi = 0$ because $F_\phi$ are automorphisms and $\gamma = \theta + O(r^\nu)$ because $F_\phi$ are asymptotic with rate $\nu-1$ to the principal action of $e^{i\phi}$.

    Now $d^\ast \gamma = \pi_7d\gamma = 0$ follows from Lemma \ref{lem:identities:0:1:forms:torsion:free}(iii). Since $d\gamma$ has type 14, $*d\gamma = -\varphi \wedge d\gamma$, so $\Delta \gamma = d^\ast d\gamma = 0$.
\end{proof}

\begin{remark}
We can also reverse the logic of Proposition \ref{prop:S1_surj} and Corollary \ref{cor:Circle:Symmetry:Cyclic:ALC:Killing:Field}: First finding a harmonic 1-form $\gamma = \theta + O(r^\nu)$, deducing from Lemma~\ref{lem:Vanishing:harmonic} that $d^\ast \gamma$ and $\pi_7^\varphi d\gamma$ both vanish, and hence from Lemma \ref{lem:identities:0:1:forms:torsion:free} that $\gamma^\sharp$ is automorphic, thus proving Corollary \ref{cor:Circle:Symmetry:Cyclic:ALC:Killing:Field}. Proposition \ref{prop:S1_surj} then follows by exponentiation.

In a similar way, Proposition \ref{prop2:Decaying:symmetries} can alternatively be proved by first arguing that any infinitesimal symmetry of $\BC(\Sigma)$ can be extended to the dual of a harmonic 1-form on $M$, which one shows must then be an infinitesimal automorphism of $(M,\varphi)$.
\end{remark}

Whichever way we go about arguing that certain elements of $\tu{Aut}\,\BC(\Sigma)$ can be extended to global symmetries of~$M$, none of the arguments we have presented address whether that extension is unique; thus if $G$ is not simply connected (in particular, if we consider the principal $S^1$--action on an ALC end of cyclic type) we have not yet ruled out that we merely end up with an action on~$M$ by the universal cover of $G$ rather than by $G$ itself.

To address this question it turns out to be helpful to deal separately with the case
where the Sasaki--Einstein 5-manifold $\Sigma$ associated with the ALC~\gtmfd~$M$ is exceptional. To this end it proves to be useful to build on Corollary \ref{cor:Circle:Symmetry:Cyclic:ALC:Killing:Field} to deduce that $M$ is in fact flat in this case. As a first step, we argue more generally that whenever the HYM connection $\theta$ associated to the end of a cyclic ALC \gtmfd~$M$ is flat, then $M$ must be reducible, Theorem \ref{mthm:PositiveMass} from the Introduction.

\begin{theorem}\label{thm:ALC:G2:Positive:Mass}
Let $(M,\varphi)$ be an ALC $\gtwo$--manifold of cyclic type asymptotic to $\BC (\Sigma)$ with rate $\nu<-1$. Furthermore assume that the HYM connection $\theta$ on $\BC(\Sigma)\ra\tu{C}(\Sigma)$ is flat. Then the automorphic vector field $\gamma^\sharp$ from Corollary \ref{cor:Circle:Symmetry:Cyclic:ALC:Killing:Field} is parallel, and $M$ is a finite quotient of 
the isometric product $\Sph^1 \times B$ where $B$ is an AC CY $3$-fold.
\end{theorem}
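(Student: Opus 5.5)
We start from the $1$-form $\gamma$ of Corollary \ref{cor:Circle:Symmetry:Cyclic:ALC:Killing:Field}. Its dual is an automorphic vector field, so $\gamma$ is coclosed, $d\gamma\in\Omega^2_{14}$, and $\gamma = \theta + O(r^{\nu'})$ for any $\nu'>-2$. The aim is to show $\nabla\gamma=0$, using the Bochner formula on the Ricci-flat manifold $M$.

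\textbf{Step 1: decay of $\nabla\gamma$.} Since $\gamma^\sharp$ is Killing, $\nabla\gamma = \tfrac12 d\gamma$. Because $\theta$ is flat, $d\theta=0$, so by \eqref{eq:Levi:Civita} the model field $\xi$ is parallel for $g_\BC$. On the model end the metric is $g_\BC$ up to $O(r^{\nu})$ with $\nu<-1$; the correction $\tfrac12 r^2\eta\wedge d\theta$ vanishes, so $\varphi'_\BC=\varphi_\BC$. Hence $\nabla\theta = O(r^{\nu-1})$ with respect to $g_\varphi$.

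I want $\nabla\gamma = O(r^{-3-\epsilon})$, i.e.\ $L^2$ with room to spare. First improve the rate of $\gamma-\theta$. The form $\gamma-\theta$ satisfies $\triangle(\gamma-\theta) = -\triangle\theta = O(r^{\nu-2})$. By Proposition \ref{prop:Harmonic:1Forms:Cone} there are no indicial roots of the $1$-form Laplacian in $[-4,0]$, and the $\theta$-component behaves like functions, whose only roots there are $-4,0$ by Proposition \ref{prop:Harmonic:Functions:Cone}. So the expansion argument of Lemma \ref{Step3} pushes $\gamma-\theta$ down to rate $\max(\nu,-4+\epsilon)$. Since $\nu<-1$ this is rate $<-1$, and iterating gives rate $-4+\epsilon$ as long as the source terms decay. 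Exponential decay of the oscillatory part (Theorem \ref{thm:Exp:Decay:ALC:G2}) handles $\Pi_\perp$.

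I expect this step to be the main obstacle. It is where flatness enters: when $d\theta\neq 0$ the term $d\theta$ is only $O(r^{-2})$, and $\nabla\gamma$ would not be in $L^2$. Flatness also has to be used to bound the source $\triangle\theta$ itself by $O(r^{\nu-2})$ with $\nu$ improvable; if $\nu$ is close to $-1$, one has to bootstrap, feeding the improved rate of $\gamma$ back into the estimate.

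\textbf{Step 2: Bochner.} With $\nabla\gamma = O(r^{-3-\epsilon})$ and $|\gamma|$ bounded, integrate $\triangle\tfrac12|\gamma|^2 = -|\nabla\gamma|^2$ over $\{r\le R\}$; this uses $\triangle\gamma=0$ and $\Ric=0$. The boundary term is bounded by $\int_{r=R}|\gamma||\nabla\gamma| = O(R^{5}\cdot R^{-3-\epsilon})$, which does not obviously vanish. So instead I would use the sharper structure $d\gamma = d(\gamma-\theta)$ and integrate by parts as in Lemma \ref{lem:integration:parts}:
\[
\|d\gamma\|_{L^2}^2=\langle d(\gamma-\theta),d\gamma\rangle=\langle\gamma-\theta,d^\ast d\gamma\rangle + \text{boundary}.
\]
Here $d^\ast d\gamma=0$, and the boundary term is $O(R^{5}\cdot R^{-4+\epsilon}R^{-5+\epsilon})\to0$. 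Hence $d\gamma=0$, so $\nabla\gamma=0$.

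\textbf{Step 3: splitting.} The field $\gamma^\sharp$ is parallel and asymptotic to $\xi$, so it is nonzero, and it preserves $\varphi$. The holonomy therefore reduces to $\sunitary{3}$, and the universal cover splits isometrically as $\R\times\widetilde B$ with $\widetilde B$ Calabi--Yau. The flow of $\gamma^\sharp$ has closed orbits near infinity, since they are asymptotic to circle fibres of length $2\pi$. Together with the circle action of Proposition \ref{prop:S1_surj}, this generates a compact group, so $M$ is a finite quotient of $\Sph^1\times B$. The cross-section $B = M/\Sph^1$ (up to finite cover) inherits the AC Calabi--Yau structure from $\tu{C}(\Sigma)$, because the end is $\Sph^1\times\tu{C}(\Sigma)$ up to $O(r^\nu)$.
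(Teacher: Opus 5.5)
Your Step 1 cannot deliver what Step 2 needs when $\nu\in[-2,-1)$. In $\triangle(\gamma-\theta)=-\triangle\theta$, the source $\triangle_{g_\varphi}\theta=O(r^{\nu-2})$ is determined by how far $g_\varphi$ is from $g_\BC$. It does not involve $\gamma$, so no bootstrap can lower the rate of $\gamma-\theta$ below $\nu$. Nothing in the hypotheses forces it lower either: even if $\gamma^\sharp=\xi$ on the end, $\gamma-\theta=(g_\varphi-g_\BC)(\xi,\cdot)$ is only known to be $O(r^{\nu})$. Theorem~\ref{thm:Exp:Decay:ALC:G2} only improves the oscillatory part, and the $S^1$-invariant part of $\varphi-\varphi'_\BC$ stays of rate $\nu$. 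Your route therefore gives only $\nabla\gamma=\tfrac12 d\gamma=O(r^{\nu-1})$, which is not in $L^2$ for $\nu\ge -2$. Likewise the boundary term in your Step 2 is $O(R^{5}R^{\nu}R^{\nu-1})=O(R^{4+2\nu})$, which tends to zero only when $\nu<-2$.

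The missing idea is to run the indicial-root argument on $d\gamma$ rather than on $\gamma-\theta$. Since $\gamma$ is harmonic and coclosed, $d\gamma$ is a closed and coclosed $2$-form. It therefore solves $(d+d^\ast)\,d\gamma=0$ with \emph{no} source term, so the expansion of Lemma~\ref{Step3} lets its rate drop across every interval free of indicial roots, independently of $\nu$. Flatness enters exactly here: $d\gamma=d\theta+O(r^{\nu-1})=O(r^{\nu-1})$ with $\nu-1<-2$, already below the root $-2$ where $d\theta$ would sit. Proposition~\ref{prop:Closed:Even:Forms} then improves this to $d\gamma=O(r^{-6+\delta})$. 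Since $d\gamma\in\Omega^2_{14}$, $\|d\gamma\|_{L^2}^2=-\int_M d\gamma\wedge d\gamma\wedge\varphi$. Integrating by parts then needs only $|\gamma|$ bounded, with boundary term $O(R^{-1+\delta})$; your own Step 2 identity would also close once this decay is known.

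Separately, Step 3 assumes that the orbits of $\gamma^\sharp$ close up, and that Proposition~\ref{prop:S1_surj} supplies a circle action. Neither follows from the orbits being asymptotic to fibres of length $2\pi$. The time-$2\pi$ flow is only an isometry asymptotic to the identity; showing it is the identity is Proposition~\ref{prop:Decaying:symmetries}, whose exceptional case relies on the present theorem. The paper avoids this circularity in three steps:
\begin{enumerate}
\item The deck group of $\R\times B\to M$ acts by translations times isometries of $B$.
\item The preimage of the end in $\{0\}\times B$ finitely covers $\tu{C}(\Sigma)\cap\{r>R\}$: it has at most two components by the splitting theorem, and each is a finite cover because $\Sigma$ has positive Ricci curvature.
\item Quotienting by the subgroup acting trivially on $B$ gives the finite cover $\Sph^1\times B$.
\end{enumerate}
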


\begin{proof}
    Consider the automorphic 1-form $\gamma = \theta + O(r^{\nu})$ constructed in Corollary~\ref{cor:Circle:Symmetry:Cyclic:ALC:Killing:Field}. We want to show that our hypothesis that $\theta$ is flat forces $\gamma$ to be parallel.
Since $\gamma^\sharp$ is an automorphic vector field it is certainly a Killing field and so it suffices to show that $d\gamma=0$.

Because $\gamma^\sharp$ is automorphic with respect to the torsion-free
structure $\varphi$, $\gamma$ is harmonic and $d^*\gamma$ and $\pi_7 d \gamma$ both vanish.
Since $\gamma$ is harmonic and $d^\ast \gamma=0$, the $2$-form $d\gamma$ is closed and coclosed.
Now, recall that for closed and coclosed $2$-forms there are no indicial roots in the interval $(-6,-2)$
(see  Proposition \ref{prop:Closed:Even:Forms}). Hence because we assumed $\nu-1<-2$ and $d\theta=0$, then $d\gamma = d\theta + O(r^{\nu-1}) = O(r^{\nu-1}) $,  can in fact be improved to $d\gamma = O(r^{-6+\delta})$ for any $\delta>0$. Now this fast decay of $d \gamma$ allows us to integrate by parts to conclude that
\[
\| d\gamma\|_{L^2}^2 = -\int_{M}{d\gamma\wedge d\gamma\wedge\varphi}=0,
\]
where we have also used the fact that $d\gamma\in\Omega^2_{14}$ to write $|d\gamma|^2\,\dvol_{g_\varphi} = -d\gamma\wedge d\gamma\wedge\varphi$.

Because $\gamma$ is parallel, by the de Rham splitting theorem the universal cover of $M$ splits as an isometric product $\R \times B$ for a Calabi--Yau 3-fold $B$, with the lift of $\gamma^\sharp$ tangent to the $\R$ factor (in other words, $p^\ast\gamma = dt$, where $t$ is the $\R$ coordinate). Since the covering group of $p : \R \times B \to M$ preserves $\gamma$, it must act by products of translations on $\R$ and isometries of $B$.

For a sufficiently large $R$, define $\wt C \subset B$ as the intersection of $\{0\} \times B$ with the pre-image of $\BC_R(\Sigma) \subset M$ (the region where $r > R$). Provided that we take $R$ large enough that $\gamma$ is never 0 on the circle fibres, the composition $q : \wt C \to C_R(\Sigma)$ of the restriction of $p$ and the circle bundle map $\pi: \BC_R(\Sigma) \to C_R(\Sigma)$ is a covering map. It must be a finite cover, because $\wt C$ has at most two components by the line splitting theorem, and each component is a finite cover because $\Sigma$ has positive Ricci curvature.

Therefore the quotient of $\R \times B$ by the subgroup of the covering group that acts trivially on the $B$ factor yields a finite isometric cover $\Sph^1 \times B$.
Finally, $p$ being a local isometry means that we can view the Calabi--Yau metric $g_B$ as the restriction of $p^\ast g_M$ to the submanifold $\{0\} \times B$.
On the end (noting that $g_\BC = g'_\BC$ because $\theta$ is flat), $g_M = g_\BC + O(r^\nu) = \theta^2 + \pi^\ast g_C + O(r^\nu)$, so $p^\ast g_M = (p^\ast \theta)^2 + (\pi\circ p)^* g_C + O(r^\nu)$. Restricting to $\{0\} \times B$, the first term is $O(r^\nu)$ because $\theta = \gamma + O(r^\nu)$ and $p^\ast \gamma$ restricts to 0, while the second term equals $q^\ast g_C$ by the definition of $q$.
Thus $g_B = q^\ast g_C + O(r^\nu)$ on $\wt C$, implying that $B$ is AC.
\end{proof}

\begin{corollary}
\label{cor:flat}
    Let $(M,\varphi)$ be an ALC \gtwo-manifold of cyclic type asymptotic to $\BC(\Sigma)$ with $\Sigma$ exceptional.
    Then $M$ is a finite quotient of the flat ALC~\gtmfd~$\Sph^1 \times \C^3$.
\end{corollary}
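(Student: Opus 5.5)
The plan is to reduce the corollary to Theorem \ref{thm:ALC:G2:Positive:Mass} and then identify the Calabi--Yau factor as flat $\C^3$ modulo a finite group.

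The first step is to show that $\Sigma$ exceptional forces the HYM connection $\theta$ to be flat. By the discussion in Section \ref{sec:Indicial:Roots:CY:cone}, the Lichnerowicz--Obata theorem says that a smooth Sasaki--Einstein $5$-manifold with first nonzero eigenvalue $5$ has constant curvature. Hence $\Sigma=\Sph^5/\Gamma$, and in the smooth case $\Gamma$ is trivial, since a nontrivial free quotient has no eigenvalue $5$. In the orbifold case $\Sigma=\Sph^5/\Gamma$ with $\Gamma$ fixing some vector. In either case the tangent cone $\tu{C}(\Sigma)=\C^3/\Gamma$ is flat.

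For flatness of $\theta$, I would use that $d\theta$ is the harmonic representative of $c_1(N)$, which is a harmonic $2$-form on $\Sigma$. Since $\Sph^5/\Gamma$ is a rational homology sphere and $H^2(\Sph^5/\Gamma;\R)=0$, also in the orbifold/basic sense, there are no harmonic $2$-forms. Therefore $d\theta=0$.

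Next I apply Theorem \ref{thm:ALC:G2:Positive:Mass}. It gives that $M$ is a finite quotient of $\Sph^1\times B$, where $B$ is an AC Calabi--Yau $3$-fold whose asymptotic cone is a finite cover of $\tu{C}(\Sigma)$. Concretely, $B$ is asymptotic to a finite cover of $\C^3/\Gamma$, which is again a flat cone $\C^3/\Gamma'$.

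The remaining step is to show that $B$ is flat with tangent cone exactly $\C^3$. I would use the extra parallel spinors from Remark \ref{rmk:Dirac:cone}. In the exceptional case there are $\Gamma$-invariant linear functions on $\C^3$, so $\C^3/\Gamma$ splits off a Euclidean factor. I would first try a volume argument instead. The splitting in the proof of Theorem \ref{thm:ALC:G2:Positive:Mass} shows that $B$ is Ricci-flat with Euclidean volume growth. If $\Gamma'$ is trivial, Bishop--Gromov rigidity (asymptotic volume ratio $1$) gives $B=\C^3$. The trivial case holds when $\Sigma$ is a smooth manifold, since then $\Gamma$ is trivial.

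For orbifold $\Sigma$, $M$ is still a smooth manifold. In that case I would use the eigenvalue-$5$ eigenfunctions $\alpha$ to produce, via Theorem \ref{thm:Index:jump} and Lemma \ref{Step3}, harmonic functions on $B$ asymptotic to the linear functions $r\alpha$. Their gradients are parallel by Bochner, since $\Ric=0$ and they have bounded Hessian decaying to zero. This splits off a flat $\C$ or $\R^k$ factor. I would then iterate on the lower-dimensional complement, which is an ALE Ricci-flat space with no room left for curvature.

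I expect this last step to be the main obstacle: establishing that $B$ is flat rather than merely AC with flat cone. The cleanest route is the volume-ratio rigidity after passing to the cover where the cone is $\C^3$. That cover exists on the end, and I would lift to a finite cover of $B$ and use that $\pi_1$ at infinity controls $\pi_1(B)$.

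Once $B$ is flat and asymptotic to $\C^3$, it is $\C^3$ itself, or a finite quotient of it, and the corollary follows.
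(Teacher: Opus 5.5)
For smooth $\Sigma$ (the literal setting of Definition~\ref{def:ALC:G2}) your argument is complete and is the paper's proof. Exceptionality forces $\Sigma$ to be the round $\Sph^5$, so $H^2(\Sigma)=0$ and $\theta$ is flat. Theorem~\ref{thm:ALC:G2:Positive:Mass} then gives a finite cover $\Sph^1\times B$, the link of $B$ is a finite cover of $\Sph^5$ and hence $\Sph^5$, and Bishop--Gromov rigidity gives $B=\C^3$.

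The gap is the orbifold (Seifert) case, which is the case the notion ``exceptional'' was introduced for. Neither of the routes you sketch for it works as stated.
\begin{itemize}
\item Passing to a finite cover of $B$ does not by itself make the link at infinity simply connected. Calabi's Ricci-flat metric on $K_{\CP^2}$ is simply connected with link $\Sph^5/\Z_3$, so ``$\pi_1$ at infinity controls $\pi_1(B)$'' does not produce a cover whose cone is $\C^3$.
\item The splitting-and-iterate route ends with the claim that a Ricci-flat ALE factor has ``no room left for curvature'', which is false (Eguchi--Hanson).
\item The Bochner step in that route also needs more than a Hessian that merely decays.
\end{itemize}

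The missing observation is short and makes the orbifold case identical to the smooth one. $B$ is a smooth manifold, being a de Rham factor of the universal cover of the smooth manifold $M$. So its link $\widetilde\Sigma$ is a smooth compact manifold. The covering $q$ in the proof of Theorem~\ref{thm:ALC:G2:Positive:Mass} makes $\widetilde\Sigma$ a finite orbifold cover of $\Sigma=\Sph^5/\Gamma$. Hence $\widetilde\Sigma=\Sph^5/\Gamma'$ for a subgroup $\Gamma'\le\Gamma$ acting freely.

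Exceptionality means there is a $\Gamma$-invariant linear function $\langle v,\cdot\rangle$ on $\R^6$. So every element of $\Gamma$ fixes the point $v/|v|\in\Sph^5$, which forces $\Gamma'=1$. The link of $B$ is therefore the round $\Sph^5$ in all cases, and Bishop--Gromov rigidity applies exactly as in your smooth case. This is the content of the paper's one-line step ``the link of $B$ must be the round $\Sph^5$''.
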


\begin{proof}
If $\Sigma$ is exceptional then $H^2(\Sigma)=(0)$ and so $\theta$ must be flat. Hence Theorem \ref{thm:ALC:G2:Positive:Mass} applies to give a finite cover $\Sph^1 \times B$ with $B$ an AC Calabi--Yau 3-fold. 
Moreover, the link of $B$ must be the round $\Sph^5$, so $B$ is $\C^3$ with the Euclidean metric.
\end{proof}

\subsection{Extension of group actions}
\label{ss:canonical:foliation}

In order to conclude the proof of part (ii) of Theorem \ref{mthm:Asymptotics} from the Introduction, \ie the extension of the circle action from infinity to any cyclic ALC $\gtwo$--manifold, we need to address the question of why we have a circle rather than an $\R$--action from Proposition \ref{prop:S1_surj}. Consider more generally a Lie subgroup $G$ of the compact Lie group $\tu{Aut}\,\BC(\Sigma)^\circ$. By passing to a cover allowing non-effective actions if necessary, we can always assume that $G=G_0\times G_1$, where $G_0$ is a torus and $G_1$ is a simply connected semisimple Lie group. Proposition~\ref{prop2:Decaying:symmetries} and Corollary~\ref{cor:Circle:Symmetry:Cyclic:ALC:Killing:Field} yield an infinitesimal action of $G$ on $M$ by ALC automorphisms, or equivalently an automorphic action on $M$ of the universal cover of $G$. This is enough to deduce that the semisimple part $G_1$ of $G$ lifts to $\tu{Aut}_\nu (M,\varphi)$. Write $G_0=\Lie{g}_0/\Lambda$ for a full rank lattice $\Lambda$ in $\Lie{g}_0$. 
Propositions~\ref{prop2:Decaying:symmetries} and Corollary~\ref{cor:Circle:Symmetry:Cyclic:ALC:Killing:Field} yield an automorphic 
$\Lie{g}_0$--action on $M$ and elements in $\Lambda$ correspond to elements in the kernel of $\tu{Aut}_\nu (M,\varphi)\ra \tu{Aut}(\BC (\Sigma))$.

The following proposition shows that this kernel is always trivial. 
Note that assumption~\eqref{eq:Symmetries:Assumption} is not used here. 

\begin{prop}\label{prop:Decaying:symmetries}
For any ALC~\gtmfd~($M,\varphi)$ the map
\[
\tu{Aut}_\nu (M,\varphi) \longrightarrow \tu{Aut}\,\BC(\Sigma)
\]
is injective for any rate $\nu < -1$.
\proof
If $M$ is ALC of cyclic type asymptotic to $\BC (\Sigma)$ with exceptional $\Sigma$, then $M$ is flat by Corollary \ref{cor:flat}, and the claim now follows from Remark \ref{rmk:aut_flat}. We can therefore assume that $M$ is not of cyclic type with exceptional $\Sigma$.

The idea of the proof is to identify  the circle bundle $N \to \Sigma$, thought of as the boundary at infinity of $M$, as the parameter space for a canonical family of semi-infinite curves that foliate the (double cover of the) end of $M$. The fact that the family is canonical implies that any ALC automorphism $F$ of $(M,\varphi)$ acts naturally on this space. Then if $F$ has limit $F_\BC = \tu{id}$, this action is trivial and $F\equiv \tu{id}$ on the end of $M$ and therefore everywhere. Since we will work only on the end of $M$, in the dihedral case we pass once and for all to a double cover of the end.

One natural choice of canonical family of curves is provided by deformations of the family of radial geodesics on the model $\BC(\Sigma)$. While one can easily construct this family by studying the Jacobi operator of the radial geodesics, we find it quicker to replace geodesics with gradient flow lines of a certain essentially canonical function $u$ on $M$ with quadratic growth at infinity, namely the unique (up to an additive constant) solution to $\triangle u = 6$ with $u = \frac{1}{2}r^2 + O(r^{1+\delta})$.

 We now explain how to construct such a function $u$. An explicit calculation shows that the function $u_\BC =\tfrac{1}{2}r^2$ satisfies $\triangle_{g_\BC}u_\BC = 6 +O(r^{-1})$. Since $g_\varphi$ is asymptotic to $g_\BC$ with rate $-1$, the same is true if we replace $\triangle_{g_\BC}$ with $\triangle_{g_\varphi}$. Under our assumption, the indicial root calculations of Proposition \ref{prop:Harmonic:Functions:Cone} 
 imply that, for any $\delta>0$ sufficiently small, the scalar Laplacian $\triangle \co C^{2,\alpha}_{1+\delta}\ra C^{0,\alpha}_{-1+\delta}$ on $M$ is surjective with kernel consisting only of constant functions. Indeed, by the Lichnerowicz--Obata Theorem our assumption implies that there are no indicial roots for the scalar Laplacian (restricted to $\check{\iota}$--invariant functions in the dihedral case) on $\tu{C}(\Sigma)$ in $(0,1+\delta_0)$ for some $\delta_0>0$. The existence of $u$ follows.

Since $u$ is unique up to an additive constant, its gradient flow lines are canonical, and therefore preserved by any isometry (and in particular any automorphism) of $M$. Moreover, the asymptotic behaviour $u=u_\BC + O(r^{1+\delta})$ implies that for every $p\in N$ there is a unique gradient flow line asymptotic to $(0,\infty)\times \{ p\} \subset \BC(\Sigma)$, the radial geodesic on $\BC (\Sigma)$ through $p$, as $r\ra\infty$. If $F$ is an isometry that decays to the identity at infinity, the action of $F$ on the set of these integral curves is trivial and since these curves foliate the end of $M$ we deduce $F\equiv\tu{id}$ on the end.
\endproof
\end{prop}

Proposition \ref{prop:Decaying:symmetries} concludes the proof of part (ii) of Theorem \ref{mthm:Asymptotics} as well as of Theorem \ref{thm:Symmetries}.

\subsection{Uniqueness via symmetry}
\label{ss:uniqueness:symmetry}
We conclude this section by observing that our symmetry extension results, Theorems \ref{thm:Circle:Symmetry:Cyclic:ALC} and \ref{thm:Symmetries}, can be used to establish uniqueness results. For analogous arguments in the AC setting see \cite[Corollary 6.10]{Karigiannis:Lotay}. 

In \cite{FHN:Coho1:ALC} we constructed infinitely many families of cohomogeneity-one ALC $\gtwo$--manifolds, all with symmetry group $\sunitary{2}^2\times\unitary{1}$.

\begin{theorem}\label{thm:Uniqueness:Coho1:ALC}
Let $\tu{BC}(\Sigma)$ be the asymptotic model of one of the cohomogeneity one examples constructed in \cite{FHN:Coho1:ALC}.
Then the cohomogeneity-one ALC $\gtwo$--manifolds asymptotic to $\tu{BC}(\Sigma)$ constructed in \cite{FHN:Coho1:ALC} are the unique ALC $\gtwo$--manifolds asymptotic to it.
\proof
In \cite[Theorem E]{FHN:Coho1:ALC} we proved that the examples constructed in that paper provide an exhaustive list of the  $\sunitary{2}^2\times\unitary{1}$--invariant complete $\gtwo$--holonomy metrics. 
So it suffices to prove that any complete ALC $\gtwo$--manifold 
with the given asymptotic model is in fact $\sunitary{2}^2\times\unitary{1}$-invariant. 
The asymptotic model for the ALC metrics of \cite{FHN:Coho1:ALC} is $\BC(\Sigma)$ where $\Sigma$ is the Sasaki--Einstein manifold $\Sigma_{\tu{hom}}=\sunitary{2}^2/\triangle\unitary{1}\simeq S^2\times S^3$ endowed with its unique homogeneous Sasaki--Einstein structure or a finite quotient  $\Sigma_{\tu{hom}}/\Gamma$ for $\Gamma$ a finite subgroup of the center $\sunitary{2}^2\times\unitary{1}$ of the identity component $\sunitary{2}^2\times\unitary{1}$ of $\tu{Aut}\, \tu{BC}(\Sigma_{\tu{hom}})$. Since $\Sigma$ is a regular Sasaki--Einstein $5$-manifold, by Remark~\ref{rmk:regular:SE}
the assumption \eqref{eq:Symmetries:Assumption} of Theorem \ref{thm:Symmetries}
is satisfied. Therefore any complete ALC \gtwo--manifold asymptotic to $\BC (\Sigma)$ must be $\sunitary{2}^2\times\unitary{1}$--invariant by Theorems \ref{thm:Circle:Symmetry:Cyclic:ALC} and \ref{thm:Symmetries}.  
\endproof
\end{theorem}

\begin{remark*}
Despite we do not use cohomogeneity one methods in the proof of Theorem \ref{mthm:Dihedral}, the examples of dihedral ALC $\gtwo$--manifolds that we construct in this paper admit a cohomogeneity one action of $\sunitary{2}^2$. The model $\tu{BC}(\Sigma)$ for these examples has $\tu{Aut}\, \tu{BC}(\Sigma)^\circ =\sunitary{2}^2$ and $\Sigma$ is a finite quotient of $\Sigma_{\tu{hom}}$, so that Theorem \ref{thm:Symmetries} still implies that any ALC \gtwo--manifold sharing the same ALC model must be also $\sunitary{2}^2$--invariant. However, the classification of cohomogeneity one $\gtwo$--manifolds in \cite{FHN:Coho1:ALC}  does not apply since the symmetry group of the examples of Theorem \ref{mthm:Dihedral} is now only $\sunitary{2}^2$ and not $\sunitary{2}^2\times\unitary{1}$.
\end{remark*}

\section{Examples of moduli spaces}\label{sec:Dimension}

In prior work we have given two different methods to produce many ALC $\gtwo$--manifolds of cyclic type. In \cite{FHN:ALC:G2:from:AC:CY3} we constructed infinitely many examples of cyclic type by considering principal circle bundles over AC Calabi--Yau 3-folds (the construction was extended to the case of principal Seifert circle bundle over AC Calabi--Yau orbifolds in \cite{Foscolo:ALC:Spin7}). In \cite{FHN:Coho1:ALC} we instead constructed (and classified) all complete $\gtwo$--manifolds with a cohomogeneity-one action of $\sunitary{2}^2\times\unitary{1}$, producing as a byproduct infinitely many 1-parameter families (up to scale) of ALC $\gtwo$--manifolds of cyclic type. In this section we study the moduli spaces of ALC \gtwo--holonomy metrics arising from these constructions.

In the first part of the section we specialise the Hodge theory results of Section \ref{sec:Hodge:ALC} to $7$-dimensional ALC manifolds with tangent cone at infinity a Calabi--Yau cone, thus determining contributions to the dimension of the moduli space of ALC \gtwo--holonomy metrics that only depend on the topology of the underlying smooth manifold. Thanks to Propositions \ref{prop:Closed:Even:Forms} and \ref{prop:Closed:Odd:Forms}, when the asymptotic cone $\tu{C}(\Sigma)$ is a regular Calabi--Yau cone, \ie $\Sigma$ is a regular Sasaki--Einstein manifold,  all infinitesimal deformations of rate $\nu<-1$ are in fact accounted for by these topological contributions. We illustrate these computations with the cohomogeneity-one ALC $\gtwo$--manifolds of \cite{FHN:Coho1:ALC} and some related examples from \cite{FHN:ALC:G2:from:AC:CY3} and \cite[Section 4.3]{Foscolo:ALC:Spin7}. While these calculations are somewhat redundant in the cohomogeneity-one case because of the uniqueness results of Theorem \ref{thm:Uniqueness:Coho1:ALC}, the computations in this section allow one to interpret the parameters of these \gtwo--holonomy metrics in terms of cohomological data as for compact $\gtwo$--manifolds \cite{Joyce:Book}.

In the second part of the section we explain how to relate ALC $\gtwo$--deformations to AC Calabi--Yau deformations in the context of our construction of ALC $\gtwo$--holonomy metrics on principal circle bundles over AC Calabi--Yau 3-folds \cite{FHN:ALC:G2:from:AC:CY3}. Thanks to the recent classification of AC Calabi--Yau metrics by Conlon--Hein \cite{Conlon:Hein:Classification}, we are therefore able to compute the dimension of the moduli space of the ALC $\gtwo$--metrics produced in \cite{FHN:ALC:G2:from:AC:CY3} at rates beyond those accounted for purely by topological contributions. 

\subsection{Cohomogeneity-one ALC \gtwo--metrics and related examples}

Recall that we concluded the first part of the paper with a discussion of weighted Hodge theory on ALC manifolds. In particular, Theorems \ref{thm:ALC_hodge_fastdecay} and \ref{thm:ALC_hodge_jump} describe topological contributions to the spaces of closed and coclosed $k$-forms on an arbitrary ALC manifolds. In the following theorem we specialise these results to the case of $3$-forms on an ALC $7$-manifold $M$ with tangent cone at infinity a Calabi--Yau cone $\tu{C}(\Sigma)$ of complex dimension $3$ (or its quotient by an antiholomorphic involution in the case of an ALC manifold of dihedral type). By Theorem \ref{thm:Moduli:Spaces}, for ALC \gtwo--manifolds these computations provide at least a topological lower-bound for the dimension of the moduli space of ALC $\gtwo$--metrics. 

Recall the notation introduced in Section \ref{sec:Hodge:ALC} before the statement of Theorem \ref{thm:ALC_hodge_jump}. The compact manifold (orbifold) $X$ is the compactification of the ALC manifold $M$ obtained by collapsing the circle fibres of $\tu{BC}(\Sigma)$ at infinity. The cohomological maps $\Phi,\overline{\Phi}$ and $\Psi$ are natural maps that associate to a closed $k$-form its cohomology class (when defined) in, respectively, $H^k_c(M), H^k(X)$ and $H^k(M)$. We use the notation $\triangle \mathcal{H}^k_\lambda(M)$ to denote the quotient $\mathcal{H}^k_{\lambda+\epsilon}(M)/\mathcal{H}^k_{\lambda-\epsilon}(M)$ for $\epsilon>0$ small enough. Finally, for $\lambda$ an indicial root, $f(\mathcal{H}^k_\lambda(\Sigma))$ is the subspace of $\mathcal{H}^k_\lambda(M)$ which accounts for all the non-topological contributions to $\mathcal{H}^k_\lambda(M)$ for $\lambda\in (-k,-k+1)$ and $k\leq\frac{n}{2}$ by Proposition \ref{prop:ALC:closed:coclosed:exact}/Remark \ref{rmk:accounted}.

\begin{theorem}\label{thm:Topological:Hodge:G2}
Let $M^7$ be a complete ALC manifold with tangent cone at infinity $\tu{C}(\Sigma)$ a Calabi--Yau cone, or a quotient of one by an antiholomorphic involution.
\begin{enumerate}
\item There exists $\delta>0$ such that $\mathcal{H}^3_{\lambda}(M) \simeq \tu{im}\, H^3_c(M)\ra H^3(X)$
via $\overline{\Phi}$ for all $\lambda\in (-3-\delta, -3)$.
\item $\triangle\mathcal{H}^3_{-3}(M) \simeq \textup{im}\left( H^3(X)\ra H^3(\Sigma)\right) \oplus \textup{im}\left( H^4(M)\ra H^3_-(\Sigma)\right)$ via $\overline{\Phi}\oplus (\Psi\circ\ast)$.
\item $\triangle\mathcal{H}^3_\lambda (M)= f(\mathcal{H}^3_\lambda(\Sigma))$ for all $\lambda \in (-3,-2)$;
\item $\triangle\mathcal{H}^3_{-2}(M) /f(\mathcal{H}^3_{-2}(\Sigma)) \simeq \textup{im}\left( H^3(M)\ra H^2_-(\Sigma)\right)$ via $\Psi$.
\end{enumerate}
Moreover, if $\Sigma$ or its oriented double cover $\Sigma^+$ is regular as a Sasaki--Einstein 5-manifold then $f(\mathcal{H}^3_{-2}(\Sigma))=0$ and $\triangle\mathcal{H}^3_{\lambda}(M)=0$ for all $\lambda\in (-3,-1]\setminus\{ -2\}$.
\proof
Item (i) is a specialisation of Theorem \ref{thm:ALC_hodge_fastdecay} and items (ii), (iii) are specialisations of Theorem \ref{thm:ALC_hodge_jump}. The final statement follows from the last statements of Propositions \ref{prop:Closed:Even:Forms} and \ref{prop:Closed:Odd:Forms}.
\endproof
\end{theorem}

\begin{remark}\label{rmk:inj_across_L2}
Since $\Sigma$ has positive Ricci curvature we have $H^1_-(\Sigma)=0$ and therefore $H^3(X)\ra H^3(M)$ is injective by \eqref{eq:thom_exact}. Thus under the hypotheses of Theorem \ref{thm:Topological:Hodge:G2}, (i) means that there exists $\delta>0$ such that the natural map $\Psi \co \mathcal{H}^3_{\lambda}(M) \ra 
H^3(M)$ is injective (with image equal to that of $H^3_c(M)$) %
for every $\lambda<-3-\delta$. 
Further, (ii) implies that $\Psi\oplus (\Psi\circ\ast)\co \mathcal{H}^3_\lambda (M)\ra H^3(M)\oplus H^4(M)$ induced by $\rho\mapsto ([\rho],[\ast\rho])$ is injective for $\lambda<-3+\delta$; if $\Sigma$ or $\Sigma^+$ is a regular Sasaki--Einstein $5$-manifold then it remains injective for all $\lambda\leq -1$.
\end{remark}

In order to illustrate the use of these results in studying moduli spaces of ALC $\gtwo$--manifolds, we now apply Theorem \ref{thm:Topological:Hodge:G2} to calculate the dimension of $\mathcal{H}^3_{\nu}(M)$ for the cohomogeneity-one $\sunitary{2}^2\times\unitary{1}$--invariant ALC $\gtwo$--manifolds of \cite{FHN:Coho1:ALC} (and some related examples). Restricting to the case $\nu>-3-\delta$ for $\delta>0$ sufficiently small, by Theorem \ref{thm:Moduli:Spaces} the dimension of $\mathcal{H}^3_{\nu}(M)$ coincides with the dimension of the moduli space of these ALC $\gtwo$--manifolds modulo scaling.

We begin with some general remarks. In all the cohomogeneity-one cases considered here, the subgroup $\sunitary{2}^2\subset\sunitary{2}^2\times\unitary{1}$ already acts with cohomogeneity one. The noncompact $7$-manifold $M$ is therefore described by a group diagram of the form
\[
K\subset K_0\subset \sunitary{2}^2,
\]
meaning that $M$ contains a submanifold (the unique singular orbit) $\Sigma_0=\sunitary{2}^2/K_0$ and $M\setminus\Sigma_0 = \R_+\times N$, where $N=\sunitary{2}^2/K$ is the principal orbit. Moreover, since $M$ is smooth the quotient $K_0/K$ must be diffeomorphic to a sphere. Note that for dimensional reasons $K$ is a finite group. The Sasaki--Einstein $5$-manifold link of the asymptotic cone of $M$ is instead $\Sigma = \sunitary{2}^2/K_\infty$, where $1\ra \triangle\unitary{1}\ra K_\infty\ra K\ra 1$ for $\triangle\unitary{1}$ the diagonal circle in the maximal torus of $
\sunitary{2}^2$. It follows that the compactification $X$ of $M$ obtained by adding to $M$ a copy of $\Sigma$ by collapsing the circle fibres of the ALC end is also a cohomogeneity-one manifold with group diagram
\[
K\subset K_0, K_\infty\subset\sunitary{2}^2,
\]
\ie the compactification $X$ contains two singular orbits $\Sigma_0=\sunitary{2}^2/K_0$ and $\Sigma=\sunitary{2}^2/K_\infty$ and $X\setminus(\Sigma \sqcup\Sigma_0)$ is $\sunitary{2}^2$--equivariantly diffeomorphic to $\R_+\times N$, where $N=\sunitary{2}^2/K$.

\begin{example}
Two of the families of cohomogeneity-one ALC $\gtwo$--metrics constructed in \cite{FHN:Coho1:ALC} (known as the $\mathbb{B}_7$ and $\mathbb{D}_7$ families of ALC $\gtwo$--metrics in the physics literature) are both defined on the same smooth manifold $S^3\times\R^4$, but viewed as a cohomogeneity-one manifold in two different ways. (Some of these metrics were known before \cite{FHN:Coho1:ALC} and we refer to the introduction of that paper for a precise history.) In both cases $K, K_0,K_\infty$ are contained in a subgroup $K'$ of $\sunitary{2}^2$, so that $M$ and its compactification $X$ can be described as the bundles over $\sunitary{2}^2/K'$ associated to the principal $K'$--bundle $\sunitary{2}^2\ra \sunitary{2}^2/K'$ and fibres the cohomogeneity-one manifolds with group diagrams, respectively,
\[
K \subset K_0\subset K', \qquad K \subset K_0,K_\infty\subset K'.
\]
\begin{enumerate}[left=0pt]
\item For the so-called $\mathbb{B}_7$ family \cite[Theorem B (i)]{FHN:Coho1:ALC} we have $K=\{1\}$ and $K_0=\triangle\sunitary{2}$ so that we can take $K'=K_0$. Then $\sunitary{2}^2\ra \sunitary{2}^2/K'\simeq S^3$ is a (necessarily trivial) $\sunitary{2}$--bundle and $M$ and $X$ are bundles over $S^3$ with fibres $\R^4$ and $\C\PP^2$ respectively. Thus $M=S^3\times \R^4$, $X=S^3\times\CP^2$, $N=S^3\times S^3$ and $\Sigma=S^2\times S^3$ and so according to Theorem \ref{thm:Topological:Hodge:G2} we have
\[
\dim \mathcal{H}^3_{\nu} = \begin{cases}
0 & \mbox{if } \nu<-3,\\
1 & \mbox{if } -3<\nu<-1.
\end{cases}
\]
\item For the so-called $\mathbb{D}_7$ family \cite[Theorem B (ii)]{FHN:Coho1:ALC} we have $K=\{1\}$ and $K_0=\unitary{1}\times\sunitary{2}$ so that we can take $K'=\unitary{1}\times\sunitary{2}$. Then we have $\sunitary{2}^2/K'\simeq S^2$ and $M$ and $X$ are fibre bundles over $S^2$ of the form $\sunitary{2}\times_{\unitary{1}}F$. The fibres $F$ are, respectively, $S^1\times \R^4$ and $S^5$ with $S^1$--actions given by the circle action on the first factor of $S^1\times \R^4$ and the action on $S^5$ induced by the diagonal action on $\C^3$. Hence $M=S^3\times \R^4$ and $X$ is the unit sphere bundle in $\mathcal{O}(1)\oplus\mathcal{O}(1)\oplus\mathcal{O}(1)\ra S^2$. Note that as a smooth sphere bundle the latter is trivial. Hence 
\[
\dim \mathcal{H}^3_{\nu} = \begin{cases}
0 & \mbox{if } \nu<-2,\\
1 & \mbox{if } -2<\nu<-1.
\end{cases}
\]
\end{enumerate}
\end{example}

\begin{example}
\label{ex:toric:Seifert}
A similar computation can be carried out for the infinitely many families of (Seifert) ALC $\gtwo$--holonomy metrics on $S^3\times\R^4$ constructed in \cite[Theorem 4.12]{Foscolo:ALC:Spin7}. Each family is labelled by a choice of positive integers $(p_1,p_2,q_1,q_2)$ such that $p_1+p_2=q_1+q_2$ and $(p_i,q_j)$ are coprime. This choice determines the tangent cone at infinity and because these cones are distinct for different choices of $(p_1,p_2,q_1,q_2)$, these families are all mutually nonisometric. The case $p_1=p_2=q_1=q_2=1$ corresponds to the~$\mathbb{D}_7$ family discussed above, but the metric is not of cohomogeneity-one in any other case (since for example the Sasaki--Einstein cross-section of the tangent cone at infinity will not be homogeneous): generically the metrics have only a \mbox{$T^3$--symmetry}, while in the special case ${p_1=p_2=p}$ there is a larger cohomogeneity-two symmetry group $\sunitary{2}\times T^2$. 
Nevertheless, in all cases we still have ${M=S^3\times \R^4}$, $N=S^3\times S^3$ and $\Sigma$ is diffeomorphic to $S^2\times S^3$. The compact manifold $X$ can be described as the unit sphere bundle in the orbibundle $\mathcal{O}(1)\oplus\mathcal{O}(q_1)\oplus\mathcal{O}(q_2)$ over the weighted projective line $W\C\PP^1[p_1,p_2]$. In particular, $H^3(X)=0$ and $\dim \mathcal{H}^3_{\nu}$ is as for the $\mathbb{D}_7$ family. 

Note however that in these examples (except when it reduces to the $\mathbb{D}_7$ family)
the Sasaki--Einstein cross-section $\Sigma$ is not regular. Therefore we cannot immediately  exclude the existence  of  further non-topological contributions to the moduli space of ALC $\gtwo$--holonomy metrics. See, however, the discussion in the next subsection. 
\end{example}

The infinitely many cohomogeneity-one ALC $\gtwo$--manifolds constructed in \cite[Theorem D (i)]{FHN:Coho1:ALC} are perhaps best understood as a special instance of the construction of \cite{FHN:ALC:G2:from:AC:CY3}. 
In the latter paper we produced families of ALC $\gtwo$--holonomy metrics on nontrivial principal circle bundles $M\ra B$ over an AC Calabi--Yau 3-fold $(B,\omega_0,\Omega_0)$ subject only to the constraint that $c_1(M)\cup[\omega_0]=0\in H^4(B)$. This construction yields cohomogeneity-one ALC $\gtwo$--metrics when the AC Calabi--Yau structure 
on $B$ is of cohomogeneity one. The simplest instance of this construction is provided by taking~$B$ to be the small resolution of the conifold $\{z_1^2 + z_2^2 + z_3^2 + z_4^2 = 0\} \subset \mathbb{C}^4$ (in this case there is no constraint on $c_1(M)$ because $H^4(B)=0$ as a small resolution);  this yields a different construction of metrics belonging to the~$\mathbb{D}_7$ family above. Another simple case to analyse is when $B=K_D$ is the canonical line bundle over a del Pezzo surface $D$. The cohomogeneity-one examples in\mbox{\cite[Theorem D (i)]{FHN:Coho1:ALC}} correspond to the choice $B=K_{\C\PP^1\times\C\PP^1}$, the canonical line bundle of  $\C\PP^1\times\C\PP^1$.

\begin{example}
Let $D$ be $\C\PP^1\times\C\PP^1$ or the blow-up of $\C\PP^2$ at $b\in \{ 1,\dots, 8\}$ points in general position. (Here we can safely exclude the case $D=\C\PP^2$ because of Corollary \ref{cor:flat}.) A K\"ahler class $[\omega_0]$ on $K_D$ can be identified with a K\"ahler class in $H^2(D)$. Consider now a nontrivial line bundle $L\ra D$ satisfying $c_1(L)\cup [\omega_0]=0$, 
let $S\subset L$ be the unit circle bundle in~$L$ and set $M=\pi^\ast S$. In the cohomogeneity-one case $D=\C\PP^1\times\C\PP^1$, the choice of $L$ is labelled by two positive integers $m,n>0$. The noncompact $7$-manifold $M$ retracts to the $5$-manifold $S$;  $S$ is finitely covered by the connected sum of $b$ copies of $S^2\times S^3$, where we set $b=1$ if $D=\C\PP^1\times \C\PP^1$. The Sasaki--Einstein 5-manifold $\Sigma$ is also covered by the connected sum of $b$ copies of $S^2\times S^3$ (in fact, it is simply connected unless $D=\C\PP^1\times\C\PP^1$, in which case the fundamental group of $\Sigma$ is~$\Z_2$). Since $H^3_c(M)\simeq H^4(M)\simeq H^4(S)=0$, a portion of the exact sequence \eqref{eq:pair_exact} reads
\[
H^3_c(M)=0\ra H^3(X)\ra H^3(\Sigma)\ra H^4_c(M)\ra H^4(X)\ra 0=H^4(M)
\] 
while the exact sequence \eqref{eq:thom_exact} is
\[
H^1(\Sigma)=0\ra H^3(X)\ra H^3(M)\ra H^2(\Sigma) \ra H^4(X) \ra 0=H^4(M).
\]
Now, the compactification $X$ of $M$ obtained by collapsing the circle fibres in $S$ can be identified with the unit $3$-sphere bundle in $L\oplus K_D\ra D$. We distinguish two cases.
\begin{enumerate}
\item If $c_1(L)\cup c_1(K_D)=0$ (for example if $[\omega_0]$ is a multiple of $c_1(K_D)$, \ie the K\"ahler class of $K_D$ is compactly supported) then the Euler class of the bundle $L\oplus K_D$ is trivial. The Gysin sequence then yields isomorphisms $H^3(X)\simeq H^0(D)$ (by push-forward) and $H^4(D)\simeq H^4(X)$ (by pull-back). In particular, $H^3(X)$ is one-dimensional. We conclude that
\[
\dim \mathcal{H}^3_{\nu} = \begin{cases}
0 & \mbox{if } \nu<-3,\\
1 & \mbox{if } -3<\nu<-2,\\
b & \mbox{if } -2<\nu<-1.
\end{cases}
\]
\item Otherwise the Euler class of the bundle $L\oplus K_D$ is not trivial and the Gysin sequence forces $H^3(X)=0$. We conclude that
\[
\dim \mathcal{H}^3_{\nu} = \begin{cases}
0 & \mbox{if } \nu<-2,\\
b & \mbox{if } -2<\nu<-1.
\end{cases}
\]
\end{enumerate}
\end{example}

Finally, the existence of a further $1$-parameter family of cohomogeneity-one ALC $\gtwo$--metrics of dihedral type, termed the $\mathbb{A}_7$ family, has been conjectured in the physics literature~\cite{ALC:A7}. These conjectural metrics are invariant under $\sunitary{2}^2$ but not $\sunitary{2}^2\times\unitary{1}$ so they are not covered by our work \cite{FHN:Coho1:ALC}, 
where the extra $\unitary{1}$ symmetry is used crucially to obtain 
a more tractable ODE system. 
Our main Theorem \ref{mthm:Dihedral} establishes the existence of (some of) these metrics by analytic methods, but uniqueness in the construction and invariance of the building blocks can be used to prove that these metrics are indeed $\sunitary{2}^2$--invariant. However, we still lack an ODE-based proof of the existence 
of the $\mathbb{A}_7$ family.
The following computation shows that the construction of Theorem \ref{mthm:Dihedral} is locally surjective.

\begin{example}
The description of the $\sunitary{2}^2$--action implies that $M$ is $\sunitary{2}^2$--equivariantly diffeo\-morphic to the (necessarily trivial) fibration $\sunitary{2}^2\times_{\triangle\sunitary{2}}F$, with $F$ the double cover of the Atiyah--Hitchin manifold. Hence $M=S^3\times (\CP^2\setminus\R\PP^2)$, $X=S^3\times\CP^2$, $N=(S^3\times S^3)/\triangle\Z_4$ and $\Sigma=(S^2\times S^3)/\Z_2$, where $\Z_2$ acts as the antipodal map on the first factor. This is an ALC manifold of dihedral type and applying Theorem \ref{thm:Topological:Hodge:G2} we obtain
\[
\dim \mathcal{H}^3_{\nu} = \begin{cases}
0 & \mbox{if } \nu<-3,\\
1 & \mbox{if } -3<\nu<-1
\end{cases}
\]
since $H^3(M)\ra H^3(\Sigma)$ is an isomorphism of $1$-dimensional spaces, the map $H^3(M)\ra H^2_-(\Sigma)$ vanishes and $H^4(M)=0=H^2(\Sigma)$. 
\end{example}

\subsection{ALC \gtwo--metrics on principal circle bundles over AC Calabi--Yau 3-folds} In \cite{FHN:ALC:G2:from:AC:CY3} (extended to the Seifert case in \cite[Section 4.3]{Foscolo:ALC:Spin7}) we established a general existence result for (highly collapsed) ALC \gtwo--metrics on the total space of any suitable nontrivial principal circle bundle $M\ra B$ over an AC Calabi--Yau 3-fold $B$. In this context, it is natural to compare the moduli space of ALC $\gtwo$--metrics on $M$ with the moduli space of AC Calabi--Yau structures (satisfying appropriate constraints) on the underlying $6$-manifold $B$. As a coda to the discussion in this section, we will now explain why these two different moduli spaces are locally diffeomorphic. In particular, exploiting the recent classification of AC Calabi--Yau manifolds by Conlon--Hein \cite{Conlon:Hein:Classification}, for all the examples of ALC $\gtwo$--manifolds arising from \cite{FHN:ALC:G2:from:AC:CY3} we are able to determine the dimension of the moduli space of ALC $\gtwo$--metrics even for decay rates beyond the range where all moduli are accounted for by cohomology. 
(The cohomological contributions accounted for by Theorem \ref{thm:Topological:Hodge:G2} can be directly compared to topological contributions to the moduli space of AC Calabi--Yau structures via the Gysin sequence). 

Let $M\ra B$ be a principal circle bundle over a noncompact 1-ended $6$-manifold $B$. The ALC $\gtwo$--metrics on $M$ constructed in \cite{FHN:ALC:G2:from:AC:CY3} are $S^1$--invariant (a posteriori necessarily so, given Theorem \ref{thm:Circle:Symmetry:Cyclic:ALC}) and take the form
\[
\varphi = \epsilon\,\theta\wedge\omega + h^{\frac{3}{4}}\Real\Omega, \qquad g_\varphi = \sqrt{h}\, g_{\omega,\Omega} + \epsilon^2\, h^{-1}\theta^2,
\]
where $(\omega,\Omega)$ is an $\sunitary{3}$--structure on $B$, $h\co B\ra \R_{>0}$, $\theta$ a connection on $M\ra B$ and $\epsilon>0$ a small parameter. The torsion-free condition for $\varphi$ becomes a nonlinear PDE system on $B$ for $(\omega,\Omega,h,\theta)$ that in \cite{FHN:ALC:G2:from:AC:CY3} we called the \emph{Apostolov--Salamon equations} (since it was first considered in the mathematics literature by Apostolov--Salamon in~\cite{Apostolov:Salamon}). The Apostolov--Salamon equations depend real analytically on the small parameter $\epsilon\geq 0$ and when $\epsilon= 0$ they reduce to the Calabi--Yau condition for the $\sunitary{3}$--structure $(\omega,\Omega)$ together with the (abelian) Calabi--Yau monopole equation for $(h,\theta)$. The strategy of \cite{FHN:ALC:G2:from:AC:CY3} is to construct a real analytic family of solutions 
to the Apostolov--Salamon equations for $\epsilon\geq 0$ sufficiently small starting from an AC Calabi--Yau structure on $B$ and the unique Hermitian Yang--Mills connection $\theta_0$ on $B$ with curvature in $c_1(M)$ (which gives rise to the solution $(1,\theta_0)$ of the Calabi--Yau monopole equation). This curve of solutions exists for any AC Calabi--Yau structure $(\omega_0,\Omega_0)$ on $B$ subject only to the constraint that
$c_1(M)\cup [\omega_0]=0\in H^4(B)$ for the chosen nonzero cohomology class $c_1(M)$.

\begin{theorem}\label{thm:Moduli:AC:CY}
Let $(B,\omega_0,\Omega_0)$ be an AC Calabi--Yau 3-fold asymptotic to the Calabi--Yau cone $\tu{C}(\Sigma)$ and let $M\ra B$ be a principal circle bundle with $c_1(M)\cup [\omega_0]=0\in H^4(B)$. Fix $\epsilon_0>0$ sufficiently small so that for all $\epsilon\in (0,\epsilon_0)$ \cite[Theorem 1.1]{FHN:ALC:G2:from:AC:CY3} guarantees the existence of an $S^1$--invariant ALC torsion-free $\gtwo$--structure $\varphi_\epsilon$ on $M$ with length of the circle fibre of the ALC end equal to $2\pi\epsilon$.
\begin{enumerate}[left=0pt]
\item For $\nu\in (-3,-1)$ generic there exists a smooth moduli space $\mathcal{M}_\nu^{\tu{CY}}$ of AC Calabi--Yau deformations $(\omega'_0,\Omega'_0)$ of $(\omega_0,\Omega_0)$ on $B$ asymptotic with rate $\nu$ to the same asymptotic cone $C(\Sigma)$ and satisfying $c_1(M)\cup [\omega'_0]=0\in H^4(B)$.
\item For $\nu$ as above, there exists $\epsilon'_0\leq \epsilon_0$ such that for all $\epsilon\in (0,\epsilon'_0)$ the moduli space $\mathcal{M}_\nu^{\tu{CY}}$ is locally diffeomorphic to the moduli space $\mathcal{M}_\nu=\mathcal{M}_\nu^\epsilon$ of ALC $\gtwo$--deformations of $\varphi_\epsilon$ of rate $\nu$ constructed in Theorem \ref{thm:Moduli:Spaces}.
\end{enumerate}
\end{theorem}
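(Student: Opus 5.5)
Part (i) is the analogue for AC Calabi--Yau $3$-folds of Theorem \ref{thm:Moduli:Spaces}. The unconstrained moduli space of AC Calabi--Yau structures of rate $\nu$ asymptotic to $\tu{C}(\Sigma)$ is known to be smooth for generic $\nu$ in this range. Its tangent space is the space of closed and coclosed pairs $(\dot\omega,\dot\Omega)$ of rate $\nu$, and for $\nu\in(-3,-1)$ these are governed by $\mathcal{H}^2_\nu(B)\oplus\mathcal{H}^3_\nu(B)$ in the usual Conlon--Hein / Karigiannis--Lotay-type picture, with indicial roots controlled by Propositions \ref{prop:Closed:Even:Forms} and \ref{prop:Closed:Odd:Forms}. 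I would take this as known and impose the linear condition $c_1(M)\cup[\omega'_0]=0$. Since $[\omega'_0]\mapsto c_1(M)\cup[\omega'_0]$ is linear and the map from the moduli space to $H^2(B)$ is smooth, the constrained moduli space is the preimage of a linear subspace. I would check that this preimage is cut out transversally, possibly after passing to the submanifold where the class is constant along the non-topological directions. Alternatively, I would simply define $\mathcal{M}^{\tu{CY}}_\nu$ as a smooth submanifold via the implicit function theorem applied to the Calabi--Yau deformation map restricted to $\ker(c_1(M)\cup\cdot)$ on the $H^2$ factor.

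For (ii), I would work through the family parameter $\epsilon$. First, consider the Apostolov--Salamon solutions of \cite{FHN:ALC:G2:from:AC:CY3}. These depend real-analytically on $\epsilon\in[0,\epsilon_0)$, and by the same fixed point argument they also depend smoothly on the initial Calabi--Yau data $(\omega'_0,\Omega'_0)\in\mathcal{M}^{\tu{CY}}_\nu$. This gives a smooth map $\Phi_\epsilon\co\mathcal{M}^{\tu{CY}}_\nu\to\mathcal{M}_\nu^\epsilon$. For this I need that the construction commutes with the gauge groups, namely that diffeomorphisms of $B$ decaying at rate $\nu+1$ lift to $\mathcal{D}_{\nu+1}$ on $M$.

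Next, I would show that $\Phi_\epsilon$ is a local diffeomorphism by comparing tangent spaces: $d\Phi_\epsilon$ should be an isomorphism onto $\mathcal{H}^3_\nu(M,g_{\varphi_\epsilon})$. To get there, I would show that every element of $\mathcal{H}^3_\nu(M)$ is $S^1$-invariant. By Corollary \ref{cor:Circle:Symmetry:Cyclic:ALC:Killing:Field} the Killing field generates an action preserving $\varphi_\epsilon$, and the kernel of the linearised problem is finite-dimensional and $S^1$-invariant. Any nonzero Fourier mode would then decay exponentially (cf.\ Theorem \ref{thm:Exp:Decay:ALC:G2}) and be $L^2$, and for small $\epsilon$ the circle is short, so the fibrewise Poincar\'e inequality of Lemma \ref{lem:Control:Oscillatory}, with constant $\epsilon^{-1}$, forces it to vanish. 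An $S^1$-invariant closed and coclosed $3$-form decomposes as $\theta\wedge\beta+\alpha$ with $\alpha,\beta$ forms on $B$. This gives a linearised Apostolov--Salamon system on $B$, which at $\epsilon=0$ decouples into the linearised Calabi--Yau system for $(\dot\omega,\dot\Omega)$ and the linearised monopole system for $(\dot h,\dot\theta)$.

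At $\epsilon=0$ I would identify this kernel with $T\mathcal{M}^{\tu{CY}}_\nu$. The monopole part contributes nothing new at rate $\nu<-1$: decaying harmonic functions vanish and the HYM connection is unique, but it does impose the constraint that the first-order variation of $c_1(M)\cup[\omega]$ is zero. The crossing of this requirement with the class $c_1(M)\cup[\dot\omega]$ is where the constraint enters. Finally, I would use a uniform-in-$\epsilon$ Fredholm argument for the collapsing family on invariant forms, working in weighted spaces on $B$ with $\nu$ generic so that there are no indicial roots. Upper semicontinuity together with the explicit construction of a kernel for every $\epsilon$ via $\Phi_\epsilon$ then gives that $\dim$ is constant for $\epsilon<\epsilon'_0$, and $d\Phi_\epsilon$ is injective by continuity from $\epsilon=0$.

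I expect the main obstacle to be the uniform linear analysis as $\epsilon\to0$: the ALC metrics collapse, so the estimates of Part \ref{Part1} must be redone with $\epsilon$-independent constants on $S^1$-invariant forms. I would first try the rescaled-circle viewpoint already used in the slice lemma of Section \ref{sec:def_theory}. The identification of the monopole constraint with $c_1(M)\cup[\omega'_0]=0$ is the other delicate point.
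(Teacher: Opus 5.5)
The genuine gap is in (ii), at exactly the point you call delicate, and it breaks your dimension count. An $S^1$--invariant $\rho\in\mathcal{H}^3_\nu(M,g_{\varphi_\epsilon})$ can be written $\rho=\epsilon\,\theta\wedge\rho_2+h^{1/2}\ast_{\omega,\Omega}\rho_3$. Then $d\rho=0=d\ast_{\varphi}\rho$ is the system $d\rho_2=0=d\rho_3$, $d(h^{1/2}\ast_{\omega,\Omega}\rho_3)=-\epsilon\,d\theta\wedge\rho_2$, $d(h\ast_{\omega,\Omega}\rho_2)=-\epsilon\,d\theta\wedge\rho_3$. There is no separate monopole unknown $(\dot h,\dot\theta)$ that could impose anything at $\epsilon=0$. (Your claims that the limit ``decouples'' and that ``the monopole part imposes the constraint'' also contradict each other.) The $\epsilon=0$ limit is the \emph{unconstrained} space $\mathcal{H}^2_\nu(B,g_0)\oplus\mathcal{H}^3_\nu(B,g_0)$, so the kernel genuinely jumps at $\epsilon=0$. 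Upper semicontinuity therefore only gives $\dim\mathcal{H}^3_\nu(M)\le\dim\mathcal{H}^2_\nu(B,g_0)+\dim\mathcal{H}^3_\nu(B,g_0)$. This exceeds $\dim\mathcal{M}^{\tu{CY}}_\nu$ whenever $c_1(M)\cup\cdot$ is nonzero on the classes of $\mathcal{H}^2_\nu(B,g_0)$. The missing observation is that for every $\epsilon>0$ the equation $d(h^{1/2}\ast_{\omega,\Omega}\rho_3)=-\epsilon\,d\theta\wedge\rho_2$ already makes $d\theta\wedge\rho_2$ exact. Hence $c_1(M)\cup[\rho_2]=0$ holds automatically: the constraint is an $\epsilon>0$ phenomenon that is invisible in the limit, and it must be built into the problem before any perturbation argument.

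The paper does exactly this, on the fixed AC manifold $(B,g_0)$. Using the constraint, $H^5(B)=0$, and a lemma producing primitives of exact forms with controlled decay, it writes all $\epsilon$--dependent terms as $dQ(\rho)$ with $Q$ of small norm on $C^{1,\alpha}_\nu$; this is where $\nu>-3$ enters. It then solves $\triangle\gamma_i=\ast\,dQ_i$ via \cite[\S 5]{FHN:ALC:G2:from:AC:CY3}. This shows that for small $\epsilon>0$ the solution space is a graph over the constrained limit space, since the corrections $d\gamma_i$ do not change $[\rho_2]$. That yields both inequalities at once and needs no collapsed analysis on $M$.

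It also makes your map $\Phi_\epsilon$ superfluous. Its smooth dependence on the Calabi--Yau data, its gauge compatibility, the fact that it lands in $\mathcal{X}_\nu$ with the same model and fibre length, and injectivity of $d\Phi_\epsilon$ ``by continuity'' are all unproved. The statement only requires identifying the tangent spaces of two smooth manifolds.

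Two smaller points:
\begin{itemize}
\item In (i), the unconstrained AC Calabi--Yau moduli space is not available in the literature. The paper assembles it from the type decomposition of closed and coclosed forms of rate $\nu<0$, Conlon--Hein for the K\"ahler directions, and an AC Tian--Todorov argument from \cite[Theorem 8.1]{FHN:ALC:G2:from:AC:CY3}. The constraint then cuts out a submanifold because $\mathcal{H}^2_\nu(B,g_0)\to H^2(B)$ is injective.
\item In the $S^1$--invariance step, a fibrewise Poincar\'e inequality alone does not kill a harmonic form. Either pair it with the Weitzenb\"ock formula, or note that a weight-$k\neq 0$ mode satisfies $ik\rho=\mathcal{L}_\xi\rho=d(\xi\lrcorner\rho)$ with a rapidly decaying primitive, so it vanishes by integration by parts.
\end{itemize}
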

For the precise definition of an AC Calabi--Yau 3-fold we refer the reader to \cite[Definition 3.10]{FHN:ALC:G2:from:AC:CY3}. 
We point out that this definition is equivalent to requiring that the isometric product $B\times S^1$ be an ALC $\gtwo$--manifold as in Definition \ref{def:ALC:G2} asymptotic to the trivial circle bundle $\tu{C}(\Sigma)\times S^1$.

The notation $\mathcal{M}_\nu^\epsilon$ emphasises the fact that we are gauging away scaling by considering a moduli space of ALC $\gtwo$--metrics with fixed asymptotic length $2\pi\epsilon$ of the circle fibres at infinity. The moduli space of Calabi--Yau metrics $\mathcal{M}_\nu^{\tu{CY}}$ does instead include deformations arising from scaling.

\proof
We provide a detailed sketch of the proof. Fix $\epsilon>0$ as small as necessary and set $\varphi=\varphi_\epsilon$. In the limit $\epsilon\ra 0$, we will first explain how to interpret the tangent space $\mathcal{H}^3_\nu(M,g_\varphi)$ to $\mathcal{M}_\nu$ at the diffeomorphism-orbit of $\varphi$ in terms of limiting data purely on the AC Calabi--Yau 3-fold $(B,\omega_0,\Omega_0)$. We will then explain how to use existing results about AC Calabi--Yau 3-folds to identify the limiting data as the tangent space to the moduli space $\mathcal{M}_\nu^{\tu{CY}}$ at the diffeomorphism-orbit of $(\omega_0,\Omega_0)$. 

Consider a $3$-form $\rho\in \mathcal{H}^3_\nu(M,g_\varphi)$ for some $\nu<-1$. Together with the invariance of the slice of Proposition \ref{prop:Slice:Thm} under $\tu{Aut}_\nu (M,\varphi)$, a linearisation of the proof of Theorem \ref{thm:Circle:Symmetry:Cyclic:ALC} implies that $\rho$ must be $S^1$--invariant. Therefore $\rho$ can be written in the form
\[
\rho = \epsilon\,\theta\wedge\rho_2 + h^{\frac{1}{2}}\ast_{\omega,\Omega}\rho_3, \qquad \ast_{\varphi}\rho = \epsilon\,\theta\wedge \rho_3 + h \ast_{\omega,\Omega}\rho_2,
\]
for a pair $(\rho_2,\rho_3)\in \Omega^2(B)\oplus\Omega^3(B)$ of rate $O(r^\nu)$. The pair $(\rho_2,\rho_3)$ satisfies a PDE system on $B$ depending on $(\omega,\Omega,h,\theta)$ and $\epsilon$:
\begin{equation}\label{eq:AS:linearised}
d\rho_2=0= d\left( h^{\frac{1}{2}}\ast_{\omega,\Omega}\rho_3\right) + \epsilon d\theta\wedge\rho_2, \qquad d\rho_3=0= d\left( h \ast_{\omega,\Omega}\rho_2\right) +\epsilon d\theta\wedge \rho_3.
\end{equation}
This system is real analytic in $\epsilon$ all the way to $\epsilon=0$. Since $(\omega,\Omega,h)$ converges smoothly to $(\omega_0,\Omega_0,1)$ as $\epsilon\ra 0$, solutions to the limiting system for $\epsilon=0$ can be identified with the subspace of $\mathcal{H}_\nu^2(B,g_0)\oplus\mathcal{H}^3_\nu(B,g_0)$ cut out by the constraint $c_1(M)\cup [\rho_2]=0\in H^4(B)$. Here $\mathcal{H}^k_\nu(B,g_0)$ denotes the space of $k$-forms on $B$ of rate $\nu$ that are closed and coclosed with respect to the metric $g_0$ induced by the AC Calabi--Yau structure $(\omega_0,
\Omega_0)$. The cohomological constraint $c_1(M)\cup [\rho_2]=0\in H^4(B)$ arises from the fact that~\eqref{eq:AS:linearised} implies that $d\theta\wedge\rho_2$ is exact for $\epsilon>0$. The analogous constraint $c_1(M)\cup \left[ \rho_3\right]=0$ is automatically satisfied since one can show that $H^5(B)=0$ for any AC Calabi--Yau 3-fold. 

The linear analysis on AC Calabi--Yau 3-folds developed in \cite[Section 5]{FHN:ALC:G2:from:AC:CY3} can now be used to identify the space of solutions $(\rho_2,\rho_3)$ to \eqref{eq:AS:linearised} for all $\epsilon>0$ sufficiently small with the graph of a bounded linear map on the space of solutions of the limiting $\epsilon=0$ system. Indeed, we can write \eqref{eq:AS:linearised} as the system
\[
d\rho=0, \qquad d^{\ast_{g_0}}\rho = \ast_{g_0}R(\rho)
\]
for the pair of forms $\rho=(\rho_2,\rho_3)$ and  an appropriate remainder term $R(\rho)$. The two components of this remainder term $R(\rho)$ are given by
\begin{equation}\label{eq:AS:linearised:remainder}
d\left( (h\ast_{\omega,\Omega}-\ast_{g_0})\rho_2\right)+\epsilon d\theta\wedge\rho_3, \qquad d\left( (h^{\frac{1}{2}}\ast_{\omega,\Omega}-\ast_{g_0})\rho_3\right)+\epsilon d\theta\wedge\rho_2,
\end{equation}
for the pair $(\rho_2,\rho_3)$ of closed forms. By \cite[Theorem 8.1]{FHN:ALC:G2:from:AC:CY3}, $g_{\omega,\Omega}-g_0=O(\epsilon r^{-1})$, $h-1=O(\epsilon r^{-2+\delta})$ for any $\delta>0$ and $d\theta=O(r^{-2})$. We will use the following observation to rewrite the remainder terms.
\begin{lemma}
Let $\gamma$ be an exact $k$-form of rate $\mu$ on the AC manifold $(B,g_0)$. Then $\gamma=d\xi$ where $\xi=O(r^{\max\{-k+1,\mu+1\}})$.
\proof
We will first show that we can find a primitive $\xi_\infty$ of the restriction of $\gamma$ to the AC end with $\xi_\infty=O(r^{\mu+1})$. If $\mu<-k$, this is just the first part of Lemma \ref{lem:Exact:forms:cone}. We can therefore assume that $\mu\geq -k$. Write $\gamma = dr\wedge\alpha + \beta$. The fact that $\gamma$ is closed implies that $d_\Sigma\alpha - \partial_r\beta = 0= d_\Sigma\beta$. If $\gamma$ is moreover assumed to be exact, then $[\beta|_{\{ r=\tu{const}\}}]=0\in H^{k}(\Sigma)$. In particular, given $r_0$ large, we can write $\beta|_{\{ r=r_0\}}=d_\Sigma\xi_{r_0}$ for some $(k-1)$-form $\xi_{r_0}$ on $\Sigma$. Then
\[
\gamma = d\xi_\infty, \qquad \xi_\infty=  \int_{r_0}^r{\alpha\, dr} + \xi_{r_0}
\]
with $\xi_\infty = O(r^{\mu+1})$ as claimed since $\xi_\infty=O(r^{-k+1})$.

Now, fix a cutoff function $\chi$ with $\chi\equiv 1$ on the AC end. The form $\xi-d(\chi\xi_\infty)$ is a closed compactly supported form representing a cohomology class in $\ker H^k_c(B)\ra H^k(B)$. We can therefore write $\gamma - d(\chi\xi_\infty) = d\xi' + d(\chi \tau)$, with $\xi'$ compactly supported and $\tau\in\mathcal{H}^{k-1}(\Sigma)$ such that $[\tau]\in H^{k-1}(M)$ maps to $[\xi-d(\chi\xi_\infty)]\in H^k_c(B)$. We can then set $\xi = \chi(\xi_\infty + \tau)+\xi'$.
\endproof
\end{lemma}

Using this lemma, we can rewrite \eqref{eq:AS:linearised} as the system
\[
d\rho_2=0=d\rho_3, \qquad d^{\ast_{g_0}}\rho_2 = \ast_{g_0} dQ_4(\rho_2,\rho_3),\qquad d^{\ast_{g_0}}\rho_3 = \ast_{g_0} dQ_3(\rho_2,\rho_3)
\]
where if $\rho_2,\rho_3\in C^{1,\alpha}_\mu$ then $Q_4(\rho_2,\rho_3)$ is of class $C^{1,\alpha}$ and has rate $\max{ \{ -4,\mu-1\}}$ and $Q_3(\rho_2,\rho_3)$ is of class $C^{1,\alpha}$ and has rate $\max{ \{ -3,\mu-1\}}$. In particular, since we assume $\nu>-3$, $Q$ defines an operator $Q\co C^{1,\alpha}_\nu\ra C^{1,\alpha}_\nu$ with norm that can be taken as small as necessary as $\epsilon\ra 0$. The claimed isomorphism between the subspace of $\mathcal{H}^2_\nu(B,g_0)\oplus\mathcal{H}^3_\nu(B,g_0)$ cut out by $c_1(M)\cup [\rho_2]=0$ and solutions of \eqref{eq:AS:linearised} is therefore established by showing that the equations
\[
d\rho_2=0=d\rho_3, \qquad d^{\ast_{g_0}}\rho_2 = \ast_{g_0} dQ_4,\qquad d^{\ast_{g_0}}\rho_3 = \ast_{g_0} dQ_3
\]
have a solution $(\rho_2,\rho_3)\in C^{1,\alpha}_\nu$ for every $Q_4,Q_3\in C^{1,\alpha}_\nu$ (with corresponding estimates). This follows from the analysis developed in \cite[Section 5]{FHN:ALC:G2:from:AC:CY3} by looking for a solution of the form $\rho_2=d\gamma_1$ and $\rho_3=d\gamma_2$ for $\gamma_1,\gamma_2\in C^{2,\alpha}_{\nu+1}$ satisfying
\[
\triangle\gamma_1 = \ast dQ_4, \qquad \triangle \gamma_2=\ast dQ_3.
\]
By \cite[Lemma 5.6]{FHN:ALC:G2:from:AC:CY3} any solution of this second-order system must satisfy $d^\ast\gamma_i=0$ (since $d^\ast\gamma_1$ is a decaying harmonic function and $d^\ast\gamma_2$ is a decaying harmonic $1$-form), so that $(d\gamma_1,d\gamma_2)$ is a solution of the original first-order system. The existence (with estimates) of $\gamma_1$ and $\gamma_2$ follows from \cite[Lemma 5.6]{FHN:ALC:G2:from:AC:CY3} (following the same argument given in \cite[Proposition 5.8]{FHN:ALC:G2:from:AC:CY3}) and from \cite[Proposition 5.16]{FHN:ALC:G2:from:AC:CY3} (since we assume $\nu>-3$, only the initial easier part of the proof is necessary).

It remains to identify, for $\nu\in (-3,-1)$ generic, the space $\mathcal{H}_\nu^2(B,g_0)\oplus\mathcal{H}^3_\nu(B,g_0)$ with the tangent space of an unobstructed moduli space $\widetilde{\mathcal{M}}_\nu^\tu{CY}$ of AC Calabi--Yau deformations of rate $\nu$ at $(\omega_0,\Omega_0)$, with $\mathcal{M}_\nu^\tu{CY}\subseteq\widetilde{\mathcal{M}}_\nu^\tu{CY}$ the submanifold cut out by the constraint $[\omega]\cup c_1(M)=0$. As far as we know, the construction of $\widetilde{\mathcal{M}}_\nu^\tu{CY}$ is not explicitly stated in the literature. However, it can be deduced easily from the analysis we developed in \cite{FHN:ALC:G2:from:AC:CY3} and the existing literature on AC Calabi--Yau metrics. We sketch the main steps.
\begin{enumerate}[left=0pt]
\item Firstly, by proving that closed and coclosed forms of rate $\nu$ are of particular $(p,q)$--types one shows that they coincide with infinitesimal Calabi--Yau deformations. More precisely, one shows that any form $\rho_2\in\mathcal{H}^2_\nu(B,g_0)$ satisfies $\rho_2\wedge\omega_0^2=0=\rho_2\wedge\Omega_0$ (\ie $\rho_2$ is a primitive $(1,1)$--form) and any $\rho_3\in\mathcal{H}^3_\nu(B,g_0)$ satisfies $\rho_3\wedge\omega_0=0=\rho_3\wedge\Omega_0$ (\ie $\rho_3$ is a primitive form of type $(1,2)+(2,1)$). These results hold under the assumption $\nu<0$ and use the fact that $g_0$ has holonomy contained in $\sunitary{3}$, so that, for example, $\ast_{g_0} (\rho_2\wedge\omega_0^2)$ is a decaying harmonic function, and a vanishing result for decaying harmonic functions and 1-forms on an AC Calabi--Yau 3-fold 
\cite[Lemma 5.6]{FHN:ALC:G2:from:AC:CY3}
(analogous to Lemma \ref{lem:Vanishing:harmonic}). It follows that, for any $(\rho_2,\rho_3)\in \mathcal{H}_\nu^2(B,g_0)\oplus\mathcal{H}^3_\nu(B,g_0)$, the pair of closed forms $(\omega,\Omega)=(\omega_0,\Omega_0)+s(\rho_2,\rho_3 -i\ast_{g_0}\rho_3)$ satisfies the $\sunitary{3}$--structure constraints $\omega\wedge\Omega=0=2\omega^3-3\Real\Omega\wedge\Imag\Omega$ up to terms of order $O(s^2)$. Thus $(\rho_2,\rho_3)$ is an infinitesimal Calabi--Yau deformation since the Calabi--Yau condition is precisely that $(\omega,
\Omega)$ is an $\sunitary{3}$--structure satisfying $d\omega=0=d\Omega$.
\item Exploiting results about existence and uniqueness of AC K\"ahler Ricci-flat metrics with respect to a fixed complex structure as systematised by Conlon--Hein \cite{Conlon:Hein:I}, one shows that for any  $\rho_2\in\mathcal{H}^2_\nu(B,g_0)$ and for $s$ sufficiently small there exists an AC Calabi--Yau structure $(\omega,\Omega)$ with $\omega = \omega_0 + s\rho_2 + i\partial\overline{\partial} u$ and $\Omega=\Omega_0$. This is done by solving a complex Monge--Amp\`ere equation for $u$. Indeed, the map $\rho_2\mapsto [\rho_2]\in H^2(B)$ is injective due to \cite[Theorem 5.12 (ii)]{FHN:ALC:G2:from:AC:CY3} and $\omega$ is the unique AC K\"ahler metric in the K\"ahler class $[\omega_0]+s[\rho_2]$ satisfying $\omega^3 = \omega_0^3$, which exists by \cite{Conlon:Hein:I}.
\item Finally, one can establish a version of the Tian--Todorov unobstructedness of deformations of the complex structure of compact Calabi--Yau 3-folds in the AC setting as a byproduct of the proof of the main theorem of \cite{FHN:ALC:G2:from:AC:CY3}. More precisely, the last remark of \cite[Section 8]{FHN:ALC:G2:from:AC:CY3} explains how to use \cite[Theorem 8.1]{FHN:ALC:G2:from:AC:CY3} to show that for any $\rho_3\in\mathcal{H}^3_\nu(B,g_0)$ the pair
$(\omega_0,\Omega_0) + s(0,\rho_3-i\ast_{g_0}\rho_3)$ can be deformed into an AC Calabi--Yau structure by adding lower-order terms. The analogy with the Tian--Todorov unobstructedness of deformations of the complex structure is due to the fact that on a Calabi--Yau 3-fold contraction with the holomorphic volume form $\Omega_0$ allows one to identify infinitesimal deformations of the complex structure $J$ for which the fixed K\"ahler form $\omega$ remains $J$--invariant with $3$-forms $\rho_3$ satisfying $\rho_3\wedge\omega=0=\rho_3\wedge\Omega_0$.
\end{enumerate}
These ingredients can be used to define an unobstructed moduli space $\widetilde{\mathcal{M}}_\nu^\tu{CY}$ of AC Calabi--Yau 3-folds with tangent space at the orbit of $(\omega_0,\Omega_0)$ given by $\mathcal{H}_\nu^2(B,g_0)\oplus\mathcal{H}^3_\nu(B,g_0)$.  For a more detailed description of this purely differential-geometric approach 
to the Calabi--Yau moduli space in the case of smooth compact $3$-folds we refer the reader to \cite{Nordstrom:Thesis}, which builds on Hitchin's seminal work on stable forms \cite{Hitchin:3forms}; see also \cite{Goto:Deformations} for a similar differential-geometric construction of the moduli space of compact Calabi--Yau manifolds in any dimension. 

Finally, the constraint $c_1(M)\cup [\omega]=0$ cuts out a smooth submanifold $\mathcal{M}_\nu^\tu{CY}$ in $\widetilde{\mathcal{M}}_\nu^\tu{CY}$ since $\mathcal{H}^2_\nu (B,g_0)\simeq H^2_c(B)\subseteq H^2(B)$ if $\nu \in (-3,-2)$ and $\mathcal{H}^2_\nu (B,g_0)\simeq H^2(B)$ if $\nu \in (-2,-1)$ by \cite[Theorem 5.12 (ii)]{FHN:ALC:G2:from:AC:CY3}.
 \endproof

We observe that Theorem~\ref{thm:Moduli:AC:CY}
allows us to determine the dimension of the moduli space  of ALC $\gtwo$--structures for all the cyclic ALC \gtwo-manifolds produced by \cite{FHN:ALC:G2:from:AC:CY3}, 
even at rates beyond the range 
that consists entirely of topological contributions. Indeed, the classification of AC Calabi--Yau metrics in \cite{Conlon:Hein:Classification} implies that the moduli space of AC Calabi--Yau structures can be locally identified with the product of $H^2(B)$ with the versal deformations of negative weight of the underlying complex manifold $(B,J_{\Omega_0})$.
For instance, we apply this methodology to compute the
dimension of the ALC moduli spaces for the following infinite class of cyclic ALC \gtwo--manifolds 
we constructed previously. 
\begin{example}
In \cite[Corollary 9.5]{FHN:ALC:G2:from:AC:CY3} we constructed infinitely many diffeomorphism types of simply connected cyclic ALC $\gtwo$--manifolds by considering a principal circle bundle over small resolutions $B$ of affine hypersurface singularities of the form $X_p=\{ x^2+y^2 +z^{p+1} -w^{p+1}=0\}\subset \C^4$. According to Friedman \cite[\S 2]{Friedman} the affine variety $X_p$ admits a complex $(p-1)$-parameter family of deformations with negative weight that lift to deformations of the small resolution $B$. Taking into account the choice of K\"ahler class, each of these deformations carries a $p$-parameter family of AC Calabi--Yau structures asymptotic to the Calabi--Yau cone $X_p$ with rate $-\frac{3}{2}$ (as can be checked by an explicit calculation of the weights of the natural $\C^\ast$--action on the versal deformation space of $X_p$). Bearing in mind that here the constraint $c_1(M)\cup [\omega_0]=0$ is vacuous since $H^4(B)=0$ (because $B$ is a small resolution), we conclude that the ALC $\gtwo$--manifolds of \cite[Corollary 9.5]{FHN:ALC:G2:from:AC:CY3} belong to a moduli space of dimension $3p-2 = 2(p-1)+p$ modulo scalings.
\end{example}

\begin{remark*}
The framework described in the first part of \cite{Foscolo:ALC:Spin7} allows one to extend without any change the statement and proof of Theorem \ref{thm:Moduli:AC:CY} to the setting of highly-collapsed ALC \gtwo--manifolds with ALC Seifert ends arising from principal Seifert circle bundles over AC Calabi--Yau orbifolds. 
Many such ALC \gtwo--manifolds were constructed in \cite{Foscolo:ALC:Spin7}, 
for instance the infinitely many families described in Example~\ref{ex:toric:Seifert}.
It also seems likely to the authors that Conlon--Hein's classification result for AC Calabi--Yau metrics \cite{Conlon:Hein:Classification} can be extended to the orbifold case.  However we are not currently aware of an explicit reference proving this. 
 \end{remark*}

\part{Construction of ALC \gtwo--manifolds of dihedral type}\label{Part3}

The final part of the paper is devoted to a general analytic desingularisation construction for ALC $\gtwo$--holonomy spaces with isolated conical singularities. As a nontrivial  application of this construction we will establish the existence of the first known examples of ALC $\gtwo$--manifolds of dihedral type.

\section{Conically singular ALC \gtwo--manifolds}\label{sec:CS:ALC:Def}

In this preliminary section we introduce conically singular (CS) ALC $\gtwo$--spaces and derive the refined Hodge-theoretic statements we will need in our construction.

By an isolated conical singularity of a Riemannian manifold we will mean an isolated singularity modelled in a strong sense (\ie with polynomial decay) on a metric cone with a smooth cross-section. In the \gthol setting the possible tangent cones must have holonomy contained in~\gtwo.

\begin{definition}\label{def:nK}
Let $N^6$ be a smooth connected compact $6$-manifold endowed with an $\sunitary{3}$--structure $(\omega,\Omega)$. We say that $(N,\omega,\Omega)$ is a \emph{nearly K\"ahler} manifold if
\[
d\omega = 3\Real\Omega, \qquad d\Imag\Omega = -2\omega^2.
\]
Equivalently, $(N,\omega,\Omega)$ is a nearly K\"ahler manifold if and only if the \gtstr
\[
\varphi_\tu{C}= r^2dr\wedge\omega + r^3\Real\Omega
\]
on the Riemannian cone $\tu{C}(N)$ over $N$ is torsion-free. Here $r$ is a radial coordinate on $\tu{C}(N)$.
\end{definition}
 
Since every \gtmfd~is Ricci-flat, nearly K\"ahler manifolds are Einstein with scalar curvature $30$. In particular, nearly K\"ahler manifolds have finite fundamental group. 
Only five simply connected nearly K\"ahler manifolds are currently known. 
The homogeneous ones are classified \cite{Butruille}: 
besides the $6$-sphere with the homogeneous almost complex structure induced by octonionic multiplication, there are unique homogeneous nearly K\"ahler structures 
on $\CP^3$, the flag manifold $\F_3$ and $S^3\times S^3$. 
Two inhomogeneous nearly K\"ahler structures are known (both of cohomogeneity one):  one on $S^6$ and another on $S^3\times S^3$ \cite{Foscolo:Haskins}. Note however that infinitely many singular nearly K\"ahler spaces are known, for example metric suspensions over Sasaki--Einstein 5-manifolds and twistor spaces of self-dual Einstein 4-orbifolds.

\begin{example}\label{eq:nK:S3xS3}
The homogeneous space $\sunitary{2}^3/\triangle \sunitary{2}\simeq S^3\times S^3$ carries
a homogeneous nearly K\"ahler structure. By \cite[Proposition 1.1]{Cortes:Vasquez} $S^3\times S^3$ is the only homogeneous nearly K\"ahler $6$-manifold to admit smooth finite quotients. There are many freely-acting finite groups of isomorphisms of~${S^3\times S^3}$, for instance any product of three cyclic groups with coprime orders in the three $\sunitary{2}$--factors of the connected component of the identity in the isometry group $\sunitary{2}^3/\triangle\Z_2$ \cite[Table 13]{Cortes:Vasquez}. Here the quotient by the diagonal $\Z_2$--subgroup is necessary for an effective action.
\end{example}

\begin{definition}\label{def:CS:ALC}
Let $M_0$ be a complete metric space. We say that $M_0$ is an \emph{ALC \gtmfd~with isolated conical singularities} at $p_1,\dots, p_n\in M_0$ if the following properties hold.
\begin{enumerate}
\item $M'_0=M_0\setminus \{ p_1,\dots, p_n\}$ is a smooth $7$-manifold endowed with a torsion-free \gtstr~$\varphi_0$. The distance on $M'_0$ is induced by the Riemannian metric $g_0=g_{\varphi_0}$ and $M_0$ is the metric completion of $M'_0$.
\item There exists a compact set $K\subset M_0$ such that $p_1,\dots,p_n\in K$ and $(M_0\setminus K,\varphi_0)$ is an ALC \gtmfd~(of cyclic or dihedral type) asymptotic to $\BC (\Sigma)$ in the sense of Definition \ref{def:ALC:G2}.
\item For each $i=1,\dots, n$ there exists an open subset $U_i\subset M_0$ with $U_i\cap \{p_1,\dots, p_n\} =\{ p_i\}$, positive constants $\varepsilon_i,\mu_i>0$, a \gtwo--cone $\tu{C}_i=\tu{C}(N_i)$ over a nearly K\"ahler manifold $(N_i,\omega_i,\Omega_i)$ non-isometric to the round $6$-sphere and a diffeomorphism $f_i \co (0,\varepsilon_i)\times N_i \ra U_i$ such that, for all $j\geq 0$,
\[
\left|\nabla^j_{\tu{C}_i}\left( f_i^\ast\varphi_0-\varphi_{\tu{C}(N_i)}\right)\right|_{g_{\tu{C}_i}} = O(r^{\mu_i-j}).
\]
The cone $\tu{C}_i$ and the constant $\mu_i$ are called the \emph{tangent cone} and the \emph{rate} of the isolated conical singularity $p_i$.
\end{enumerate}
When there is no need to specify the singularities $p_1,\dots, p_n$ we will say that $M_0$ is a \emph{conically singular} (CS) ALC \gtmfd.
\end{definition}

\begin{remark}\label{rmk:CS:ALC}
A similar definition can be given in the more general Riemannian context, where the metric is not assumed to arise from a torsion-free $\gtwo$--structure. In this more general context, we will say that $(M_0,g_0)$ is a \emph{CS ALC manifold}.
\end{remark}

Up to finite quotients, only one nontrivial example of a CS ALC $\gtwo$--manifold is currently known.

\begin{example}\label{eg:CS:ALC}
In \cite{FHN:Coho1:ALC} we constructed a CS ALC \gtmfd~$M_0$ of cohomogeneity-one with one isolated conical singularity $p$ with tangent cone $\tu{C}(S^3\times S^3)$ over the homogeneous nearly K\"ahler structure on $S^3\times S^3$ and rate $\mu = \frac{\sqrt{145}-7}{2}\approx 2.5$. $M_0'=M_0\setminus \{ p\}$ is diffeomorphic to $\R^+ \times S^3\times S^3$. The torsion-free \gtstr~$\varphi_0$ is invariant under the group $\sunitary{2}^2\times\tu{N}$, where the first two factors act on the left on $S^3\times S^3\simeq \sunitary{2}^2$, and $\tu{N}$ is the normaliser of $\unitary{1}$ in the diagonal right-acting $\sunitary{2}$. The $\unitary{1} \subset \tu{N}$ corresponds to the principal circle action of the circle bundle on the ALC end. Note that $\tu{N}/\unitary{1} \cong \Z_2$. Elements of $\tu{N} \setminus \unitary{1}$ have order 4 (think of them as imaginary unit quaternions orthogonal to $\C \subset \mathbb{H}$) and map $\unitary{1}$-orbits to $\unitary{1}$-orbits, but reversing the orientation of the circles.

The diagonal copy of $\Z_2\subset\sunitary{2}^2\times\unitary{1}\subset\sunitary{2}^2\times \tu{N}$ acts trivially, so really the symmetry group is $(\sunitary{2}^2\times\tu{N})/\Z_2$. 
In particular every finite group $\Gamma \subset (\sunitary{2}^2\times\tu{N})/\Z_2$ acting freely on $S^3\times S^3$ acts freely on $M_0'$ and $M_0/\Gamma$ is a CS ALC \gtmfd~with one singularity modelled on the \gtwo--cone over $(S^3\times S^3)/\Gamma$.
\end{example}

We also introduce the definition of an AC \gtmfd.
\begin{definition}\label{def:AC:G2}
Let $(M,\varphi)$ be a complete \gtmfd~and let $(\tu{C},\varphi_{\tu{C}})$ be a \gtwo--cone over a nearly K\"ahler manifold $N$. We say that $(M,\varphi)$ is \emph{asymptotically conical} (AC) asymptotic to $(\tu{C},\varphi_{\tu{C}})$ with rate $\mu<0$ if there exists a compact set $K\subset M$, $R>0$ and a diffeomorphism $f \co (R,\infty)\times N \ra M\setminus K$ such that
\[
\left|\nabla^j_{\tu{C}}\left( f^\ast\varphi-\varphi_{\tu{C}}\right)\right|_{g_{\tu{C}}} = O(r^{\mu-j})
\]
for all $j\geq 0$.
\end{definition}

\begin{remark}\label{rmk:AC:G2}
	We will make use of the following normal form for an AC $\gtwo$--manifold $(M,\varphi)$ asymptotic to $\tu{C}(N)$ with rate $\mu\leq -3$, which is established in \cite[Theorem 3.6]{Karigiannis} under an assumption later shown always to hold in \cite[Proposition 6.25]{Karigiannis:Lotay} (an analogue in the AC setting of our Proposition \ref{prop:Slice:End}). We can choose the diffeomorphism $f$ such that outside the compact set $K$ we have
	\[
\varphi = \varphi_\tu{C}+ \xi + d\eta, \qquad \ast_{\varphi}\varphi =  \ast_{\varphi_\tu{C}}\varphi_\tu{C} + \zeta + rdr\wedge \ast_{N}\xi + d\theta
\]
for some $(\xi,\zeta)\in \mathcal{H}^3(N)\oplus \mathcal{H}^4(N)$ and $(\eta,\theta)=O(r^{-2-\delta})$ for some $\delta>0$. (Note that if $\mu<-3$ then $\xi=0$ and if $\mu<-4$ then $\zeta = 0$ too; in these cases the decay of the primitives $\eta,\theta$ is not optimal: for example, if $\mu<-4$ then $(\eta,\theta)=O(r^{\mu+1})$.)
\end{remark}

\begin{example}\label{eg:AC:G2:Bryant:Salamon}
For a long time the only known AC \gtmfd s were those constructed by Bryant--Salamon \cite{Bryant:Salamon}. Of particular relevance for the concrete examples we will analyse later is the Bryant--Salamon AC \gtmetric~on $S^3\times \R^4$. This metric is asymptotic with rate $\mu=-3$ to the cone over the homogeneous nearly K\"ahler structure on $S^3\times S^3$ described in Example \ref{eq:nK:S3xS3}. Karigiannis--Lotay \cite[Corollary 6.10]{Karigiannis:Lotay} have shown that the Bryant--Salamon metric on $S^3\times \R^4$ is the unique AC $\gtwo$--manifold asymptotic to this cone.\end{example}

\begin{example}\label{eg:FHN:AC:G2}
Recently the authors found an infinite family of new simply connected AC \gtmfd s \cite[Theorem C]{FHN:Coho1:ALC}: for every pair of coprime positive integers $m,n$ there exists an AC \gtmetric, unique up to scale, on the total space $M_{m,n}$ of the principal circle bundle over the canonical line bundle $K_{\CP^1 \times \CP^1}$ of $\C\PP^1\times\C\PP^1$ with first Chern class $(m,-n)$. Topologically, $M_{m,n}$ can alternatively be described as the pull-back of $K_{\CP^1 \times \CP^1}$ to the total space  of the circle bundle over $\CP^1 \times \CP^1$ of Chern class $(m,-n)$, so it is diffeomorphic to $S^3 \times \mathcal{O}_{\CP^1}(-2(m+n))$.

The AC \gtmetric~is asymptotic with rate $\mu=-3$ to a $\Z_{2(n+m)}$--quotient of the cone over the homogeneous nearly K\"ahler structure on $S^3\times S^3$.
More specifically, the group of $2(m+n)$th roots of unity acts on $S^3 \times S^3$ by the embedding $\zeta \mapsto (1, \zeta^{n+m}, \zeta^m) \in \unitary{1}^3/\Delta \Z_2 \subset \sunitary{2}^3/\Delta \sunitary{2} \subset \tu{Aut}(S^3 \times S^3)$. This action commutes with that of $\sunitary{2}^2 \times \unitary{1} \subset \tu{Aut}(S^3 \times S^3)$, and indeed the whole manifold $M_{m,n}$ has a cohomogeneity-one action by that group. $\sunitary{2} \times \sunitary{2}$ acts transitively on each orbit, and the stabiliser of a point in a principal orbit (in particular, the link of the asymptotic cone) is the image of $\zeta \mapsto (\zeta^n, \zeta^{-m})$.

In view of the examples we will discuss later, the AC \gtmetric~on $M_{1,1}$ is of particular interest. In this special case we denote the group $\Gamma_{1,1}\simeq \Z_4$ as $\Gamma_{cyc}$ for reasons that will be clear later. The embedding of $\Gamma_{cyc}$ in $\sunitary{2}^3/\triangle\Z_2$ is induced by $\zeta\mapsto (1,\zeta^2,\zeta)\in \sunitary{2}\times\sunitary{2}\times\unitary{1}\subset\sunitary{2}^3$.
\end{example}

\subsection{Hodge theory}

Let $(M_0,g_0)$ be a $7$-dimensional CS ALC manifold in the sense of Remark \ref{rmk:CS:ALC}, with singularities $p_1,\dots, p_n$. We assume that the tangent cone $\tu{C}(N_i)$ at $p_i$ is a \gtwo--cone and the asymptotic model $\BC(\Sigma)$ at infinity is the same as the one for an ALC \gtmfd. We do not impose any restriction on the holonomy or curvature of $g_0$. In this section we collect some preliminary results about the Laplacian acting on $2$-forms on $M_0$.

First of all, we define weighted Sobolev spaces on $M_0$ adapted to the ALC end \emph{and} the conical singularities. Let $K$ and $U_1,\dots, U_n$ be the subsets given by Definition \ref{def:CS:ALC}. We have to consider multiple weight functions. Let $r$ be a smooth function on $M_0$ which outside of $K$ agrees (up to a double cover in the case of an ALC end of dihedral type) with the radial function on the asymptotic model at infinity $\BC (\Sigma)$ and satisfies $r\equiv 1$ on $K$. For each $i=1,\dots, n$ let $r_i$ be a smooth function on $M_0$ which agrees (by composition with the diffeomorphism $f_i$) with the radial function on the \gtwo--cone $\tu{C}_i$ on $U_i$ and satisfies $r_i\equiv 1$ outside of a small tubular neighbourhood of $U_i$.

\begin{definition}\label{def:Weighted:Sobolev:CS:ALC}
Let $\bm{\nu} = (\nu_1,\dots, \nu_n,\nu_\infty)$ be an $(n+1)$--tuple of real numbers. Given a differential form $\psi$ on $M'_0$ we define its $L^p_{l,\bm{\nu}}$--norm by
\begin{equation}\label{eq:Weighted:Sobolev:CS:ALC}
\begin{aligned}
\| \psi \| ^p_{L^p_{l,\bm{\nu}}} &= \sum_{j=0}^{l}
\int_{K}{
\left( \prod_{i=1}^{n}{
r_i^{-\nu_i p+jp-7}}\right) |\nabla^j\psi|^p \dvol_{M_0}}\\ &+\int_{M_0\setminus K}{\left( |r^{-\nu_\infty+j}\nabla^j \Pi_0\psi |^p + |r^{-\nu_\infty+l}\nabla^j \Pi_\perp\psi |^p \right) r^{-6}\dvol_{M_0}}.
\end{aligned}
\end{equation}
Define the weighted Sobolev space of differential forms on $M_0$ of class $L^p_{l,\bm{\nu}}$ as the closure of the space of smooth compactly supported differential forms on $M'_0$ with respect to the norm \eqref{eq:Weighted:Sobolev:CS:ALC}.
\end{definition}

\begin{remark*}
Note that $\prod_{i=1}^{n}{r_i^{-\nu_i p+jp-7}} = r_i^{-\nu_i p+jp-7}$ in $U_i$ and $\prod_{i=1}^{n}{r_i^{-\nu_i p+jp-7}} =1$ outside of a neighbourhood of the singularities.
\end{remark*}

There is a Fredholm theory for differential operators such as $d+d^\ast$ and the Laplacian on $M_0$ acting between weighted Sobolev spaces analogous to Theorems \ref{thm:Fredholm}, \ref{thm:Index:jump} and \ref{thm:Fredholm:2nd:order}. We summarise this theory in the following statements without proof.
\begin{enumerate}
\item There is a discrete set of codimension $1$ linear subspaces of $\R^{n+1}$ such that if $\bm{\nu}$ lies in the complement of these ``indicial walls'' then an elliptic operator $P\co L^2_{l,\bm{\nu}}\ra L^2_{l-k,\bm{\nu}-\bm{k}}$ is Fredholm. Here $\bm{\nu}-\bm{k}$ denotes the multi-index where $k$ is subtracted from each entry of $\bm{\nu}$, where $k$ is the order of $P$.
\item The growth of elements in the kernel of $P$ is controlled by the behaviour of elements in the kernel of the analogous operators on the models $\tu{C}_i$ and $\BC(\Sigma)$ for the ends.
\item The index of $P$ can only jump when $\bm{\nu}$ crosses an indicial wall and the jump of the index is determined by the number of homogeneous elements in the kernel of the model operators on the ends of $M_0$.
\item Set $\textbf{\tu{dim}}=(7,\dots, 7, 6)$, \ie $\textbf{\tu{dim}}$ collects the dimensions of the tangent cones at the singularities and at infinity of $M_0$ respectively, and note that the $L^2$--inner product defines a non-degenerate pairing between $L^2_{\bm{\nu}}$ and $L^2_{-\textbf{dim}-\bm{\nu}}$. Under this pairing, the cokernel of $P\co L^2_{l,\bm{\nu}}\ra L^2_{l-k,\bm{\nu}-\bm{k}}$ is identified with the kernel of $P^\ast$ in $L^2_{-\textbf{\tu{dim}}-\bm{\nu}+\bm{k}}$. 
\end{enumerate}

The latter statement uses the following integration by parts formula: if $P$ is a first-order operator, $u\in L^2_{1,\bm{\nu}}$ and $v\in L^2_{1,\bm{\mu}}$ with $\nu_i +\mu_i \geq -6$ and $\nu_\infty + \mu_\infty \leq -5$, then $\langle Pu,v\rangle_{L^2} = \langle u,P^\ast v\rangle_{L^2}$.

We now want to determine the indicial walls for the operators we will use. The results of Section \ref{sec:Indicial:Roots:CY:cone} describe the indicial roots for the Laplacian and $d+d^\ast$ on the ALC end of $M_0$. The following lemma collects the results we need about indicial roots for the isolated conical singularities of $M_0$.

\begin{lemma}\label{lem:Indicial:Roots:G2:cone}
Let $\tu{C}(N)$ be a \gtwo--cone and let $\gamma$ be a harmonic $k$-form on $\tu{C}(N)$ homogeneous of order $\lambda$.
\begin{enumerate}
\item If $k=0$ then $\gamma=0$ whenever $\lambda\in (-6,1)\setminus\{ -5,0\}$. Moreover, if $\lambda=0$ (respectively, $\lambda=-5$) then $\gamma = c$ (respectively, $\gamma = r^{-5}c$) for some $c\in\R$.
\item If $k=1$ then $\gamma=0$ whenever $\lambda\in (-5,0)$.
\item If $k=2$ then $\gamma=0$ whenever $\lambda\in (-3,-2)$. Moreover, if $\lambda=-2$ (respectively, $\lambda=-3$) then $\gamma = \beta$ (respectively, $\gamma = r^{-1}\beta$) for some $\beta\in\mathcal{H}^2(N)$.
\item If $k=3$ and $\lambda \in (-4,-3)$ then $\gamma$ is never closed. Moreover, if $\lambda=-3$ then $\gamma = \beta$ for some $\beta\in\mathcal{H}^3(N)$.
\end{enumerate}
Similar statements for $k=4,5,6,7$ are obtained by applying the Hodge star operator. Moreover, for any $k\neq 3,4$ and any $\lambda$ there are no harmonic $k$-forms on $\tu{C}(N)$ that can be expressed as a polynomial in $\log{r}$ with coefficients in the space of harmonic $k$-forms on $\tu{C}(N)$ homogeneous of order $\lambda$.
\proof
Since $N$ is Einstein with scalar curvature $30$, statements (i)--(iii) follow from \cite[Theorem A.2]{FHN:ALC:G2:from:AC:CY3} using the fact that the first nonzero eigenvalue of the Laplacian on functions on $N$ is at least $6$ and the first eigenvalue of the Laplacian on coclosed $1$-forms on $N$ is at least $10$. Part (iv) follows from \cite[Theorem A.2 and Remark A.5]{FHN:ALC:G2:from:AC:CY3}. The last statement is a consequence (i)--(iii) since by \cite[Proposition A.6]{FHN:ALC:G2:from:AC:CY3} harmonic polynomials in $\log{r}$ only occur when $\lambda = -\frac{5}{2}$.
\endproof
\end{lemma}

The following two lemmas are immediate consequences of integration by parts and the indicial roots computations.

\begin{lemma}\label{lem:Harmonic:1:forms:CS:ALC}
Assume that $\nu_i \geq-5$ for all $i=1,\dots, n$ and $\nu_\infty \leq 0$.
\begin{enumerate}
\item There are no harmonic functions on $M_0$ in $L^2_{\bm{\nu}}$.
\item Every harmonic $1$-form on $M_0$ in $L^2_{\bm{\nu}}$ is closed and coclosed.
\end{enumerate} 
\proof
Because of the indicial root computations of Lemma \ref{lem:Indicial:Roots:G2:cone} (i) and (ii) and Propositions \ref{prop:Harmonic:Functions:Cone} and \ref{prop:Harmonic:1Forms:Cone}, we can in fact assume that $\nu_i = -\epsilon$ and $\nu_\infty = -4+\epsilon$ for any $\epsilon>0$ and integrate by parts.
\endproof
\end{lemma} 

\begin{remark}\label{rmk:Harmonic:1:forms:CS:ALC}
Under the same assumptions on $\bm{\nu}$, if $g_0$ is Ricci-flat then there are no harmonic $1$-forms on $M_0$ in $L^2_{\bm{\nu}}$. Indeed, Lemma \ref{lem:Indicial:Roots:G2:cone} (ii) and Proposition \ref{prop:Harmonic:1Forms:Cone} and an integration by parts guarantee that any such form is parallel, and the decay at infinity forces its vanishing. More generally, Lemma \ref{lem:Harmonic:1:forms:CS:ALC} implies that harmonic 1-forms in $L^2_{\bm{\nu}}$ are in fact closed and coclosed and $L^2$--integrable and, combining \cite[Example 0.16]{Lockhart} and Theorem \ref{thm:ALC_hodge_fastdecay}, we deduce that the latter subspace is topological and that it can be identified with the subspace of $H^1(M_0)$ consisting of classes that restrict to zero in $H^1(N)$. In particular, this subspace vanishes if $M_0$ is diffeomorphic to a Ricci-flat CS ALC manifold.
\end{remark}

\begin{lemma}\label{lem:Harmonic:2:forms:CS:ALC}
Every harmonic $2$-form $\sigma$ on $M_0$ in $L^2_{\bm{\nu}}$ is
\begin{enumerate}
\item coclosed if $\nu_i \geq -4$ for all $i=1,\dots, n$ and $\nu_\infty \leq 1$;
\item closed if $\nu_i \geq -3$ for all $i=1,\dots, n$ and $\nu_\infty \leq -2$.
\end{enumerate}
\proof
Since $d^\ast\sigma$ is a harmonic $1$-form in $L^2_{\bm{\nu}-\bm{1}}$, Lemma \ref{lem:Harmonic:1:forms:CS:ALC} implies that it is closed (and coclosed) and hence
$dd^\ast\sigma=0$. Moreover, as in the proof of Lemma \ref{lem:Harmonic:1:forms:CS:ALC}, $d^\ast\sigma$ decays sufficiently fast so that we can integrate by parts:
\[
0=\langle dd^\ast\sigma,\sigma\rangle_{L^2} = \| d^\ast \sigma\|^2_{L^2}.
\]
The second statement follows immediately from an integration by parts whenever $\nu_i \geq -\frac{5}{2}$ and $\nu_\infty \leq -2$. The improvement $\nu_i\geq -3$ follows from Lemma \ref{lem:Indicial:Roots:G2:cone} (iii).
\endproof
\end{lemma}

Our main goal is to study the solvability of the equation $\triangle\sigma = d^\ast\xi$ for a $2$-form $\sigma$ in weighted Sobolev spaces for certain ranges of weight. 

\begin{prop}\label{prop:Laplacian:2Forms:ALC}
Suppose that $\nu_i\in [-5,-3]$ for all $i=1,\dots, n$ and $\nu_\infty \in [-3,0]$ and assume that $\bm{\nu}+\bm{1}$ does not lie on an indicial wall for the Laplacian acting on $2$-forms. Then for every $3$-form $\xi\in L^2_{l,\bm{\nu}}$ the equation $\triangle\sigma = d^\ast\xi$ has a solution $\sigma$ with $dd^\ast\sigma=0$ and $d\sigma\in L^2_{l,\bm{\nu}}$.

Furthermore, there exists $\delta>0$ such that we have the following improved decay for $d\sigma$:
\begin{enumerate}
\item if $\nu_i\in [-3-\delta,-3]$ but in fact $d^\ast\xi \in L^2_{l-1,-4+\delta}$ as $r_i\ra 0$ for some $i$ then $d\sigma \in L^2_{l,-3+\delta}$ near that $p_i$;
\item if $\nu_\infty\in [-3,-3+\delta]$ but in fact $d^\ast\xi \in L^2_{l-1,-4-\delta}$ as $r\ra \infty$ then $d\sigma \in L^2_{l,-3-\delta}$ on the ALC end.
\end{enumerate}
\end{prop}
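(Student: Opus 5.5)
We use the Fredholm theory summarised above for $\triangle\co L^2_{l+1,\bm{\nu}+\bm{1}}\ra L^2_{l-1,\bm{\nu}-\bm{1}}$ on $2$-forms, which applies because $\bm{\nu}+\bm{1}$ is off the indicial walls. By the duality statement, the cokernel is identified with harmonic $2$-forms $\zeta\in L^2_{-\textbf{dim}-\bm{\nu}+\bm{1}}$. These have rates $-6-\nu_i\in[-3,1]$ at the singular points and $-5-\nu_\infty\in[-5,-2]$ at infinity. Hence $\triangle\sigma=d^\ast\xi$ is solvable precisely when $\langle d^\ast\xi,\zeta\rangle_{L^2}=0$ for all such $\zeta$.

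To verify this orthogonality we apply Lemma \ref{lem:Harmonic:2:forms:CS:ALC} to $\zeta$. Since $-6-\nu_i\geq -3$ and $-5-\nu_\infty\leq -2$, part (ii) shows that $\zeta$ is closed. Part (i) also applies, because $-6-\nu_i\geq-4$ and $-5-\nu_\infty\leq 1$, so $\zeta$ is coclosed as well. Then $\langle d^\ast\xi,\zeta\rangle=\langle\xi,d\zeta\rangle=0$, provided the integration by parts is justified. The weights of $\xi$ and $\zeta$ sum to $-6$ at each $p_i$ and to $-5$ at infinity. These are exactly the borderline cases of the integration by parts formula, so the boundary terms need care. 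If $\bm{\nu}$ lies off indicial walls, $\zeta$ actually decays slightly better (rates $-6-\nu_i+\epsilon$ and $-5-\nu_\infty-\epsilon$), which makes the sums strictly admissible. I would take this slack from the nonexistence of indicial roots of the Laplacian on $2$-forms in a neighbourhood of $-5-\nu_\infty$ and $-6-\nu_i$. This gives a solution $\sigma\in L^2_{l+1,\bm{\nu}+\bm{1}}$, so $d\sigma\in L^2_{l,\bm{\nu}}$.

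Next, $d^\ast\sigma$ is harmonic, since $\triangle d^\ast\sigma=d^\ast\triangle\sigma=d^\ast d^\ast\xi=0$, and it lies in $L^2_{\bm{\nu}}$ with $\nu_i\geq-5$ and $\nu_\infty\leq0$. Lemma \ref{lem:Harmonic:1:forms:CS:ALC}(ii) then shows that $d^\ast\sigma$ is closed, so $dd^\ast\sigma=0$.

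For the improved decay in (i) and (ii), we compare the solutions at the two weights. Near $p_i$ the hypothesis $d^\ast\xi\in L^2_{-4+\delta}$ lets us also solve at weight $\nu_i=-3+\delta$, provided the cokernel condition still holds. That weight is not covered by the first part, so the obstructions must be handled separately. By Lemma \ref{lem:Indicial:Roots:G2:cone}(iii), the only indicial roots of the $2$-form Laplacian in $[-2-\delta,-2+\delta]$ occur at $-2$, with homogeneous solutions $\beta\in\mathcal{H}^2(N)$. The analogue of Lemma \ref{Step3} then gives $\sigma=\chi\beta+\sigma'$ with $\sigma'$ decaying at the faster rate. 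Since $d(\chi\beta)=0$ near $p_i$ ($\beta$ is closed on the cone), $d\sigma=d\sigma'$ improves to $L^2_{-3+\delta}$.

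At infinity the argument is the same. By Proposition \ref{prop:Harmonic:2forms:cone}, the indicial roots near $-2$ are $\tau_1\log r+\tau_0$ with $\tau_j$ harmonic on $\Sigma$, together with $\theta\wedge$(forms on the cone). The leading terms with $\log r$ must be excluded, or else their differential $\frac{dr}{r}\wedge\tau_1$ must be shown to vanish. Here we would use $dd^\ast\sigma=0$ together with the extra decay of $d^\ast\xi$, arguing as in Proposition \ref{prop:Closed:to:Coclosed:Laplacian:Mid:deg}.

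The main obstacle is this ALC endpoint. The $2$-form Laplacian has a double indicial root at $-2$ there, which produces $\log r$ terms, so showing that $d\sigma$ has no $O(r^{-3})$ leading term needs the boundary-pairing computation of Proposition \ref{prop:Closed:to:Coclosed:Laplacian:Mid:deg}. That pairing should force $\tau_1=0$. The coclosedness of $\sigma$ should also force the $\theta\wedge$ part of $d\sigma$ to decay faster.
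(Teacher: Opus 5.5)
Your argument for the main solvability statement and for the cone-point improvement (i) is essentially the paper's. The cokernel consists of harmonic $2$-forms of the dual weight, which are closed by Lemma~\ref{lem:Harmonic:2:forms:CS:ALC}(ii); the borderline integration by parts is already allowed by the stated formula, so your extra slack is harmless but unnecessary. The identity $dd^\ast\sigma=0$ follows from Lemma~\ref{lem:Harmonic:1:forms:CS:ALC}. At $p_i$ the root $-2$ is simple, with no $\log r_i$ terms and closed leading term $\beta\in\mathcal{H}^2(N_i)$, so expanding the solution you already have suffices; you do not need to solve at a new weight or handle new obstructions there.

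The gap is in (ii). You plan to expand the solution already constructed as $\sigma=\chi(\tau_0+\tau_1\log r)+O(r^{-2-\delta})$ and show that a boundary pairing forces $\tau_1=0$. This cannot work. The solution $\sigma$ is determined only up to harmonic $2$-forms of rate $\nu_\infty+1\in(-2,-2+\delta]$. Such forms may have leading term $\alpha_2+\alpha_1\log r$ with $\alpha_1\neq 0$; these non-closed harmonic forms are exactly the obstructions in Proposition~\ref{prop:Closed:to:Coclosed:Laplacian:Mid:deg}. Adding one to $\sigma$ changes $\tau_1$, so $\tau_1$ is not determined by $\xi$ and no identity can force it to vanish. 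Concretely, Green's formula only controls $\tau_1$ against the leading terms $\alpha_2$ of \emph{closed} harmonic forms. The condition $dd^\ast\sigma=0$ gives nothing either, because $\log r\,\tau_1$ is coclosed to leading order. Separately, there are no $\theta\wedge(\cdot)$ terms at rate $-2$, since harmonic $1$-forms on $\tu{C}(\Sigma)$ of rate in $[-4,0]$ vanish by Proposition~\ref{prop:Harmonic:1Forms:Cone}.

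The missing idea is to exploit this non-uniqueness rather than fight it. The paper solves afresh at the faster weight, looking for $\sigma=\sigma'+\chi\tau$ with $\sigma'\in L^2_{l+1,-2-\delta}$ on the end and $\tau\in\mathcal{H}^2(\Sigma)$. The obstructions at this weight are harmonic $2$-forms $\zeta$ of rate $-2+\delta$. The closed ones drop out by integration by parts against $\xi$, as in Proposition~\ref{prop:Closed:to:Coclosed:Laplacian:Mid:deg}. A non-closed $\zeta$ is detected by its coefficient $\alpha_1\neq 0$, since $\alpha_1=0$ forces $d\zeta=0$. Meanwhile $\langle d^\ast d(\chi\tau),\zeta\rangle_{L^2}$ is a nonzero multiple of $\int_\Sigma\tau\wedge\ast_\Sigma\alpha_1$, so a suitable choice of $\tau$ cancels all remaining obstructions. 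Since $d(\chi\tau)$ is compactly supported, $d\sigma=d\sigma'+d(\chi\tau)\in L^2_{l,-3-\delta}$ on the end. In your language: you must add to the first solution an element of the kernel whose $\log r$ coefficient cancels $\tau_1$. The existence of such an element is precisely what this duality computation provides; it is not automatic.
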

\proof
We begin with the first part of the Proposition. 
First note that any solution satisfies ${dd^\ast\sigma=0}$: indeed, $d^\ast\sigma$ is a harmonic $1$-form in $L^2_{\bm{\nu}}$ and so $dd^\ast\sigma$ vanishes by Lemma \ref{lem:Harmonic:1:forms:CS:ALC} when $\nu_i\geq -5$ and $\nu_\infty\leq 0$ (in fact $d^\ast\sigma=0$ under the additional assumptions of Remark \ref{rmk:Harmonic:1:forms:CS:ALC}). Next, the obstructions to solve $\triangle\sigma = d^\ast\xi$ in $L^2_{2,\bm{\nu}+\bm{1}}$ lie in the space of harmonic $2$-forms in $L^2_{-\bm{dim}-\bm{\nu}+\bm{1}}$ which are \emph{not} closed. By Lemma \ref{lem:Harmonic:2:forms:CS:ALC} (ii), if $\nu_i\leq -3$ and $\nu_\infty \geq -3$ then there are none of these and thus the first part of the Proposition is proved.

It remains to prove the improved decay for $d\sigma$ claimed in the second part of the Proposition.
Choose $\delta>0$ sufficiently small so that there are no indicial roots for the Laplacian on $2$-forms on $\tu{C}_i$ in $(-2,-2+\delta)$ and on $\BC (\Sigma)$ in $(-2-\delta,-2)$. Consider first case (i). By the first part of the Proposition we can always solve $\triangle\sigma = d^\ast\xi$ for a coclosed $2$-form with $\sigma = O(r_i^{-2-\delta})$ as $r_i\ra 0$ and analogous estimates for the derivatives. However, since $\triangle\sigma = O(r_i^{-4+\delta})$ as $r_i\ra 0$, by Lemma \ref{lem:Indicial:Roots:G2:cone} (iii) we must have
\[
\sigma = \beta + O(r_i^{-2+\delta})
\]
as $r_i\ra 0$ for some $\beta\in\mathcal{H}^2(\Sigma)$ which in particular is closed. Hence $d\sigma$ indeed decays faster than expected.

The proof of the improved decay for $d\sigma$ in case (ii) is analogous to those from \cite[Proposition 5.16]{FHN:ALC:G2:from:AC:CY3} and \cite[Proposition 2.35]{Foscolo:ALC:Spin7} and follows from Proposition \ref{prop:Closed:to:Coclosed:Laplacian:Mid:deg}: the obstructions to solve $\triangle \sigma = d^\ast\xi$ with $\sigma\in L^2_{l+1,-2-\delta}$ as $r\ra\infty$ are compensated by adding to  $\sigma\in L^2_{l+1,-2-\delta}$ a term of the form $\chi \tau$ with $\tau\in \mathcal{H}^2(\Sigma)$ and $\chi$ a cutoff function with $\chi\equiv 1$ on the ALC end. Since $d(\chi\tau)$ is compactly supported, the improved decay of $d\sigma$ follows.
\endproof

\begin{remark}\label{rmk:Laplacian:2Forms:ALC}
We observe explicitly that in Proposition \ref{prop:Laplacian:2Forms:ALC} we can take $n=0$, \ie assume that $M_0$ is a smooth ALC manifold without any isolated conical singularities. Then the proposition guarantees the solvability of the equation $d^\ast d\sigma = d^\ast\xi$ with $d\sigma\in L^2$.
\end{remark}

\begin{remark}\label{rmk:Laplacian:3Forms:ALC}
Consider the analogous problem $\triangle\sigma = d^\ast\zeta$ for a $4$-form $\zeta\in L^2_{1,\bm{\nu}}$.
\begin{itemize}
\item Any solution $\sigma$ satisfies $dd^\ast\sigma=0$ (and therefore $d^\ast d\sigma = d^\ast\zeta$) if $\nu_i\geq -2$ and $\nu_\infty\leq -1$: indeed, the harmonic $2$-form $d^\ast\sigma$ is then closed by Lemma \ref{lem:Harmonic:2:forms:CS:ALC} (ii).
\item A solution always exists if $\nu_i \leq -\frac{7}{2}$ and $\nu_\infty \geq -3$, for then any harmonic $3$-form of rate $-\bm{dim}-\bm{\nu}+\bm{1}$ is closed by an integration by parts.
\end{itemize}
Unless we make further assumptions on the Laplace spectrum of the nearly K\"ahler manifolds $N_i$, these results appear to be optimal and therefore in general there is no choice of $\nu_i$ such that the equation $\triangle \sigma = d^\ast\zeta$ for a $4$-form $\zeta\in L^2_{1,\bm{\nu}}$ always has a solution with $dd^\ast\sigma=0$ and $d\sigma\in L^2_{1,\bm{\nu}}$.
\end{remark}

\section{Desingularising isolated conical singularities of ALC \gtmfd s}\label{sec:CS:ALC:Gluing}

The purpose of this final section of the paper is to prove the following general existence theorem.

\begin{theorem}\label{thm:Desing:CS:ALC}
Let $(M_0,\varphi_0)$ be a CS ALC \gtwo--manifold with singularities $p_1,\dots, p_n$ modelled on $\gtwo$--cones $\tu{C}(N_i)$, $i=1,\dots,n$. For each $i=1,\dots, n$ let $(M_i,\varphi_i)$ be an AC $\gtwo$--manifold asymptotic to the cone $\tu{C}(N_i)$ with rate less than or equal to $-3$. As in Remark \ref{rmk:AC:G2}, write
\[
\varphi_i = \xi_i + d\eta_i, \qquad \ast_{\varphi_i}\varphi_i = \zeta_i + r_i dr_i\wedge \ast_{N_i}\xi_i + d\theta_i
\]
for some $(\xi_i,\zeta_i)\in \mathcal{H}^3(N_i)\oplus \mathcal{H}^4(N_i)$ and $(\eta_i,\theta_i)=O(r_i^{-2-\delta})$ for some $\delta>0$.

Assume that $([\xi_1],\dots, [\xi_n])\in \bigoplus_i{H^3(N_i)}$ and $(0,[\zeta_1],\dots, [\zeta_n])\in H^4(N)\oplus \bigoplus_i{H^4(N_i)}$ lie in the image of, respectively, the restriction maps
\[
H^3(M_0')\ra H^3(N)\oplus \bigoplus\nolimits_i{H^3(N_i)}\stackrel{pr}{\ra} \bigoplus\nolimits_i{H^3(N_i)}, \qquad H^4(M_0')\ra H^4(N)\oplus \bigoplus\nolimits_i{H^4(N_i)}.
\]
Here $N\ra\Sigma$ is the circle fibration arising from the ALC end $\tu{BC}(\Sigma)$ of $M_0$.

Then the following holds. Let $M$ be the smooth manifold obtained by replacing neighbourhoods of the singular points $p_1,\dots, p_n$ with compact regions of the AC manifolds $M_1,\dots, M_n$ using the diffeomorphisms in Definitions \ref{def:CS:ALC} and \ref{def:AC:G2}. There exists $\epsilon_0>0$ with the following significance. For every $\epsilon\in (0,\epsilon_0)$ there exists a torsion-free ALC $\gtwo$--structure $\varphi_\epsilon$ on $M$ such that as $\epsilon\ra 0$
\begin{enumerate}
\item $\varphi_\epsilon$ converges smoothly to $\varphi_0$ on compact subsets of $M_0'$;
\item $\epsilon^{-3}\varphi_\epsilon\ra \varphi_i$ smoothly on the regions of $M$ identified with compact regions of $M_i$.
\end{enumerate}
\end{theorem}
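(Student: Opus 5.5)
The plan follows Karigiannis's three-step scheme for the compact case, adapted to the ALC end. First I construct a closed approximate $\gtwo$--structure $\tilde\varphi_\epsilon$ on $M$. Near each $p_i$, Definition \ref{def:CS:ALC} together with the closedness of $\varphi_0$ and Lemma \ref{lem:Exact:forms:cone} (in its conical version near $r_i=0$) let me write $\varphi_0=\varphi_{\tu{C}_i}+\xi^0_i+d\eta^0_i$, with $\xi^0_i$ a homogeneous closed form and $\eta^0_i=O(r_i^{\mu_i+1})$. On the AC side, Remark \ref{rmk:AC:G2} gives $\epsilon^3\varphi_i=\varphi_{\tu{C}_i}+\epsilon^3\xi_i+d(\epsilon^3\eta_i)$ after rescaling by $\epsilon$. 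The terms $\epsilon^3\xi_i$ are not small in the neck, and the cohomological hypothesis on $([\xi_1],\dots,[\xi_n])$ is there to deal with them. I choose a closed $3$-form $\chi_\epsilon$ on $M_0'$ that equals $\epsilon^3\xi_i$ near $p_i$, is supported away from the ALC end (this uses that the class can be taken to restrict to $0$ on $N$, or can be absorbed by the decay), and is $O(\epsilon^3)$ in weighted norms. I then glue $\varphi_0+\chi_\epsilon$ to $\epsilon^3\varphi_i$ by interpolating only the exact parts $d\eta$ with cutoffs supported in annuli $r_i\sim\epsilon^{\kappa}$, $0<\kappa<1$. The result $\tilde\varphi_\epsilon$ is closed, and it is ALC with rate $\nu$ on the same $\BC(\Sigma)$. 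The $4$-form hypothesis on $(0,[\zeta_i])$ is used in the same way. I correct $\ast\tilde\varphi_\epsilon$ by a closed $4$-form so that $\tilde\psi_\epsilon$ differs from a closed form by a small term, which gives torsion $d\tilde\psi_\epsilon=d\tau_\epsilon$ with $\tau_\epsilon$ supported in the necks. The condition $0\in H^4(N)$ ensures this correction does not introduce a slowly decaying error on the ALC end.

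Next I estimate the error. Writing $d\tilde\psi_\epsilon=d^\ast_{\tilde\varphi_\epsilon}\xi_\epsilon$ (up to Hodge star) with $\xi_\epsilon$ supported in the gluing annuli, I bound $\xi_\epsilon$ in weighted norms $L^2_{l,\bm\nu}$ adapted to $M$. These norms interpolate between the CS-ALC weights of Definition \ref{def:Weighted:Sobolev:CS:ALC} and the AC weights on the rescaled $M_i$. The rates $\mu_i>0$ at the singularities and $\le -3$ on the AC pieces make $\|\xi_\epsilon\|\le C\epsilon^{\beta}$ for some $\beta>0$ once $\kappa$ is chosen well.

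Then I prove an ALC analogue of Joyce's perturbation theorem (Theorem \ref{thm:Joyce:ALC}). I seek $\varphi=\tilde\varphi_\epsilon+d\sigma$, with the gauge from Proposition \ref{prop:Deformation:Problem} reducing the torsion-free equation to $\triangle\sigma=d^\ast\xi_\epsilon+$ quadratic terms. The key linear input is a uniformly bounded right inverse for $\triangle$ on $2$-forms on $M$, giving $dd^\ast\sigma=0$ and $d\sigma\in L^2_{l,\bm\nu}$ with $\epsilon$-independent constants. I obtain this by transplanting Proposition \ref{prop:Laplacian:2Forms:ALC} from $M_0$ and its AC counterpart from each $M_i$, choosing $\nu_i\in(-4,-3)$ and $\nu_\infty\in(-3,-3+\delta)$ so that the improved-decay statements (i) and (ii) apply. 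I glue the parametrices and bound the error by the neck width. Uniformity then follows from a blow-up contradiction argument: a sequence of normalised kernel elements would converge either to a decaying harmonic $2$-form on $M_0$ that is not closed, or to one on some $M_i$. Lemma \ref{lem:Harmonic:2:forms:CS:ALC} and its AC analogue exclude both. The quadratic terms are controlled using Lemma \ref{lem:Hitchin:dual} and Theorem \ref{thm:Sobolev}, with a smallness condition on $C^0$ norms, and the contraction mapping theorem gives $\sigma$ with $\|d\sigma\|\le C\epsilon^\beta$. Elliptic regularity (Theorem \ref{thm:Weighted:Regularity}) then gives the smooth convergence claims (i) and (ii).

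I expect the main obstacle to be the uniform invertibility in the last step. The Laplacian on $2$-forms has the double indicial root at $-2$ on the $6$-dimensional cone, and the weights must lie in a narrow window around $-3$ at both the singularities and infinity, so the log-type behaviour and the $\chi\tau$ compensating terms from Proposition \ref{prop:Closed:to:Coclosed:Laplacian:Mid:deg} have to be tracked carefully through the gluing. I would first attempt the contradiction argument with $\nu_\infty$ just above $-3$, adding the finite-dimensional space $V$ from the proof of Lemma \ref{lem:Linearised:Def:ALC:G2} to the domain.
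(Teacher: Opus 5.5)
You have a genuine gap in the construction of the approximate solution, at the step where you handle the $H^3$ obstruction.

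\textbf{The cut-off $3$-form.} You take $\chi_\epsilon$ closed, equal to $\epsilon^3\xi_i$ near $p_i$, and supported away from the ALC end. That requires a lift of $([\xi_1],\dots,[\xi_n])$ to $H^3(M_0')$ with zero image in $H^3(N)$. The theorem deliberately does not assume this, since the $H^3(N)$ component is projected out. The assumption also fails in the intended applications. In Example \ref{eg:B7:D7}, $M_0'\simeq(0,\infty)\times S^3\times S^3$, so $H^3(M_0')\to H^3(N)$ is an isomorphism. Yet $[\xi_1]\neq 0$ for the Bryant--Salamon block: $\varphi_{\tu{C}}$ is exact, so the period of $\varphi_1$ on its associative $S^3$ is carried by $\xi_1$. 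A closed form with nonzero class in $H^3(N)$ cannot be cut off at infinity. It has a tail $\eta\wedge\tau$ or $\theta\wedge\tau$, decaying only like $r^{-3}$ resp.\ $r^{-2}$, according to its image in $H^2_-(\Sigma)$.

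\textbf{Torsion not confined to the necks.} Because $\chi_\epsilon$ is merely closed, the linear part $d\ast\bigl(\tfrac43\pi_1+\pi_7-\pi_{27}\bigr)\chi_\epsilon$ of the torsion has size $\epsilon^3$ on a fixed compact region of $M_0$. So the torsion is not supported in the necks, contrary to your claim.

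\textbf{The missing idea.} This is Theorem \ref{thm:CS:ALC:Obstructions:3:form}. Before gluing, make the closed extension $\xi'$ coclosed on the singular space $M_0$ itself, by solving $\triangle\sigma=-d^\ast\xi'$ there with Proposition \ref{prop:Laplacian:2Forms:ALC}. It is here, on $M_0$, that its improved-decay statements are needed. The result is a closed, coclosed, type-$27$ form $\xi$ with $\xi-\xi_i=O(r_i^{-3+\mu})$ and a controlled tail. Then $\psi'_{0,\epsilon}=\ast\varphi_0+\epsilon^4\zeta-\epsilon^3\ast\xi$ is closed. The torsion of $\varphi_0+\epsilon^3\xi$ is purely quadratic: $O(\epsilon^6)$ in the bulk and $O(\epsilon^6r^{-4}+\epsilon^4r^{-3-\delta})$ at infinity. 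It therefore lies in $L^2_\nu$ for some $\nu<-3$, and all remaining error sits in the necks.

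\textbf{Consequence for your last step.} Your uniformly bounded weighted inverse on the glued manifold is the hard part, and as sketched it is incomplete. Closed and coclosed $2$-forms lie in the kernel of $\triangle$ and are not ruled out by Lemma \ref{lem:Harmonic:2:forms:CS:ALC}. You would have to work modulo $L^2$-harmonic forms and control the logarithmic solutions at the double root $-2$ uniformly in $\epsilon$.

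The paper never proves such an estimate. Theorem \ref{thm:Joyce:ALC} runs Joyce's iteration in unweighted $L^2$, $L^p$ and $C^0$ norms. For each fixed $\epsilon$ it only needs existence of a coclosed solution of $\triangle\sigma=d^\ast\chi$ with $d\sigma\in L^2_{1,\nu}$. That comes from Proposition \ref{prop:Laplacian:2Forms:ALC} with $n=0$ (Remark \ref{rmk:Laplacian:2Forms:ALC}) on the smooth ALC manifold $M$, together with $H^1_c(M)=0$. It justifies the integration by parts giving $\|d\sigma\|_{L^2}\le\|\chi\|_{L^2}$. The estimates (vi)--(viii) then come from bounded geometry at scale $\epsilon$.

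This scheme requires $\|\chi_\epsilon\|_{L^2}\le C\epsilon^{7/2+\kappa}$. Karigiannis's neck estimates give this only after the harmonic choice of $\xi$ has removed the $O(\epsilon^3)$ bulk torsion; with your $\chi_\epsilon$ that norm is of order $\epsilon^3$. So you must either actually prove the uniform inverse, or switch to the harmonic correction and Joyce's argument.
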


The rest of the section is devoted to the proof of this theorem and then to concrete applications of it, in particular to establish our main Theorem \ref{mthm:Dihedral}. We will use the general Hodge-theoretic results for CS ALC $\gtwo$--spaces proven in the previous section to show how the topological assumptions in Theorem \ref{thm:Desing:CS:ALC} guarantee the existence of a closed and coclosed 3-form and closed 4-form on $M_0$ with prescribed asymptotics. As a further preliminary result, we will establish an extension to the ALC setting of Joyce's deformation result for closed $\gtwo$--structures with small torsion on compact manifolds to nearby torsion-free ones. We will then combine these preliminary results to conclude the proof of Theorem \ref{thm:Desing:CS:ALC} and apply it to Examples \ref{eg:CS:ALC}, \ref{eg:AC:G2:Bryant:Salamon} and \ref{eg:FHN:AC:G2}.

\subsection{Solving the linearised equation on a CS ALC {\gtmfd}}\label{sec:CS:ALC:Obstructions}

Let $(M_0,\varphi_0)$ be a CS ALC {\gtmfd} with singularities at $p_1,\dots, p_n$ with tangent cones $\tu{C}_1,\dots, \tu{C}_n$. Suppose that for each $i=1,\dots,n$ there exists an AC \gtmfd~$(M_i,\varphi_i)$ asymptotic to the \gtwo--cone $\tu{C}_i$ with rate $\mu'_i\leq -3$. We want to resolve the singularities of $(M_0,\varphi_0)$ by gluing in rescaled copies of $M_1,\dots, M_n$ in small neighbourhoods of $p_1,\dots, p_n$, respectively. Obstructions to this smoothing procedure arise from the fact, explained in Remark \ref{rmk:AC:G2}, that the restriction of $(\varphi_i,\ast_{\varphi_i}\varphi_i)$ to the end of $\tu{C}_i$ defines a pair of closed forms on an exterior region of $\tu{C}_i$ with a priori nontrivial cohomology classes. The harmonic representatives $(\xi_i,\zeta_i)$ of these cohomology classes on $\tu{C}_i$ in fact encode the leading order deviation of the AC \gtstr~$\varphi_i$ from the conical \gtstr~$\varphi_{\tu{C}_i}$. Now, suppose that
\begin{enumerate}
\item the CS \gtstr~$\varphi_0$ on $M_0$ can be deformed at the linear level to match $\varphi_{\tu{C}_i}+\xi_i$ near $p_i$ for all $i=1,\dots, n$, and
\item there exists a closed $4$-form $\psi$ on $M_0$ asymptotic to $\ast_{\varphi_{\tu{C}_i}} \varphi_{\tu{C}_i}+\zeta_i+ r_i dr_i\wedge \ast_{N_i}\xi_i$ near $p_i$ for all $i=1,\dots, n$.
\end{enumerate}
When $M_0$ is compact, Karigiannis \cite{Karigiannis} showed that these two conditions suffice to guarantee the success of the desingularisation procedure. Karigiannis also showed how to understand conditions (i) and (ii) in terms of the global topology of $M_0$. In this section we carry out a similar analysis in the case that $M_0$ is a CS ALC {\gtmfd} as in Definition \ref{def:CS:ALC}. 

Regard $M'_0=M_0\setminus\{ p_1,\dots, p_n\}$ as a manifold with $n+1$ boundary components given by the nearly K\"ahler manifolds $N_1,\dots, N_n$ cross-sections of the tangent cones $\tu{C}_1,\dots,\tu{C}_n$ at the singularities and~$N$. Hence we have natural restriction maps
\[
H^k(M'_0)\ra \bigoplus_{i=1}^n{H^k(N_i)}\oplus H^k(N).
\] 
Furthermore, recall that since $b_1 (\Sigma)=0=b_4(\Sigma)$ the Gysin sequence for the circle fibration $N\ra \Sigma$ yields exact sequences
\[
0\ra H^3 (\Sigma)\ra H^3(N) \ra H_-^2(\Sigma)\ra 0, \qquad 0\ra H^4(N)\ra H_-^3(\Sigma)\ra H^5(\Sigma)\ra 0.
\]
Here recall that $H_-^k(\Sigma)$ denotes the cohomology of $\Sigma$ with values in its orientation bundle.

The following theorems are the ALC analogue of \cite[Theorem 3.10]{Karigiannis} and yield the solution of the obstruction problem.

\begin{theorem}\label{thm:CS:ALC:Obstructions:4:form}
Let $(M_0,\varphi_0)$ be a CS ALC \gtmfd~with singularities at $p_1,\dots, p_n$ modelled on the \gtwo--cones $\tu{C}_1,\dots, \tu{C}_n$ respectively.

For each $i=1,\dots, n$ let $\zeta_i$ be a closed and coclosed $4$-form on $N_i$. Suppose that
\[
\left( [\zeta_1],\dots, [\zeta_n],0 \right)\in \bigoplus_{i=1}^n{H^4(N_i)}\oplus H^4(N)
\]
lies in the image of the restriction map $H^4(M'_0)\ra \bigoplus_{i=1}^n{H^4(N_i)} \oplus H^4(N)$. Then there exist a $4$-form $\zeta$ on $M'_0$ and $\mu>0$ such that $d\zeta =0$ and for all $j\geq 0$
\[
|\nabla^j_{\tu{C}_i}\left( \zeta -\zeta_i\right)|_{g_{\tu{C}_i}} = O(r_i^{-4+\mu-j}),
\]
as $r_i\ra 0$, $i=1,\dots, n$, and
\[
|\nabla^j_{\BC} \zeta |_{g_{\BC}} = O(r^{-3-\mu-j})
\]
as $r\ra\infty$.
\proof
Assume there exists a cohomology class in $H^4(M'_0)$ whose images in $H^4(N_i)$, $i=1,\dots, n$, and $H^4(N)$ are $[\zeta_i]$ and 0, respectively. We can then find a smooth closed $4$-form $\zeta'$ such that
\[
\zeta' = \zeta_i + dw_i , \qquad \zeta' = dw
\]
near $p_i$ and at infinity, respectively. Introduce cutoff functions $\chi_i$ and $\chi$ which vanish in a small enough neighbourhood of $p_i$ and outside of a large enough compact set, respectively, and are equal to $1$ on the ``bulk'' of $M_0$. We define $\zeta$ by
\[
\zeta = \zeta_i + d(\chi_i w_i) , \qquad \zeta' =  d(\chi w)
\]
near $p_i$ and infinity and $\zeta = \zeta'$ on the ``bulk'' of $M'_0$.
\endproof
\end{theorem}

\begin{remark*}
Here it is crucial to require that $\left( [\zeta_1],\dots, [\zeta_n],0 \right)$ and not only $\left( [\zeta_1],\dots, [\zeta_n] \right)$ lies in the image of $H^4(M_0')$. Indeed, under the weaker assumption we could have $\zeta = \theta\wedge\eta\wedge\tau +O(r^{-3-\delta})$ as $r\ra\infty$ for some nontrivial $\tau\in\mathcal{H}^2(\Sigma)$. Then $\zeta\notin L^2$, which implies that the approximate solution we will construct later has torsion which is too large. This could be overcome if $\zeta$ in the theorem could be made coclosed as well as closed, but this does not appear to be the case due to Remark~\ref{rmk:Laplacian:3Forms:ALC}.
\end{remark*}

\begin{theorem}\label{thm:CS:ALC:Obstructions:3:form}
Let $(M_0,\varphi_0)$ be an irreducible CS ALC \gtmfd~with singularities at $p_1,\dots, p_n$ modelled on the \gtwo--cones $\tu{C}_1,\dots, \tu{C}_n$ respectively.

For each $i=1,\dots, n$ let $\xi_i$ be a closed and coclosed $3$-form on $N_i$. Suppose that
\[
\sum_{i=1}^n{[\xi_i]}\in \bigoplus_{i=1}^n{H^3(N_i)}
\]
lies in the image of the restriction map $H^3(M'_0)\ra \bigoplus_{i=1}^n{H^3(N_i)}$. Then there exist a $3$-form $\xi\in \Omega^3_{27}(M'_0,\varphi_0)$ and $\mu>0$ such that $d\xi=0=d^\ast\xi$ and for all $i=1,\dots, n$
\begin{equation}\label{eq:CS:ALC:Obstructions:3form:i}
|\nabla^j_{\tu{C}_i}\left( \xi -\xi_i\right)|_{g_{\tu{C}_i}} = O(r_i^{-3+\mu-j}),
\end{equation}
as $r_i\ra 0$ for all $j\geq 0$. Consider the image of $[\xi]$ in $H^3(N)$ and its projection onto $H^2_-(\Sigma)$.
\begin{enumerate}
\item If the image of $[\xi]$ in $H^2_-(\Sigma)$ vanishes then there exists a harmonic $2$-form $\tau$ on $\Sigma$ such that
\begin{equation}\label{eq:CS:ALC:Obstructions:3form:infty:(i)}
|\nabla^j_{\BC}\left( \xi -\eta\wedge\tau\right)|_{g_{\BC}} = O(r^{-3-\mu-j})
\end{equation}
for all $j\geq 0$.
\item If the image of $[\xi]$ in $H^2_-(\Sigma)$ is represented by a nonzero harmonic $2$-form $\tau$ on $\Sigma$ then
\begin{equation}\label{eq:CS:ALC:Obstructions:3form:infty:(ii)}
|\nabla^j_{\BC}\left( \xi -\theta\wedge\tau\right)|_{g_{\BC}} = O(r^{-2-\mu-j})
\end{equation}
for all $j\geq 0$.
\end{enumerate}
\proof
As in the proof of Theorem \ref{thm:CS:ALC:Obstructions:4:form}, we can find a smooth closed $3$-form $\xi'$ on $M_0$ which satisfies the given boundary conditions \eqref{eq:CS:ALC:Obstructions:3form:i} and either \eqref{eq:CS:ALC:Obstructions:3form:infty:(i)} or \eqref{eq:CS:ALC:Obstructions:3form:infty:(ii)}. We want to perturb $\xi'$ so that it is also coclosed.

Consider then the equation $\triangle \sigma = -d^\ast\xi'$. Note that $\xi'\in L^2_{l,\bm{\nu}}$ where $\nu_i = -3-\delta$ and $\nu_\infty=-2+\delta$ for any $\delta>0$. The latter can be improved to $\nu_\infty=-3+\delta$ in case (i). However, in fact $d^\ast\xi' \in L^2_{l-1,\bm{\nu}'}$ where $\nu'_i = -3+\delta$ and $\nu'_\infty=-3-\delta$ in case (i) and $\nu'_\infty=-2-\delta$ in case (ii).

We apply Proposition \ref{prop:Laplacian:2Forms:ALC}: there is a solution $\sigma$ to $\triangle\sigma=-d^\ast\xi'$ such that $dd^\ast\sigma=0$ (in fact $d^\ast\sigma=0$ in view of Remark \ref{rmk:Harmonic:1:forms:CS:ALC}) and $d\sigma \in L^2_{l,\bm{\nu}'}$. In case (ii) the improved decay of $d\sigma$ follows from the fact (which is a consequence of Propositions \ref{prop:Harmonic:1Forms:Cone} and \ref{prop:Harmonic:2forms:cone}) that $-1$ is not an indicial root for the Laplacian on $\BC (\Sigma)$ acting on $2$-forms.

We now set $\xi = \xi' + d\sigma$, which is then closed and coclosed. Letting $l\ra\infty$, by weighted elliptic regularity we can pass from weighted $L^2$--decay to pointwise decay of $\xi$ and its derivatives in the statement of the theorem.
\endproof
\end{theorem}

\subsection{Adaptation of Joyce's deformation result to the ALC setting}

In this subsection we consider the problem of deforming a closed ALC \gtstr~with small torsion into a torsion-free \gtstr~while preserving the ALC asymptotics. As part of his construction of the first examples of compact \gtmfd s, Joyce gave a general and flexible result that allows one to deform closed \gtstr s with small torsion to \gtmetric s on \emph{compact} manifolds. We state and prove a version of Joyce's deformation result \cite[\S\S 11.6--11.8]{Joyce:Book} for closed \gtstr s with small torsion adapted to the ALC setting. The estimates are not optimal, but since this does not seem essential, we prefer to use Joyce's original argument and hence avoid having to prove uniform weighted estimates. In comparison to the original work of Joyce, the crucial new feature of the noncompact setting is that it is necessary not only to assume that the torsion is small but also that it decays fast enough.  

\begin{theorem}\label{thm:Joyce:ALC}
Fix a Riemannian cone $\tu{C}(\Sigma)$, and $\nu<-3$ sufficiently close to $-3$ so that there are no harmonic $2$-forms and closed and coclosed $3$-forms on $\tu{C}(\Sigma)$ homogeneous of order $\lambda$ in $[\nu+1,-2)$ and $[\nu,-3)$ respectively. Fix $C>0$, $p\geq 1$, $\alpha,\beta_2,\beta_p\in\R$ and $\kappa>0$. Then there exist $\epsilon_0,K>0$ such that the following holds whenever $0<\epsilon<\epsilon_0$.

Let $(M^7,g)$ be a (cyclic or dihedral) ALC manifold asymptotic to $\BC(\Sigma)$ for some circle bundle over a cone $\tu{C}(\Sigma)$. Suppose that the Riemannian metric $g$ is induced by a smooth closed ALC \gtstr~$\varphi$ and that there exists a smooth closed $3$-form $\chi$ such that $d\,{\ast_\varphi}\chi=d\,{\ast_\varphi} \varphi$. Assume that:
\begin{enumerate}
\item $\chi\in L^2_{l,\nu}$ for every $l\geq 0$.
\item $\| \chi\|_{L^2}\leq C\epsilon^{\alpha+\kappa}$.
\item $\| \chi\|_{C^0}\leq C\epsilon^{\kappa}$.
\item $\| d\chi\|_{L^2}\leq C\epsilon^{\beta_2+\kappa}$ and  $\| d \chi\|_{L^p}\leq C\epsilon^{\beta_p+\kappa}$.
\item For every $3$-form $\chi\in L^2_{1,\nu}$ the equation $\triangle \sigma = d^\ast\chi$ has a coclosed solution $\sigma$, unique up to the addition of a closed and coclosed $2$-form, with $\sigma\in L^2_{2,\nu+1+\delta}$ for every $\delta>0$ and $d\sigma\in L^2_{1,\nu}$.
\item $\|\nabla\rho\|_{L^2} \leq C \left( \| d\rho\|_{L^2} + \|d^\ast\rho\|_{L^2} + \epsilon^{-\alpha+\beta_2}\|\rho\|_{L^2}\right)$ for all $3$-forms $\rho\in L^2_1$.
\item $\|\nabla\rho\|_{L^p} \leq C \left( \| d\rho\|_{L^p} + \|d^\ast\rho\|_{L^p} + \epsilon^{-\alpha+\beta_p}\|\rho\|_{L^2}\right)$ for all $3$-forms $\rho\in L^2$ with $\nabla\rho\in L^p$.
\item $\|\rho\|_{C^0} \leq C \left( \epsilon^{-\beta_p}\| \nabla\rho\|_{L^p} + \epsilon^{-\alpha}\|\rho\|_{L^2}\right)$ for all $3$-forms $\rho\in L^2$ with $\nabla\rho\in L^p$.
\end{enumerate}
Then there exists a smooth $3$-form $\rho\in L^2_1\cap C^0$ which is $L^2$--orthogonal to closed and coclosed $L^2$--integrable $3$-forms and satisfies
\[
\| \rho\|_{C^0} + \epsilon^{-\alpha}\|\rho\|_{L^2} + \epsilon^{-\beta_2}\|\nabla\rho\|_{L^2} + \epsilon^{-\beta_p}\|\nabla\rho\|_{L^p}\leq K\epsilon^\kappa,
\]
such that $\varphi+\rho$ is a torsion-free ALC \gtstr~on $M$.
\proof
First of all, in \cite[Theorem G2]{Joyce:Book} (but also note the comment at the bottom of p 296) Joyce makes the special choices $p=14$, $\alpha=\tfrac{7}{2}$, $\beta_p=-\tfrac{1}{2}=\beta_2-3$ 
and $\kappa = \frac{1}{2}$.
Note also that in the compact setting $L^p\subset L^2$ for every $p\geq 2$ under a uniform bound on the total volume. It is immediate to check that, given assumptions (vi)--(viii), his argument in the compact setting works unchanged for any choice of $p,\alpha,\beta_2,\beta_p$ as in the statement of the Theorem.

We have to explain how Joyce's argument has to be modified due to the noncompactness of $M$. Joyce's strategy to prove the Theorem in the compact setting is to first solve the nonlinear elliptic system for a $2$-form $\sigma$
\[
\triangle \sigma = d^\ast\chi + d^\ast(f\chi) + d^\ast Q_{\varphi} (d\sigma),
\]
where $f\varphi=\tfrac{7}{3}\pi_1 (d\sigma)$. This is done using an iteration scheme that at each step requires us to solve the equation
\[
\triangle\sigma_j = d^\ast\chi_j,
\]
where $\chi_j = \psi + f_{j-1}\chi + Q_\varphi(d\sigma_{j-1})$ with $f_{j-1}\varphi=\tfrac{7}{3}\pi_1 (d\sigma_{j-1})$. If we knew that $\chi_{j}\in L^2_{1,\nu}$ then assumption (v) would guarantee the existence of a coclosed solution $\sigma_j$ with $d\sigma_j\in L^2_{1,\nu}$. Then Joyce's arguments would give us uniform estimates for $d\sigma_j$ in $L^p_1\cap L^2_1\cap C^0$ exploiting assumptions (vi)--(viii). The assumption $\chi_{j}\in L^2_{1,\nu}$ can be brought in to the iteration scheme by a simple induction argument exploiting (i) and (iii) and the fact that $d\sigma_{j-1}$ is bounded by the induction hypotheses. Indeed, by \cite[Proposition 10.3.5]{Joyce:Book} there exists a universal constant $c>0$ such that  
\[
|Q_{\varphi}(d\sigma_{j-1})| \leq c\, |d\sigma_{j-1}|^2, \qquad |\nabla Q_{\varphi}(d\sigma_{j-1})| \leq c\left( |d \sigma_{j-1}|^2|d^\ast\psi| + |\nabla d\sigma_{j-1}|\, | d\sigma_{j-1}| \right).
\]
Thus if $d\sigma_{j-1}\in L^2_{1,\nu}\cap C^0$ then $Q_{\varphi}(d\sigma_{j-1})\in L^2_{1,\nu}$.

Thus the equation $\triangle\sigma_j = d^\ast\chi_j$ can always be solved with $d\sigma_j\in L^2_{\nu}\subset L^2$. Then the proof of \cite[Theorem G2]{Joyce:Book} goes through to guarantee that for $\epsilon$ sufficiently small $\{d\sigma_j\}_{j\geq 0}$ is a Cauchy sequence in $L^2\cap C^0$ and $\{\nabla d\sigma_j\}_{j\geq 0}$ is a Cauchy sequence in $L^2\cap L^p$, with uniform estimates.

In the compact setting Joyce proves that $\sigma_j$ converges as well. This last fact is not strictly necessary and we prefer to avoid  adapting its proof to the noncompact setting. We instead define $\rho_j = d\sigma_j$, use Joyce's arguments to show that $\rho_j$ has a limit $\rho$ in $C^0\cap L^2_1$ with $\nabla\rho\in L^p$ and observe that $\rho$ is closed and $L^2$--orthogonal to closed and coclosed $3$-forms in $L^2$. Moreover, $\rho$ satisfies the estimates in the statement of the Theorem and the elliptic system
\[
d\rho =0, \qquad d^\ast\rho = d^\ast\chi + d^\ast(f\chi) + d^\ast Q_{\varphi} (\rho),
\]
where $f\varphi=\tfrac{7}{3}\pi_1 (\rho)$. Hence by (weighted) elliptic regularity $\rho$ is smooth and decays at least as~$r^{-3}$ (in fact, one would expect $\rho$ to decay as $r^\nu$, but we haven't provided a proof of the fact that~$\rho_j$ converges to $\rho$ in $L^2_\nu$ as this would require uniform weighted estimates for the solutions to $\triangle\sigma = d^\ast\chi$).

Now, following \cite[Theorem 10.3.7]{Joyce:Book}, for $\epsilon$ sufficiently small $\varphi+\rho$ is a closed ALC \gtstr{} on~$M$ such that
\[
d\ast_{\varphi+\rho}(\varphi+\rho) = df\wedge(\ast\varphi-\ast\chi) + d\alpha \wedge \varphi,
\] 
where $\alpha\wedge\varphi = 2\,{\ast}\pi_7 (\rho)$. To conclude that $df=0=d\alpha$, \ie that $\varphi+\rho$ is torsion-free, we use \cite[Proposition 10.3.4]{Joyce:Book}: although the result is stated under the assumption that $M$ is a compact manifold, one can easily check that the proof goes through unchanged in the noncompact setting provided $\rho$ and $\nabla\rho$ are $L^2$--integrable.
\endproof 
\end{theorem}

\begin{remark*}
The fact that $\rho\in L^2$ is closed and $L^2$--orthogonal to closed and coclosed $3$-forms in $L^2$ guarantees that $\rho$ is exact and therefore the closed $3$-form $\varphi+\rho$ lies in the same cohomology class as $\varphi$. Indeed, since $\rho\in L^2$ we have that $[\rho]\in H^3(M)$ lies in the image of $H^3_c(M)$,
and hence in the image of $\Psi : \mathcal{H}^3_{\lambda}(M) \to H^3(M)$ by %
Remark \ref{rmk:inj_across_L2}. More precisely, the proof of Theorem \ref{thm:ALC_hodge_fastdecay}(ii) (which underpins Theorem \ref{thm:Topological:Hodge:G2}(i)) shows that we can write the unique representative of~$[\rho]$ in $\mathcal{H}^3_{\lambda}(M)$ as $\rho + d\gamma$, with $\gamma$ extending smoothly to the compactification $X$. Now $d\gamma$ too is $L^2$-orthogonal to~$\mathcal{H}^3_{\lambda}(M)$, so we must have $\rho = -d\gamma$.
\end{remark*}

\subsection{Existence results}\label{sec:CS:ALC:Conclusion}

We have now all the ingredients in place for the proof of Theorem \ref{thm:Desing:CS:ALC}.

Let $(M_0,\varphi_0)$ be a CS ALC \gtwo--manifold with singularities $p_1,\dots, p_n$ modelled on $\gtwo$--cones $\tu{C}(N_i)$, $i=1,\dots,n$, and asymptotic to $\BC(\Sigma)$ at infinity. For each $i=1,\dots, n$ we fix an AC $\gtwo$--manifold $(M_i,\varphi_i)$ asymptotic to the cone $\tu{C}(N_i)$ with rate less than or equal to $-3$. As in Remark \ref{rmk:AC:G2}, we write
\[
\varphi_i = \xi_i + d\eta_i, \qquad \ast_{\varphi_i}\varphi_i = \zeta_i + r_i dr_i\wedge \ast_{N_i}\xi_i + d\theta_i
\]
for some $(\xi_i,\zeta_i)\in \mathcal{H}^3(N_i)\oplus \mathcal{H}^4(N_i)$ and $(\eta_i,\theta_i)=O(r_i^{-2-\delta})$ for some $\delta>0$.

We assume that $([\xi_1],\dots, [\xi_n])\in \bigoplus_i{H^3(N_i)}$ and $(0,[\zeta_1],\dots, [\zeta_n])\in H^4(N)\oplus \bigoplus_i{H^4(N_i)}$ lie in the image of, respectively, the restriction maps
\[
H^3(M_0')\ra H^3(N)\oplus \bigoplus\nolimits_i{H^3(N_i)}\stackrel{pr}{\ra} \bigoplus\nolimits_i{H^3(N_i)}, \qquad H^4(M_0')\ra H^4(N)\oplus \bigoplus\nolimits_i{H^4(N_i)}.
\]
Let $\zeta$ and $\xi$ be the forms provided by Theorems \ref{thm:CS:ALC:Obstructions:4:form} and, respectively, \ref{thm:CS:ALC:Obstructions:3:form}. We can then consider 1-parameter families of closed forms
\[
\varphi_{0,\epsilon}' = \varphi_0 + \epsilon^3\xi, \qquad \psi_{0,\epsilon}' = \ast_{\varphi_0}\varphi_0 + \epsilon^4\zeta - \epsilon^3\ast_{\varphi_0}\xi
\]
for $\epsilon>0$. Note that
\begin{equation}\label{eq:Decay:Torsion:Approx:Desing}
\ast_{\varphi_{0,\epsilon}'} \varphi_{0,\epsilon}' - \psi_{0,\epsilon}' = \ast_{\varphi_0}Q_{\varphi_0}(\epsilon^3 \xi) - \epsilon^4\zeta = O(\epsilon^6 r^{-4} + \epsilon^4 r^{-3-\delta})
\end{equation}
as $r\ra\infty$ for some $\delta>0$, with analogous estimates for all derivatives. On the other hand, as in \cite[Equations (82)--(85)]{Karigiannis} for $r_i\ra 0$ we have
\begin{equation}\label{eq:Approx:Desing}
\begin{aligned}
\varphi_{0,\epsilon}'= \varphi_{\tu{C}_i} + \epsilon^3\xi_i + d\eta_{0,i}, \qquad &\eta_{0,i} = O(r_i^{\mu_i+1}+\epsilon^3 r_i^{-2+\delta}),\\
\ast_{\varphi_{0,\epsilon}'}\varphi_{0,\epsilon}'= \ast_{\varphi_{\tu{C}_i}}\varphi_{\tu{C}_i} + \epsilon^4\zeta_i + \epsilon^3 r_i dr_i\wedge \ast_{N_i} \xi_i + d\theta_{0,i}, \qquad &\theta_{0,i} = O(r_i^{\mu_i+1}+\epsilon^3 r_i^{-2+\delta} + \epsilon^4 r_i^{-3+\delta}),
\end{aligned}
\end{equation}
with analogous estimates on the derivatives. Here $\mu_i>0$ is the rate of the CS singularity $p_i$ as in Definition \ref{def:CS:ALC}.

For $\epsilon$ sufficiently small we now proceed as in \cite[Definition 3.15]{Karigiannis} to construct a smooth $7$-manifold $M$ with a family of closed $\gtwo$--structures $\varphi'_\epsilon$ with suitably small torsion. This is done by interpolating between $\varphi_{0,\epsilon}'$ and a rescaling $\epsilon^3 \varphi_i$ in an annular region where $r_i\approx \epsilon^\gamma$ for $\gamma\in (0,1)$ given in \cite[Corollary 3.25]{Karigiannis}. More precisely, denote by $r_{M_i}$ a function on $M_i$ that coincides with the radius function on the cone $C_i$ along the AC end. Then the manifold $M$ is obtained by removing the set $r_i \leq 2\epsilon^\gamma$ in $M_0$ and replacing it with the region of $M_i$ where $r_{M_i}\leq 2\epsilon^{\gamma-1}$ under the rescaling $r_i=\epsilon r_{M_i}$. Under this diffeomorphism we have
\[
\epsilon^3\varphi_i = \varphi_{\tu{C}_i} + \epsilon^3 \xi_i + \epsilon^3 d\eta_i, \qquad \epsilon^3\eta_i = O(\epsilon^3 r_i^{-2-\delta})
\]
so that we can construct a closed $3$-form on $M$ that coincides with $\epsilon^3 \varphi_i$ for $r_{M_i}\leq \epsilon^{1-\gamma}$ and $\varphi_{0,\epsilon}'$ for $r_i \geq 2r_i$. Similarly, one interpolates between $\psi_{0,\epsilon}'$ and $\epsilon^4 \ast_{\varphi_{i}}\varphi_i$ in the same annular region to obtain a closed $4$-form $\psi_\epsilon'$ on $M$.

\begin{proof}[Proof of Theorem \ref{thm:Desing:CS:ALC}]
We will now explain why all the conditions of Theorem \ref{thm:Joyce:ALC} are satisfied with $\varphi=\varphi_{\epsilon}'$ and 3-form $\chi_\epsilon$ defined by
\[
\ast_{\varphi_\epsilon'} \chi_\epsilon = \ast_{\varphi_\epsilon'}\varphi_\epsilon'-\psi_\epsilon'. 
\]

Firstly, the condition (v) of Theorem \ref{thm:Joyce:ALC} is just Proposition \ref{prop:Laplacian:2Forms:ALC} and Remark \ref{rmk:Laplacian:2Forms:ALC} applied to the ALC manifold $(M,g_{\varphi'_\epsilon})$. Moreover, we have solutions to $\triangle\sigma = d^\ast\chi$ with $d^\ast\sigma=0$ instead of the weaker condition $dd^\ast\sigma$ since $d^\ast\sigma$ lies in the space of $L^2\mathcal{H}^1(M)$ of $L^2$--integrable closed and coclosed $1$-forms, isomorphic to $H^1_c(M)$ by Theorem \ref{thm:ALC_hodge_fastdecay}; the latter cohomology group vanishes since the Ricci-flat AC manifolds $M_i$ have necessarily vanishing first Betti number (since their fundamental groups are finite, see the analogous \cite[Proposition 5.10]{FHN:ALC:G2:from:AC:CY3} in the Calabi--Yau setting) and similarly for the Ricci-flat CS ALC manifold $M_0$ we must have $H^1_c(M_0)=0$ as explained in Remark \ref{rmk:Harmonic:1:forms:CS:ALC}.

Secondly, the estimates in (vi)--(viii) of Theorem \ref{thm:Joyce:ALC} hold for any $p>7$ with $\alpha=\tfrac{7}{2}$ and $\beta_p = \frac{7}{p}-1$ since the injectivity radius of $g_{\varphi_\epsilon'}$ is bounded below by $C\epsilon$ and the Riemannian curvature of $g_{\varphi_\epsilon'}$ is bounded above by $C\epsilon^{-2}$ as in \cite[Proposition 3.27]{Karigiannis}: then every point of $M$ has a neighbourhood of radius $\approx \epsilon$ with bounded geometry, so the estimates in the theorem follow by rescaling standard local elliptic estimates.

Finally, $\chi_\epsilon$ satisfies all the estimates in (i)--(iv) combining Karigiannis' estimates in \cite[Corollary 3.25]{Karigiannis} and the asymptotic expansion at infinity given in \eqref{eq:Decay:Torsion:Approx:Desing}.

The proof of Theorem \ref{thm:Desing:CS:ALC} is now just an application of Theorem \ref{thm:Joyce:ALC} combined with a standard bootstrap argument based on elliptic regularity to pass from the Sobolev estimates in Theorem \ref{thm:Joyce:ALC} to the $C^\infty_{loc}$--convergence in Theorem \ref{thm:Desing:CS:ALC}. 
\end{proof}

We now explain how to apply Theorem \ref{thm:Desing:CS:ALC} to Examples \ref{eg:CS:ALC}, \ref{eg:AC:G2:Bryant:Salamon} and \ref{eg:FHN:AC:G2}.

Up to taking finite quotients, the only currently known CS ALC $\gtwo$--holonomy space is the one described in Example \ref{eg:CS:ALC}, which we denote by $\widetilde{M}_0$. Recall that $\widetilde{M}_0$ has a unique conical singularity $p_1$ modelled on $\tu{C}(N_1)$ the cone over the homogeneous nearly K\"ahler structure on $N_1\simeq S^3\times S^3$ and that $\widetilde{M}_0'=\widetilde{M}_0\setminus \{p\}\simeq (0,\infty)
\times S^3\times S^3$. The ALC end of $\widetilde{M}_0$ is modelled on $\BC (\Sigma)$, where $\Sigma\simeq S^2\times S^3$ is endowed with its homogeneous Sasaki--Einstein structure and $N\ra \Sigma$ is the principal circle bundle with first Chern class the generator of $H^2(\Sigma;\Z)$.

We obtain infinitely many examples by considering finite quotients $M_0=\widetilde{M}_0/\Gamma$ by a finite subgroup $\Gamma$ of the group $(\sunitary{2}^2\times\tu{N})/\Z_2$ of diffeomorphisms of $\widetilde{M}_0$ that preserves its CS ALC $\gtwo$--structure. Recall that $\tu{N} \subset \sunitary{2}$ is the normaliser of $\unitary{1}$, $\tu{N} / \unitary{1} \cong \Z_2$, and any element of~$\tu{N} \setminus \unitary{1}$ has order 4.

\begin{remark*}
    Let us comment on the asymptotic geometry of $M_0$. Recall that the circle fibration on the link $N = S^3 \times S^3$ of the ALC end of $\wt M_0$ corresponds to the diagonal right action of $\unitary{1} \subset \tu{N} \subset \sunitary{2}$. Thus if $\Gamma$ is contained in this $\unitary{1}$ then the asymptotic link of $M_0$ is a principal circle bundle over the same base $\Sigma \simeq S^2 \times S^3$, but ``with shorter fibres''. (In particular, this is the case for $\Gamma = \{\pm1\} \subset \unitary{1}$.) In general, the base is the quotient of $\Sigma$ by the induced action of the image of $\Gamma$ in $\sunitary{2}/\Z_2 \times \tu{N}/\unitary{1}$.
    
    Thinking of $\Sigma$ as the link of the conifold, the action of $\tu{N}/\unitary{1}$ is by complex conjugation (which reverses orientation).    
The action of $\tu{N}$ descends to the quotient $(S^3 \times S^3)/{\pm 1}$, factoring through $\tu{N}/{\pm 1} \cong (\unitary{1}/{\pm1}) \rtimes \Z_2$, with the $\Z_2$ semi-direct factor acting as a standard involution.
\end{remark*}

Note that, no matter what finite subgroup $\Gamma$ we choose, the topological constraints in Theorem \ref{thm:Desing:CS:ALC} are automatically satisfied since $H^4(N_1)\oplus H^4 (N)=\{ 0\}$ and $H^3(M_0')\simeq H^3(N_1/\Gamma)$. Hence, we just need to explain choices of $\Gamma$ and AC $\gtwo$--manifold $M_1$ to feed into the construction of Theorem \ref{thm:Desing:CS:ALC}. For this it is important to note that the tangent cone $\tu{C}(N_1)$ at the conical singularity $p_1$ of~$\widetilde{M}_0$ has a larger automorphism group, namely $(\sunitary{2}^3/\Delta\Z_2)\rtimes S_3$, where $S_3$ is the symmetric group that exchanges the three factors.  

\begin{example}\label{eg:B7:D7}
We take $M_0=\widetilde{M}_0$, \ie take $\Gamma=\{ 1\}$, and $M_1$ the Bryant--Salamon AC $\gtwo$--metric on $S^3\times\R^4$ of Example \ref{eg:AC:G2:Bryant:Salamon}.
There are three topologically different ways to identify the asymptotic cone of $M_1$ with the cone at the singularity of $M_0$, exchanged by the $S_3$ subgroup of automorphisms of $\tu{C}(N_1)$. We then obtain three different families of ALC $\gtwo$--metrics by desingularising $M_0$ using one of the three versions of $M_1$. By Theorem \ref{thm:Uniqueness:Coho1:ALC}, each of these must be among the $\sunitary{2}^2\times\unitary{1}$--invariant $\gtwo$--metrics obtained in \cite[Theorem B]{FHN:Coho1:ALC} and conjectured earlier in the physics literature (the $\mathbb{B}_7$ family and two versions of the $\mathbb{D}_7$ family, arising from the 
two possible small resolutions of the conifold).  
\end{example}

\begin{example}\label{eg:C7}
Fix a pair $n,m$ of coprime positive integers. As explained in Example \ref{eg:FHN:AC:G2}, in \cite{FHN:Coho1:ALC} we constructed a (unique up to scale) AC $\gtwo$--holonomy metric $M_{m,n}$ asymptotic with rate~$-3$ to a $\gtwo$--cone with
cross-section $N_1=S^3\times S^3/\Gamma_{m+n}$ for a certain subgroup $\Gamma_{m+n}\simeq \Z_{2(n+m)}\subset (\sunitary{2}^2\times\unitary{1})/\Z_2$. The manifold $M_{m,n}$ is the total space of a circle bundle (determined by $m$ and $n$) over the canonical line bundle of $\C\PP^1\times\C\PP^1$. Taking $M_0=\widetilde{M}_0/\Gamma_{m+n}$ and $M_1=M_{m,n}$ in Theorem \ref{thm:Desing:CS:ALC} yields infinitely many families of ALC $\gtwo$--holonomy metrics which must coincide with the families of $\sunitary{2}^2\times\unitary{1}$--invariant $\gtwo$--metrics obtained in \cite[Theorem D]{FHN:Coho1:ALC}.
\end{example}

These examples recover known families of ALC~\gtwo-metrics of cyclic type.  The following example provides the first known instances of ALC $\gtwo$--manifolds of dihedral type, thus concluding the proof of Theorem \ref{mthm:Dihedral}.

\begin{example}\label{eg:A7}
Consider now the $m = n = 1$ case of Example \ref{eg:FHN:AC:G2}. The group $\Gamma_{cyc}:=\Gamma_{1,1}\simeq \Z_4$ is embedded in $(\sunitary{2}^2\times\unitary{1})/\Z_2\subset (\sunitary{2}^2\times \tu{N})/\Z_2$ via $\zeta\mapsto (1,\zeta^2,\zeta)$, \ie the image is generated by $(1, -1, i)$. Consider now a different embedding $\Gamma_{dih}$ of $\Z_4 \subset(\sunitary{2}^2\times \tu{N})/\Z_2$, generated by $(1,-1,\tau)$ for any $\tau \in \tu{N} \setminus \unitary{1}$. Then $M_0=\widetilde{M}_0/\Gamma_{dih}$ is again a CS ALC $\gtwo$--holonomy space 
but now it has an ALC end of dihedral type (the canonical double cover being $(S^3 \times S^3)/{\pm 1}$).

Now, the two subgroups $\Gamma_{cyc}$ and $\Gamma_{dih}$ are in fact conjugate in the larger symmetry group $\sunitary{2}^3/\triangle\Z_2$ of the cone $\tu{C}(N_1)$. Hence the isolated conical singularity of $M_0$ is modelled on the cone over $N_1/\Gamma_{cyc}$ and the AC manifold $M_{1,1}$ can be used in Theorem \ref{thm:Desing:CS:ALC}, yielding a 1-parameter family of ALC $\gtwo$--holonomy metrics of dihedral type whose existence was previously unknown. The underlying smooth manifold is diffeomorphic to $M_{1,1} \cong S^3\times\mathcal{O}_{S^2}(-4)$, where $\mathcal{O}_{S^2}(-4)$ is the total space of the complex line bundle over $S^2$ with Euler class $-4$. \end{example}

\begin{remark*}
    The $\gtwo$--metrics in the previous example produced via an application of Theorem \ref{thm:Desing:CS:ALC} can in fact be shown to be $\sunitary{2}^2$--invariant (by considering the symmetries of the building blocks and carrying out the deformation of Theorem \ref{thm:Desing:CS:ALC} equivariantly). Hence these metrics are still of cohomogeneity-one, \ie in principle described by solutions of an ODE system: however in \cite{FHN:Coho1:ALC} we made essential use of the larger symmetry group $\sunitary{2}^2\times\unitary{1}$ not enjoyed by these new metrics to reduce to a simpler ODE system. 
It would be interesting to use ODE methods to give a construction 
of the whole $1$-parameter family of $\mathbb{A}_7$ metrics. 
\end{remark*}

\bibliographystyle{amsinitial}
\bibliography{Def_ALC}

\end{document}